\newtheorem{theorem}{Theorem}[section]
\theoremstyle{plain}
\newtheorem{corollary}{Corollary}[section]
\newtheorem{definition}{Definition}[section]
\newtheorem{lemma}{Lemma}[section]
\newtheorem{remark}{Remark}[section]
\numberwithin{equation}{section}
\numberwithin{figure}{section}
\begin{document}
\title[\textbf{The best bound of the area--length ratio in Ahlfors' theory
}]{\textbf{The best bound of the area--length ratio in Ahlfors
Covering surface theory (I)}}
\author{Guang Yuan Zhang }
\address{Department of Mathematical Sciences, Tsinghua University, Beijing
100084, P. R. China. \textit{Email:} \textit{gyzhang@math.tsinghua.edu.cn}}

\begin{abstract}
In Ahlfors' covering surface theory, it is well known that there exists a
positive constant $h$ such that for any nonconstant holomorphic mapping $f:%
\overline{\Delta }\rightarrow S,$ if $f(\Delta )\cap \{0,1,\infty
\}=\emptyset ,$ then%
\begin{equation*}
A(f,\Delta )\leq hL(f,\partial \Delta ),
\end{equation*}%
where $\Delta $ is the disk $|z|<1$ in $\mathbb{C},$ $S$ is the unit Riemann
sphere, $A(f,\Delta )$ is the area of the image of $\Delta $ and $%
L(f,\partial \Delta )$ is the length of the image of $\partial \Delta $,
both counting multiplicities.

In this paper, we will show that the best lower bound for $h$ is the number
\begin{equation*}
h_{0}=\max_{\tau \in \lbrack 0,1]}\left[ \frac{\sqrt{1+\tau ^{2}}\left( \pi
+\arcsin \tau \right) }{\mathrm{{arccot}\frac{\sqrt{1-\tau ^{2}}}{\sqrt{%
1+\tau ^{2}}}}}-\tau \right] =4.\,\allowbreak 034\,159\,790\,\allowbreak
51\dots ,
\end{equation*}%
and this is the exact estimation, i.e. there exists a sequence of
holomorphic mappings $f_{n}:\overline{\Delta }\rightarrow S$ such that $%
f_{n}(\Delta )\cap \{0,1,\infty \}=\emptyset $ and
\begin{equation*}
\lim_{n\rightarrow \infty }A(f_{n},\Delta )/L(f_{n},\partial \Delta )=h_{0}.
\end{equation*}
\end{abstract}

\subjclass[2000]{30D35, 30D45, 52B60}
\thanks{Project 10271063 and 10571009 supported by NSFC}
\maketitle

\section{Introduction\label{1SS-Intro}}

In this paper, the Riemann sphere $S$\ is the unit sphere
\begin{equation*}
S=\{(x_{1},x_{2},x_{3})\in \mathbb{R}^{3};\;x_{1}^{2}+x_{2}^{2}+x_{3}^{2}=1\}
\end{equation*}%
endowed with the stereographic projection
\begin{equation*}
P:\overline{\mathbb{C}}=\mathbb{C}\cup \{\infty \}\rightarrow S
\end{equation*}%
with $P(0)=(0,0,-1)$, $P(\infty )=(0,0,1).$ The lengths of curves and the
areas of domains in $S$ are defined in the usual way. Thus, $P$ induces the
spherical metric $ds=\rho (z)|dz|=\frac{2}{1+|z|^{2}}|dz|,z\in \mathbb{C}.$
For a set $V$ in $\overline{\mathbb{C}},$ we denote by $\partial V$ its
boundary and $\overline{V}$ its closure.

We will identify the extended plane $\overline{\mathbb{C}}=\mathbb{C}\cup
\{\infty \}$ with $S,$ via the stereographic projection $P$. So for any set $%
D\subset \mathbb{C}$, we will also write $D\subset S,$ but in the later
relation, $D$ in fact means the set $P(D).$ When we write $0\in S$, for
example, $0$ indicates the point $P(0)=(0,0,-1)$ in $S.$ In this way, some
notations in $\mathbb{C}$ will be used in $S$: we use the interval notation $%
[-1,1],[0,+\infty ]$ to denote the line segment $P([-1,1]),P([0,+\infty ])$
in $S,$ etc.

For a Jordan domain $U$ in $\mathbb{C}$ and a holomorphic mapping $g:%
\overline{U}\rightarrow S,$ we denote by $A(g,U)$ the spherical area of the
image of $U,$ counted with multiplicities, and denote by $L(g,\partial U)$
the spherical length of the image of $\partial U,$ counted with
multiplicities. If we regard $g$ as a mapping from from $\overline{U}$ into $%
\overline{\mathbb{C}}=\mathbb{C\cup \{\infty \}}$, via the stereographic
projection $P$, we have
\begin{equation*}
A(g,U)=\iint_{U}(\rho (g(z))|g(z)|)^{2}dxdy,\ z=x+iy;
\end{equation*}%
\begin{equation*}
L(g,\partial U)=\int_{\partial U}\rho (g(z))\left\vert g(z)\right\vert |dz|.
\end{equation*}

In Ahlfors' covering surface theory (\cite{Ah}, \cite{Ha}), it is well known
that there exists a positive constant $h$ such that for any holomorphic
mapping $f:\overline{\Delta }\rightarrow S,$ if $f(z)\neq 0,1,\infty $ for
any $z\in \Delta ,$ then%
\begin{equation}
A(f,\Delta )\leq hL(f,\partial \Delta ).  \label{a6}
\end{equation}%
The goal of this paper is to give the best lower bound for $h,$ and our main
result is the following theorem.

\begin{theorem}
\label{main}Let $f:\overline{\Delta }\rightarrow S$ be a nonconstant
holomorphic mapping such that $f(z)\neq 0,1,\infty $ for any $z\in \Delta $.
Then
\begin{equation}
A(f,\Delta )<h_{0}L(f,\partial \Delta ),  \label{1-1}
\end{equation}%
where
\begin{equation}
h_{0}=\max_{\tau \in \lbrack 0,1]}\left[ \frac{\sqrt{1+\tau ^{2}}\left( \pi
+\arcsin \tau \right) }{\mathrm{arccot}\frac{\sqrt{1-\tau ^{2}}}{\sqrt{%
1+\tau ^{2}}}}-\tau \right] =4.03415979051\dots ,  \label{best}
\end{equation}%
and $h_{0}$ is the best lower bound in the sense that there exists a
sequences of holomorphic mappings $f_{n}:\overline{\Delta }\rightarrow S$
such that $f_{n}(\Delta )\cap \{0,1,\infty \}=\emptyset $ and
\begin{equation*}
\lim_{n\rightarrow \infty }\frac{A(f_{n},\Delta )}{L(f_{n},\partial \Delta )}%
=h_{0}.
\end{equation*}
\end{theorem}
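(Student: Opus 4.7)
The plan is to reduce inequality (\ref{1-1}) to a sharp isoperimetric-type estimate for holomorphic maps into a closed hemisphere of $S$, solve the resulting extremal problem to identify the constant $h_{0}$, and construct an asymptotically sharp sequence by taking high-degree branched covers of a near-extremal domain.

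\emph{Reduction to a geometric inequality.} Let $\Gamma =P(\mathbb{R}\cup \{\infty \})$ be the great circle through $0,1,\infty $, separating $S$ into the open hemispheres $S^{\pm }$. Since $f$ is nonconstant and holomorphic on the compact set $\overline{\Delta }$, the preimage $E=f^{-1}(\Gamma )\cap \overline{\Delta }$ is a finite union of analytic arcs whose endpoints lie on $\partial \Delta $ or at zeros of $f^{\prime }$ over $\Gamma $. Let $V_{1},\dots ,V_{N}$ be the components of $\Delta \setminus E$; each $V_{j}$ is a Jordan-type domain on which $f$ takes values in one of $S^{\pm }$. Summing contributions gives $A(f,\Delta )=\sum _{j}A(f,V_{j})$ and $L(f,\partial \Delta )=\sum _{j}L(f,\partial V_{j}\cap \partial \Delta )$, so it suffices to prove, for every $j$, the \emph{free-boundary} estimate
\begin{equation*}
A(f,V_{j})\leq h_{0}\,L(f,\partial V_{j}\cap \partial \Delta ),
\end{equation*}
in which the arcs $\partial V_{j}\cap E$ (mapped onto $\Gamma $) contribute nothing to the right-hand side. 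Strict inequality for at least one nontrivial $V_{j}$ will yield (\ref{1-1}).

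\emph{Solution of the extremal problem.} The preceding inequality is purely geometric: if $\mathcal{D}$ is the finitely-sheeted branched surface obtained by restricting $f$ to $\overline{V_{j}}$, with boundary decomposed as $\alpha \cup \beta $ where $\alpha $ lies over $\Gamma $ and $\beta $ lies in the open hemisphere (except at finitely many endpoints on $\Gamma $), then one must bound $A(\mathcal{D})/L(\beta )$. A spherical convexification/symmetrization step which keeps the base $\alpha $ on $\Gamma $ but replaces $\beta $ by a single circular arc is expected to reduce the problem to a one-parameter family of spherical caps cut from a hemisphere by small circles meeting $\Gamma $ non-tangentially, parametrized by the tilt $\tau \in [0,1]$ of the cutting circle. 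A direct calculation of area and arc length on this family yields the closed-form expression inside the $\max $ of (\ref{best}): $\sqrt{1+\tau ^{2}}$ arises as the spherical radius of the cap, $\pi +\arcsin \tau $ as its angular opening measured from the base on $\Gamma $, and $\mathrm{arccot}\sqrt{(1-\tau ^{2})/(1+\tau ^{2})}$ as the half-length of $\beta $, with the $-\tau $ term accounting for the secant chord cut off by $\Gamma $. Optimizing over $\tau $ produces $h_{0}\approx 4.0341\ldots $, with the maximum attained at some interior $\tau ^{\ast }\in (0,1)$.

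\emph{Sharpness and main obstacle.} For the lower bound, let $D^{\ast }$ be the extremal cap corresponding to $\tau ^{\ast }$, rotated so that $\overline{D^{\ast }}\cap \{0,1,\infty \}\subset \alpha \subset \Gamma $. For each $n$, choose an $n$-sheeted branched covering $f_{n}\colon \overline{\Delta }\to \overline{D^{\ast }}$ whose branch points are interior to $\Delta $ and whose critical values avoid $\{0,1,\infty \}$; then $f_{n}(\Delta )\cap \{0,1,\infty \}=\emptyset $, while $A(f_{n},\Delta )=nA(D^{\ast })+O(1)$ and $L(f_{n},\partial \Delta )=nL(\beta ^{\ast })+O(1)$, so the ratio tends to $A(D^{\ast })/L(\beta ^{\ast })=h_{0}$. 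The main technical obstacle I expect is the free-boundary isoperimetric step: because the ambient surface has positive Gauss curvature, planar Bonnesen-type inequalities do not apply, so one must either develop a spherical analogue allowing a prescribed free boundary on $\Gamma $, or execute a rearrangement that preserves the $\Gamma $-part of $\partial \mathcal{D}$ while enlarging the area relative to $L(\beta )$. This rearrangement must be carried out in the possibly multi-sheeted setting, tracking branch points so that the reduction to the one-parameter family of caps is rigorous; that is where the heaviest work of the proof should lie.
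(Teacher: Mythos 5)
There is a genuine gap, and it occurs at both ends of your argument. First, the per-piece free-boundary estimate $A(f,V_{j})\leq h_{0}L(f,\partial V_{j}\cap \partial \Delta )$ is false in general: a component $V_{j}$ of $\Delta \setminus f^{-1}(\Gamma )$ can have its entire boundary mapped into $\Gamma $ (so that $f|_{V_{j}}$ covers a full hemisphere, or the map contains a full sheet of $S$ slit along an arc of $\Gamma $), in which case the right-hand side is $0$ while $A(f,V_{j})\geq 2\pi $. Such components are unavoidable — they occur in every near-extremal example — and they are the entire reason the constant has the form $h_{0}=\max_{\tau}\frac{4\pi +A(D_{l})}{l}$ with the summand $4\pi $ equal to the area of a whole sphere sheet that contributes nothing to $L(f,\partial \Delta )$. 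The paper cuts along the segment $[0,+\infty ]$ (not the full great circle), isolates these "fat" pieces (Theorem \ref{fat}), and proves the inequality in the coupled form $4\pi +A(g,\Delta )\leq h_{0}L(g,\partial \Delta )$ for the non-fat pieces carrying the boundary (Theorem \ref{good2}, Lemmas \ref{nofat} and \ref{key1}); your decomposition has no mechanism to absorb the boundaryless area. Relatedly, your geometric reading of the formula is off: by Bernstein's inequality any single-sheeted domain in a hemisphere with boundary length $L<2\pi $ satisfies $A<L$, so no family of caps can produce a ratio above $1$, let alone $4.03$; the quantity maximized is $\frac{2\pi +\zeta _{1}(\tau )}{\zeta _{0}(\tau )}$ where $\zeta _{0},\zeta _{1}$ are the length and area attached to a circular arc joining two of the omitted points.

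Second, the sharpness construction fails for the same reason. An $n$-sheeted branched cover of a cap $\overline{D^{\ast }}$ with interior branch points sends $\partial \Delta $ onto all of $\partial D^{\ast }$ (including the part on $\Gamma $) $n$ times, so $L(f_{n},\partial \Delta )=nL(\partial D^{\ast })$, not $nL(\beta ^{\ast })$; the ratio is the constant $A(D^{\ast })/L(\partial D^{\ast })<1$ and does not tend to $h_{0}$. The correct extremal sequence (Section \ref{ss-p2}) covers $S\setminus \overline{D_{l_{0}}}$ exactly $m$ times and the lens $D_{l_{0}}$ (whose two vertices are omitted points) $2m$ times, with boundary covering $\partial D_{l_{0}}$ only $m$ times plus a fixed segment twice, giving $\frac{4m\pi +mA(D_{l_{0}})}{\pi +ml_{0}}\rightarrow h_{0}$; the essential feature is $m$ complete sphere sheets, each adding $4\pi $ of area and no boundary, which your covers of a hemispherical cap cannot produce.
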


Consider the function
\begin{equation}
h(\tau )=\frac{\sqrt{1+\tau ^{2}}\left( \pi +\arcsin \tau \right) }{\mathrm{%
arccot}\frac{\sqrt{1-\tau ^{2}}}{\sqrt{1+\tau ^{2}}}}-\tau ,\tau \in \lbrack
0,1].  \label{1.1}
\end{equation}%
It is clear that
\begin{equation*}
h(0)=4,h(1)=3\sqrt{2}-1<4,
\end{equation*}%
and
\begin{equation*}
h^{\prime }(0)=\frac{4}{\pi }-1>0.
\end{equation*}%
Thus, $h$ takes its maximum $h_{0}$ at some point $\tau _{0}\in (0,1)$ and $%
h_{0}>4.$

For a domain $U$ in $S,$ we denote by $A(U)$ the area of $U.$ If $U\subset
\mathbb{C},$ we still use the notation $A(U)$ to denote the spherical area
of $U,$ which is the area of $P(U)$ given by
\begin{equation*}
A(U)=\iint_{U}(\rho (x+iy))^{2}dxdy.
\end{equation*}

For a curve $\Gamma =\Gamma (t),t\in \lbrack 0,1],$ in $S,$ we denote by $%
L(\Gamma )$ the length of the set $\Gamma =\{\Gamma (t);t\in \lbrack 0,1]\}.$
If $\Gamma =\Gamma (t),t\in \lbrack 0,1],$ is a curve in $\mathbb{C}$, we
still denote by $L(\Gamma )$ the spherical length of $\Gamma ,$ which is the
length of the set $P(\Gamma ),$ and in the case that $\Gamma $ is simple we
have
\begin{equation*}
L(\Gamma )=\int_{\Gamma }\rho (z)|dz|.
\end{equation*}

Now we explain the geometric meaning of the function $h(\tau )$ given by (%
\ref{1.1})$.$ Let $D$ be the disk in $S$ with diameter $\overline{1,\infty }%
, $ the shortest path from $1$ to $\infty $ in $S.$ Let $l\in \lbrack \pi ,%
\sqrt{2}\pi ]$ and let $D_{l}$ be a domain inside $D$ whose boundary is
composed of the two congruent circular arcs, each of which has endpoints $%
\{1,\infty \}$ and spherical length $\frac{l}{2}.$ Then we have $L(\partial
D_{l})=l.$ It is clear that $D_{l},$ regarded as a domain in $\mathbb{C},$
is an angular domain whose vertex is $1$ and bisector is the ray $[1,+\infty
)$ in $\mathbb{C}$. We denote by $2\theta _{l}$ the value of the angle of
this angular domain$.$ Then it is clear that $\theta _{l}<\frac{\pi }{2}.$

It is proved in Section \ref{ss-4classi} that the area $A(D_{l})$ and the
length $L(\partial D_{l})=l$ are real analytic functions of $\tau =\sin
\theta _{l},\theta _{l}\in \lbrack 0,\frac{\pi }{2}],$ and when we
understand $A(D_{l})$ and $l,$ in the ratio $\frac{4\pi +A(D_{l})}{l},$ as
functions of $\tau =\sin \theta _{l},$ we obtain the function $h(\tau )$
given by (\ref{1.1}):%
\begin{equation}
h(\tau )=\frac{A(S)+A(D_{l})}{l}=\frac{4\pi +A(D_{l})}{l},\tau \in \lbrack
0,1].  \label{1.4}
\end{equation}%
This is the geometrical meaning of the function $h(\tau ).$

Considering that $l\geq \pi $ and $A(D_{l})\leq A(D)=2\pi (1-\frac{\sqrt{2}}{%
2}),$ we have%
\begin{equation*}
h(\tau )\leq \frac{4\pi +2\pi (1-\frac{\sqrt{2}}{2})}{\pi }<4.6,
\end{equation*}%
and then%
\begin{equation*}
4<h_{0}<4.6.
\end{equation*}%
A numerical computation shows that%
\begin{equation*}
h_{0}=4.034\,159\,790\,51\dots
\end{equation*}

The inequality (\ref{a6}) directly follows from the fundamental theorem of
L. Ahlfors' covering surface theory (\cite{Ah}, \cite{Ha}) for a finite
number of points $a_{1},\dots ,a_{q}$:

\begin{theorem}[Ahlfors]
Let $a_{1},\dots ,a_{q}$ be distinct $q$ points in $S.$ Then there exists a
positive constant $h=h(a_{1},\dots ,a_{q})$ such that for any meromorphic
function defined on $\overline{\Delta }$%
\begin{equation}
(q-2)A(f,\Delta )/4\pi \leq \sum_{m=1}^{q}n(f,a_{m})+hL(f,\partial \Delta ),
\label{1.2}
\end{equation}%
where $n(f,a_{m})$ is the number of solutions of the equation $%
f(z)=a_{m},z\in \Delta ,$ ignoring multiplicities.
\end{theorem}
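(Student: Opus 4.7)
The statement is the second fundamental theorem of Ahlfors' covering surface theory, and the plan is to derive it from Ahlfors' geometric Main Theorem for bordered covering surfaces. I view $f:\overline{\Delta}\to S$ as a (possibly branched) covering and set $\bar n=A(f,\Delta)/(4\pi)$ (the mean sheet number of the covering over the sphere) and $L=L(f,\partial\Delta)$.

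First I would choose pairwise disjoint spherical Jordan disks $D_1,\dots,D_q\subset S$ about $a_1,\dots,a_q$, small enough that each component of $f^{-1}(D_m)$ whose closure lies in $\Delta$ (an \emph{island} over $D_m$) is mapped as a branched cover onto $D_m$ and therefore contains at least one solution of $f(z)=a_m$, while the complement $D_0=S\setminus\bigcup_{m=1}^q\overline{D}_m$ is connected with Euler characteristic $\chi(D_0)=2-q$. Let $\tilde n_m$ denote the number of islands over $D_m$.

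The central step is to establish the Ahlfors covering inequality
\begin{equation*}
(q-2)\bar n\le \sum_{m=1}^q \tilde n_m + h\,L,
\end{equation*}
with $h=h(D_1,\dots,D_q)$. For this I would combine three ingredients: (a) the Riemann--Hurwitz formula applied to $f$ restricted to $f^{-1}(D_0)$, which expresses $\chi(f^{-1}(D_0))$ in terms of $\bar n\cdot\chi(D_0)=(2-q)\bar n$ minus a nonnegative branching term; (b) a single-disk ``area--length'' comparison, valid for each Jordan disk $D\subset S$, of the form $|\bar n(D)\cdot A(D)-A(f,f^{-1}(D))|\le k(D)L$, where the error arises only from \emph{peninsulas} (components of $f^{-1}(D)$ meeting $\partial\Delta$) and is controlled by the length of the relative boundary lying on $\partial\Delta$; and (c) the elementary bound $-\chi^+(\overline{\Delta})=-1$, which is absorbed into the constant. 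Summing the estimates (b) across the complete partition $\{D_0,D_1,\dots,D_q\}$ (whose areas sum to $4\pi$) and combining with (a) and (c) produces the displayed covering inequality after the peninsula contributions for $D_0$ are recognized as adjacent to peninsulas for some $D_m$ and lumped into $hL$.

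Finally, since each island over $D_m$ contains at least one root of $f(z)=a_m$ and $n(f,a_m)$ counts roots without multiplicity, $\tilde n_m\le n(f,a_m)$; substituting this into the covering inequality and recalling $\bar n=A(f,\Delta)/(4\pi)$ delivers exactly (\ref{1.2}). The main obstacle is (b), the area--length comparison for a single Jordan disk: this is the geometric heart of Ahlfors' theory and requires a careful isoperimetric argument on $S$, bounding the area of each peninsula by a constant (depending on $D$) times the spherical length of its boundary arc on $\partial\Delta$, while also handling the Euler-characteristic accounting for components of $f^{-1}(D)$ that are annular rather than disk-like.
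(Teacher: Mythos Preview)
The paper does not prove this theorem at all: it is stated in the introduction as Ahlfors' fundamental theorem, attributed to \cite{Ah} and \cite{Ha}, and used only to motivate the inequality $A(f,\Delta)\le hL(f,\partial\Delta)$ that the paper actually studies. There is therefore nothing to compare your argument against within the paper itself.

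That said, your outline is the standard textbook route to Ahlfors' Second Main Theorem (islands and peninsulas, the covering lemma $|\bar n - \bar n(D)|\le k(D)L$ for each Jordan disk, and Euler-characteristic bookkeeping via Riemann--Hurwitz on the inverse image of the complementary region). Two points deserve tightening if you flesh this out. First, the Riemann--Hurwitz step in (a) is not applied to $f^{-1}(D_0)$ as a whole but to its connected components together with the island components; one computes $\rho^+=\max(-\chi,0)$ for the covering surface and uses $\rho^+(\overline{\Delta})\ge \sum_m \tilde n_m - \chi(f^{-1}(D_0))$ after discarding simply connected peninsulas. Second, the inequality $\tilde n_m\le n(f,a_m)$ is fine, but the constant $h$ you obtain depends on the chosen disks $D_m$, so you must fix the $D_m$ once and for all (depending only on $a_1,\dots,a_q$) before stating the theorem; your proposal does this implicitly but it should be said. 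With those caveats your plan is correct and is precisely what one finds in Hayman \cite{Ha}.
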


\begin{remark}
J. Dufresnoy's work \cite{Du} may be the first literature estimating the
number $h$ in (\ref{1.2}) explicitly, in which it is shown that the number $%
h $ in (\ref{1.2}) can be taken to be $h=h_{1}=\frac{3}{2\delta _{0}},$
where $\delta _{0}$ is the smallest spherical distance between the points $%
a_{m},m=1,\dots ,q$. When $f(z)\neq 0,1,\infty ,z\in \Delta ,$ Dufresnoy's
result is that
\begin{equation*}
A(f,\Delta )\leq 12L(f,\partial \Delta ).
\end{equation*}
\end{remark}

\begin{remark}
J. Dufresnoy's work \cite{Du} also studied the relationship between the
constant in (\ref{a6}) and some other classical constants, such as Landau's,
Bloch's and Schotkii's constants. This is also introduced in the book \cite%
{Ha} by Haymann.
\end{remark}

To prove the main theorem, the difficulty lies in the inequality (\ref{1-1}%
). It seems hard to estimate the best lower bound for the constant $h$ by
following Ahlfors' method in his covering surface theory. Fortunately, we
managed to re-understand Ahlfors's theory via the classical isoperimetric
inequality of the unit hemisphere which is obtained by F. Bernstein \cite%
{Ber} in 1905 (see Section \ref{ss-4classi}). The following is the outline
of the proof of the main theorem.

\noindent \textbf{(A). Observation for certain class of open mappings}. We
have been able to find that the area-length ratio is relative easy to figure
out for a special family $F$ of mappings from $\overline{\Delta }$ into $S$
such that for each $f\in F,$ $f$ satisfies the following conditions (a)--(e):

(a) $f$ is open, discrete\footnote{%
The term \emph{discrete} means that for each $q\in f(\overline{\Delta }),$ $%
f^{-1}(q)$ is a finite set.} and continuous, the boundary curve $\Gamma
_{f}=f(z),z\in \partial \Delta ,$ is a polygonal curve in $S$ and $f(\Delta
)\cap \{0,1,\infty \}=\emptyset .$

(b) Each natural edge\footnote{%
See Definition \ref{v-e} (2) and (3).} of $\Gamma $ has spherical length
strictly less than $\pi .$

(c) $\Gamma _{f}$ is locally convex everywhere except at $0,1,\infty .$

(d) All branched points of $f$ are located in $\{0,1,\infty \}.$

(e) $\Gamma _{f}\cap \lbrack 0,+\infty ]$ contains at most finitely many
points. Here $[0,+\infty ]$ denotes the line segment in $S$ from $0$ to $%
\infty $ passing through $1.$

It is clear that normal mappings defined in Section \ref{ss-3con-norm}
satisfy condition (a). Conversely, any mapping satisfying (a) that is
orientation preserved is a normal mapping\footnote{%
We will not introduce the proof for this conclusion, since it is not used in
this paper.}. It is relatively easy to estimate the area--length ratio for
mappings in the family $F:$ for each $f\in F\ $one can obtain the following
inequality by Lemmas \ref{nofat} and \ref{key1},%
\begin{equation*}
A(f,\Delta )\leq h_{0}L(g,\partial \Delta )-\min \{A(f,\Delta ),4\pi \},
\end{equation*}%
where $h_{0}$ is given by (\ref{best}).

On the other hand, it is fortunate that we are able to prove that, for any
holomorphic mapping $f:\overline{\Delta }\rightarrow S$ with $f(\Delta )\cap
\{0,1,\infty \}=\emptyset ,$ and for sufficiently small $\varepsilon >0,$
there exist a finite number of mappings $\{g_{1},\dots ,g_{n}\}$ in the
family $F,$ such that
\begin{equation}
\sum_{j=1}^{n}A(g_{j},\Delta )\geq A(f,\Delta )-\varepsilon ,\ \mathrm{and\ }%
\sum_{j=1}^{n}L(g_{j},\partial \Delta )\leq L(f,\partial \Delta
)+\varepsilon .  \label{1-2}
\end{equation}%
Summarizing the above two aspects, we obtain (\ref{1-1}).

The existence of the family $\{g_{1},\dots ,g_{n}\},$ which is given by
Theorem \ref{key}, is the first key step to prove the main theorem. Sections %
\ref{ss-7to<pi}--\ref{ss-10to-convex} is prepared for proving Theorem \ref%
{key}: we first prove Theorems \ref{1-br-2-map} and \ref{1-nconvex}, and
then we apply these two results to deduce Theorem \ref{key} in Section \ref%
{ss-9finite}. The ingredients of Sections \ref{ss-7to<pi} and \ref%
{ss-8move-br} are Theorem \ref{>pi}, Lemma \ref{move} and Lemma \ref{move-bd}%
, which are just used to prove Theorem \ref{1-br-2-map} and Theorem \ref%
{1-nconvex}. We will give the outline for the proof of Theorem \ref{key} in
the following part (B).

The content of Sections \ref{ss-4classi}--\ref{ss-p1} and \ref{ss14} is for
proving Lemmas \ref{nofat} and \ref{key1}, which, with the existence of the
family $\{g_{1},\dots ,g_{n}\},$ deduce the main theorem in the last
section, Section \ref{ss-p2}. In Section \ref{ss-4classi}, we introduce two
classical results, the Bernstein's isoperimetric inequality of the unit
hemisphere and the Lad\'{o}'s theorem, from which we prove Theorems \ref%
{good} and \ref{good2} that is used in Section \ref{ss-p2} for proving
Lemmas \ref{nofat} and \ref{key1}. Sections \ref{SS5-Sconvex-curves} and \ref%
{ss-6lift} are prepared for Section \ref{ss-p1}, and the ingredient of
Section \ref{ss-p1} is Theorem \ref{decom}, which is the second key step to
prove the main theorem: with Theorems \ref{good}, \ref{good2} and \ref{fat},
it deduces Lemmas \ref{nofat} and \ref{key1}. Theorem \ref{fat}, which is
proved just based on Lemma \ref{position} and Corollary \ref{by the way}, is
the third key step to prove the main theorem.

\noindent \textbf{(B). The existence of }$\{g_{1},\dots ,g_{n}\}$ \textbf{in
(A). }Now, we introduce the outline to prove the existence of $\{g_{1},\dots
,g_{n}\}.$ Let $f:\overline{\Delta }\rightarrow S$ be a holomorphic mapping
with $f(\Delta )\cap \{0,1,\infty \}=\emptyset .$ To show the existence of
the family $\{g_{1},\dots ,g_{n}\},$ for any $\varepsilon >0,$ we first
approximate $f$ by an open mapping $f_{1}$ such that $f_{1}$ satisfies (a)
and (b) in (A) and
\begin{equation*}
A(f_{1},\Delta )>A(f,\Delta )-\frac{\varepsilon }{2}\ \mathrm{and\ }%
L(f_{1},\partial \Delta )<L(f,\partial \Delta )-\frac{\varepsilon }{2}.
\end{equation*}

Then we are able to first show that there exist a finite number of mappings $%
\{G_{1},\dots ,G_{n}\}$ that satisfy (\ref{1-2}) and (a)--(d) as follows.

\noindent \textbf{Operation 1: (a)(b)}$\rightarrow $(\textbf{a)(b)(c).} We
can apply Theorem \ref{1-nconvex} several times to obtain a mapping $f_{2}$
such that $f_{2}$ satisfies (a)--(c) and%
\begin{equation*}
A(f_{2},\Delta )\geq A(f_{1},\Delta )\ \mathrm{and\ }L(f_{2},\partial \Delta
)\leq L(f_{1},\partial \Delta ).
\end{equation*}

If $f_{2}$ satisfies (d), then $\{G_{1}\}=\{f_{2}\}$ is the desired family.
Otherwise we turn to next operation.

\noindent \textbf{Operation 2: (a)(b)(c)}$\rightarrow $\textbf{(a)(b)(d).}
If $f_{2}$ does not satisfies (d)$,$ then we can apply Theorem \ref%
{1-br-2-map} a finite number of times to decompose $f_{2}$ into a finite%
\footnote{%
By Theorem \ref{1-br-2-map} (iv) we may assume $\sum_{j=1}^{m}V(f_{2j})\leq
V(f_{2})+2(m-1),$ where $V(f_{2})$ is the number of natural vertices (see
Definition \ref{v-e}) of the polygonal curve $\Gamma _{f_{2}}=f_{2}(z),z\in
\partial \Delta .$ Then by Lemma \ref{E>3} we have $3m\leq V(f_{2})+2(m-1),$
which implies $m\leq V(f_{2})-2,$ and then the finiteness follows.} number
of mappings $f_{2j},j=1,\dots ,m,$ that satisfy (a), (b), (d) and%
\begin{equation*}
\sum_{j=1}^{m}A(f_{2j},\Delta )\geq A(f_{2},\Delta )\ \mathrm{and\ }%
\sum_{j=1}^{m}L(f_{2j},\partial \Delta )\leq L(f_{2},\partial \Delta ).
\end{equation*}

Operation 2 may destroy condition (c)! We try to repair this by applying
Operation 1 to all the mappings $f_{2j}$ and obtain mappings $%
f_{12j},j=1,\dots ,m,$ that satisfy (a)--(c) and%
\begin{equation*}
A(f_{12j},\Delta )\geq A(f_{2j},\Delta )\ \mathrm{and\ }L(f_{12j},\partial
\Delta )\leq L(f_{2j},\partial \Delta ),j=1,\dots ,m.
\end{equation*}

But Operation 1 may destroy condition (d)! We try to repair this by applying
Operation 2 to each $f_{12j}$ that has ramification points in $\Delta $ and
obtain more mappings$.$ But then condition (c) may again be destroyed for
the mappings obtained from Operation 2.

It seems we are arguing in a circle! Luckily, we are able to prove that
Operations 1 and 2 can not be applied infinitely many times! This is the
ingredient of Theorem \ref{key}. Thus, we can execute Operations 1 and 2
alternatively with in a finite number of steps to obtain the desired
mappings $G_{j},j=1,\dots ,n.$

From the mappings $G_{j}$ we can easily obtain the mappings $%
g_{j},j=1,2,\dots ,n,$ by slightly perturb each $G_{j}$.

\begin{remark}
The method in this paper can also be used to estimate the best bound of the
constant $h$ in Ahlfors's fundamental theorem for any number $(\geq 3)$ of
points. We will discuss this in another paper.
\end{remark}

\section{Some notations and definitions related to curves in $S$ \label%
{ss-2appoint}}

In this section we introduce some notations, definitions and make some
conventions. \emph{Locally convex polygonal paths }and\emph{\ locally convex
polygonal curves} in the Riemann sphere $S$ defined in this section play a
central role in this paper.

Let $\Gamma =\Gamma (t),t\in \lbrack \alpha ,\beta ],$ be a curve in $%
\mathbb{C}$ or $S$. Then the orientation of the curve $\Gamma $ will be
regarded as the orientation as $t$ increases. Therefore, if $\Gamma $ is not
closed, the orientation of $\Gamma $ is from $\Gamma (\alpha )$ to $\Gamma
(\beta ),$ and we will denote by
\begin{equation*}
-\Gamma =\Gamma (t_{2}+t_{1}-t),t\in \lbrack t_{1},t_{2}],
\end{equation*}%
the same curve with opposite orientation.

If $\Gamma _{j}=\Gamma _{j}(t),t\in \lbrack t_{j1},t_{j2}],$ are two curves
in $\mathbb{C}$ (or $S$) and $\Gamma _{1}(t_{12})=\Gamma _{2}(t_{21}),$ we
will denote by $\Gamma _{1}+\Gamma _{2}$ the curve
\begin{equation*}
\Gamma (t)=\left\{
\begin{array}{ll}
\Gamma _{1}(t), & t\in \lbrack t_{11},t_{12}], \\
\Gamma _{2}(t+t_{21}-t_{12}), & t\in (t_{12},t_{12}+t_{22}-t_{21}].%
\end{array}%
\right.
\end{equation*}%
When $\Gamma _{1}+(-\Gamma _{2})$ makes sense, we will write it by $\Gamma
_{1}-\Gamma _{2}.$

Curves in this paper are always oriented and continuous curves. Some times a
curve $\Gamma $ will be regard as a set in $S.$ But this is only in the case
that the curve is involved in some set operations.

For a Jordan domain $D$ in $\mathbb{C},$ the boundary $\partial D$ of $D$ is
always regarded as an oriented curve with the anticlockwise orientation. If $%
D$ is a Jordan domain in $\mathbb{C}$ and $f:\overline{D}\rightarrow S$ is a
continuous mapping, then the the boundary curve
\begin{equation}
\Gamma _{f}=\Gamma _{f}(z),z\in \partial D,  \label{3-5}
\end{equation}%
of $f$ is always regarded as an oriented curve with the oreintation induced
by $\partial D$.

The notation $\Gamma _{f}$ will be used through out this paper, which alway
denotes the curve given by (\ref{3-5}) for any given Jordan domain $D$ of $%
\mathbb{C}$ and any mapping $f:\overline{D}\rightarrow S$.

An oriented great circle $C$ in $S$ divides the sphere into two hemispheres.
We will call the hemisphere that is on the left hand side of $C$ \emph{%
inside, or enclosed by,} $C$, in the sense that we are standing on the
sphere with our heads pointing to the center of $S,$ and going along $C$ in
the orientation of $C.$ For example, when $\Delta $ is regarded as a disk in
$S,$ $\Delta $ is the lower hemisphere of $S$ and $\Delta $ is inside the
oriented circle $\partial \Delta ,$ i.e. $P(\Delta )$ is inside the great
circle $P(\partial \Delta );$ and the upper hemisphere $\overline{\mathbb{C}}%
\backslash \overline{\Delta }$ in $S$ is inside the oriented circle $%
-\partial \Delta .$

If $\Gamma $ is a Jordan curve in $S$, then the domain in $S$ that is
bounded by $\Gamma $ and is inside $\Gamma $ is also called the domain
inside, or enclosed by, $\Gamma .$ Of course, here \textquotedblleft
inside\textquotedblright\ means \textquotedblleft on the left hand side
of\textquotedblright .

A section of a great circle in $S$ is called a \emph{line segment}. To
emphasize this, we also call it \emph{straight line segment }or \emph{%
geodesic line segment.}

The spherical distance of two points $p$ and $q$ in $S$ will be denoted by $%
d(p,q).$ In the case that $p$ and $q$ are not antipodal, we denote by $%
\overline{pq}$ the shortest (simple) path in $S$ from $p$ to $q,$ which is
unique and is in fact the shorter of the two arcs with end points $p$ and $q$
of the great circle of $S$ passing through $p$ and $q.$ We will write%
\begin{equation*}
\overline{q_{1}q_{2}\dots q_{n}}=\overline{q_{1}q_{2}}+\overline{q_{2}q_{3}}%
+\dots +\overline{q_{n-1}q_{n}},
\end{equation*}%
if each term of the right hand side makes sense, where $q_{1},\dots ,q_{n}$
are points in $S.$

We write $\overline{pq}$ by $\overline{p,q},$ if $p,$ or $q,$ or both$,$ is
replaced by explicit complex numbers. For example, we denote by the shortest
path from $p=1$ to $q=2$ by $\overline{pq}=\overline{1,2}.$ Note that we
identify $\overline{\mathbb{C}}=\mathbb{C}\cup \{\infty \}$ with $S,$ via
the stereographic projection $P.$

When $\overline{pq}$ makes sense, we will denote by $\overline{pq}^{\circ }$
the interior of the path.

\begin{definition}
\label{natural-cur}A closed curve
\begin{equation*}
\Gamma =f(z),z\in \partial \Delta ,
\end{equation*}
in $S$ is called a\emph{\ polygonal closed curve} if and only if there exist
a finite number of points $p_{j}\in \partial \Delta ,j=1,\dots ,n,$ with
\begin{equation}
\arg p_{1}<\arg p_{2}<\dots <\arg p_{n}<\arg p_{1}+2\pi  \label{c-2}
\end{equation}%
such that for each section\footnote{%
A section of a curve always inherits the orientation of the curve.} $\alpha
_{j}$ of $\partial \Delta $ from $p_{j}$ to $p_{j+1}$ $(p_{n+1}=p_{1}),$ the
section $\Gamma _{j}$ of $\Gamma $ restricted to $\alpha _{j}$ is a locally
simple and locally straight path, and in this case
\begin{equation*}
\Gamma =\Gamma _{1}+\dots +\Gamma _{n}
\end{equation*}%
is called a \emph{partition} of $\Gamma $.
\end{definition}

Note that the term \emph{partition} emphasizes that each term $\Gamma _{j}$
is locally simple and locally straight. A locally simple and locally
straight curve in $S$ must be contained in some great circle of $S.$ So, for
each $\alpha _{j}$ in the above definition, each $p_{0}\in \alpha _{j}$ has
a neighborhood $L_{p_{0}}$ in $\alpha _{j}$ such that $\Gamma $ restricted
to $L_{p_{0}}$ is a homeomorphism onto a line segment in $S.$

Through out this paper, we denote by $E$ the set $\{0,1,\infty \}$ in $S.$

\begin{definition}
\label{v-e}Let $\Gamma =f(z),z\in \partial \Delta ,$ be a polygonal closed
curve in $S$.

(1) A point $p_{0}\in \partial D$ is called a \emph{natural vertex} of $%
\Gamma $ if and only if one of the following conditions holds:

(a) $f(p_{0})\in E=\{0,1,\infty \}.$

(b) $f(p_{0})\notin E$ and for any neighborhood $I_{p_{0}}$ of $p_{0}$ in $%
\partial \Delta ,$ the restriction $\Gamma |_{I_{p_{0}}}=f(z),z\in
I_{p_{0}}, $ can not be a straight and simple path.

(2) In the case that $\Gamma $ has at least two natural vertices, a closed
interval $I$ in $\partial \Delta $ is called a \emph{natural edge} of $%
\Gamma $ if and only if the endpoints of $I$ are both natural vertices of $%
\Gamma $ but the interior of $I$ does not contain any natural vertex of $%
\Gamma .$

(3) If $I$ is a natural edge of $\Gamma ,$ then the restriction $\Gamma
|_{I}=f(z),z\in I,$ is also called a \emph{natural edge} of $\Gamma .$
\end{definition}

For the above definition (2), the reader should be aware that a natural edge
can not contain any point of $f^{-1}\left( E\right) =f^{-1}\left(
\{0,1,\infty \}\right) $ in its interior (in $\partial \Delta $), because by
definition each point in $f^{-1}\left( E\right) $ is a natural vertex. Thus,
one can not understand any natural edge to be a maximal interval on which $%
\Gamma $ is locally simple and locally straight. If we regard the great
circle $C$ determined\footnote{%
This is in the sense that $C$ contains $\overline{0,1}$ and is oriented by $%
\overline{0,1}.$} by $\overline{0,1}$ as a simple closed curve, it has three
natural edges $\overline{0,1},\overline{1,\infty }$ and $\overline{\infty
,-1,1}=\overline{\infty ,-1}+\overline{-1,1}$, but the whole curve $C$ is
simple and straight.

If $\Gamma $ has no any natural vertex, $\Gamma $ must be a closed curve
contained in some great circle $C_{1}$ of $S$ with $C_{1}\cap \{0,1,\infty
\}=\emptyset $ and $\Gamma $ is locally simple, and in this case, $\partial
\Delta $ is regard as a natural edge without endpoints.

If $\Gamma $ has only one natural vertex $p_{0}\in \partial \Delta ,$ then,
by the definition, $q_{0}=f(p_{0})=0,1$ or $\infty ,$ and $\Gamma $ must be
also contained in some great circle $C_{2}$ of $S$ so that $C_{2}\cap
\{0,1,\infty \}=\{q_{0}\}$ and $\Gamma $ must be a simple path from $q_{0}$
to $q_{0}.$ In this case $\partial \Delta $ will be regarded as a natural
edge with endpoints coinciding at the unique natural vertex $q_{0}$.

\begin{definition}
\label{natural}Let $\Gamma =f(z),z\in \partial \Delta ,$ be a\emph{\
polygonal closed curve }and assume that $p_{1}\in \partial \Delta $ is a
natural vertex of $\Gamma $. Then there uniquely exist a finite number of
points $p_{j}\in \partial \Delta ,j=1,\dots ,n,$ with (\ref{c-2}) such that $%
p_{1},\dots ,p_{n}$ is an enumeration of all natural vertices of $\Gamma .$
In this case,
\begin{equation}
\Gamma =\Gamma _{1}+\Gamma _{2}+\dots +\Gamma _{n}  \label{c-3}
\end{equation}%
is called a \emph{natural partition} of $\Gamma ,$ where each $\Gamma _{j}$
is the restriction of $\Gamma $ to the section $\alpha _{j}$ of $\partial
\Delta $ from $p_{j}$ to $p_{j+1}$ $(p_{n+1}=p_{n}),$ and%
\begin{equation}
\partial \Delta =\alpha _{1}+\alpha _{2}+\dots +\alpha _{n}  \label{c-4}
\end{equation}%
is also called a \emph{natural partition} of $\partial \Delta $
corresponding to (\ref{c-3}).
\end{definition}

\begin{remark}
\label{closed-c}For the sake of simplicity and avoiding confusions, we make
the following conventions.

(1) When we say that $\Gamma ^{\prime }$ is a natural edge of a polygonal
closed curve $\Gamma =f(z),z\in \partial \Delta ,$ we always mean that $%
\Gamma $ and $\partial \Delta $ have natural partitions (\ref{c-3}) and (\ref%
{c-4}), respectively, such that $\Gamma ^{\prime }$ is the restriction $%
\Gamma _{j}=f(z),z\in \alpha _{j},$ for some $j.$

(2) When we use (\ref{c-3}) to denote a natural partition of $\Gamma ,$ we
always mean that there is a natural partition (\ref{c-4}) corresponding to (%
\ref{c-3}). Then, in the above definition we also call $q_{j}=f(p_{j}),$
which should be understood to be the pair $(p_{j},q_{j}),$ a \emph{natural
vertex} of $\Gamma $ for $j=1,\dots ,n$.
\end{remark}

\begin{definition}
\label{permit}A partition
\begin{equation*}
\Gamma =\Gamma _{1}+\Gamma _{2}+\dots +\Gamma _{n}
\end{equation*}%
of a closed polygonal curve in $S$ is called a\emph{\ permitted partition}
if each $\Gamma _{j}$ is contained in some natural edge of $\Gamma .$
\end{definition}

A polygonal Jordan curve in $S$ that is composed of exactly three line
segments is called a \emph{triangle}. Note that a vertex of a triangle may
not be a natural vertex. Any great circle may be regarded as a triangle,
while it has no any natural vertex.

\begin{definition}
\label{convex1}Let $\Gamma =\Gamma (z),z\in \partial \Delta ,$ be a closed
polygonal curve in $S$.

(1) For a point $p_{0}\in \partial \Delta ,$ $\Gamma $ is called \emph{convex%
} at $p_{0}$, if $p_{0}$ has a neighborhood $I$ in $\partial \Delta $ such
that the following two conditions (a) and (b) hold.

(a) The restriction $\Gamma |_{I}$ of $\Gamma $ to $I$ is a simple path.

(b) Either $\Gamma |_{I}$ is straight or $\Gamma ^{\prime }=\Gamma |_{I}+%
\overline{p^{\prime \prime }p^{\prime }},$ in which $p^{\prime }\ $and $%
p^{\prime \prime }$ are the initial and terminal point of $\Gamma |_{I},$
respectively, is a triangle which encloses\footnote{%
By definition, \textquotedblleft encloses\textquotedblright\ means the
triangle domain is \textquotedblleft on the left hand side
of\textquotedblright\ of the triangle $\Gamma ^{\prime }$.} a convex
triangle domain in $S.$

(2) $\Gamma $ is called \emph{strictly convex} at $p_{0}\in \partial \Delta $
if $\Gamma $ is convex at $p_{0}$ and for any neighborhood $I$ of $p_{0}$ in
$\partial \Delta ,$ $\Gamma |_{I}$ is not straight.

(3). For a point $q_{0}\in S,\ \Gamma $ is called \emph{convex} at $q_{0}\in
S$ if and only if for each $p\in \partial \Delta $ with $\Gamma (p)=q_{0},$ $%
\Gamma $ is convex at $p.$

(4). For a set $T\subset S,$ the closed curve $\Gamma $ is called \emph{%
locally convex in }$T$ if and only if $\Gamma $ is convex at each point $%
q_{0}\in T.$
\end{definition}

It is clear that if $\Gamma $ is convex at $q_{0}\in S,$ then for some
neighborhood $T$ of $q_{0}$ in $S,$ $\Gamma $ is locally convex in $T.$

\begin{definition}
A path
\begin{equation*}
\Gamma =\Gamma (t),t\in \lbrack 0,1],
\end{equation*}%
in $S,$ is called a \emph{polygonal path} if and only if $[0,1]$ has a
partition
\begin{equation}
0=t_{0}<t_{1}<\dots <t_{n}=1,  \label{3-3}
\end{equation}%
such that the section $\Gamma _{j}=\Gamma (t),t\in \lbrack t_{j-1},t_{j}]$,
is a locally simple and locally straight path, $j=1,\dots ,n,$ and in this
case
\begin{equation*}
\Gamma =\Gamma _{1}+\dots +\Gamma _{n}
\end{equation*}%
is called a \emph{partition} of $\Gamma .$
\end{definition}

Natural vertices, natural edges, natural partition, permitted partitions and
convex vertices of a polygonal path $\Gamma =\Gamma (t),t\in \lbrack 0,1],$
in $S,$ can be defined as that for polygonal closed curves. But convex
vertices are only defined in the open interval $(0,1)$ of $[0,1]$ and we
don't call the endpoints $0$ and $1$ natural vertices. To avoid confusions
we write these definitions completely.

\begin{definition}
\label{n-v-path}Let $\Gamma =f(t),t\in \lbrack 0,1],$ be a polygonal path in
$S$.

(1) A point $p_{0}\in (0,1)$ is called a \emph{natural vertex} of $\Gamma $
if and only if one of the following conditions holds:

(a) $f(p_{0})\in E=\{0,1,\infty \}.$

(b) $f(p_{0})\notin E$ and for any neighborhood $I_{p_{0}}$ of $p_{0}$ in $%
(0,1),$ the restriction $\Gamma |_{I_{p_{0}}}=f(t),t\in I_{p_{0}},$ can not
be a straight and simple path.

(2) A closed interval $I$ in $[0,1]$ is called a \emph{natural edge} of $%
\Gamma $ if and only if each endpoint of $I$ is either $0,$ or $1,$ or a
natural vertex of $\Gamma ,$ and the interior of $I$ does not contain any
natural vertex of $\Gamma .$

(3) If $I$ is a natural edge of $\Gamma ,$ the restriction $\Gamma
|_{I}=f(t),t\in I,$ is also called a \emph{natural edge} of $\Gamma .$
\end{definition}

\begin{definition}
\label{n-p}For a polygonal path $\Gamma =f(t),t\in \lbrack 0,1],$ a
partition
\begin{equation}
\Gamma =\Gamma _{1}+\dots +\Gamma _{n}  \label{3-1}
\end{equation}%
is called a \emph{natural partition} of $\Gamma ,$ if and only if $[0,1]$
has a partition
\begin{equation}
0=t_{0}<t_{1}<\dots <t_{n}=1,  \label{3-2}
\end{equation}%
such that each $[t_{j-1},t_{j}]$ is a natural edge of $\Gamma ,$ and $\Gamma
_{j}$ is the restriction of $\Gamma $ to $[t_{j-1},t_{j}],j=1,\dots ,n$, in
this case (\ref{3-2}) is also called a \emph{natural partition} of $[0,1]$
corresponding to (\ref{3-1}).
\end{definition}

\begin{remark}
\label{path}We make similar conventions as in Remark \ref{closed-c} for
polygonal paths.

(1) When we say that $\Gamma ^{\prime }$ is a natural edge of a polygonal
path $\Gamma =f(t),t\in \lbrack 0,1],$ we always mean that $\Gamma $ and $%
[0,1]$ have natural partitions (\ref{3-1}) and (\ref{3-2}), respectively,
such that $\Gamma ^{\prime }$ is the restriction $\Gamma _{j}=f(t),t\in
\lbrack t_{j-1},t_{j}],$ for some $j.$

(2) When we use (\ref{3-1}) to denote a natural partition of $\Gamma ,$ we
always mean that there is a natural partition (\ref{3-2}) corresponding to (%
\ref{3-1}). Then, in the above definition we also call $q_{j}=f(t_{j}),$
which should be understood to be the pair $(t_{j},q_{j}),$ a natural vertex
of $\Gamma $ for $j=1,\dots ,n-1$.
\end{remark}

\begin{definition}
\label{convex}Let $\Gamma =\Gamma (t),t\in \lbrack 0,1],$ be a polygonal
path in $S$.

(1) For a point $p_{0}\in (0,1),$ $\Gamma $ is called \emph{convex} at $%
p_{0} $, if there is a closed interval $I\subset (0,1)$ such that (a) and
(b) in Definition \ref{convex1} (1) hold.

(2) $\Gamma $ is called \emph{strictly convex} at $p_{0}\in \partial \Delta $
if $\Gamma $ is convex at $p_{0}$ and for any neighborhood $I$ of $p_{0}$ in
$(0,1),$ $\Gamma |_{I}$ is not straight.

(3). For a point $q_{0}\in S,\ \Gamma $ is called \emph{convex} at $q_{0}\in
S$ if and only if for each $p\in (0,1)$ with $\Gamma (p)=q_{0},$ $\Gamma $
is convex at $p.$

(4). For a set $T\subset S,$ the closed curve $\Gamma $ is called \emph{%
locally convex in }$T$ if and only if $\Gamma $ is convex at each point $%
q_{0}\in T.$
\end{definition}

%\begin{figure}[tbp]
%\label{con-curve} \centering
%\includegraphics[scale=0.85]{con-curve.eps}
%\caption{}
%\end{figure}

Geometrically, a locally convex path (or curve) has the property
that when we go ahead along the path (or curve) with our heads
pointing to the center of the sphere $S$, we always go straight or
turn left.

\begin{remark}
The term \textquotedblleft closed polygonal path\textquotedblright\ and
\textquotedblleft closed polygonal curve\textquotedblright\ have distinct
meaning in some sense. If a polygonal path $\Gamma $ given by its natural
partition (\ref{3-1}), the natural vertices mean $t_{1},\dots ,t_{n-1}$. But
when $f(0)=f(1)$ and $\Gamma $ is regarded as a \emph{closed curve, }$%
t_{1},\dots ,t_{n-1}$ are still natural vertices of $\Gamma ,$ $t_{0}=0$,
identified with $t_{n}=1,$ may or may not be a natural vertex of the \emph{%
closed curve} $\Gamma .$ Closed polygonal paths still emphasize the initial
and terminal points, while for a closed polygonal curve, there is no initial
and terminal points, all points on it have equality.
\end{remark}

\begin{remark}
A \emph{locally convex polygonal Jordan path} that is closed may
not be a \emph{locally convex polygonal Jordan curve}, by the
definition.
\end{remark}

\begin{definition}
A polygonal Jordan curve in $S$ that is either a great circle, or is
composed of exactly two straight edges is called a biangle. A biangle
divides the sphere $S$ into two biangle domains.
\end{definition}

Note that a biangle may contains more than two natural edges, in the case
that it contains $0$, $1$ or $\infty $ in its straight edges.

\begin{definition}
\label{gen-tri}A triangle in $S$ is called a \emph{generic} triangle if it
encloses a triangle domain whose three angles are all strictly less than $%
\pi .$
\end{definition}

\begin{definition}
A Jordan curve $\Gamma $ in $S$ is called convex if the domain $D_{\Gamma
}\subset S$ inside $\Gamma $ is a convex domain in the sense that for any
two points $q_{1}$ and $q_{2}$ in $D_{\Gamma },$ there is a line segment $%
L\subset S$ with endpoints $q_{1}$ and $q_{2}$ such that $L\subset D_{\Gamma
}.$
\end{definition}

\begin{remark}
\label{loc-con}By the definition, each locally convex polygonal Jordan curve
is a convex curve and is contained in some closed hemisphere, while any
locally convex curve that is not simple may not be contained in any closed
hemisphere.
\end{remark}

\begin{remark}
Any triangle $\Gamma $ in $S$ all of whose edges have length $\leq \pi $ has
a orientation so that $\Gamma $ is a convex polygonal Jordan curve. But when
a triangle $\Gamma $ in $S$ has an edge with length $>\pi ,$ $\Gamma ,$ with
either orientation, may not be a locally convex triangle.
\end{remark}

\begin{remark}
For any convex triangle $\Gamma $ in $S$, the triangle domain enclosed%
\footnote{%
By definition, \textquotedblleft enclosed\textquotedblright\ means
\textquotedblleft on the left hand side of\textquotedblright .} by $\Gamma $
is contained in some hemisphere of $S.$ Conversely, any triangle domain
whose closure is contained in some open hemisphere of $S$ is enclosed by a
generic convex triangle in the same open hemisphere.
\end{remark}

\section{Definition and some properties of Normal mappings \label%
{ss-3con-norm}}

The proof of the main theorem is based on the investigation of so called
normal mappings defined in this section, which are the mappings satisfying
condition (a) in Section \ref{1SS-Intro}. But we will use another definition.

\begin{definition}
\label{722-1}Let $D$ be a Jordan domain in $\mathbb{C}$. A mapping $f:%
\overline{D}\rightarrow S$ is called a \emph{normal mapping }if the
following five conditions are satisfied:

(a) The boundary curve $\Gamma _{f}=f(z),z\in \partial D,$ is a polygonal
closed curve.

(b) For each $p\in D,$ there exist a neighborhood $U\subset D$ of $p$, a
disk $V\ $in $S$ centered at $q=f(p)$ and homeomorphisms $h_{1}:U\rightarrow
\Delta $ and $h_{2}:V\rightarrow \Delta ,$ such that
\begin{equation*}
h_{2}\circ f|_{U}\circ h_{1}^{-1}(\zeta )=\zeta ^{d},\zeta \in \Delta
\end{equation*}%
for some positive integer $d.$

(c) For each $p\in \partial D,$ there exists a neighborhood $U$ of $p$ in $%
\overline{D}$, a disk $V\ $in $S$ centered at $q=f(p)$ and homeomorphisms $%
h_{1}:\overline{U}\rightarrow \overline{\Delta ^{+}}$ and $h_{2}:\overline{V}%
\rightarrow \overline{\Delta },$ such that
\begin{eqnarray*}
h_{1}\left( \overline{U}\cap \partial D\right) &=&[-1,1], \\
h_{2}\circ f|_{\overline{U}}\circ h_{1}^{-1}(\zeta ) &=&\zeta ^{d},\zeta \in
\overline{\Delta ^{+}},
\end{eqnarray*}%
for some positive integer $d$, where $\Delta ^{+}$ is the upper half disk $%
\{\zeta \in \Delta ,\mathrm{Im}\zeta >0\}.$

(d) $f(D)\cap \{0,1,\infty \}=\emptyset .$

(e) $f$ is orientation preserved in the sense that $P^{-1}\circ f$ is
orientation preserved, where $P$ is the stereographic projection.
\end{definition}

The reader should be aware of that a normal mapping satisfies condition (a)
in Section \ref{1SS-Intro}. Conversely, a mapping that is orientation
preserved and satisfies (a) in Section \ref{1SS-Intro} must be a normal
mapping, but this is not important for us.

In the above definition if for some point $p\in D,$ the corresponding $d\geq
2,$ then $p$ is called a \emph{ramification point}, $f(q)$ is called a \emph{%
branched point}, $v_{f}(p)=d$ is called the \emph{multiplicity} of $f$ at $%
p, $ and $b_{f}(p)=d-1$ is called the \emph{branched number} of $f$ at $p.$

If for some $p\in \partial D,$ the corresponding $d\geq 3,$ then $p$ is
called a \emph{ramification point}, $f(q)$ is called a \emph{branched point}%
, $v_{f}(p)=\left[ \frac{d}{2}\right] $ is called the multiplicity of $f$ at
$p,$ and $b_{f}(p)=\left[ \frac{d+1}{2}\right] -1$ is called the branched
number of $f$ at $p.$

In the definition, \textquotedblleft orientation
preserved\textquotedblright\ means that for any regular point $p\in
\overline{\Delta }$ of $f,$ there is a closed Jordan domain $K_{p}$ in $%
\overline{\Delta }$ that is a neighborhood\footnote{%
This means that if $p\in \Delta ,$ $p$ is contained in the interior of $%
K_{p} $ in $\mathbb{C},$ and if $p\in \partial \Delta ,$ $p$ is contained in
the interior of the arc $K_{p}\cap \partial \Delta $ in $\partial \Delta .$}
of $p$ in $\overline{\Delta }$ such that $\widetilde{f}=P^{-1}\circ f$ or $%
\frac{1}{\widetilde{f}}$ maps $K_{p}$ homeomorphically onto a Jordan domain $%
K^{\prime }$ in $\mathbb{C}$ such that when $z$ goes along $\partial K_{p}$
anticlockwise, $\widetilde{f}(z)$ goes along $\partial K^{\prime }$
anticlockwise.

For a normal mapping $f:\overline{D}\rightarrow S$, $f$ has only finitely
many ramification points. $p\in \overline{D}$ is called a regular point of $%
f $ if $v_{p}(f)=1.$

The reader may be puzzled by the definition of $v_{f}(p)$ and $b_{f}(p)$
when $p\in \partial D.$ As a matter of fact, the definition in this case
follows from the fact that we can extend the mapping $f$ to be a normal
mapping so that $p$ becomes an interior ramification point with multiplicity
$\left[ \frac{d+1}{2}\right] .$

For a Jordan domain $D$ and a normal mapping $f:\overline{D}\rightarrow S,$
the boundary curve $\Gamma _{f}=f(z),z\in \partial D,$ is a polygonal closed
curve. Then the term \emph{natural vertex}, \emph{natural edge,} \emph{%
natural partition, permitted partition, }etc. introduced in Section \ref%
{ss-2appoint} are well defined for $\Gamma _{f}.$

\begin{definition}
For a Jordan domain $D$ and a normal mapping $f:\overline{D}\rightarrow S.$
We define $V(f)$ to be the number of natural vertices of the boundary curve $%
\Gamma _{f}=f(z),z\in \partial D;$ define $V_{E}(f)$ to be the number of
natural vertices of $\Gamma _{f}$ that is contained in $E$ and define%
\begin{equation*}
V_{NE}(f)=V(f)-V_{E}(f),
\end{equation*}%
which is the number of natural vertices of $\Gamma _{f}$ that is not
contained in $E.$
\end{definition}

Recall that $E$ always denotes the set $\{0,1,\infty \}$ in $S.$

Let $f:\overline{D}\rightarrow S$ be a normal mapping. Then by the
definition, $\overline{D}$ has a triangulation such that each ramification
point of $f$ is a vertex of the triangulation and $f$ restricted to each
triangle of the triangulation of $\overline{D}$ is a homeomorphism onto a
real triangle on $S,$ i.e., each edge of the triangle is straight. Then $f$
and the triangulation of $\overline{D}$ induce a triangulation of the
Riemann surface of $f;$ which is consisted of real triangles in $S.$
Therefore, the following two lemmas are obvious.

\begin{lemma}
\label{p3}Let $D$ be a Jordan domain in $\mathbb{C}$ and let $f:\overline{D}%
\rightarrow S$ be a normal\emph{\ }mapping. Then for any Jordan domain $%
D_{1} $ contained in $D,$ the restriction of $f$ to $\overline{D_{1}}$ is a
normal mapping, provided that the curve $f(z),z\in \partial D_{1},$ is a
polygonal curve.
\end{lemma}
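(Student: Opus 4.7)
The plan is to verify each of the five defining conditions (a)--(e) of a normal mapping for the restriction $g := f|_{\overline{D_{1}}}$. Condition (a) is exactly the hypothesis of the lemma. Condition (d) holds because $g(D_{1})\subset f(D)\subset S\setminus\{0,1,\infty\}$, and condition (e) is inherited by any restriction since orientation preservation is a local property. For condition (b) at $p\in D_{1}\subset D$, the chart $(h_{1},h_{2})$ realizing $\zeta\mapsto\zeta^{d}$ near $p$ for $f$ restricts --- after shrinking the domain neighborhood so it lies inside $D_{1}$ --- to the required chart for $g$ with the same local degree.

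The substantive step is condition (c) at $p\in\partial D_{1}$, and I would handle the two cases $p\in D$ and $p\in\partial D$ in parallel. In the first case I would start from the full-disk chart $h_{2}\circ f\circ h_{1}^{-1}(\zeta)=\zeta^{d}$ given by condition (b) of $f$. The arc $\partial D_{1}\cap U$ pushes forward via $h_{1}$ to a Jordan arc $\gamma\subset\overline{\Delta}$ through $0$, and the hypothesis that $\Gamma_{g}=f|_{\partial D_{1}}$ is polygonal forces $\zeta^{d}(\gamma)$ to be a locally polygonal arc through $0$. Since $\zeta\mapsto\zeta^{d}$ sends rays from $0$ to rays from $0$, one can then precompose $h_{1}$ by a homeomorphism of $\overline{\Delta}$ onto itself that straightens $\gamma$ into a union of radial segments emanating from $0$. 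The component of the complement of the straightened $\gamma$ corresponding to $\overline{D_{1}}\cap U$ becomes a union of equal sectors at $0$, which is homeomorphic via a further power map to $\overline{\Delta^{+}}$ in such a way that the composition with $\zeta^{d}$ equals $\zeta^{e}$ for some integer exponent $e\geq 1$. The case $p\in\partial D$ proceeds identically, starting from the half-disk chart of condition (c) for $f$ at $p$ rather than the full-disk chart.

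The main obstacle is this \emph{unwinding} step: verifying that the $\overline{D_{1}}$-side of $\gamma$ in $\overline{\Delta}$ is indeed a union of sectors of equal angular width, arranged so that the resulting chart is a homeomorphism with the prescribed boundary behavior. I expect this to be cleanest through the triangulation viewpoint sketched just before the lemma: a normal mapping $f$ carries a triangulation of $\overline{D}$ whose vertex set contains all ramification points of $f$ and on each face of which $f$ is a homeomorphism onto a geodesic spherical triangle. Using the polygonal hypothesis on $f|_{\partial D_{1}}$ one refines this triangulation so that $\partial D_{1}$ is a union of edges; then $\overline{D_{1}}$ becomes a subcomplex, and at any boundary vertex $p$ of $\overline{D_{1}}$ the star of $p$ in $\overline{D_{1}}$ is a finite fan of closed triangles whose $f$-images fan around $f(p)$ with some total angle $e\pi$. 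This fan is precisely the half-disk chart that realizes condition (c) for $g$ at $p$.
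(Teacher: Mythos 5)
Your proposal is correct and follows essentially the same route as the paper, which gives no separate proof but declares the lemma obvious from exactly the triangulation picture you invoke: a triangulation of $\overline{D}$ on whose faces $f$ is a homeomorphism onto spherical triangles, refined via the polygonal hypothesis so that $\partial D_{1}$ is a subcomplex, so that the star of a boundary point of $\overline{D_{1}}$ is the required half-disk fan. Your preliminary chart-by-chart verification of conditions (a), (b), (d), (e) is a harmless elaboration of details the paper leaves implicit.
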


\begin{lemma}
\label{patch}Let $D$ be a Jordan domain in $\mathbb{C}$, let $\alpha $ be a
Jordan path in $\overline{D}$ such that the interior\footnote{%
This means the curve $\alpha $ without endpoints.} of $\alpha $ is contained
in $D$ and $\alpha $ has two distinct endpoints lying on $\partial D$, let $%
D_{1}$ and $D_{2}$ be the two components of $D\backslash \alpha $, and let $%
f_{j}:\overline{D_{j}}\rightarrow S$ be two normal mappings, $j=1,2$. If $%
f_{1}(z)=f_{2}(z)$ for each $z\in \alpha ,$ then the mapping
\begin{equation*}
F=\left\{
\begin{array}{l}
f_{1}(z),z\in \overline{D_{1}}, \\
f_{2}(z),z\in D\backslash \overline{D_{1}},%
\end{array}%
\right.
\end{equation*}%
is a normal mapping defined on $\overline{D}.$
\end{lemma}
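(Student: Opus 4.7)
The plan is to verify that $F$ satisfies each of the five conditions (a)--(e) of Definition \ref{722-1}, producing the required local model at each $p\in\overline{D}$ either from $f_1$, from $f_2$, or by patching the two models along $\alpha$. For $p\in D_j$ condition (b) for $F$ is simply condition (b) for $f_j$, and for $p\in \partial D$ that is not an endpoint of $\alpha$, condition (c) for $F$ is condition (c) for $f_j$. Condition (e) is inherited directly, and condition (a) holds because $\partial D$ decomposes as the concatenation of the two arcs $\beta_j=\partial D_j\setminus \alpha^\circ$ on which $F$ equals $f_j$, so $\Gamma_F$ is a splice of arcs of the polygonal curves $\Gamma_{f_1}$ and $\Gamma_{f_2}$, hence polygonal.

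The substantive point is to produce the local model at $p\in\alpha^\circ$ (condition (b)) and at $p\in\partial\alpha\cap\partial D$ (condition (c)). Fix $p\in\alpha^\circ$ and set $q=f_1(p)=f_2(p)$. Applying condition (c) to each $f_j$ at $p$ gives neighborhoods $U_j$ of $p$ in $\overline{D_j}$, a common target disk $V\ni q$ in $S$, and homeomorphisms $h_1^{(j)}:\overline{U_j}\to\overline{\Delta^+}$, $h_2^{(j)}:\overline{V}\to\overline{\Delta}$ with $h_1^{(j)}(\overline{U_j}\cap\alpha)=[-1,1]$ and
\begin{equation*}
h_2^{(j)}\circ f_j\circ (h_1^{(j)})^{-1}(\zeta)=\zeta^{d_j},\qquad \zeta\in\overline{\Delta^+}.
\end{equation*}
Since $f_1=f_2$ on $\alpha$ and each $f_j|_{\alpha}$ is locally a $d_j$-to-one cover of a straight arc through $q$, we must have $d_1=d_2=:d$. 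After composing $h_2^{(2)}$ with a rotation of $\Delta$ and correspondingly modifying $h_1^{(2)}$ we can achieve $h_2^{(1)}=h_2^{(2)}=:h_2$, and then postcompose the second domain chart with the reflection $\zeta\mapsto\bar\zeta$ so that $h_1^{(1)}$ together with the modified $h_1^{(2)}$ defines a homeomorphism $h_1:\overline{U_1\cup U_2}\to\overline{\Delta}$; orientation preservation (e) for $f_1,f_2$ is precisely what ensures that the two images join as upper and lower half-disks rather than overlapping. The identity $h_2\circ F\circ h_1^{-1}(\zeta)=\zeta^d$ then holds on all of $\overline{\Delta}$, establishing condition (b) at $p$. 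A point $p\in\partial\alpha\cap\partial D$ is handled analogously by joining two quarter-disk models into a half-disk model. Finally, condition (d) at $p\in\alpha^\circ$ also follows from this local model: if $F(p)\in E$ then the degree-$d$ chart would force $F(p)\in F(D_1)\subset f_1(D_1)$, contradicting $f_1(D_1)\cap E=\emptyset$.

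The main obstacle is the chart-matching in the previous paragraph: showing that the two half-disk models can be simultaneously renormalized so that their target charts $h_2^{(j)}$ agree and their domain charts reflect into one another across $[-1,1]$. Two observations make this work. First, each $f_j$ sends $\alpha$ near $p$ to the same germ of a straight arc through $q$, so the rotational freedom in $h_2^{(j)}$ is pinned down up to a finite ambiguity that can be matched. Second, orientation preservation forbids a reflection in the gluing, so the two upper half-disks patch into a full disk with the orientation inherited from $D$. Once the gluing is arranged, the equality of the exponents $d_1=d_2=d$ forces the glued map to be exactly $\zeta\mapsto\zeta^d$ on the full disk, completing the verification and yielding that $F$ is a normal mapping on $\overline{D}$.
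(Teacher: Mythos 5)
Your overall strategy -- verify conditions (a)--(e) of Definition \ref{722-1} pointwise, with the only real work at points of $\alpha$, where orientation preservation rules out a fold -- is sound, and it is in fact more than the paper offers (the paper disposes of this lemma with a one-line appeal to the triangulation of the Riemann surface of a normal mapping). But two concrete steps in your argument are wrong. First, the claim $d_1=d_2$ does not follow from $f_1=f_2$ on $\alpha$: the restriction of $\zeta\mapsto\zeta^{d}$ to the boundary segment $[-1,1]$ of $\overline{\Delta^{+}}$ is at most two-to-one (injective for $d$ odd, a fold for $d$ even), not $d$-to-one, so the common boundary values only pin down the \emph{parity} of $d_1$ and $d_2$. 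One can perfectly well glue a homeomorphism ($d_1=1$) to a map with a boundary ramification point of multiplicity $d_2=3$ along $\alpha$; the glued map then sweeps total angle $(d_1+d_2)\pi$ around $q=F(p)$ and is locally modelled on $\zeta^{(d_1+d_2)/2}$, not on $\zeta^{d_1}$. Your concluding sentence (``the equality of the exponents $d_1=d_2=d$ forces the glued map to be exactly $\zeta\mapsto\zeta^{d}$'') therefore rests on a false premise. The repair is routine but is exactly the content you skipped: either verify directly that $F$ is open and discrete near $\alpha$ and invoke the Stoilow local normal form, or build the chart by hand, sending $\overline{U_1}$ and $\overline{U_2}$ to the two complementary sectors of $\overline{\Delta}$ of angles $\frac{2\pi d_1}{d_1+d_2}$ and $\frac{2\pi d_2}{d_1+d_2}$ so that $\zeta\mapsto\zeta^{(d_1+d_2)/2}$ restricts on each sector to the given half-disk models after reparametrization.

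Second, your verification of condition (d) at $p\in\alpha^{\circ}$ is incorrect, and necessarily so. The degree-$d$ chart does not place $F(p)$ in $f_1(D_1)$: under $\zeta\mapsto\zeta^{d}$ on $\overline{\Delta^{+}}$ the centre of the target disk is hit only by $\zeta=0$, which lies on $[-1,1]$, i.e.\ on $\partial D_1$, so $q=F(p)$ need not belong to $f_1(D_1)$ and no contradiction with $f_1(D_1)\cap E=\emptyset$ arises. Indeed the lemma as literally stated is false on this point: take $D=\Delta$, $\alpha=[-1,1]$, and $f_1,f_2$ the restrictions of the stereographic identification $P$ to $\overline{\Delta^{+}}$ and $\overline{\Delta^{-}}$; each is a normal mapping, they agree on $\alpha$, yet $F=P|_{\overline{\Delta}}$ has $0\in F(\Delta)$. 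The statement needs the additional hypothesis $f_1(\alpha^{\circ})\cap E=\emptyset$ (which holds in every application the paper makes of the lemma, since $\alpha$ is always mapped onto an arc whose interior avoids $E$), and with that hypothesis condition (d) is immediate rather than something to be derived from the local model. You should flag this hypothesis explicitly rather than attempt to prove a claim that is not true.
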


\begin{lemma}
\label{cov-1}Let $f:\overline{\Delta }\rightarrow S$ be a normal mapping and
let $q\in f(\overline{\Delta }).$ Then, for sufficiently small disk $D(q)$
in $S$ centered at $q,$ $f^{-1}(\overline{D(q)})$ is a union of disjoint
closed domains $\overline{U_{j}}$ in $\overline{\Delta },j=1,2,\dots ,n,$
such that for each $j,$ $\overline{U_{j}}$ is the closure of a (relatively)
open subset $U_{j}$ of $\overline{\Delta },$ $U_{j}\cap f^{-1}(q)$ contains
exactly one point $x_{j}$ and the followings holds:

(i). If $x_{j}\in \Delta ,$ then $f$ restricted to $\overline{U_{j}}$ is a
branched covering mapping onto $\overline{D(q)}$ such that $x_{j}$ is the
unique possible ramification point.

(ii). If $x_{j}\in \partial \Delta ,\ $then $f(\overline{U_{j}})=\overline{%
D(q)}$ or $f(\overline{U_{j}})$ is a closed sector of $\overline{D(q)}$, and
there exist homeomorphisms $\phi _{j}$ from $\overline{U_{j}}$ onto the
closed half disk $\overline{\Delta ^{+}}$ and $\psi _{j}$ from $D(q)$ onto $%
\overline{\Delta }$ such that%
\begin{equation*}
\phi _{j}(x_{j})=0,\ \phi _{j}(\overline{U_{j}}\cap \partial \Delta )=[-1,1],
\end{equation*}%
and%
\begin{equation*}
\psi _{j}\circ f\circ \phi _{j}^{-1}(\xi )=\xi ^{d_{j}},\xi \in \overline{%
\Delta ^{+}},
\end{equation*}%
for some positive integer $d_{j}$.
\end{lemma}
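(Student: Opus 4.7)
The plan is to apply the local normal forms of Definition \ref{722-1}(b),(c) at each point of $f^{-1}(q)$ and then shrink $D(q)$ via a compactness argument so that $f^{-1}(\overline{D(q)})$ is confined to the local charts. Since $f$ is discrete, $f^{-1}(q)$ is a finite set $\{x_1,\dots,x_n\}$. For each $j$, Definition \ref{722-1}(b) (if $x_j\in\Delta$) or (c) (if $x_j\in\partial\Delta$) supplies an open neighborhood $W_j$ of $x_j$ in $\overline{\Delta}$, a disk $V_j\subset S$ centered at $q$, and homeomorphisms $h_{1,j},h_{2,j}$ conjugating $f|_{\overline{W_j}}$ to the power map $\zeta\mapsto\zeta^{d_j}$ on $\overline{\Delta}$ or on $\overline{\Delta^{+}}$. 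Since $\overline{\Delta}$ is Hausdorff and the $x_j$ are distinct, I may shrink the $W_j$ so that their closures are pairwise disjoint.

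Next, I would choose $r>0$ small enough to force $\overline{D(q,r)}\subset\bigcap_j V_j$ and $f^{-1}(\overline{D(q,r)})\subset\bigsqcup_j W_j$, and such that, in each chart, $h_{2,j}(\overline{D(q,r)})$ is a round closed disk about $0$ whose $\zeta\mapsto\zeta^{d_j}$-preimage is a compact disk (interior case) or closed half-disk (boundary case) about $0$ touching $\partial\overline{\Delta^{+}}$ only along the diameter $[-1,1]$. The inclusion $f^{-1}(\overline{D(q,r)})\subset\bigsqcup_j W_j$ is the one technical point and holds for all sufficiently small $r$ by compactness: otherwise, for every $r>0$ one could find $y_r\in f^{-1}(\overline{D(q,r)})\setminus\bigsqcup_j W_j$, and letting $r\to 0$ a subsequential limit $y_\infty\in\overline{\Delta}$ would satisfy $f(y_\infty)=q$ by continuity, hence $y_\infty=x_j$ for some $j$, contradicting that $y_r$ sits outside $W_j$ for small $r$.

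Finally, set $D(q):=D(q,r)$ for such an $r$ and $\overline{U_j}:=f^{-1}(\overline{D(q)})\cap\overline{W_j}$. The $\overline{U_j}$ are pairwise disjoint closed subsets of $\overline{\Delta}$, each satisfies $\overline{U_j}\cap f^{-1}(q)=\{x_j\}$ by the local model, and their union is $f^{-1}(\overline{D(q)})$ by the inclusion above. For $x_j\in\Delta$, conclusion (i) is immediate: the conjugation to $\zeta\mapsto\zeta^{d_j}$ exhibits $f|_{\overline{U_j}}\to\overline{D(q)}$ as a branched covering with $x_j$ the unique possible ramification point. For $x_j\in\partial\Delta$, the image of the local preimage under $\zeta\mapsto\zeta^{d_j}$ equals all of $h_{2,j}(\overline{D(q)})$ when $d_j\geq 2$ (since $\zeta^{d_j}$ wraps $\overline{\Delta^{+}}$ by total angle $d_j\pi\geq 2\pi$) and equals its closed upper half when $d_j=1$; transporting back via $h_{2,j}^{-1}$ yields the dichotomy in (ii), with $\phi_j=h_{1,j}|_{\overline{U_j}}$ and $\psi_j$ obtained by (co)restricting $h_{2,j}$. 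I foresee no serious obstacle: the lemma is in essence a packaging of the local normal form together with the compactness step that confines $f^{-1}(\overline{D(q)})$ to the disjoint charts.
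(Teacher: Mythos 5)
Your proposal is correct and is exactly the standard local-normal-form-plus-compactness argument that the paper itself declines to write out (its "proof" consists of the remark that the argument is simple and standard). The only cosmetic imprecision is calling $h_{2,j}(\overline{D(q,r)})$ a round disk --- $h_{2,j}$ is only a homeomorphism, so one should say "closed topological disk neighborhood of $0$" and note that $\psi_j$ is obtained from $h_{2,j}$ after a further standard reparametrization compatible with the power map; none of this affects the validity of the argument.
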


\begin{proof}
The proof is quite simple and standard. Note that in (ii) $f(\overline{U_{j}}%
\cap \Delta )$ may be the disk $D(q)$ omitting a radius.
\end{proof}

\begin{corollary}
\label{branched-lift}Let $f:\overline{\Delta }\rightarrow S$ be a normal
mapping that has a ramification point $p_{0}\in \partial \Delta $. Then $%
\partial \Delta $ has a section $\alpha _{1}$ from $p_{0}$ to some point in $%
\partial \Delta \backslash \{p_{0}\}$ such that $\beta =f(z),z\in \alpha
_{1},$ is a simple path in $S$ starting from $f(p_{0})$ and, lifted by $f,$ $%
\beta $ has $b=b_{f}(p_{0})$ lifts $\alpha _{2},\dots ,\alpha _{b+1}$ that
start from $p_{0}$ and satisfy
\begin{equation*}
\alpha _{j}\backslash \{p_{0}\}\subset \Delta ,j=2,\dots ,b+1.
\end{equation*}
\end{corollary}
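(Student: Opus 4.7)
The plan is to reduce to the local normal form at $p_0$ provided by Lemma~\ref{cov-1}(ii) and then read off the lifts directly from the monomial expression $\xi\mapsto\xi^d$. Setting $q=f(p_0)$ and taking the sufficiently small disk $D(q)$ supplied by Lemma~\ref{cov-1}, there is a unique closed neighborhood $\overline U$ of $p_0$ in $\overline{\Delta}$ among the pieces $\overline{U_j}$, together with homeomorphisms $\phi:\overline U\to\overline{\Delta^+}$ and $\psi:D(q)\to\overline{\Delta}$, satisfying $\phi(p_0)=0$, $\phi(\overline U\cap\partial\Delta)=[-1,1]$, and
\begin{equation*}
\psi\circ f\circ\phi^{-1}(\xi)=\xi^d,\qquad \xi\in\overline{\Delta^+},
\end{equation*}
for some integer $d\ge 3$; by Definition~\ref{722-1}, $b=b_f(p_0)=[(d+1)/2]-1\ge 1$.

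Next, fix $r\in(0,1)$ and set $\alpha_1=\phi^{-1}([0,r])$. Since $[0,r]\subset[-1,1]=\phi(\overline U\cap\partial\Delta)$, $\alpha_1$ is a section of $\partial\Delta$ from $p_0$ to $\phi^{-1}(r)\in\partial\Delta\setminus\{p_0\}$, and its image $\beta=f(\alpha_1)=\psi^{-1}([0,r^d])$ is a simple path in $S$ starting at $f(p_0)$. For each $k\in\{0,1,\dots,d-1\}$ the radial segment $\gamma_k(t)=t e^{2\pi i k/d}$, $t\in[0,r]$, satisfies $\gamma_k(t)^d=t^d$, so $\phi^{-1}\circ\gamma_k$ is a lift of $\beta$ by $f$ whenever $\gamma_k\subset\overline{\Delta^+}$; the range of $\gamma_k$ lies in the open half-disk $\Delta^+$ for $t\in(0,r]$ precisely when $2\pi k/d\in(0,\pi)$, that is, when $k\in\{1,\dots,\lceil d/2\rceil-1\}$.

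Thus setting $\alpha_{j+1}=\phi^{-1}(\gamma_j)$ for $j=1,\dots,\lceil d/2\rceil-1$ produces $\lceil d/2\rceil-1=[(d+1)/2]-1=b$ paths that start at $p_0$, are lifts of $\beta$ by $f$, and satisfy $\alpha_{j+1}\setminus\{p_0\}\subset\Delta$, because $\phi$ carries $\overline U\cap\partial\Delta$ onto the whole diameter $[-1,1]$ of $\overline{\Delta^+}$, so the remaining part of $\partial\overline{\Delta^+}$ lies in $\partial U\cap\Delta$ and hence $\phi^{-1}(\Delta^+)\subset U\cap\Delta$. There is no serious obstacle here: once the local model is in hand, the entire corollary is a direct computation in the coordinate $\xi$, and the only point requiring attention is verifying the integer identity $\lceil d/2\rceil-1=[(d+1)/2]-1$ for both parities of $d$, so that the count of interior lifts matches the definition of $b_f(p_0)$.
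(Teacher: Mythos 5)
Your proof is correct and is precisely the argument the paper leaves implicit: the corollary is stated without proof as a consequence of Lemma \ref{cov-1}(ii), and reading off the radial lifts of $[0,r^{d}]$ in the local model $\xi \mapsto \xi ^{d}$ on $\overline{\Delta ^{+}}$, together with the parity check $\lceil d/2\rceil -1=[(d+1)/2]-1=b_{f}(p_{0})$, is exactly the intended derivation. The only point you gloss over is that $\phi ^{-1}([0,r])$ might be the clockwise arc of $\partial \Delta $ emanating from $p_{0}$ (sections are oriented anticlockwise by the paper's convention), in which case one uses $\phi ^{-1}([-r,0])$ instead and the count of interior lifts is unchanged; this is immaterial for how the corollary is applied.
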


\begin{lemma}
\label{glue}Let $D$ be a Jordan domain in $\mathbb{C}$ and let $\alpha
_{j}=\alpha _{j}(t),t\in \lbrack 0,1],$ be two paths contained in $\partial
D $ such that $\alpha _{1}(0)=\alpha _{2}(0)$ and $\alpha _{1}\cap \alpha
_{2}$ contains at most two points. Let $f:\overline{D}\rightarrow S$ be a
normal mapping such that%
\begin{equation*}
f(\alpha _{1}(t))=f(\alpha _{2}(t)),t\in \lbrack 0,1].
\end{equation*}%
If $\alpha _{1}(1)\neq \alpha _{2}(1),$ then $f$ can be regarded as a normal
mapping $g:\overline{\Delta }\rightarrow S$ such that
\begin{equation*}
A(g,\Delta )=A(f,D),L(g,\partial \Delta )=L(f,\left( \partial D\right)
\backslash \{\alpha _{1}\cup \alpha _{2}\}),
\end{equation*}%
and $\Gamma _{g}=g(z),z\in \partial \Delta ,$ is the same as the closed
curve
\begin{equation*}
\Gamma _{f}=f(z),z\in \left\{ \left( \partial D\right) \backslash \left[
\alpha _{1}\cup \alpha _{2}\right] \right\} \cup \{\alpha _{1}(1)\},
\end{equation*}%
ignoring a parameter transformation$.$

If $\alpha _{1}(1)=\alpha _{2}(1),$ then $f$ can be regard as an open
continuous mapping $g$ from the sphere $S$ onto itself$.$ And so, $f$ takes
every value in $S.$
\end{lemma}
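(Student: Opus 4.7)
The plan is to construct $g$ as the quotient of $\overline{D}$ by the equivalence relation generated by $\alpha_1(t)\sim\alpha_2(t)$, $t\in[0,1]$. Let $X=\overline{D}/{\sim}$ with the quotient topology and $\pi:\overline{D}\to X$ the projection. Because $f(\alpha_1(t))=f(\alpha_2(t))$ by hypothesis, $f$ descends to a continuous $\bar{f}:X\to S$. Both cases then reduce to identifying the topological type of $X$, transporting $\bar{f}$ via a homeomorphism to $\overline{\Delta}$ or $S$, and verifying the required regularity.

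In Case 1 ($\alpha_1(1)\neq\alpha_2(1)$), the hypothesis $|\alpha_1\cap\alpha_2|\leq 2$ forces $\alpha_1\cap\alpha_2=\{\alpha_1(0)\}$, so $\partial D$ decomposes as $\alpha_1+\beta-\alpha_2$ where $\beta$ runs from $\alpha_1(1)$ to $\alpha_2(1)$. A local analysis shows that $X$ is a 2-manifold with boundary: interior points of $D$ are unaffected; at an interior point of $\alpha_1$ the two adjacent half-disk neighborhoods from the $\alpha_1$- and $\alpha_2$-sides of $\overline{D}$ are glued along their full diameters to yield a full disk, so the point becomes interior in $X$; at $v_0=\alpha_1(0)=\alpha_2(0)$ the single half-disk neighborhood has its two boundary rays $\alpha_1$ and $\alpha_2$ identified, producing a cone of total angle $2\pi$, again a disk, so $v_0$ becomes interior; at $v_1$, the common image of $\alpha_1(1)$ and $\alpha_2(1)$, the two distinct half-disks are glued along only half of each diameter (the $\alpha$-halves), leaving the $\beta$-halves free and giving a half-disk, so $v_1$ remains a boundary point; interior points of $\beta$ keep their half-disk neighborhoods. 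Consequently $\partial X=\pi(\beta)$, which is a Jordan curve since the two distinct endpoints of $\beta$ are identified to $v_1$. The CW count $V-E+F=2-2+1=1$ combined with orientability forces $X\cong\overline{\Delta}$. Transporting $\bar{f}$ via any such homeomorphism yields $g$; the asserted area and length equalities are immediate since the identified arc has two-dimensional measure zero and $\partial\Delta$ corresponds to $\beta$.

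In Case 2 ($\alpha_1(1)=\alpha_2(1)$) we have $\alpha_1\cup\alpha_2=\partial D$, and the same local analysis, now applied to every point of $\partial D$ (with $v_1$ now also a single half-disk in $\overline{D}$ whose two boundary rays get identified, giving another cone of angle $2\pi$), shows that every point of $\partial D$ becomes interior in $X$. Hence $X$ is a closed 2-manifold. Its CW structure with two vertices, one edge, and one face gives $\chi(X)=2$, so $X\cong S^2$. The induced $g:S\to S$ is open and continuous since $f$ is and the quotient map is open, so $g(S)$ is simultaneously compact and open in $S$, hence equal to $S$.

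The main technical subtlety is verifying conditions (b) and (c) of Definition~\ref{722-1} for $g$ in Case 1 at the images of $v_0$, $v_1$, and the interior of the identified arc. Away from these new interior points, the conditions are inherited pointwise from $f$. At the new interior points the local models for $f$ on each side of the identification, each of the form $\zeta\mapsto\zeta^d$ via Definition~\ref{722-1}, must agree on the identified boundary by the hypothesis $f(\alpha_1(t))=f(\alpha_2(t))$, and so fit together into a single branched covering model $\zeta\mapsto\zeta^d$ on the glued full disk, which is exactly condition (b). Conditions (a), (d), (e) transfer directly, and the rest is the routine unwinding outlined above.
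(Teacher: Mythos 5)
Your proposal is correct and is precisely the ``standard gluing argument'' that the paper itself invokes in one sentence without detail: you form the quotient of $\overline{D}$ by $\alpha_1(t)\sim\alpha_2(t)$, identify its topological type (disk or sphere) by the local half-disk analysis and the Euler characteristic count, and transport $f$ through the quotient. Since the paper's proof is the same construction left implicit, there is nothing further to compare.
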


\begin{proof}
The proof is the standard gluing argument that glue the domain $D$ by
identifying $\alpha _{1}(t)$ and $\alpha _{2}(t)$ for each $t\in \lbrack
0,1] $.
\end{proof}

\begin{lemma}
\label{cut-3}Let $f:\overline{\Delta }\rightarrow S$ be a normal mapping and
let $p_{0}\in \overline{\Delta }$ be a ramification point of $f$. Assume
that $\beta =\beta (t),t\in \lbrack 0,1],$ is a polygonal Jordan path in $S$
such that the followings hold.

(a) $\beta (0)=f(p_{0}),$ $\beta $ has two distinct lifts $\alpha
_{j}=\alpha _{j}(t),t\in \lbrack 0,1],$ in $\overline{\Delta }$ by $f,$ with
$\alpha _{j}(0)=p_{0}$ and%
\begin{equation}
f(\alpha _{1}(t))=f(\alpha _{2}(t))=\beta (t),t\in \lbrack 0,1],j=1,2.
\label{boli-1}
\end{equation}

(b) The interior $\alpha _{j}^{\circ }=\alpha _{j}(t),t\in (0,1),$ of $%
\alpha _{j}$ is contained in $\Delta ,j=1,2,\ $and
\begin{equation*}
\{\alpha _{1}(1),\alpha _{2}(1)\}\subset \partial \Delta .
\end{equation*}

(c) $f$ has no ramification point in the interior of $\alpha _{1}$ and $%
\alpha _{2}.$

Then $\alpha _{1}(1)\neq \alpha _{2}(1).$
\end{lemma}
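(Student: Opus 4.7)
The plan is to argue by contradiction: suppose $p_1 := \alpha_1(1) = \alpha_2(1)$, which by hypothesis (b) lies in $\partial \Delta$. I would then use the two lifts to bound a subdomain $D_1 \subset \Delta$ whose boundary maps $2$-to-$1$ onto the simple path $\beta$, apply Lemma~\ref{glue} to force $f|_{\overline{D_1}}$ to be surjective onto $S$, and obtain a contradiction from a counting argument on $\beta \cap E$.

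The first step is to verify that $\alpha_1 \cup \alpha_2$ is a Jordan curve in $\overline{\Delta}$. Each $\alpha_j$ is simple, since any coincidence $\alpha_j(s) = \alpha_j(t)$ forces $\beta(s) = \beta(t)$ and hence $s = t$ by simplicity of $\beta$. The same reasoning shows any intersection $\alpha_1(s) = \alpha_2(t)$ has $s = t$. At such a common interior parameter $t \in (0,1)$, the point $\alpha_1(t) = \alpha_2(t)$ is not a ramification point of $f$ by hypothesis (c), so $f$ is a local homeomorphism there and the two lifts of $\beta$ must agree on a neighborhood of $t$. Therefore $\{t \in (0,1) : \alpha_1(t) = \alpha_2(t)\}$ is both open and closed in the connected interval $(0,1)$, hence empty --- otherwise $\alpha_1 = \alpha_2$ on $[0,1]$, contradicting their distinctness. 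Combined with the assumed coincidence at $t = 1$, this gives $\alpha_1 \cap \alpha_2 = \{p_0, p_1\}$, so the union is indeed a Jordan curve.

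Let $D_1$ denote its Jordan interior. Since this Jordan curve lies in the closed disk $\overline{\Delta}$ and meets $\partial \Delta$ only at points of $\{p_0, p_1\}$, the bounded region $D_1$ is a Jordan subdomain of $\Delta$ with $\partial D_1 = \alpha_1 \cup \alpha_2$. The boundary curve $f|_{\partial D_1}$ traces the polygonal path $\beta$ once forward along $\alpha_1$ and once backward along $\alpha_2$, hence is polygonal, so by Lemma~\ref{p3}, $f|_{\overline{D_1}}$ is a normal mapping. I would then invoke Lemma~\ref{glue} with $D = D_1$ and the two arcs $\alpha_1, \alpha_2 \subset \partial D_1$: they share both endpoints, meet in exactly two points, and satisfy $f \circ \alpha_1 = f \circ \alpha_2 = \beta$. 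The second clause of Lemma~\ref{glue} (the case $\alpha_1(1) = \alpha_2(1)$) produces an open continuous surjection $S \to S$ whose image coincides with $f(\overline{D_1})$, so $f(\overline{D_1}) = S$.

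To close the argument, observe that $D_1 \subset \Delta$ together with condition (d) of Definition~\ref{722-1} gives $f(D_1) \cap E = \emptyset$, where $E = \{0,1,\infty\}$; by hypothesis (b) the interiors $\alpha_j^\circ \subset \Delta$ also map into $S \setminus E$, so $\beta((0,1)) \cap E = \emptyset$. Hence $\beta \cap E \subset \{\beta(0), \beta(1)\}$ has at most two elements, while $|E| = 3$; therefore $f(\overline{D_1}) = f(D_1) \cup \beta$ must miss at least one point of $E$, contradicting $f(\overline{D_1}) = S$. The main obstacle is the monodromy step ensuring that the two lifts do not merge on their interiors, which crucially uses hypothesis (c); without it $\alpha_1 \cup \alpha_2$ need not be a simple closed curve and Lemma~\ref{glue} could not be applied.
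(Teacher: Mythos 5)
Your proposal is correct and follows essentially the same route as the paper's proof: show the two lifts meet only at their endpoints, let the resulting Jordan curve bound a domain $D_{1}\subset \Delta $, apply Lemma \ref{glue} to get $f(\overline{D_{1}})=S$, and contradict this by noting that $f(D_{1})$ misses $E$ while $\beta $ can contain at most two of the three points of $E$. Your open-and-closed monodromy argument for the disjointness of the interiors of the lifts is a more detailed justification of what the paper asserts directly from hypothesis (c), but the substance is identical.
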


\begin{proof}
Since there is no ramification point in the interiors of $\alpha _{1}$ and $%
\alpha _{2},$ we have%
\begin{equation}
\alpha _{1}\cap \alpha _{2}\subset \{\alpha _{1}(0),\alpha _{1}(1)\}.
\label{2}
\end{equation}%
By (\ref{boli-1}) and (b),
\begin{equation}
\beta (t)\neq 0,1,\infty ,t\in (0,1).  \label{3}
\end{equation}

If $\alpha _{1}(1)=\alpha _{2}(1),$ then $\alpha _{1}-\alpha _{2},$ or $%
\alpha _{2}-\alpha _{1},$ encloses a Jordan domain $D$ in $\overline{\Delta }%
,$ and then by Lemma \ref{glue}, $f(\overline{D})=S.$ But $f$ is a normal
mapping, and then $f(D)\subset f(\Delta )\subset S\backslash \{0,1,\infty
\}, $ and then by (\ref{2}) and (\ref{3}) we have $f^{-1}(\{0,1,\infty
\})\subset \{\alpha _{1}(0),\alpha _{2}(1)=\alpha _{2}(1)\}.$ Therefore we
have $f(\overline{D})\neq S$. This is a contradiction.
\end{proof}

\section{A classical isoperimetric inequality of the unit hemisphere\label%
{ss-4classi}}

In this section we use Bernstein's isoperimetric inequality to prove
theorems \ref{good} and \ref{good2}, which will be used in Section \ref%
{ss-p2}.

The following result is obtained by Bernstein in 1905.

\begin{theorem}[Bernstein inequality \protect\cite{Ber}]
Let $\Gamma $ be a simple curve in some hemisphere $S^{\ast }$ of $S.$ Then
the length $L=L(\Gamma )$ and the area $A$ of the domain in $S^{\ast }$
enclosed by $\Gamma $ satisfy%
\begin{equation*}
L^{2}\geq 4\pi A-A^{2},
\end{equation*}%
equality holds if and only if $\Gamma $ is a circle.
\end{theorem}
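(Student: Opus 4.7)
The plan is to prove the inequality by reducing, via symmetrization, to the extremal case of a spherical cap, and then to verify equality directly for that case. First I would compute both sides for the extremal configuration. If $\Gamma$ is the boundary of a spherical cap of geodesic radius $r$ in $S^{\ast}$, then $A = 2\pi(1-\cos r)$ and $L = 2\pi\sin r$, so
\[
L^{2} = 4\pi^{2}\sin^{2} r = 2\pi(1-\cos r)\cdot 2\pi(1+\cos r) = A(4\pi - A),
\]
confirming that equality holds for circles and fixing the correct normalization.

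Next I would apply spherical (Schwarz) symmetrization with respect to an axis passing through the pole of $S^{\ast}$. Given any simple closed $\Gamma \subset S^{\ast}$ bounding a region $D$ of area $A$, one constructs a symmetric region $D^{\ast}$ whose intersection with each small ``parallel'' circle perpendicular to the chosen axis is a single arc of the same length. Standard arguments based on the coarea formula and Cauchy--Schwarz then show that $D^{\ast}$ has the same area as $D$ and a boundary no longer than $\partial D$. Iterating the symmetrization about all axes, or passing to a limit of such rearrangements, the only invariant regions are the spherical caps, so one reduces to the case already verified.

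The main obstacle in executing this rigorously is handling the low-regularity issues produced by the rearrangement: after symmetrization one must check that the new boundary is still a rectifiable simple curve, that it remains contained in some hemisphere of $S$, and that one gets a \emph{strict} drop in perimeter whenever $D$ is not already a spherical disk (this last point is what is needed for the equality clause). An alternative route that sidesteps part of this difficulty is to stereographically project $S^{\ast}$ onto a disk in $\mathbb{C}$, in which the spherical metric takes the form $\rho(z)|dz|$ with $\rho(z) = 2/(1+|z|^{2})$; the inequality then becomes a weighted planar isoperimetric problem that can be attacked either by Brunn--Minkowski on the round sphere or by a direct variational argument. I would expect the symmetrization route to give the shortest path to the inequality itself, while a separate Bonnesen-type argument may be cleanest for the characterization of equality.
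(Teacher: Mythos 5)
First, a point of comparison: the paper does not prove this statement at all. It is quoted verbatim as a classical theorem of Bernstein (1905) with a citation to \cite{Ber}, so there is no internal proof to measure your argument against; anything you write here is a reconstruction of a century-old result, not a gap in the paper.

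As a reconstruction, your proposal is a plan rather than a proof, and the gap lies exactly where you yourself point: every nontrivial step is deferred to ``standard arguments.'' Your verification of the equality case for a cap is correct ($L^{2}=4\pi^{2}\sin^{2}r=2\pi(1-\cos r)\cdot 2\pi(1+\cos r)=A(4\pi-A)$), and spherical symmetrization is indeed a legitimate classical route. But the substance of that route consists of three claims you do not establish: (i) that the symmetrized set has the same area and no larger perimeter (the coarea/Cauchy--Schwarz computation on the sphere, which requires care because the ``parallels'' transverse to the chosen axis have varying length and the rearranged boundary need not be rectifiable a priori); (ii) that iterating symmetrizations, or passing to a limit, actually produces a cap — for this one needs a compactness argument in a topology in which both area and perimeter behave well, which is not automatic for merely rectifiable simple curves; and (iii) the rigidity statement that perimeter strictly drops unless the region is already a cap. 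Point (iii) is not a cosmetic issue here: the paper's Corollary \ref{ber0} explicitly invokes the ``equality holds if and only if $\Gamma$ is a circle'' clause, and your proposal openly leaves that clause to an unspecified Bonnesen-type argument. So while the strategy is sound and would be accepted as a roadmap, as written it does not constitute a proof of either the inequality or its equality case.
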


The following inequality is another version of Bernstein inequality.

\begin{corollary}
\label{ber}Under the same hypothesis and additional condition $L(\Gamma
)\leq 2\pi ,$
\begin{equation*}
A\leq 2\pi \left( 1-\sqrt{1-R^{2}}\right) ,
\end{equation*}%
equality holds if and only if $\Gamma $ is a circle, where $R=\frac{L(\Gamma
)}{2\pi }.$
\end{corollary}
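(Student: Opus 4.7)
The plan is to derive Corollary \ref{ber} as a direct algebraic consequence of Bernstein's inequality $L^2 \geq 4\pi A - A^2$ combined with the a priori bound $A \leq 2\pi$ coming from the fact that the enclosed domain sits inside a hemisphere.

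First I would rewrite Bernstein's inequality as the quadratic inequality $A^2 - 4\pi A + L^2 \geq 0$ in the unknown $A$. Since the discriminant is $16\pi^2 - 4L^2 \geq 0$ by the standing hypothesis $L \leq 2\pi$, this factors into the two solution branches
\begin{equation*}
A \leq 2\pi - \sqrt{4\pi^2 - L^2} \quad \text{or} \quad A \geq 2\pi + \sqrt{4\pi^2 - L^2}.
\end{equation*}
Next I would rule out the upper branch. Because $\Gamma$ lies in the hemisphere $S^{\ast}$ and encloses a subdomain of $S^{\ast}$, we have $A \leq A(S^{\ast}) = 2\pi$, whereas any $A$ in the upper branch satisfies $A \geq 2\pi$, with equality possible only when $L = 2\pi$ (in which case the two branches collapse and both inequalities in the statement read $A \leq 2\pi$, which is automatic). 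So in every case $A \leq 2\pi - \sqrt{4\pi^2 - L^2}$, and substituting $L = 2\pi R$ gives exactly
\begin{equation*}
A \leq 2\pi\left(1 - \sqrt{1 - R^2}\right).
\end{equation*}

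For the equality clause, the conversion is monotone: equality in the corollary's bound forces equality in $A^2 - 4\pi A + L^2 \geq 0$ (the branch choice is strict unless the two branches coincide), hence equality in Bernstein's inequality, which by Bernstein's theorem occurs precisely when $\Gamma$ is a circle. Conversely if $\Gamma$ is a circle the formula can be checked directly from spherical-cap area/perimeter identities, closing the iff.

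There is essentially no obstacle here beyond the branch-selection argument: the only subtle point is that one must invoke the hemisphere hypothesis $A \leq 2\pi$ to exclude the spurious upper root of the quadratic, and this is precisely why the hypothesis $L \leq 2\pi$ plus the hemisphere confinement are jointly needed for the stated form of the bound.
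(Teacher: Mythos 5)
Your proposal is correct and is exactly the derivation the paper intends: the paper gives no explicit proof of Corollary \ref{ber} beyond the remark that a circle of Euclidean radius $R$ cuts the sphere into caps of area $2\pi\left(1\pm\sqrt{1-R^{2}}\right)$, and your argument (solve the quadratic $A^{2}-4\pi A+L^{2}\geq 0$, discard the upper root via the hemisphere bound $A\leq 2\pi$, and track equality back to Bernstein's equality case) is the standard way to fill that in. No gaps.
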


In fact, any circle in $S$ with Euclidian radius $R$ divides the sphere into
two spherical disks with areas $2\pi \left( 1\pm \sqrt{1-R^{2}}\right) .$
The following result is obtained by Lad\'{o} in 1935.

\begin{theorem}[Lad\'{o} \protect\cite{Rad}]
\label{Rad}Any closed curve in $S$ with length less than $2\pi $ is
contained in some open hemisphere.
\end{theorem}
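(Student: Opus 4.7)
The plan is to exhibit an explicit open hemisphere containing $\Gamma$, built from two cleverly chosen points of $\Gamma$. First I would parametrize $\Gamma$ and pick two points $p,q\in \Gamma$ that divide $\Gamma$ into two sub-arcs, each of spherical length $L(\Gamma)/2<\pi$. Since $d(p,q)\leq L(\Gamma)/2<\pi$, the points $p$ and $q$ are not antipodal, so the shortest geodesic $\overline{pq}$ is well-defined and has a unique midpoint $m\in S$. The claim is that $\Gamma$ lies entirely in the open hemisphere $H=\{y\in S:d(y,m)<\pi/2\}$ with pole $m$.

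To verify the claim, I would compute $d(x,m)$ for an arbitrary $x\in\Gamma$ by regarding $p,q,x$ as unit vectors in $\mathbb{R}^{3}$. Set $c=d(p,q)$, $a=d(q,x)$, $b=d(p,x)$. Because $|p+q|^{2}=2+2\cos c=4\cos^{2}(c/2)$, we have $m=(p+q)/(2\cos(c/2))$, and therefore
\[
\cos d(x,m)=x\cdot m=\frac{\cos a+\cos b}{2\cos(c/2)}=\frac{\cos\tfrac{a+b}{2}\cos\tfrac{a-b}{2}}{\cos(c/2)}.
\]
The point $x$ lies on at least one of the two arcs of $\Gamma$ joining $p$ to $q$, so the triangle inequality along that arc gives $a+b\leq L(\Gamma)/2<\pi$. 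Hence $\tfrac{a+b}{2}<\pi/2$ and $\tfrac{|a-b|}{2}\leq\tfrac{a+b}{2}<\pi/2$, so both cosines in the numerator are strictly positive; combined with $\cos(c/2)>0$, this yields $\cos d(x,m)>0$, i.e.\ $d(x,m)<\pi/2$. Thus $x\in H$, proving the claim.

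The argument is elementary; the only delicate point is the unambiguous choice of $m$, which is guaranteed by $d(p,q)<\pi$, and the fact that at least one of the two arcs between $p$ and $q$ passes through $x$ with total length at most $L(\Gamma)/2$. An alternative route would be the spherical Cauchy--Crofton formula $L(\Gamma)=\tfrac14\int_{S}n(\Gamma\cap C_{u})\,du$, where $C_{u}$ is the great circle with pole $u$: if every great circle met the closed curve $\Gamma$, then $n(\Gamma\cap C_{u})\geq 2$ for a.e.\ $u$, giving $L(\Gamma)\geq \tfrac14\cdot 2\cdot 4\pi=2\pi$, a contradiction, so some great circle $C$ misses $\Gamma$ and the connected set $\Gamma$ lies in one of the two open hemispheres cut out by $C$. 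I would prefer the median-based proof above, which exhibits the enclosing hemisphere explicitly rather than relying on integral geometry.
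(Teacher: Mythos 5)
Your midpoint argument is correct, and it is worth noting that the paper itself offers no proof of this statement at all: it is quoted as a classical theorem of Rad\'o (1935) with a citation, so there is nothing internal to compare against. Your proof is the standard self-contained one (often called the hemisphere lemma, or Horn's lemma): bisect $\Gamma$ by arc length at $p,q$, take $m=(p+q)/|p+q|$, and use $\cos a+\cos b=2\cos\tfrac{a+b}{2}\cos\tfrac{a-b}{2}$ together with $a+b\leq L(\Gamma)/2<\pi$ to conclude $\cos d(x,m)>0$. All the steps check out: $d(p,q)\leq L(\Gamma)/2<\pi$ guarantees $\cos(c/2)>0$ and a well-defined midpoint, the triangle inequality along the sub-arc containing $x$ gives $a+b<\pi$, and $|a-b|\leq a+b$ handles the second cosine factor; the degenerate cases $p=q$ or $L(\Gamma)=0$ cause no trouble. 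The only implicit hypothesis is rectifiability, which is built into the assumption that the length is finite and less than $2\pi$, so the arc-length bisection is legitimate even for non-simple closed curves. Your sketched Crofton alternative would need the additional (standard but not free) justification that for almost every pole $u$ a closed curve meeting the great circle $C_{u}$ meets it in at least two points, so your preference for the explicit median construction is well placed; it also has the advantage of producing the enclosing hemisphere constructively, which is in the spirit of how the paper uses the theorem (e.g.\ in Theorem \ref{good} and Corollary \ref{ber0}).
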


\begin{corollary}
\label{ber0}Let $l$ be a given positive number with
\begin{equation*}
\pi <l<\sqrt{2}\pi ,
\end{equation*}%
let $l_{1}$ and $l_{2}$ be positive numbers with
\begin{equation*}
l_{1}+l_{2}=l\ \mathrm{and\ }l_{j}\geq \frac{\pi }{2},j=1,2,
\end{equation*}%
and, for $j=1,2,$ let $\gamma _{j}$ be a circular path in $S$ such that $%
\gamma _{j}$ has endpoints $\{0,1\}$, $L(\gamma _{j})=l_{j},$ and $\gamma
=\gamma _{1}+\gamma _{2}$ is a Jordan curve that encloses a domain $%
D_{\gamma }$ in some hemisphere of $S.$ Then the area of $D_{\gamma }$
assumes the maximum if and only if $l_{1}=l_{2}=\frac{1}{2}l$ and $D_{\gamma
}$ is convex.
\end{corollary}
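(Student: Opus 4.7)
The proof amounts to a one-variable concavity argument after reducing to a symmetric opposite-side configuration. I would first parametrize the admissible arcs: any circle in $S$ through $0$ and $1$ lies in a plane containing the Euclidean chord $\overline{0,1}_{\mathbb R^3}$, parametrized by the dihedral angle $\phi \in [0,\pi/2]$ with the plane of the great circle $C_0$ through $0,1$; its Euclidean radius is $R(\phi) = \sqrt{1-\tfrac12\sin^2\phi}$, and each of its two arcs from $0$ to $1$ has length $2R\theta$ (minor) or $2\pi R - 2R\theta$ (major) with $\sin\theta = 1/(R\sqrt{2})$. Any non-great-circle arc lies in exactly one of the two open hemispheres cut off by $C_0$. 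I would then argue that the maximizer must have $\gamma_1$ and $\gamma_2$ on opposite sides of $C_0$: if they lay on the same side, the enclosed lens would be contained in one hemisphere with area equal to the difference $|A(L_1)-A(L_2)|$ of the two associated lune areas (where $L_j$ denotes the lune bounded by $\gamma_j$ and the geodesic $\overline{0,1}$), which is strictly smaller than the opposite-side sum $A(L_1)+A(L_2)$.

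In the opposite-side case, $D_\gamma$ contains $\overline{0,1}^{\circ}$ and decomposes as $D_\gamma \setminus \overline{0,1} = L_1 \sqcup L_2$, so $A(D_\gamma) = A(L_1)+A(L_2)$. By the isometry of $S$ that swaps the two open hemispheres cut off by $C_0$ while fixing $0$ and $1$, the lune area depends only on the arc length, and I write $A(L_j) = f(l_j)$. Applying Gauss--Bonnet to $L_j$ (using that a circle of Euclidean radius $R$ on the unit sphere has constant geodesic curvature $\sqrt{1-R^2}/R$) yields the closed form
\begin{equation*}
f(l_j) \;=\; 2\alpha_j \;-\; \frac{\sqrt{1-R_j^2}}{R_j}\, l_j,
\end{equation*}
where $\alpha_j$ is the angle at $0$ (equivalently at $1$) between $\gamma_j$ and $\overline{0,1}$ and $R_j = R(\phi_j)$, both determined by $l_j$ through $\phi_j$.

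The crux is then to show that $f$ is strictly concave on its domain, which by Jensen gives $f(l_1)+f(l_2) < 2f(l/2)$ unless $l_1 = l_2 = l/2$. I would establish concavity by differentiating the explicit formula above with respect to $l_j$ via the chain rule through $\phi_j$; this is the main technical obstacle, since $\alpha_j$ and $R_j$ depend transcendentally on $l_j$ and the minor-arc and major-arc regimes (for $l_j$ below or above $\pi/\sqrt{2}$) must be handled separately and then joined continuously. Once strict concavity is in hand, the unique maximizer among opposite-side configurations is the symmetric one with $l_1 = l_2 = l/2$, and direct inspection shows $D_\gamma$ is convex (both arcs bulge away from the geodesic $\overline{0,1}$ by equal amounts), completing the characterization.
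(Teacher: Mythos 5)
Your outline is coherent and genuinely different from the paper's argument, but it has a real gap: the strict concavity of the lune-area function $f$ is never established. You correctly reduce the problem (via the same-side versus opposite-side comparison and the reflection symmetry) to the inequality $f(l_{1})+f(l_{2})<2f\left(\tfrac{l}{2}\right)$ for $l_{1}\neq l_{2}$, and your Gauss--Bonnet formula $f(l_{j})=2\alpha_{j}-\tfrac{\sqrt{1-R_{j}^{2}}}{R_{j}}\,l_{j}$ is correct (it matches the paper's Lemma on $\zeta_{0},\zeta_{1}$). But that midpoint inequality \emph{is} the corollary; deferring it to ``differentiating the explicit formula via the chain rule through $\phi_{j}$'' with the regimes joined ``continuously'' is not a proof. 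The dependence of $\alpha_{j}$ and $R_{j}$ on $l_{j}$ is implicit and transcendental, the range of $l_{j}$ (up to $l-\tfrac{\pi}{2}>\tfrac{\pi}{\sqrt{2}}$) genuinely straddles the minor-arc/major-arc transition, and one must check second-derivative sign on both pieces and concavity across the junction. None of this is carried out, and you yourself flag it as the main technical obstacle. As written, the proposal proves nothing beyond the reduction.

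For comparison, the paper avoids the computation entirely: it takes the circle $\Gamma_{1}$ containing the symmetric arc of length $\tfrac{l}{2}$ from $0$ to $1$, marks a second arc of length $\tfrac{l}{2}$ from $1$ to a point $p$ on the remainder, and then replaces these two arcs by congruent copies of $\gamma_{1}$ and $\gamma_{2}$, keeping the third arc from $p$ to $0$ fixed. The resulting closed curve has the same total length $<2\pi$ as $\Gamma_{1}$, so Rad\'{o} places both in hemispheres and Bernstein's isoperimetric inequality gives $A_{\Gamma_{1}}\geq A_{\Gamma_{2}}$ with equality only for the circle; subtracting the common third region yields exactly $2f\!\left(\tfrac{l}{2}\right)\geq f(l_{1})+f(l_{2})$. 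If you want to salvage your route, you should either actually carry out the concavity computation (e.g.\ using the paper's explicit parametrization $L=\zeta_{0}(\tau)$, $A=\zeta_{1}(\tau)$ and showing $\zeta_{1}\circ\zeta_{0}^{-1}$ is strictly concave), or replace that step by the append-an-arc trick above.
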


By this corollary, $D_{\gamma }$ assume the maximum if and only $D_{\gamma }$
is congruent with the domain $D_{l}$ defined in Section \ref{1SS-Intro}.

\begin{proof}
This follows from Corollary \ref{ber} and Theorem \ref{Rad} directly. Let $%
\Gamma _{1}$ be a circle passing through $0$ and $1$ in $S$ so that the
length of the section $\alpha _{1}$ of $\Gamma _{1}$ from $0$ to $1$ is $%
\frac{l}{2}$ and $\Gamma _{1}$ is convex in the sense that the disk inside $%
\Gamma _{1}$ is contained in some open hemisphere of $S$. Then by the
assumption, we have $L(\alpha _{1})<L(\Gamma _{1}\backslash \alpha _{1}),$
and then there is a point $p\in \Gamma _{1}\backslash \alpha _{1}$ so that
the section $\alpha _{2}$ of $\Gamma _{1}$ from $1$ to $p$ has length $\frac{%
l}{2}$ as well.

We replace $\alpha _{j}$ with $\gamma _{j}^{\prime }$ so that $\gamma
_{j}^{\prime }$ is congruent with $\gamma _{j},j=1,2,$ and that the circle $%
\Gamma _{1}$ becomes a Jordan curve $\Gamma _{2}$ that is convex everywhere,
except at $0,1$ and $p,$ in the sense that the triangle $\overline{0,1,p,0}$
is inside the closure of the domain inside $\Gamma _{2}$. It is clear that $%
L(\Gamma _{1})=L(\Gamma _{2})<2\pi ,$ and thus by Theorem \ref{Rad}, $\Gamma
_{j}$ is contained in some hemisphere $S_{j}$ of $S,j=1,2$. Then by Theorem %
\ref{ber} $A_{\Gamma _{1}}\geq A_{\Gamma _{2}},$ the equality holds if and
only if $\Gamma _{2}$ is a circle, where $A_{\Gamma _{j}}$ is the area
enclosed by $\Gamma _{j}$ in $S_{j},j=1,2.$ From this, the conclusion
follows.
\end{proof}

\begin{lemma}
\label{extre}Let $l<2\pi $ be a positive number and let $l_{1},l_{2},\dots
,l_{n}$ be nonnegative numbers with
\begin{equation*}
0\leq l_{1}\leq l_{2}\leq \dots \leq l_{n}\ \mathrm{and\ }l_{1}+l_{2}+\dots
+l_{n}=l.
\end{equation*}%
Then%
\begin{equation*}
\sum_{k=1}^{n}\left( 2\pi -\sqrt{(2\pi )^{2}-l_{k}^{2}}\right) \leq 2\pi -%
\sqrt{(2\pi )^{2}-l^{2}},
\end{equation*}%
the equality holds if and only if $l_{1}=\dots =l_{n-1}=0$ and $l_{n}=l.$
\end{lemma}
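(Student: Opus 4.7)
The plan is to set
\begin{equation*}
\phi(t) = 2\pi - \sqrt{(2\pi)^{2}-t^{2}}, \qquad t \in [0, 2\pi],
\end{equation*}
so that the inequality in question reads $\sum_{k=1}^{n}\phi(l_{k}) \le \phi(l)$. The proof reduces to establishing the \emph{superadditivity} of $\phi$ on $[0,2\pi]$ and then iterating.

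First I would verify that $\phi(0)=0$ and that $\phi$ is strictly convex on $[0,2\pi)$. Indeed, $\phi'(t) = t/\sqrt{(2\pi)^{2}-t^{2}}$ is strictly increasing (being $0$ at $t=0$ and tending to $+\infty$ as $t\to 2\pi^-$), which gives $\phi''>0$ on $(0,2\pi)$ and hence strict convexity. Because $\phi(0)=0$ and $\phi$ is strictly convex, for any $t\in(0,2\pi)$ and any $\lambda\in[0,1]$ one has $\phi(\lambda t)\le \lambda\phi(t)$, with strict inequality unless $\lambda\in\{0,1\}$.

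The next step is the two-variable superadditivity: if $a,b\ge 0$ and $a+b\le 2\pi$, then
\begin{equation*}
\phi(a) + \phi(b) \le \phi(a+b),
\end{equation*}
with equality only when $ab = 0$. This follows at once from the previous inequality applied with $t = a+b$ and $\lambda = a/(a+b)$ respectively $\lambda = b/(a+b)$, and then adding; the equality case comes from the strict convexity of $\phi$, which forces $\lambda\in\{0,1\}$, i.e.\ $a=0$ or $b=0$.

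Finally I would induct on $n$. The base case $n=1$ is trivial. For the inductive step, applying the inductive hypothesis to $l_{1},\dots,l_{n-1}$ (whose sum is $l - l_{n}\le l<2\pi$) together with the two-variable superadditivity yields
\begin{equation*}
\sum_{k=1}^{n}\phi(l_{k}) \le \phi(l_{1}+\dots+l_{n-1}) + \phi(l_{n}) \le \phi(l).
\end{equation*}
For the equality case, the second inequality forces $l_{1}+\dots+l_{n-1}=0$ or $l_{n}=0$; since the $l_{k}$ are ordered with $l_{n}$ largest and the sum equals $l>0$, we must have $l_{1}=\dots=l_{n-1}=0$ and $l_{n}=l$, which is exactly the asserted equality condition. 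I do not expect any real obstacle here; the only point requiring a modicum of care is that the sum $l-l_{n}$ remains in the domain of $\phi$ so that the inductive hypothesis applies, which is immediate from $l<2\pi$.
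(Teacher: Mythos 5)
Your proof is correct, and it is exactly the ``standard way in calculus'' that the paper merely alludes to without writing out (the paper's proof of this lemma is a one-line remark). The convexity-plus-$\phi(0)=0$ superadditivity argument, the induction, and the equality analysis are all sound, including the check that $l-l_n<2\pi$ keeps you in the domain of $\phi$.
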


\begin{proof}
There is a standard way in calculus to prove this. In fact, it also follows
from Bernstein's inequality and Lad\'{o}'s theorem directly.
\end{proof}

\begin{theorem}
\label{good}Let $f:\overline{\Delta }\rightarrow S$ be a normal mapping such
that
\begin{equation*}
L(f,\partial \Delta )<2\pi .
\end{equation*}%
Then
\begin{equation}
A(f,\Delta )\leq A(D_{f})=\frac{1-\sqrt{1-R_{f}^{2}}}{R_{f}}L(f,\partial
\Delta )<L(f,\partial \Delta )  \label{a4}
\end{equation}%
with $R_{f}=\frac{L(f,\partial \Delta )}{2\pi }$ and $D_{f}$ is a disk in
some open hemisphere of $S$ with $L(\partial D_{f})=L(f,\partial \Delta ).$

If, in addition, $L(f,\partial \Delta )\geq \sqrt{2}\pi ,$ then
\begin{equation}
4\pi +A(f,\Delta )<4L(f,\partial \Delta ).  \label{a5}
\end{equation}
\end{theorem}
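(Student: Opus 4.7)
The plan is to combine Lad\'o's theorem with Bernstein's isoperimetric inequality, applied to the superlevel sets of the multiplicity function of $f$. Since $L(f,\partial\Delta) < 2\pi$, Lad\'o's theorem (Theorem \ref{Rad}) places $\Gamma_f = f(z),\ z\in\partial\Delta$, inside some open hemisphere $H \subset S$. For $p \in S \setminus \Gamma_f$ let $n(f,p)$ be the number of preimages of $p$ in $\Delta$ counted with multiplicity; since $f$ is a normal mapping (hence an orientation-preserving branched cover), the argument principle identifies $n(f,p)$ with the winding number of $\Gamma_f$ about $p$, whence $n(f,p) \geq 0$ and $n(f,p) = 0$ off $\overline{H}$. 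Setting $U_k = \{p \in S \setminus \Gamma_f : n(f,p) \geq k\}$ (only finitely many are nonempty, each an open subset of $H$ with $\partial U_k \subset \Gamma_f$), I obtain the area--multiplicity identity and a coarea-type inequality
\begin{equation*}
A(f,\Delta) = \sum_{k \geq 1} A(U_k), \qquad \sum_{k \geq 1} L(\partial U_k) \leq L(f, \partial \Delta);
\end{equation*}
the second is proved edge by edge, noting that an edge of $\Gamma_f$ across which $n$ changes by $d$ contributes to exactly $|d|$ of the sets $\partial U_k$, and $|d|$ never exceeds the number of times the parameterization traverses that edge.

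Next I apply Bernstein (Corollary \ref{ber}) to each $U_k$. Although $U_k$ can be multiply connected, dominating its area component by component by the disks bounded by the outer Jordan curves and pooling via Lemma \ref{extre} gives
\begin{equation*}
A(U_k) \leq 2\pi\Bigl(1 - \sqrt{1 - (L(\partial U_k)/(2\pi))^2}\Bigr).
\end{equation*}
A second application of Lemma \ref{extre} across $k$, combined with the coarea inequality above, then yields
\begin{equation*}
A(f,\Delta) \leq 2\pi\Bigl(1 - \sqrt{1 - R_f^2}\Bigr) = \frac{1 - \sqrt{1 - R_f^2}}{R_f}\,L(f,\partial\Delta),
\end{equation*}
which is the first half of (\ref{a4}); the strict bound $A(D_f) < L(f,\partial\Delta)$ is the elementary inequality $(1-\sqrt{1-R^2})/R < 1$ for $R \in (0,1)$.

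For (\ref{a5}), put $L = L(f,\partial\Delta) \in [\sqrt{2}\pi, 2\pi)$. The previous bound gives $4\pi + A(f,\Delta) \leq 6\pi - \sqrt{4\pi^2 - L^2}$, so (\ref{a5}) reduces to $6\pi - 4L < \sqrt{4\pi^2 - L^2}$; for $L \geq 3\pi/2$ this is immediate, and for $L \in [\sqrt{2}\pi, 3\pi/2)$ squaring yields $17L^2 - 48\pi L + 32\pi^2 < 0$, whose roots $(24 \pm 4\sqrt{2})\pi/17 \approx (1.08\pi, 1.74\pi)$ bracket the interval. I expect the main technical obstacle to be clean justifications of the coarea identity and the multiply-connected Bernstein bound, both of which rest on a careful local picture of $\Gamma_f$ at self-intersections and at boundary ramification points of $f$ (Lemma \ref{cov-1} and Corollary \ref{branched-lift}).
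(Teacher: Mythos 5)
Your proposal follows essentially the same route as the paper's proof: Lad\'o's theorem to confine the image to a hemisphere, the superlevel sets of the multiplicity function (your $U_k$ are the paper's $\Delta_j$), the area identity and length (in)equality for their boundaries, Bernstein applied component by component after filling in holes, and pooling via Lemma \ref{extre}. The only divergence is your derivation of (\ref{a5}), which uses the sharper bound $A(f,\Delta)\leq 2\pi-\sqrt{4\pi^2-L^2}$ and a quadratic estimate where the paper simply uses $A(f,\Delta)<L$ together with $4\pi/L\leq 2\sqrt{2}$; both computations are correct.
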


\begin{proof}
We first show that%
\begin{equation}
A(f,\Delta )\leq 2\pi -\sqrt{4\pi ^{2}-\left( L(f,\partial \Delta )\right)
^{2}}.  \label{sss2}
\end{equation}%
We may assume that

(a) The boundary curve $\Gamma _{f}=f(z),z\in \partial \Delta ,$ has
finitely many multiple points, i.e. there is a finite set $Q\subset \partial
\Delta ,$ such that $f$ restricted to $\left( \partial \Delta \right)
\backslash Q$ is injective.

If (a) fails, we may conside the restriction $f_{1}=f|_{\overline{D}}$ of $f$
to some closed Jordan domain $\overline{D}\subset \overline{\Delta },$ such
that the boundary curve
\begin{equation*}
\Gamma _{f_{1}}=f_{1}(z)=f(z),z\in \partial D,
\end{equation*}
of $f_{1}$ satisfies (a), while $|A(f,\Delta )-A(f_{1},D)|$ and $%
|L(f,\partial \Delta )-L(f_{1},\partial D)|$ may be made arbitrarily small.
Then we prove (\ref{sss2}) for $f_{1},$ which implies (\ref{sss2}) for $f$.

By Theorem \ref{Rad}, $f(\partial \Delta )$ is contained in some open
hemisphere $S^{\prime }$ of $S,$ and then $f(\partial \Delta )\cap \left(
S\backslash S^{\prime }\right) =\emptyset .$ If $f(\Delta )\cap \left(
S\backslash S^{\prime }\right) \neq \emptyset ,$ then, since $f$ is normal
and a normal mapping is an open mapping, it is clear that $S\backslash
S^{\prime }\subset f(\Delta ),$ which implies that $f(\Delta )\cap E\neq
\emptyset $ (recall that $E=\{0,1,\infty \}),$ for $S\backslash S^{\prime }$
is a closed hemisphere of $S$ and a closed hemisphere of $S$ must contain at
least one point of $E.$ But this contradicts that $f$ is a normal mapping$.$
Thus, the followings holds.

(b) $f(\overline{\Delta })$ is contained in $S^{\prime }.$

For each positive integer $j,$ let $\Delta _{j}$ be the set that for each
point $p\in \Delta _{j},$ $f(z)=p$ has at least $j$ solutions in $\Delta $,
counted with multiplicities. Since $f$ is normal, there exists a positive
integer $n$ such that $n$ is the largest number with $\Delta _{n}\neq
\emptyset .$ Then
\begin{equation}
A(f,\Delta )=\sum_{j=1}^{n}A(\Delta _{j}),  \label{aaa6}
\end{equation}%
and by (a), considering that $f$ is a normal mapping, it is clear that for
any pair $\{j,k\}$ with $j\neq k,$ $\left( \partial \Delta _{j}\right) \cap
\left( \partial \Delta _{k}\right) $ is a finite set and
\begin{equation}
L(f,\partial \Delta )=\sum_{j=1}^{n}L(\partial \Delta _{j}).  \label{aaa7}
\end{equation}

For each $j\leq n,$ $\Delta _{j}$ is a union of finitely many components $%
\Delta _{jk},k=1,2,\dots ,k_{j},$ each of which is a domain also contained
in $S^{\prime }$ (by (b)) and is enclosed by a finite number of polygonal
Jordan curves. For each $j$ and each $k\leq k_{j},$ Let $\Delta _{jk}^{\ast
} $ be the domain which is the complement of the component of $S\backslash
\Delta _{jk}$ in $S$ that contains $\partial S^{\prime }.$ Then $\Delta
_{jk}^{\ast }$ is a polygonal Jordan domain with%
\begin{equation*}
\partial \Delta _{jk}^{\ast }\subset \partial \Delta _{jk}\mathrm{\ but\ }%
\Delta _{jk}^{\ast }\supset \Delta _{jk},
\end{equation*}%
and then by Corollary \ref{ber} we have%
\begin{equation*}
A(\Delta _{jk})\leq A(\Delta _{jk}^{\ast })\leq 2\pi -\sqrt{4\pi
^{2}-L(\partial \Delta _{jk}^{\ast })^{2}}\leq 2\pi -\sqrt{4\pi
^{2}-L(\partial \Delta _{jk})^{2}},
\end{equation*}%
i.e.%
\begin{equation*}
A(\Delta _{jk})\leq 2\pi -\sqrt{4\pi ^{2}-L(\partial \Delta _{jk})^{2}},
\end{equation*}%
for each $j\leq n$ and each $k\leq k_{j}.$ Then, by Lemma \ref{extre} we have%
\begin{eqnarray*}
\sum_{j=1}^{n}\sum_{k=1}^{k_{j}}A(\Delta _{jk}) &\leq
&\sum_{j=1}^{n}\sum_{k=1}^{k_{j}}(2\pi -\sqrt{4\pi ^{2}-L(\partial \Delta
_{jk})^{2}}) \\
&\leq &2\pi -\sqrt{4\pi ^{2}-\left(
\sum_{j=1}^{n}\sum_{k=1}^{k_{j}}L(\partial \Delta _{jk})\right) ^{2}},
\end{eqnarray*}%
the second equality holds if and only if $n=1$ and $\Delta _{1}=f(\Delta ).$
By (\ref{aaa6}) and (\ref{aaa7}), considering that
\begin{equation*}
\sum_{j=1}^{n}A(\Delta _{j})=\sum_{j=1}^{n}\sum_{k=1}^{k_{j}}A(\Delta _{jk}),
\end{equation*}%
and%
\begin{equation*}
\sum_{j=1}^{n}L(\partial \Delta
_{j})=\sum_{j=1}^{n}\sum_{k=1}^{k_{j}}L(\partial \Delta _{jk}),
\end{equation*}%
we have (\ref{sss2})$.$

Let $R_{f}=\frac{L(f,\partial \Delta )}{2\pi }.$ Then by (\ref{sss2}),
considering that $R_{f}<1,$ we have%
\begin{eqnarray*}
A(f,\Delta ) &\leq &2\pi (1-\sqrt{1-R_{f}^{2}}) \\
&=&\frac{1-\sqrt{1-R_{f}^{2}}}{R_{f}}L(f,\partial \Delta ) \\
&<&L(f,\partial \Delta ),
\end{eqnarray*}%
and (\ref{a4}) is proved.

On the other hand, under the additional assumption $L(f,\partial \Delta
)\geq \sqrt{2}\pi ,$ we have $\frac{4\pi }{L(f,\partial \Delta )}\leq 2\sqrt{%
2},$ and then by (\ref{a4}), we have
\begin{equation*}
A(f,\Delta )+4\pi <L(f,\partial \Delta )+4\pi \leq \left( 1+2\sqrt{2}\right)
L(f,\partial \Delta )<4L(f,\partial \Delta ).
\end{equation*}%
This completes the proof.
\end{proof}

\begin{corollary}
\label{good0}Let $f:\overline{\Delta }\rightarrow S$ be a normal mapping
such that $f$ maps the diameter $I=[-1,1]$ of $\overline{\Delta }$
homeomorphically onto the line segment $\gamma =\overline{0,1}$ in $S$ and%
\begin{equation}
L(f,\partial \Delta )<\sqrt{2}\pi .  \label{4.1}
\end{equation}%
Then%
\begin{equation*}
A(f,\Delta )\leq A(D_{l}),
\end{equation*}%
where $D_{l}$ is the convex Jordan domain in $S$ which is contained in the
spherical disk in $S$ with diameter $\overline{0,1}$ and is enclosed by the
two circular arcs in $S,$ each of which has endpoints $\{0,1\}$ and length $%
l=\frac{1}{2}L(f,\partial \Delta ).$
\end{corollary}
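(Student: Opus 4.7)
The plan is to reduce to Corollary~\ref{ber0} by a Schwarz-type reflection, combined with a level-set analysis in the spirit of Theorem~\ref{good}. Let $\sigma: S \to S$ denote the reflection of the sphere across the great circle $C$ containing $\overline{0,1}$; since $C$ also passes through $-1$ and $\infty$, the reflection $\sigma$ fixes the set $E = \{0, 1, \infty\}$ pointwise. Write $l_+$, $l_-$ for the spherical lengths of $f$ on the upper and lower semicircles of $\partial\Delta$, so $l_+ + l_- = L(f, \partial\Delta) = 2l$; since $f(-1)$ and $f(1)$ are the distinct points of $\{0,1\}$, of spherical distance $\pi/2$, we have $l_\pm \geq \pi/2$.

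First I would construct a doubled mapping by Schwarz reflection: define $\tilde f_+:\overline\Delta \to S$ by $\tilde f_+|_{\overline{\Delta^+}} = f$ and $\tilde f_+(z) = \sigma(f(\bar z))$ on $\overline{\Delta^-}$ (and symmetrically for $\tilde f_-$). The two definitions of $\tilde f_+$ agree on the diameter $[-1,1]$ since $f([-1,1]) = \overline{0,1} \subset C$ is pointwise fixed by $\sigma$. Both $\sigma$ and $\rho(z) = \bar z$ are orientation-reversing, so their composition preserves orientation, making $\tilde f_+$ a normal mapping with $\tilde f_+(\Delta) \cap E = \emptyset$, $A(\tilde f_+, \Delta) = 2A(f, \Delta^+)$, and $L(\tilde f_+, \partial\Delta) = 2l_+ < \sqrt{2}\pi$.

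Next is the key step: establish $A(\tilde f_\pm, \Delta) \leq A(D^*_{l_\pm})$, where $D^*_{l_\pm}$ denotes the symmetric convex lens bounded by two congruent circular arcs of length $l_\pm$ from $0$ to $1$. Following the proof of Theorem~\ref{good}, decompose $A(\tilde f_+, \Delta) = \sum_{j,k} A(\tilde\Delta_{jk})$ into level-set components. By the $\sigma$-equivariance $\tilde f_+ \circ \rho = \sigma \circ \tilde f_+$, each component is either $\sigma$-symmetric or occurs in a $\sigma$-reflected pair; in either case its boundary arcs come in matched pairs meeting on $C$, with the extreme endpoints at $\{0, 1\}$. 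Applying Bernstein's inequality to each symmetric pair and then invoking Corollary~\ref{ber0} in place of the cruder Bernstein bound to pass from a disk estimate to a two-arc lens estimate, and summing via the super-additivity of Lemma~\ref{extre}, yields the claimed bound. Dividing by two, $A(f, \Delta^\pm) \leq A(D^*_{l_\pm})/2$.

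The final step combines the two halves. Summing gives $A(f, \Delta) \leq \tfrac12 A(D^*_{l_+}) + \tfrac12 A(D^*_{l_-})$, which is exactly the area of the (generally asymmetric) two-arc lens bounded by a circular arc of length $l_+$ above $\overline{0,1}$ and one of length $l_-$ below, with total perimeter $2l$ and each arc of length $\geq \pi/2$. Corollary~\ref{ber0} applied directly to this lens gives $\tfrac12 A(D^*_{l_+}) + \tfrac12 A(D^*_{l_-}) \leq A(D_l)$, with equality only at $l_+ = l_- = l$, completing the proof.

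The main obstacle is the second step: sharpening Theorem~\ref{good}'s disk bound to the lens bound. A direct application of Theorem~\ref{good} to $\tilde f_\pm$ yields only $A(\tilde f_\pm, \Delta) \leq 2\pi(1 - \sqrt{1 - l_\pm^2/\pi^2})$, the area of a spherical disk of perimeter $2l_\pm$, which strictly exceeds $A(D^*_{l_\pm})$ throughout the range $2l_\pm < \sqrt{2}\pi$ (since no circle through $\{0,1\}$ has perimeter $< \sqrt{2}\pi$). The refinement requires carefully exploiting the $\sigma$-symmetry so that Corollary~\ref{ber0} replaces Bernstein in the per-component bound, and additional care is needed when level components have boundaries with multiple arc pieces or when the meeting points on $C$ are not $\{0,1\}$ themselves.
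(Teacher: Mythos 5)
Your overall architecture (treat $\Delta^{+}$ and $\Delta^{-}$ separately, bound each half by the area of a one-arc region attached to $\overline{0,1}$, then combine the two halves via Corollary \ref{ber0}) is the same as the paper's, and your final combination step is correct. The genuine gap is exactly the step you flag as the "main obstacle": the claim $A(\tilde f_{\pm},\Delta)\leq A(D^{*}_{l_{\pm}})$ for the reflected double. Your proposed repair --- apply Bernstein per level component and then "invoke Corollary \ref{ber0} in place of the cruder Bernstein bound" --- does not work, because Corollary \ref{ber0} is a statement about Jordan curves consisting of two circular arcs whose endpoints are pinned at $\{0,1\}$, at mutual distance $\pi/2$. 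The level sets $\tilde\Delta_{j}$ of the doubled map for $j\geq 2$ are merely $\sigma$-symmetric (or come in $\sigma$-reflected pairs); their boundaries cross the great circle $C$ at points that need not be $0$ or $1$ and need not be $\pi/2$ apart, and a $\sigma$-symmetric region with unpinned axis crossings has the Bernstein \emph{disk} as its isoperimetric extremal, not a lens. So the sharpest per-component bound available is the disk bound, and summing disk bounds via Lemma \ref{extre} returns precisely the disk bound $2\pi(1-\sqrt{1-l_{\pm}^{2}/\pi^{2}})$ for the total --- the bound you correctly identify as too weak. The anchoring of the image along the full segment $\overline{0,1}$ (coming from $f([-1,1])=\overline{0,1}$) is global information that is lost once you decompose into multiplicity components. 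A secondary defect: the intermediate symmetric lens $D^{*}_{l_{+}}$ and the doubled map itself can fall out of range, since $l_{+}$ is only bounded by $l_{+}<\sqrt{2}\pi-l_{-}\leq\sqrt{2}\pi-\pi/2$, so $2l_{+}$ may exceed $\sqrt{2}\pi$ and even $2\pi$, in which case neither Theorem \ref{good} nor Corollary \ref{ber0} applies to the doubled object.

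The paper closes this gap with a completion rather than a reflection: it extends $f|_{\overline{\Delta^{+}}}$ to a map $F$ of the whole disk by attaching, along $[-1,1]\mapsto\overline{0,1}$ and $\alpha^{-}\mapsto C\setminus c_{1}$, a homeomorphism of $\overline{\Delta^{-}}$ onto $\overline{D_{2}}=\overline{K\setminus\overline{D_{1}}}$, where $K$ is the disk bounded by the circle $C$ through $0$ and $1$ whose arc $c_{1}$ has length $L(f,\alpha^{+})$, and $D_{1}$ is the target one-arc region. Then $L(F,\partial\Delta)=L(C)<2\pi$, Theorem \ref{good} gives $A(F,\Delta)\leq A(K)$ because the comparison disk has the same perimeter as $K$, and subtracting the known area $A(D_{2})$ from both sides yields $A(f,\Delta^{+})\leq A(D_{1})$. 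In other words, the anchoring along $\overline{0,1}$ is exploited by completing to a perimeter-$L(C)$ map whose isoperimetric extremal \emph{is} the disk $K$, so only the crude Theorem \ref{good} is ever needed. If you replace your reflection step by this completion step, the rest of your argument goes through.
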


\begin{remark}
The domain $D_{l}$ defined here is congruent with the domain $D_{l}$ defined
in Section \ref{1SS-Intro} and the convexity of $D_{l}$ is ensured by (\ref%
{4.1}).
\end{remark}

\begin{proof}
This follows from Corollaries \ref{ber0} and Theorem \ref{good}. Without
loss of generality, we assume that the orientation of $f([-1,1])\subset S$
is from $0$ to $1.$

Let
\begin{equation*}
\alpha ^{+}=\{z\in \partial \Delta ;\mathrm{Im}z\geq 0\},\ \alpha
^{-}=\{z\in \partial \Delta ;\mathrm{Im}z\leq 0\},
\end{equation*}%
\begin{equation*}
\Delta ^{+}=\{z\in \Delta ;\mathrm{Im}z>0\},\ \Delta ^{-}=\{z\in \Delta ;%
\mathrm{Im}z<0\}.
\end{equation*}%
Then by (\ref{4.1}) there uniquely exists a circle $C$ in $S$ passing
through $0$ and $1$ such that the interior $\overline{0,1}^{\circ }$ of $%
\overline{0,1}$ is contained in the disk $K$ enclosed by $C$ and the section
$c_{1}$ of $C$ from $1$ to $0$ has length $L(f,\alpha ^{+}).$ Then, by the
assumption, it is clear that
\begin{equation*}
L(c_{1})=L(f,\alpha ^{+})\geq \frac{\pi }{2}.
\end{equation*}

If $f(\alpha ^{+})=\frac{\pi }{2},$ then by the assumption we have $%
f(\partial \Delta ^{+})=\overline{0,1},$ and then $f(\Delta ^{+})$ must
contains $\infty ,$ for normal mappings are open mappings. But this
contradicts the assumption that $f$ is normal and as a normal mapping $%
f(z)\neq 0,1,\infty $ for all $z\in \Delta $. Thus we have $L(f,\alpha ^{+})>%
\frac{\pi }{2},$ which implies
\begin{equation}
L(\partial K)=L(C)<2\pi .  \label{4.1+1}
\end{equation}%
Thus $\overline{0,1}+c_{1}$ encloses a Jordan domain $D_{1}$ and $%
D_{2}=K\backslash \overline{D_{1}}$ is also a Jordan domain.

We may extend $f|_{\overline{\Delta ^{+}}}$ to be a continuous mapping $F:%
\overline{\Delta }\rightarrow S$ such that $F$ restricted to $\overline{%
\Delta ^{-}}$ is a homeomorphism onto $\overline{D_{2}}$ and restricted to $%
\alpha ^{-}$ is a homeomorphism onto $C\backslash c_{1}.$ Then, we have
\begin{equation*}
L(F,\partial \Delta )=L(f,\alpha ^{+})+L(F,\alpha
^{-})=L(c_{1})+L(C\backslash c_{1})=L(C),
\end{equation*}%
which, with (\ref{4.1+1}), implies
\begin{equation}
L(F,\partial \Delta )=L(\partial K)=L(C)<2\pi .  \label{4.2}
\end{equation}

$F$ is not a normal mapping, and so we can not apply Theorem \ref{good} to $%
F $ directly. But by (\ref{4.2}) we can apply Theorem \ref{good} to a normal
mapping $g$ so that $|L(g,\partial \Delta )-L(F,\partial \Delta )|$ and $%
|A(g,\Delta )-A(F,\Delta )|$ can be made arbitrarily small, and finally
obtain%
\begin{equation}
A(F,\Delta )\leq A(D_{F}),  \label{4.3}
\end{equation}%
where $D_{F}$ is a disk in some hemisphere of $S$ with%
\begin{equation}
L(\partial D_{F})=L(F,\partial \Delta ).  \label{4.5}
\end{equation}

$F$ is not normal just because the boundary curve $\Gamma _{F}=F(z),z\in
\partial \Delta ,$ is not polygonal. Since $F(\alpha ^{+})=f(\alpha ^{+})$
is already polygonal, $F(\alpha ^{-})=C\backslash c_{1}$ and $\partial
\Delta =\alpha ^{+}\cup \alpha ^{-},$ the mapping $g$ mentioned above can be
obtained by restricting $F$ to a domain $\Delta _{g}\subset \Delta $ with $%
\Delta _{g}\supset \Delta ^{+}.$

By (\ref{4.2}) and (\ref{4.5}) we have $L(\partial D_{F})=L(\partial K),$
which implies $A(D_{F})=A(K).$ Thus, by (\ref{4.3}) we have%
\begin{equation*}
A(F,\Delta )\leq A(K).
\end{equation*}%
Therefore, by the facts $A(K)=A(D_{1})+A(D_{2})$ and $A(F,\Delta
)=A(f,\Delta ^{+})+A(D_{2})$ we have $A(f,\Delta ^{+})\leq A(D_{1}).$

Similarly, we can show that $A(f,\Delta ^{-})\leq A(D_{1}^{\prime }),$ where
$D_{1}^{\prime }$ is the convex domain in some hemisphere of $S$ and is
enclosed by $\overline{1,0}$ and the circular arc $c_{2}$ from $0$ to $1$
with $L(c_{2})=L(f,\alpha ^{-}).$ Then $\gamma =c_{1}+c_{2}$ encloses a
Jordan domain $D_{\gamma }$ with $A(D_{\gamma })=A(D_{1})+A(D_{1}^{\prime })$
and%
\begin{equation*}
A(f,\Delta )\leq A(D_{1})+A(D_{1}^{\prime })=A(D_{\gamma }),
\end{equation*}%
and by Corollary \ref{ber0}, the desired result follows. This completes the
proof.
\end{proof}

Let $\alpha $ be a circular path in the upper half plane $\mathrm{Im}z\geq 0$
from $1$ to $0$ and let $\mathfrak{A}_{\alpha }$ be the domain in $\mathbb{C}
$ enclosed by $\alpha $ and the interval $[0,1]$ and assume $L(\alpha )\leq
\frac{\sqrt{2}}{2}\pi ,$ which means that $\alpha $ is contained in the
closed half-disk
\begin{equation*}
\{z\in \mathbb{C};\ \mathrm{Im}z\geq 0\ \mathrm{and\ }|z-\frac{1}{2}|<\frac{1%
}{2}\}.
\end{equation*}%
Then
\begin{equation*}
\frac{\pi }{2}\leq L(\alpha )\leq \frac{\sqrt{2}}{2}\pi .
\end{equation*}%
We want to find the relation between the spherical length $L(\alpha )$ and
the spherical area $A(\mathfrak{A}_{\alpha }).$ We will show that both $%
L(\alpha )$ and $A(\mathfrak{A}_{\alpha })$ is a real analytical function of
\begin{equation*}
\tau =\sin \theta _{a},0\leq \theta _{\alpha }\leq \frac{\pi }{2}.
\end{equation*}%
where $\theta _{\alpha }$ is the value of the angle between $\alpha $ and
the interval $[0,1]$ at $0.$

%\begin{figure}[tbp]
%\centering
%\includegraphics[scale=0.85]{ratio-1.eps} \label{ratio-1}
%\end{figure}

\begin{lemma}
\label{mid}In the above setting, we have%
\begin{equation}
L(\alpha )=\zeta _{0}(\tau ):=\frac{2}{\sqrt{1+\tau ^{2}}}(\frac{\pi }{2}%
-\arctan \frac{\sqrt{1-\tau ^{2}}}{\sqrt{1+\tau ^{2}}}),\tau \in \lbrack
0,1],  \label{1110-1}
\end{equation}%
and%
\begin{equation}
A(\mathfrak{A}_{\alpha })=\zeta _{1}(\tau ):=2\arcsin \tau -\tau \zeta
_{0}(\tau ),\tau \in \lbrack 0,1].  \label{1110-2}
\end{equation}
\end{lemma}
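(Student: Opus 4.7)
The plan is to parameterize $\alpha$ explicitly and reduce both the length and area integrals to a single standard one-variable integral. Since $\alpha$ is a Euclidean circular arc joining $0$ and $1$ with angle $\theta_\alpha$ at $0$, symmetry about the perpendicular bisector $\operatorname{Re} z = 1/2$ forces the center and radius of the underlying Euclidean circle to be
\begin{equation*}
c = \tfrac{1}{2} - \tfrac{i}{2}\cot\theta_\alpha, \qquad r = \tfrac{1}{2\sin\theta_\alpha} = \tfrac{1}{2\tau}.
\end{equation*}
I will use the parameterization $z(\psi) = c + ire^{i\psi}$, $\psi \in [-\theta_\alpha, \theta_\alpha]$ (giving $z = 1$ at $\psi = -\theta_\alpha$ and $z = 0$ at $\psi = \theta_\alpha$). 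A short calculation using $|c|^2 = r^2$ yields the clean identity
\begin{equation*}
1 + |z(\psi)|^2 = 1 + 2r^2\bigl(1 - \cos(\psi - \theta_\alpha)\bigr) = 1 + 4r^2\sin^2\tfrac{\psi - \theta_\alpha}{2},
\end{equation*}
which will drive both computations.

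For the length, $|dz/d\psi| = r$, and the substitution $u = (\psi - \theta_\alpha)/2$ reduces $L(\alpha) = \int_\alpha \rho(z)\,|dz|$ to $4r\int_0^{\theta_\alpha}(1 + 4r^2\sin^2 u)^{-1}\,du$. This is the classical integral evaluating to $\frac{1}{\sqrt{1+4r^2}}\arctan\!\bigl(\sqrt{1+4r^2}\tan\theta_\alpha\bigr)$. Using $\sqrt{1+4r^2} = \sqrt{1+\tau^2}/\tau$ and $\tan\theta_\alpha = \tau/\sqrt{1-\tau^2}$, the argument of $\arctan$ collapses to $\sqrt{1+\tau^2}/\sqrt{1-\tau^2}$, and $\arctan(1/x) = \pi/2 - \arctan x$ converts the result to exactly $\zeta_0(\tau)$.

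For the area I would invoke Green's theorem with the $1$-form $\omega = \tfrac{2(-y\,dx + x\,dy)}{1+|z|^2} = \tfrac{2\operatorname{Im}(\bar z\,dz)}{1+|z|^2}$, which satisfies $d\omega = \tfrac{4}{(1+|z|^2)^2}\,dx\wedge dy$ by a direct check. On the segment $[0,1]$ one has $y = dy = 0$, so $\omega$ vanishes there and only the integral along $\alpha$ survives. Substituting the parameterization, $\operatorname{Im}(\bar z\, dz) = r^2(1 - \cos(\psi - \theta_\alpha))\,d\psi$, and the same change of variables $u = (\psi - \theta_\alpha)/2$ turns the integrand into $8r^2\sin^2 u/(1 + 4r^2\sin^2 u)$. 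Writing the numerator as $2[(1 + 4r^2\sin^2 u) - 1]$ splits the integral into two pieces, the first giving $2\theta_\alpha$ and the second being exactly $2$ times the length integral already evaluated:
\begin{equation*}
A(\mathfrak{A}_\alpha) = 2\theta_\alpha - 2\int_0^{\theta_\alpha}\frac{du}{1 + 4r^2\sin^2 u} = 2\theta_\alpha - \tfrac{1}{2r}L(\alpha) = 2\arcsin\tau - \tau L(\alpha),
\end{equation*}
which is $\zeta_1(\tau)$.

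The computation is essentially mechanical; the only real bookkeeping is checking that $c$ lies in the lower half plane so that $\alpha$ is the upper arc of the circle, and getting the correct orientation of $\partial\mathfrak{A}_\alpha$ (counterclockwise: $[0,1]$ from $0$ to $1$, then $\alpha$ from $1$ to $0$) when applying Green's theorem. A conceptually cleaner but less self-contained alternative would be to use spherical Gauss--Bonnet: the Euclidean circle through $0,1$ corresponds under stereographic projection to a spherical circle, so $\alpha$ has constant geodesic curvature $\kappa_g$; with interior angles $\theta_\alpha$ at both corners, Gauss--Bonnet reads $A(\mathfrak{A}_\alpha) + \kappa_g L(\alpha) = 2\theta_\alpha$, and identifying $\kappa_g = \tau$ reproduces $\zeta_1(\tau)$. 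Either route avoids any genuine difficulty; the calculation is the proof.
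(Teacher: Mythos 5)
Your proposal is correct, and both computations land exactly where the paper's do: the length reduces to $\int_0^{\theta_\alpha}(1+4r^2\sin^2u)^{-1}\,du$ with $4r^2=1/\tau^2$, and the area collapses to the identity $A(\mathfrak{A}_\alpha)=2\theta_\alpha-\tau L(\alpha)$. The difference is in the bookkeeping. For the length the paper parameterizes $\alpha$ by the argument $t=\arg z$, writing $|\alpha(t)|=\sin(\theta_\alpha-t)/\sin\theta_\alpha$ and $|d\alpha|=dt/\sin\theta_\alpha$, which yields the same integral $\int_0^{\theta_\alpha}\frac{2\sin\theta_\alpha}{\sin^2\theta_\alpha+\sin^2x}\,dx$ that you obtain from the angular parameterization about the center $c$; these are the same computation in different clothes. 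For the area the routes genuinely diverge: the paper integrates the spherical area form in polar coordinates centered at $0$, exploiting the fact that $\mathfrak{A}_\alpha$ is swept out radially as the argument runs over $[0,\theta_\alpha]$ (so the inner radial integral $\int_0^{|\alpha(t)|}\frac{4r\,dr}{(1+r^2)^2}=2\bigl(1-\frac{1}{1+|\alpha(t)|^2}\bigr)$ does the work), while you apply Green's theorem to the primitive $\omega=\frac{2\,\mathrm{Im}(\bar z\,dz)}{1+|z|^2}$ and use that $\omega$ vanishes on the real segment. The paper's method needs the observation that $\alpha$ is a graph over the argument; yours needs only the boundary parameterization and a correct orientation check, and is arguably more robust. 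Both then relate the surviving integral back to $L(\alpha)$ via $\frac{1}{2r}=\sin\theta_\alpha=\tau$. The Gauss--Bonnet remark is a nice sanity check but would require justifying $\kappa_g=\tau$ independently, so it is right that you keep it as an aside. The only loose end in either treatment is the degenerate endpoint $\tau=0$, where the circle becomes a line; the formulas extend by continuity and neither you nor the paper dwells on it.
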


\begin{proof}
Let $c_{\alpha }\in \mathbb{C}$ be the center of the circle containing $%
\alpha .$ Then $\mathrm{Re}c_{\alpha }=\frac{1}{2},$ and since $L(\alpha
)\leq \frac{\sqrt{2}}{2}\pi ,$ $\mathrm{Im}c_{\alpha }\leq 0.$ Let $%
d_{\alpha }=2c_{\alpha }.$ Then the triangle in $\mathbb{C}$ with vertices $%
0,1$ and $d_{\alpha }$ is a right-angled triangle and $\theta _{\alpha }$ is
the value of the angle at $d_{\alpha }.$

It is clear that
\begin{equation*}
|d_{\alpha }|=\frac{1}{\sin \theta _{\alpha }}.
\end{equation*}%
On the other hand, for any point $z\in \alpha ,$ it is clear that
\begin{equation*}
|z|=\sin (\theta _{\alpha }-t)|d_{\alpha }|,
\end{equation*}%
where $t=\arg z.$ Then we obtain a parameter expression of the circular path
$\alpha $:%
\begin{equation*}
\alpha =\alpha (t)=\frac{\sin (\theta _{\alpha }-t)}{\sin \theta _{\alpha }}%
e^{it},t\in \lbrack 0,\theta _{\alpha }],
\end{equation*}

Then we have%
\begin{equation*}
|d\alpha (t)|=\frac{|-e^{it}\cos (\theta _{\alpha }-t)+ie^{it}\sin (\theta
_{\alpha }-t)|}{\sin \theta _{\alpha }}dt=\frac{dt}{\sin \theta _{\alpha }},
\end{equation*}%
and%
\begin{eqnarray*}
L(\alpha ) &=&\int_{\alpha }\frac{2|dz|}{1+|z|^{2}}=\int_{0}^{\theta
_{\alpha }}\frac{2|d\alpha (t)|}{1+|\alpha (t)|^{2}} \\
&=&\int_{0}^{\theta _{\alpha }}\frac{2\sin \theta _{\alpha }}{\sin
^{2}\theta _{\alpha }+\sin ^{2}(\theta _{\alpha }-t)}dt \\
&=&\int_{0}^{\theta _{\alpha }}\frac{2\sin \theta _{\alpha }}{\sin
^{2}\theta _{\alpha }+\sin ^{2}x}dx \\
&=&\frac{2}{\sqrt{1+\sin ^{2}\theta _{\alpha }}}\left( \frac{\pi }{2}%
-\arctan \frac{\sqrt{1-\sin ^{2}\theta _{\alpha }}}{\sqrt{1+\sin ^{2}\theta
_{\alpha }}}\right) ,
\end{eqnarray*}%
and we have (\ref{1110-1}).

On the other hand, we have%
\begin{eqnarray*}
A(\mathfrak{A}_{\alpha }) &=&\iint\limits_{\mathfrak{A}_{\alpha }}\frac{4dxdy%
}{\left( 1+|z|^{2}\right) ^{2}} \\
&=&\int_{0}^{\theta _{a}}dt\int_{0}^{|\alpha (t)|}\frac{4rdr}{\left(
1+r^{2}\right) ^{2}}=2\int_{0}^{\theta _{a}}\left( 1-\frac{1}{1+|\alpha
(t)|^{2}}\right) dt \\
&=&2\theta _{\alpha }-2\int_{0}^{\theta _{a}}\frac{dt}{1+|\alpha (t)|^{2}} \\
&=&2\theta _{a}-2\int_{0}^{\theta _{a}}\frac{\sin ^{2}\theta _{\alpha }dx}{%
\sin ^{2}\theta _{\alpha }+\sin ^{2}x} \\
&=&2\theta _{a}-\sin \theta _{\alpha }L(\alpha ),
\end{eqnarray*}%
and we have (\ref{1110-2}).
\end{proof}

It is clear from the geometrical sense that the function
\begin{equation*}
\zeta _{0}(\tau )=\frac{2}{\sqrt{1+\tau ^{2}}}\left( \frac{\pi }{2}-\arctan
\frac{\sqrt{1-\tau ^{2}}}{\sqrt{1+\tau ^{2}}}\right) ,\tau \in \lbrack 0,1]
\end{equation*}%
is an injective mapping.

\begin{corollary}
\label{good00}Let $f:\overline{\Delta }\rightarrow S$ be a normal mapping
such that $f$ maps the diameter $[-1,1]$ of $\overline{\Delta }$
homeomorphically onto the line segment $\overline{0,1}$ in $S$ and%
\begin{equation*}
L(f,\partial \Delta )<\sqrt{2}\pi .
\end{equation*}%
Then
\begin{equation*}
A(f,\Delta )\leq 2\zeta _{1}(\tau )=4\arcsin \tau -2\tau \zeta _{0}(\tau ),
\end{equation*}%
where $\tau =\zeta _{0}^{-1}(\frac{1}{2}L(f,\partial \Delta ))$.
\end{corollary}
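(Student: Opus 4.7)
The plan is to deduce this from Corollary \ref{good0} combined with the explicit area formula of Lemma \ref{mid}. Set $l=\tfrac{1}{2}L(f,\partial\Delta)$, so that the hypothesis $L(f,\partial\Delta)<\sqrt{2}\pi$ gives $l<\tfrac{\sqrt{2}}{2}\pi$. Corollary \ref{good0} then yields
\[
A(f,\Delta)\leq A(D_l),
\]
where $D_l\subset S$ is the convex Jordan domain enclosed by two circular arcs $c_1,c_2$ with endpoints $\{0,1\}$ and $L(c_1)=L(c_2)=l$, lying in the spherical disk of diameter $\overline{0,1}$. Thus it suffices to verify the identity $A(D_l)=2\zeta_1(\tau)=4\arcsin\tau-2\tau\zeta_0(\tau)$ for $\tau=\zeta_0^{-1}(l)$.

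To do so, I would use the reflection symmetry of $D_l$ across the line segment $\overline{0,1}$. The two arcs $c_1,c_2$ are congruent and are images of each other under this reflection, so $\overline{0,1}$ cuts $D_l$ into two congruent halves $D_l^{+}$ and $D_l^{-}$, each bounded by $\overline{0,1}$ and one of the arcs. Via the stereographic projection $P$, the arc $P^{-1}(c_1)$ is a circular (or straight) arc in $\mathbb{C}$ with endpoints $0$ and $1$, lying in a closed half-plane determined by the line through $0$ and $1$, and the closed half-disk of Lemma \ref{mid} is precisely the region in $\mathbb{C}$ corresponding to $P^{-1}(D_l^+)$. Hence $P^{-1}(D_l^+)$ is exactly the planar domain $\mathfrak{A}_\alpha$ of Lemma \ref{mid} for $\alpha=P^{-1}(c_1)$, and $L(\alpha)=L(c_1)=l\leq\tfrac{\sqrt{2}}{2}\pi$ places $\alpha$ in the setting of that lemma.

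By injectivity of $\zeta_0$ on $[0,1]$ (noted just before the corollary), $\tau:=\zeta_0^{-1}(l)$ is well-defined and coincides with $\sin\theta_\alpha$; Lemma \ref{mid} then gives $A(D_l^+)=A(\mathfrak{A}_\alpha)=\zeta_1(\tau)$, and by congruence $A(D_l)=2\zeta_1(\tau)$, which is the stated quantity. I expect no real obstacle here, since the only content beyond quoting Corollary \ref{good0} is a symmetry/stereographic identification of $D_l^+$ with $\mathfrak{A}_\alpha$; the mildest care needed is to confirm that $l\geq\pi/2$ (guaranteed because a circular arc on $S$ joining $0$ and $1$ has length at least that of the geodesic segment $\overline{0,1}$, whose spherical length is $\pi/2$) so that we land in the parameter range $\tau\in[0,1]$ covered by Lemma \ref{mid}.
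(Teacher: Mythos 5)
Your proposal is correct and follows exactly the route the paper takes: the paper's own proof of Corollary \ref{good00} is simply the observation that it follows from Corollary \ref{good0} together with Lemma \ref{mid}, and your verification that $A(D_l)=2\zeta_1(\tau)$ via the reflection symmetry of $D_l$ across $\overline{0,1}$ (together with the checks $\pi/2\leq l\leq\tfrac{\sqrt{2}}{2}\pi$ and the injectivity of $\zeta_0$) is the intended, and correct, filling-in of that step.
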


\begin{proof}
This follows from Lemma \ref{mid} and Corollary \ref{good0} directly.
\end{proof}

\begin{theorem}
\label{good2}Let $f:\overline{\Delta }\rightarrow S$ be a normal mapping
such that $f$ maps the diameter $[-1,1]$ of $\overline{\Delta }$
homeomorphically onto the interval $[0,1]$ in $S$ and%
\begin{equation*}
L(f,\partial \Delta )<\sqrt{2}\pi .
\end{equation*}%
Then
\begin{equation*}
4\pi +A(f,\Delta )\leq h_{0}L(f,\partial \Delta ),
\end{equation*}%
where $h_{0}$ is given by (\ref{best}), i.e.
\begin{equation*}
h_{0}=\max_{\tau \in \lbrack 0,1]}\left[ \frac{\sqrt{1+\tau ^{2}}\left( \pi
+\arcsin \tau \right) }{\mathrm{arccot}\frac{\sqrt{1-\tau ^{2}}}{\sqrt{%
1+\tau ^{2}}}}-\tau \right] .
\end{equation*}
\end{theorem}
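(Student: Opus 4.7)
The plan is to derive the stated inequality as a direct algebraic consequence of Corollary \ref{good00}, by rewriting both sides in terms of the parameter $\tau$ that already appeared there. Since the hypotheses on $f$ in Theorem \ref{good2} are precisely those of Corollary \ref{good00} (with $[0,1]$ interpreted as $P([0,1])\subset S$), I can immediately invoke it to obtain
\begin{equation*}
A(f,\Delta)\le 2\zeta_{1}(\tau)=4\arcsin\tau-2\tau\zeta_{0}(\tau),\qquad \tau=\zeta_{0}^{-1}\!\left(\tfrac{1}{2}L(f,\partial\Delta)\right).
\end{equation*}
Because $\zeta_{0}$ is a strictly increasing bijection from $[0,1]$ onto $[\pi/2,\sqrt{2}\pi/2]$ (see Lemma \ref{mid}) and the argument in the proof of Corollary \ref{good0} forces $L(f,\partial\Delta)>\pi$, such a $\tau\in(0,1]$ really exists under the present hypothesis $L(f,\partial\Delta)<\sqrt{2}\pi$, so the invocation is legitimate.

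The next step is pure algebra. From the definition of $\tau$ I have $L(f,\partial\Delta)=2\zeta_{0}(\tau)$, so
\begin{equation*}
A(f,\Delta)\le 4\arcsin\tau-\tau L(f,\partial\Delta),
\end{equation*}
and adding $4\pi$ to both sides yields
\begin{equation*}
4\pi+A(f,\Delta)\le 4(\pi+\arcsin\tau)-\tau L(f,\partial\Delta).
\end{equation*}
Dividing through by $L(f,\partial\Delta)=2\zeta_{0}(\tau)$ and rearranging gives
\begin{equation*}
\frac{4\pi+A(f,\Delta)}{L(f,\partial\Delta)}+\tau\le\frac{2(\pi+\arcsin\tau)}{\zeta_{0}(\tau)}.
\end{equation*}

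Finally I will substitute the explicit form of $\zeta_{0}$ from (\ref{1110-1}) and use the identity $\tfrac{\pi}{2}-\arctan x=\operatorname{arccot} x$ (valid for $x\ge 0$) to rewrite
\begin{equation*}
\zeta_{0}(\tau)=\frac{2}{\sqrt{1+\tau^{2}}}\,\operatorname{arccot}\frac{\sqrt{1-\tau^{2}}}{\sqrt{1+\tau^{2}}}.
\end{equation*}
Plugging this in and subtracting $\tau$ from both sides produces
\begin{equation*}
\frac{4\pi+A(f,\Delta)}{L(f,\partial\Delta)}\le \frac{\sqrt{1+\tau^{2}}(\pi+\arcsin\tau)}{\operatorname{arccot}\frac{\sqrt{1-\tau^{2}}}{\sqrt{1+\tau^{2}}}}-\tau=h(\tau),
\end{equation*}
which is bounded above by $h_{0}=\max_{\tau\in[0,1]}h(\tau)$ by definition. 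Multiplying by $L(f,\partial\Delta)$ yields the required inequality.

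There is no substantive obstacle here: all the real work has been absorbed into Corollary \ref{good00} (which rests on the Bernstein isoperimetric inequality and the symmetry argument of Corollary \ref{good0}) and into Lemma \ref{mid} (which identifies the arc-length and enclosed-area functions $\zeta_{0},\zeta_{1}$ of $\tau$). The only thing to be careful about is the legitimacy of the parameter $\tau\in[0,1]$, i.e.\ that $\tfrac{1}{2}L(f,\partial\Delta)$ lies in the image of $\zeta_{0}$; this is guaranteed by the two-sided bound $\pi<L(f,\partial\Delta)<\sqrt{2}\pi$ derived above, so the proof reduces to the indicated transparent computation.
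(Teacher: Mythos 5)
Your proof is correct and follows essentially the same route as the paper: both invoke Corollary \ref{good00} for the bound $A(f,\Delta)\le 2\zeta_1(\tau)$ with $\tau=\zeta_0^{-1}(\tfrac12 L(f,\partial\Delta))$ and then perform the same algebraic rewriting of the ratio $\tfrac{2\pi+\zeta_1(\tau)}{\zeta_0(\tau)}$ into the function $h(\tau)$ whose maximum is $h_0$. Your explicit check that $\tfrac12 L(f,\partial\Delta)$ lies in the range of $\zeta_0$ is a welcome detail the paper leaves implicit.
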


\begin{proof}
Let $\tau =\zeta _{0}^{-1}(\frac{1}{2}L(f,\partial \Delta )).$ Then
\begin{equation*}
L(f,\partial \Delta )=2\zeta _{0}(\tau )
\end{equation*}%
and by Corollary \ref{good00}.
\begin{equation*}
A(f,\Delta )\leq 2\zeta _{1}(\tau ).
\end{equation*}%
Then%
\begin{eqnarray*}
4\pi +A(f,\Delta ) &\leq &4\pi +2\zeta _{1}(\tau ) \\
&=&\frac{4\pi +2\zeta _{1}(\tau )}{L(f,\partial \Delta )}L(f,\partial \Delta
) \\
&=&\frac{4\pi +2\zeta _{1}(\tau )}{2\zeta _{0}(\tau )}L(f,\partial \Delta )
\\
&=&\frac{2\pi +\zeta _{1}(\tau )}{\zeta _{0}(\tau )}L(f,\partial \Delta
)\leq h_{0}L(f,\partial \Delta ),
\end{eqnarray*}%
where
\begin{eqnarray*}
h_{0} &=&\max_{\tau \in \lbrack 0,1]}\frac{2\pi +\zeta _{1}(\tau )}{\zeta
_{0}(\tau )}=\max_{\tau \in \lbrack 0,1]}\frac{2\pi +2\arcsin \tau -\tau
\zeta _{0}(\tau )}{\zeta _{0}(\tau )} \\
&=&\max_{\tau \in \lbrack 0,1]}\left[ \frac{2\pi +2\arcsin \tau }{\frac{2}{%
\sqrt{1+\tau ^{2}}}\left( \frac{\pi }{2}-\arctan \frac{\sqrt{1-\tau ^{2}}}{%
\sqrt{1+\tau ^{2}}}\right) }-\tau \right] \\
&=&\max_{\tau \in \lbrack 0,1]}\left[ \frac{\sqrt{1+\tau ^{2}}\left( \pi
+\arcsin \tau \right) }{\mathrm{arccot}\frac{\sqrt{1-\tau ^{2}}}{\sqrt{%
1+\tau ^{2}}}}-\tau \right] .
\end{eqnarray*}
\end{proof}

\section{Locally convex polygonal paths and curves in the Riemann sphere
\label{SS5-Sconvex-curves}}

The goal of Sections \ref{SS5-Sconvex-curves}--\ref{ss-p1} is to prove
Theorem \ref{decom}, which is the second key step to prove the main theorem.

In this section we prove some results about locally convex polygonal Jordan
paths and curves, which is used in Sections \ref{ss-6lift} and \ref{ss-p1}.

It is clear that a generic convex triangle\footnote{%
See Definition \ref{gen-tri}.} in $S$ is contained in some open hemisphere
of $S,$ and then we have the followings.

\begin{lemma}
\label{convextri}Let $T\subset S$ be a triangle domain inside\footnote{%
By definition, \textquotedblleft inside\textquotedblright\ means
\textquotedblleft on the left hand side of\textquotedblright .} a generic
convex triangle $\overline{q_{1}q_{2}q_{3}q_{1}}$ in $S.$ Then for any $q\in
T$, the notation $\overline{q_{1}qq_{3}q_{1}}$ makes sense and denotes a
generic convex triangle.
\end{lemma}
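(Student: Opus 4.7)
The plan is to exploit the observation stated immediately before the lemma: any generic convex triangle in $S$ is contained in some open hemisphere $S^\ast$. Hence $\overline{T} \subset S^\ast$ and all four points $q_1, q_2, q_3, q$ lie in this open hemisphere.

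First I would verify that the three segments $\overline{q_1 q}$, $\overline{q q_3}$, $\overline{q_3 q_1}$ are well-defined. Antipodal points cannot both lie in a single open hemisphere, so no two of $q_1, q, q_3$ are antipodal and each shortest path is uniquely defined, with spherical length strictly less than $\pi$. Next I would show that these three segments form a non-degenerate Jordan triangle. Let $C$ be the great circle through $q_1$ and $q_3$ containing the side $\overline{q_3 q_1}$ of the original triangle. Because $T$ is a convex triangle domain with $\overline{q_3 q_1}$ on its boundary, $T$ lies strictly on one open side of $C$; in particular $q \in T$ does not lie on $C$, so $q_1, q, q_3$ are not collinear on $S$. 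Convexity of $T$ also gives $\overline{q_1 q}, \overline{q q_3} \subset \overline{T}$, hence $\overline{q_1 q q_3 q_1}$ is a Jordan curve enclosing a triangle domain $T' \subset \overline{T} \subset S^\ast$.

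It remains to check that all three interior angles of $T'$ are strictly less than $\pi$. At $q_1$, the new angle is bounded above by the original angle of $T$ at $q_1$: the ray from $q_1$ through $q$ enters the interior of the original angular sector at $q_1$ (since $q \in T$), so the new sector is strictly contained in the old one, whose angle is $<\pi$ by the genericity hypothesis on $\overline{q_1 q_2 q_3 q_1}$. The same argument handles $q_3$. The main obstacle is the angle at $q$; here I would argue that the closed curve $\overline{q_1 q q_3 q_1}$ bounds two complementary domains on $S$ whose interior angles at each vertex differ by $2\pi$, so one domain has all three interior angles less than $\pi$ and the other has all three greater than $\pi$ (the borderline value $\pi$ at any vertex is excluded by the non-collinearity established above). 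Since $T'$ is contained in the open hemisphere $S^\ast$ while the complementary domain contains a closed hemisphere, $T'$ must be the former. Consequently $\overline{q_1 q q_3 q_1}$ is a generic convex triangle.
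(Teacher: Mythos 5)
The paper itself offers no proof of this lemma (it is presented as an immediate consequence of the observation that a generic convex triangle lies in an open hemisphere, together with the Remark at the end of Section \ref{ss-2appoint}), so your write-up is judged on its own terms. Your overall strategy — reduce everything to the containment $\overline{T}\subset S^{\ast}$ for an open hemisphere $S^{\ast}$ — is exactly the intended one, and the first part of your argument (well-definedness of the three segments, non-collinearity of $q_{1},q,q_{3}$ via the great circle through $\overline{q_{3}q_{1}}$, and the bounds on the angles at $q_{1}$ and $q_{3}$) is sound.

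The step you yourself call the main obstacle, however, is not actually established. First, the two complementary domains of the Jordan curve $\overline{q_{1}qq_{3}q_{1}}$ have angles that \emph{sum} to $2\pi$ at each vertex (they do not "differ by $2\pi$"), and from this alone it does not follow that one domain collects all three angles $<\pi$ while the other collects all three angles $>\pi$: the per-vertex sum condition is perfectly consistent with a mixed assignment in which a domain has a small angle at $q_{1}$ and a large angle at $q$. The uniformity of the assignment across the three vertices is precisely the geometric content you need for the angle at $q$, so as written this step is a non sequitur. Second, you assert at the outset that the enclosed (left-hand) domain $T^{\prime}$ satisfies $T^{\prime}\subset\overline{T}$, and your final sentence uses exactly this containment to decide which complementary domain is the generic one; but that containment is itself an orientation statement that needs a (short) check, e.g.\ by noting that near an interior point of the common edge $\overline{q_{3}q_{1}}$ both $T$ and the small complementary domain lie on the left, so the small domain is the one enclosed by $\overline{q_{1}qq_{3}q_{1}}$.

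Both defects are repairable in the spirit of your plan. Once you know $\overline{T^{\prime}}\subset S^{\ast}$, you can either invoke the paper's remark that any triangle domain whose closure lies in an open hemisphere is enclosed by a generic convex triangle, or argue directly: gnomonic projection from the pole of $S^{\ast}$ is a diffeomorphism of $S^{\ast}$ onto the plane carrying geodesics to straight lines, so $T^{\prime}$ is carried onto the bounded component of the complement of a Euclidean triangle, all of whose angles are $<\pi$, and this property is preserved back on the sphere. Note that an area count via Girard's theorem does \emph{not} close the gap by itself: $A(T^{\prime})<2\pi$ together with the two angles at $q_{1},q_{3}$ being $<\pi$ gives only $\alpha_{q}<3\pi-\alpha_{q_{1}}-\alpha_{q_{3}}$, which is weaker than $\alpha_{q}<\pi$.
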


The following result is easy to see.

\begin{lemma}
\label{con-hemi0}For any polygonal convex Jordan curve $\Gamma $ in $S$ and
any natural edge\footnote{%
By definition, natural edges are oriented by the polygonal curve.} $l$ of $%
\Gamma ,$ the domain inside $\Gamma $ is contained in the open hemisphere of
$S$ which is inside the great circle determined\footnote{%
This means that the great circle contains $l$ and is oriented by $l$.} by $%
l. $
\end{lemma}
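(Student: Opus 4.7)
The plan is to prove the stronger conclusion $D_\Gamma \subset H^+$ by contradiction, where $D_\Gamma$ denotes the domain inside $\Gamma$ and $H^+$ denotes the open hemisphere inside $C_l$, the great circle determined by $l$. The setup: fix an interior point $p_0$ of the natural edge $l$. Since $p_0$ lies in the interior of $l$, $\Gamma$ coincides with a subarc of $C_l$ on some $S$-neighborhood $N$ of $p_0$, so $D_\Gamma \cap N$ lies on the left of this subarc, i.e., in $H^+$. Hence there exist points of $D_\Gamma \cap H^+$ arbitrarily close to $p_0$.

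Suppose for contradiction some $q \in D_\Gamma$ lies in $H^-$, the open hemisphere on the other side of $C_l$. I first choose $p_0 \in l^\circ$ not antipodal to $q$; this is possible because $l^\circ$ is a non-degenerate arc and $q$ has a unique antipode. By the convexity of $\Gamma$, for each $q' \in D_\Gamma \cap H^+$ near $p_0$ there is a line segment $L_{q'} \subset D_\Gamma$ joining $q$ and $q'$. Since $D_\Gamma$ is contained in a closed hemisphere by Remark \ref{loc-con} and no spherical arc of length greater than $\pi$ fits in a closed hemisphere, $L_{q'}$ must be the short arc of the great circle through $q$ and $q'$. As $q' \to p_0$, $L_{q'}$ converges to the short geodesic from $q$ to $p_0$; the underlying great circle meets $C_l$ only at $p_0$ and its antipode, and the short arc from $q$ to $p_0$ avoids the antipode, so this limit meets $C_l$ only at $p_0$. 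Therefore the unique crossing point of $L_{q'}$ with $C_l$ tends to $p_0$, and for $q'$ sufficiently close to $p_0$ this crossing lies in $l^\circ \subset \Gamma$, contradicting $L_{q'} \subset D_\Gamma$.

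This establishes $D_\Gamma \cap H^- = \emptyset$, i.e., $D_\Gamma \subset \overline{H^+}$. To upgrade this to $D_\Gamma \subset H^+$, I use that $D_\Gamma$ is open in $S$: if some $q \in D_\Gamma$ lay on $C_l$, every $S$-neighborhood of $q$ would meet $H^-$ while remaining in $D_\Gamma$, contradicting the previous step. The degenerate cases where $\Gamma$ is itself a great circle or has only a single natural vertex are immediate, since then $C_l = \Gamma$ and $D_\Gamma = H^+$ by the orientation convention. The main obstacle is the convergence step in the second paragraph: one has to force the crossing of $L_{q'}$ with $C_l$ onto $l$ rather than elsewhere on $C_l$, which is why $p_0$ must be chosen non-antipodal to $q$ and why it is essential that $L_{q'}$ be the short arc — both facts funnel the intersection close to $p_0$.
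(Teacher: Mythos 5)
The paper gives no proof of this lemma at all --- it is introduced with ``The following result is easy to see'' --- so there is no argument of the author's to compare yours against, and I am judging your proposal on its own terms. Your overall strategy is the natural one: assume $q\in D_{\Gamma}$ lies in the hemisphere $H^{-}$ on the wrong side of $C_{l}$, join $q$ inside $D_{\Gamma}$ to points $q'$ of $D_{\Gamma}\cap H^{+}$ accumulating at an interior point $p_{0}$ of $l$, and force the connecting segment to cross $l^{\circ}\subset\Gamma$. The first paragraph (existence of such $q'$) and the final convergence step are fine, \emph{granted} that $L_{q'}$ is the short arc from $q$ to $q'$.

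That is exactly where the gap is. You justify ``$L_{q'}$ is the short arc'' by citing Remark \ref{loc-con} for ``$D_{\Gamma}$ is contained in a closed hemisphere,'' but that remark's hypothesis is that the curve is \emph{locally convex}, whereas Lemma \ref{con-hemi0} only assumes $\Gamma$ is a \emph{convex} Jordan curve in the sense of the paper's definition: every pair of points of $D_{\Gamma}$ is joined by \emph{some} line segment (a section of a great circle, of unrestricted length) lying in $D_{\Gamma}$. These notions are not interchangeable, and the implication ``convex $\Rightarrow$ contained in a closed hemisphere'' is at least as strong as the statement you are proving, so invoking it here is essentially circular. The gap is not cosmetic: if $L_{q'}$ may be the long arc, then its intersection with $C_{l}$ converges to the antipode $-p_{0}$ rather than to $p_{0}$, and no contradiction results. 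Worse, with the paper's literal definitions the complement $S\setminus\overline{T}$ of a small geodesic triangle $T$ is a convex polygonal Jordan domain (any two of its points can be joined inside it by a suitable great-circle arc, possibly the long one), yet it is contained in no hemisphere determined by an edge of $T$; so the step you are missing cannot be recovered from the stated hypotheses and must instead be imported as an extra assumption (local convexity of $\Gamma$, or that the \emph{shortest} path between any two points of $D_{\Gamma}$ stays in $D_{\Gamma}$ --- which is how the lemma is actually used later in the paper). With that assumption made explicit, your argument closes; without it, the proof --- and indeed the lemma as literally stated --- does not.
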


\begin{lemma}
\label{con-hemi}Let $\Gamma =\Gamma (z),z\in \partial \Delta ,$ be a
polygonal Jordan curve in $S.$

(i) If $\Gamma $ is convex and contains a pair of antipodal points, then $%
\Gamma $ is a biangle, and moreover, if in addition $\Gamma $ has a straight
edge with length $>\pi ,$ then $\Gamma $ is a great circle of $S.$

(ii) If $\Gamma $ is convex and has at least three vertices in the usual
sense, i.e. $\Gamma $ can be expressed as
\begin{equation*}
\Gamma =l_{1}+l_{2}+\dots +l_{m},m\geq 3,
\end{equation*}%
where each $l_{j}$ is a straight edge\footnote{%
Note that the interior of $l_{j}$ may contain points in $E,$ and so $l_{j}$
may not be a natural edge of $\Gamma ,$ by the definition of natual edges.}
of $\Gamma $ whose endpoints are both strictly convex vertices of $\Gamma $,
$j=1,2,\dots ,m;$ then for each $j,$ $j=1,2,\dots ,m,$%
\begin{equation}
L(l_{j})<\pi  \label{6.1}
\end{equation}%
and for the great circle $C_{l_{j}}$ in $S$ determined by $l_{j},$%
\begin{equation}
\Gamma \cap C_{l_{j}}=l_{j},  \label{6.2}
\end{equation}%
and therefore, $\Gamma $ is contained in some open hemisphere of $S.$
\end{lemma}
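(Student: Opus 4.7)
For part (i), the plan is to use Lemma~\ref{con-hemi0} as the main lever: for every natural edge $l$ of the convex polygonal Jordan curve $\Gamma$, the domain $D_{\Gamma}$ lies in the open hemisphere bounded by $C_{l}$, so $\overline{D_{\Gamma}}$ sits in the corresponding closed hemisphere. Since a closed hemisphere contains an antipodal pair only on its bounding great circle, the hypothesis $p,-p\in \Gamma\subset \overline{D_{\Gamma}}$ forces every such $C_{l}$ to pass through both $p$ and $-p$; hence every maximal geodesic arc of $\Gamma$ lies on a great circle through $\{p,-p\}$. Because any two distinct great circles through $\{p,-p\}$ meet only at those two points, a short bookkeeping over the geometric vertices of $\Gamma$ shows that either $\Gamma$ is a single great circle through $\{p,-p\}$, or $\Gamma$ has exactly two geometric vertices, one at $p$ and one at $-p$, joined by two semicircular edges of length $\pi$. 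In either case $\Gamma$ is a biangle. The moreover clause follows at once: a biangle that is not a full great circle has its two straight edges of length exactly $\pi$, so an edge of length $>\pi$ forces $\Gamma$ to be a full great circle.

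For part (ii), I would establish the three assertions in turn. First, $L(l_{j})<\pi$ follows by contradiction: if $L(l_{j_{0}})\geq \pi$, then $l_{j_{0}}$ carries an antipodal pair, so by (i) $\Gamma$ is a biangle, which has at most two strictly convex vertices, contradicting $m\geq 3$. Next, to prove $\Gamma\cap C_{l_{j}}=l_{j}$, note that $D_{\Gamma}\cap C_{l_{j}}=\emptyset$ by Lemma~\ref{con-hemi0}, so $\Gamma\cap C_{l_{j}}=\overline{D_{\Gamma}}\cap C_{l_{j}}$. I would then argue that this intersection is spherically convex on $C_{l_{j}}$: for any two non-antipodal points in it, the unique shortest geodesic between them lies both on $C_{l_{j}}$ and, by convexity of $\overline{D_{\Gamma}}$, inside $\overline{D_{\Gamma}}$. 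Since part (i) rules out an antipodal pair in $\overline{D_{\Gamma}}$, the intersection $\overline{D_{\Gamma}}\cap C_{l_{j}}$ is a single closed sub-arc of $C_{l_{j}}$ of length $<\pi$ containing $l_{j}$; if it strictly contained $l_{j}$, then one of $v_{j-1}$ or $v_{j}$ would lie in its interior, so $\Gamma$ would be locally straight on $C_{l_{j}}$ at that vertex, contradicting strict convexity.

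For the final containment in an open hemisphere, I would perturb $C_{l_{1}}$ as follows. Since $L(l_{1})<\pi$, the complementary arc $C_{l_{1}}\setminus l_{1}$ has length $>\pi$ and hence contains an antipodal pair $\{q,-q\}$. Rotating $C_{l_{1}}$ by a small angle about the axis through $\{q,-q\}$ produces a great circle $C'$ with $C'\cap C_{l_{1}}=\{q,-q\}$, in particular disjoint from $l_{1}$; and since $\Gamma\setminus l_{1}^{\circ}$ sits at a positive spherical distance from $C_{l_{1}}$, for a sufficiently small rotation angle $C'$ is disjoint from all of $\Gamma$, placing $\Gamma$ strictly in one of the open hemispheres bounded by $C'$. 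The step I expect to be most delicate is the convexity-and-connectedness claim for $\overline{D_{\Gamma}}\cap C_{l_{j}}$ in (ii); this is where part (i) is essential, since without excluding antipodal obstructions the ``face'' $\overline{D_{\Gamma}}\cap C_{l_{j}}$ could a priori break into several disjoint arcs and the strict-convexity contradiction at $v_{j-1}$ or $v_{j}$ would not be available.
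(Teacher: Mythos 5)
Your treatment of (i) and of the length bound (\ref{6.1}) matches the paper's: both rest on Lemma \ref{con-hemi0} forcing every edge's great circle through the antipodal pair, and on the fact that a biangle cannot carry three strictly convex vertices. For (\ref{6.2}), however, you take a genuinely different route. The paper uses only the three closed hemispheres attached to $l_{j}$ and its two neighbours: their intersection $K$ is a genuine triangle domain with $K\cap C_{l_{j}}=l_{j}$, and $\Gamma \subset K$ by Lemma \ref{con-hemi0}, which gives (\ref{6.2}) in two lines. You instead analyse the exposed face $\overline{D_{\Gamma }}\cap C_{l_{j}}$ directly, show it is a single arc of length $<\pi $, and exclude overhang past the endpoints of $l_{j}$ by strict convexity. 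This works, but carries more overhead than the paper's argument: you must upgrade convexity of $D_{\Gamma }$ to arc-convexity of the closed face (it helps that any line segment joining two non-antipodal points of $C_{l_{j}}$ automatically lies on $C_{l_{j}}$); you must note that an antipodal pair of $\overline{D_{\Gamma }}$ necessarily lies on $\Gamma $ (since $D_{\Gamma }$ sits in an open hemisphere) before invoking (i); and the step ``$\Gamma $ would be locally straight at that vertex'' needs the remark that the overhanging arc, being a connected subset of the Jordan curve $\Gamma $ issuing from the vertex and meeting $l_{j}$ only at that vertex, must be an initial segment of the adjacent edge. All of this is routine, but worth recording.

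The one step that is wrong as written is the final hemisphere claim: $\Gamma \setminus l_{1}^{\circ }$ does \emph{not} have positive spherical distance from $C_{l_{1}}$, because the endpoints $q_{1},q_{2}$ of $l_{1}$ belong to $\Gamma \setminus l_{1}^{\circ }$ and lie on $C_{l_{1}}$. Consequently an unluckily oriented small rotation of $C_{l_{1}}$ re-enters $D_{l_{1}}$ near $l_{1}$ and cuts the adjacent edges, which leave $q_{1}$ and $q_{2}$ into $D_{l_{1}}$ at positive angles. The fix is to choose the rotation direction: rotate about the axis through the antipodal pair $\{q,-q\}\subset C_{l_{1}}\setminus l_{1}$ so that the arc of $C_{l_{1}}\setminus \{q,-q\}$ \emph{not} containing $l_{1}$ is the one pushed into $\overline{D_{l_{1}}}$. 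That arc is compact and disjoint from $\Gamma $ by (\ref{6.2}), hence at positive distance from $\Gamma $, so for a small enough angle the rotated circle misses $\Gamma $ entirely, while its other half lies outside $\overline{D_{l_{1}}}\supset \Gamma $; the connected set $\Gamma $ then lies in one open hemisphere. (The paper does not spell out this last containment at all, so supplying an argument is the right instinct; it just needs this orientation choice.)
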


\begin{proof}
Assume that $\Gamma $ has a pair of antipodal points $q_{1}$ and $q_{2}$. We
show that the section $\Gamma ^{\prime }$ of $\Gamma $ from $q_{1}$ to $%
q_{2}\ $is straight.

For any natural edge $e$ of $\Gamma ^{\prime }$, by Lemma \ref{con-hemi0}, $%
q_{1}$ and $q_{2}$ are both contained in the closed hemisphere inside the
great circle $C_{e}$ determined by $e,$ and thus $q_{1}$ and $q_{2}$ are
both contained in $C_{e}.$ By the arbitrariness of $e,$ $\Gamma ^{\prime }$
must be a straight path from $q_{1}$ to $q_{2}.$ For the same reason, we can
show that $\Gamma \backslash \Gamma ^{\prime }$ is also a straight path$.$
Thus, $\Gamma $ is a biangle, which implies the second conclusion of (i),
and (i) is proved.

Now, we prove (ii). It is clear that (i) implies (\ref{6.1}) directly, for
otherwise $\Gamma $ is a biangle which contains at most two edges in the
usual sense. So we may write%
\begin{equation*}
\Gamma =\overline{q_{1}q_{2}}+\overline{q_{2}q_{3}}+\dots +\overline{%
q_{m}q_{1}},m\geq 3,
\end{equation*}%
where $l_{j}=\overline{q_{j}q_{j+1}}$ with $q_{m+1}=q_{1}.$

We denote by $C_{l_{m}}$, $C_{l_{1}}$ and $C_{l_{2}}$ the great circles in $%
S $ determined by $l_{m}=\overline{q_{m}q_{1}},l_{1}=\overline{q_{1}q_{2}}\ $%
and $l_{2}=\overline{q_{2}q_{3}},$ and denote by $D_{l_{m}}$, $D_{l_{1}}$
and $D_{l_{2}}$ the domains inside $C_{l_{m}}$, $C_{l_{1}}$ and $C_{l_{2}},$
respectively. Then, $q_{m}$ and $q_{3}$ must be both contained in $%
D_{l_{1}}, $ since, by the assumption, $\Gamma $ is strictly convex at $%
q_{1} $ and $q_{2}$; and then, it is clear that $K=\overline{D_{l_{m}}}\cap
\overline{D_{l_{1}}}\cap \overline{D_{l_{2}}}$ is a closed triangle domain
whose three angles are all strictly less than $\pi $, and then $K$ has a
vertex in $D_{l_{1}}$ and%
\begin{equation*}
l_{1}=\overline{q_{1}q_{2}}=K\cap C_{l_{1}}.
\end{equation*}%
On the other hand, it is clear that $\Gamma \cap C_{l_{1}}\supset l_{1}$
and, by Lemma \ref{con-hemi0}, $K\supset \Gamma .$ Therefore, we have (\ref%
{6.2}) for $j=1$. This completes the proof.
\end{proof}

\begin{lemma}
\label{con-path-hemi}Let $\Gamma $ be a locally convex polygonal Jordan path
with initial and terminal point at $q_{1}.$ Assume $\Gamma $ has the
following natural partition\footnote{%
By definition, here "locally convex" means that for each $j=1,2,\dots ,m-1,$
$\overline{q_{j}q_{j+1}q_{j+2}}$ is a convex path from $q_{j}$ to $q_{j+2}.$
So, as a closed curve, $\Gamma $ may not be convex at $q_{1}$, i.e., $%
\overline{q_{m}q_{1}q_{2}}$ may not be a convex path.}
\begin{equation}
\Gamma =\overline{q_{1}q_{2}}+\overline{q_{2}q_{3}}+\dots +\overline{%
q_{m}q_{1}},m\geq 3,  \label{3.001}
\end{equation}%
such that
\begin{equation}
\overline{q_{1}q_{2}\dots q_{m}}\cap \lbrack 0,+\infty ]=\{q_{1}\}.
\label{3.002}
\end{equation}

Then the followings hold.

(i) For each $j=1,\dots ,m-2,$ $L_{j}=\overline{q_{1}q_{j+1}q_{j+2}q_{1}}$
is a generic convex triangle.

(ii) The closure $\overline{T_{\Gamma }}$ of the domain $T_{\Gamma }$ inside
$\Gamma $ is contained in some open hemisphere of $S.$

(iii) For each triangle domain $T_{j}$ inside the triangle $L_{j},$
\begin{equation*}
T_{j}\cap T_{k}=\emptyset ,1\leq j<k\leq m-2,
\end{equation*}%
and
\begin{equation*}
\overline{T_{\Gamma }}=\cup _{j=1}^{m-2}\overline{T_{j}}.
\end{equation*}
\end{lemma}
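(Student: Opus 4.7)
The plan is to prove the lemma by induction on $m$. For the base case $m = 3$, the path $\Gamma$ coincides with the triangle $L_1 = \overline{q_1 q_2 q_3 q_1}$. Local convexity at $q_2$ combined with Definition \ref{convex1}(1)(b) implies that $L_1$ encloses a convex triangle domain, and the fact that $q_2$ is a natural vertex together with the simple Jordan-path structure of $\Gamma$ rules out the degenerate collinear case, so $L_1$ is generic convex. This gives (i); (ii) is then immediate from the fact that every generic convex triangle lies in an open hemisphere (Lemma \ref{convextri}), and (iii) is trivial with $T_\Gamma = T_1$.

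For the inductive step with $m \geq 4$, I would cut off the first ``ear'' along the diagonal $\overline{q_1 q_3}$. The same argument as in the base case, applied to the convexity at $q_2$, shows that $L_1 = \overline{q_1 q_2 q_3 q_1}$ is generic convex; let $T_1$ denote its interior. I then form the residual path $\Gamma_1 = \overline{q_1 q_3 q_4 \cdots q_m q_1}$ with $m - 1$ vertices and verify that it inherits the hypotheses of the lemma: it is a polygonal Jordan path, locally convex at $q_3, \ldots, q_m$, and $\overline{q_1 q_3 \cdots q_m} \cap [0, +\infty] = \{q_1\}$. The Jordan-path and $[0, +\infty]$-disjointness properties follow from the observation that the new edge $\overline{q_1 q_3}$ is a side of the generic convex triangle $L_1$ whose interior is separated from the rest of $\Gamma$, combined with (3.002) applied to $\Gamma$. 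Applying the induction hypothesis to $\Gamma_1$ then yields generic convex triangles $L'_k = \overline{q_1 q_{k+2} q_{k+3} q_1}$ for $k = 1, \ldots, m - 3$; reindexing $L_{k+1} = L'_k$ and adjoining $L_1$ gives (i), while $\overline{T_\Gamma} = \overline{T_1} \cup \overline{T_{\Gamma_1}}$ with interiors separated by the chord $\overline{q_1 q_3}$ gives (iii). Part (ii) follows from the inductive conclusion applied to $\Gamma_1$ together with $L_1$ being adjacent across the common edge $\overline{q_1 q_3}$.

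The principal obstacle is verifying local convexity of $\Gamma_1$ at $q_3$. After the cut, the tangent direction arriving at $q_3$ changes from $\vec{q_2 q_3}$ to $\vec{q_1 q_3}$, and a direct angle calculation shows that the new turning at $q_3$ in $\Gamma_1$ equals the original turning plus the interior angle of $L_1$ at $q_3$; this sum could a priori exceed $\pi$ and destroy convexity. To resolve this, I expect to strengthen the inductive hypothesis by including, as an auxiliary invariant, the star-shapedness of $T_\Gamma$ from $q_1$, namely that every chord $\overline{q_1 q_k}$ lies in $\overline{T_\Gamma}$ and meets $\Gamma$ only at its endpoints. Star-shapedness is the true global property behind the fan triangulation, and it follows from local convexity at $q_2, \ldots, q_m$ together with condition (3.002), which prevents $\Gamma$ from spiralling around $q_1 \in [0, +\infty]$. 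Once $\overline{q_1 q_3}$ is known to lie in $\overline{T_\Gamma}$, the geometric constraint forces the sum of turning angles at $q_3$ to remain below $\pi$, preserving local convexity and allowing the induction to close. As a fallback for part (ii), one may argue by contraposition via Lemma \ref{con-hemi}(i): if $\overline{T_\Gamma}$ failed to lie in an open hemisphere, $\Gamma$ would contain antipodal points, forcing $\Gamma$ to be a biangle, which is incompatible with having $m - 1 \geq 2$ strictly convex natural vertices.
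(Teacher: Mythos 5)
Your proposal correctly isolates the crux of the lemma, but it does not resolve it, and the route you choose makes the unresolved step unavoidable. In the ear-cutting induction, before you can even form the residual path $\Gamma_1=\overline{q_1q_3q_4\dots q_mq_1}$ you need (1) that $d(q_1,q_3)<\pi$ so that $\overline{q_1q_3}$ is defined, (2) that $\overline{q_1q_3}$ lies in $\overline{T_\Gamma}$ and meets $\Gamma$ only at its endpoints (otherwise $\Gamma_1$ is not a Jordan path and $T_1\not\subset T_\Gamma$), and (3) that $\Gamma_1$ is still convex at $q_3$. You acknowledge (3) and propose to obtain all of this from star-shapedness of $T_\Gamma$ with respect to $q_1$, but you offer no argument for star-shapedness beyond the assertion that it ``follows from local convexity at $q_2,\dots,q_m$ together with (3.002).'' That assertion is precisely the global-from-local content of the lemma: conclusion (iii) is equivalent to the fan from $q_1$ being a triangulation, i.e.\ to star-shapedness. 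Since $\Gamma$, viewed as a closed curve, may fail to be convex at $q_1$, nothing local prevents a distant portion of $\Gamma$ from intruding into the triangle $\overline{q_1q_2q_3q_1}$; ruling this out is the whole difficulty, and your induction assumes it rather than proves it. Your fallback for (ii) is also unsound: Lemma \ref{con-hemi}(i) applies to \emph{convex} Jordan curves, whereas $\Gamma$ is not known to be convex at $q_1$; and in any case ``not contained in any open hemisphere'' does not imply ``contains an antipodal pair'' (three equatorial points at mutual distance $2\pi/3$ lie in no open hemisphere yet contain no antipodal pair).

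The paper's proof sidesteps all of this by cutting at the \emph{bad} vertex $q_1$ rather than at the good vertex $q_2$: it chooses a chord $l$ from $q_1$ into $T_\Gamma$ that splits the angle of $T_\Gamma$ at $q_1$ into two angles each of value $<\pi$ and first meets $\Gamma$ in the interior of some edge $\overline{q_sq_{s+1}}$ with $2\le s\le m-1$. The two resulting closed curves $\overline{q_1q_2\dots q_sq'}-l$ and $\overline{q'q_{s+1}\dots q_mq_1}+l$ are then \emph{strictly convex at every vertex} (the split at $q_1$ manufactures convexity there), so Lemma \ref{con-hemi}(ii) applies to each and yields the hemisphere containment (ii) directly; (i) and (iii) are then deduced from (ii). If you want to keep your ear-cutting induction, you would have to first prove the hemisphere containment or the star-shapedness by some independent argument of this kind; as written, the proposal has a genuine gap at its central step.
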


\begin{remark}
\label{u}(1). Condition (\ref{3.002}) is used just to ensure that each
vertex $q_{j},j=2,3,\dots ,m,$ of $\Gamma $ is a \emph{strictly} convex
vertex$.$ Thus, (\ref{3.002}) can be replaced by the condition that $\Gamma $
is strictly convex at $q_{2},\dots ,q_{m}.$ By Definition \ref{convex}, (\ref%
{3.002}) can also be replaced by
\begin{equation*}
\{q_{2},\dots q_{m}\}\cap E=\emptyset ,
\end{equation*}%
which, with the assumption that $\Gamma $ is locally convex, implies that $%
\Gamma $ is strictly convex at $q_{2},\dots ,q_{m}.$

(2). The reader should notice that (\ref{3.001}) makes sense if and only if $%
d(q_{j},q_{j+1})<\pi $ for all $j=1,\dots ,m-1,$ by the appointment.

(3). By conclusion (ii), in the case that $\overline{q_{m}q_{1}}+\overline{%
q_{1}q_{m}}$ is straight, we have $L(\overline{q_{m-1}q_{m}}+\overline{%
q_{m}q_{1}})<\pi .$ Thus if we regard $\Gamma $ as a closed polygonal Jordan
curve, each edge, in the usual sense, of $\Gamma $ has length $<\pi ,$ and
thus, each natural edge of $\Gamma $ has length $<\pi .$
\end{remark}

\begin{proof}
We regard $\Gamma $ as a closed curve. Then $\Gamma $ is locally convex
everywhere, with at most one exceptional point at $q_{1}.$

Let $T_{\Gamma }$ be the polygonal domain inside $\Gamma .$ Then, it is easy
to see that there is a path $l$ in $\overline{T_{\Gamma }}$ from $q_{1}$ to
some point $q^{\prime }\in \Gamma $ such that the followings hold.

(a) $l\cap \overline{q_{s}q_{s+1}}^{\circ }=\{q^{\prime }\}$ for some
natural edge $\overline{q_{s}q_{s+1}}$ of $\Gamma ,$ where $\overline{%
q_{s}q_{s+1}}^{\circ }$ is the interior of $\overline{q_{s}q_{s+1}}$ (if $%
s=m,$ $q_{s+1}=q_{1})$.

(b) The interior of $l$ is in the domain $T_{\Gamma }.$

(c) $l$ divides the angle $\Theta _{q_{1}}$ of the polygonal domain $%
T_{\Gamma }$ at $q_{1}$ into two angles, each of which has value $<\pi .$

By the fact that any two distinct straight lines in the sphere $S$ only
intersect at a pair of antipodal points, and that (\ref{3.001}) implies $L(%
\overline{q_{1}q_{2}})<\pi $ and $L(\overline{q_{m}q_{1}})<\pi ,$ we have
that
\begin{equation}
l\cap \overline{q_{1}q_{2}}=l\cap \overline{q_{m}q_{1}}=\{q_{1}\},
\label{3.003}
\end{equation}%
which implies%
\begin{equation}
2\leq s\leq m-1.  \label{3.004}
\end{equation}

It is easy to see from (a)--(c) that
\begin{equation*}
\Gamma _{1}=\overline{q_{1}q_{2}\dots q_{s}q^{\prime }}-l
\end{equation*}
is strictly convex at $q_{1}$ and $q^{\prime },$ and then by (\ref{3.002})
and the assumption that $\Gamma $ is a locally convex path and that $%
q_{2},\dots ,q_{m}$ are the all natural vertices, $\Gamma _{1}$ is a
polygonal convex Jordan curve that is strictly convex at all points $%
q_{1},\dots ,q_{s},q^{\prime }.$ On the other hand, since $\Gamma $ is
simple, by (\ref{3.003}) and (\ref{3.004}) we conclude that $q_{1},q_{2}$
and $q^{\prime }$ are distinct each other. Therefore, $\Gamma _{1}$ is a
convex polygonal Jordan curve that has at least three strictly convex
vertices$,$ and thus, by Lemma \ref{con-hemi} (ii), $\Gamma _{1}\backslash
\overline{q_{s}q^{\prime }}$ is contained in the open hemisphere $S^{\prime
} $ inside the great circle determined by $\overline{q_{s}q_{s+1}}\supset
\overline{q_{s}q^{\prime }}$, and for the same reason,
\begin{equation*}
\Gamma _{2}=\overline{q^{\prime }q_{s+1}\dots q_{m}q_{1}}+l
\end{equation*}
is also a convex polygonal Jordan curve that has at least three strictly
convex vertices and $\Gamma _{2}\backslash \overline{q^{\prime }q_{s+1}}$ is
also contained in $S^{\prime }.$ Thus, $\Gamma \backslash \overline{%
q_{s}q_{s+1}}$ is contained in $S^{\prime },$ and, considering that $L(%
\overline{q_{s}q_{s+1}})<\pi ,$ we have proved (ii).

(i) follows from (ii) and the convexity of $\Gamma _{1}$ and $\Gamma _{2}$;
and (iii) follows from (i) and (ii) directly. This completes the proof.
\end{proof}

In the rest of this section we assume that $\gamma _{0}$ is a locally convex
polygonal Jordan path that has the natural partition
\begin{equation}
\gamma _{0}=\overline{q_{1}q_{2}}+\overline{q_{2}q_{3}}+\dots +\overline{%
q_{m-1}q_{m}},m\geq 3,  \label{6-1}
\end{equation}%
with%
\begin{equation}
\gamma _{0}\cap \lbrack 0,+\infty ]=\{q_{1},q_{m}\}.  \label{3.2+1}
\end{equation}

Then $q_{2},\dots ,q_{m-1}$ are natural vertices of $\gamma _{0},$ at which $%
\gamma _{0}$ is convex, and none of $q_{2},\dots ,q_{m-1}$ is contained in $%
E.$ Thus, by Definitions \ref{n-v-path} and \ref{convex} we have that

(a) $\gamma _{0}$ is strictly convex at all its natural vertices, the points
$q_{2},\dots ,q_{m-1}.$

\begin{lemma}
\label{L6-1}Assume $q_{1}\neq q_{m}$ and let $I_{q_{1}q_{m}}$ be the section
of $[0,+\infty ]$ from $q_{1}$ to $q_{m}.$ Then the followings hold.

(i) $\Gamma =\gamma _{0}-I_{q_{1}q_{m}}$ is a polygonal Jordan curve that is
convex everywhere, with at most one exceptional point at $q_{1}$ or $q_{m}$.

(ii) $L(I_{q_{1}q_{m}})<\pi ,$ and $\Gamma $ and the closure $\overline{%
T_{\Gamma }}$ of the domain $T_{\Gamma }$ enclosed by
\begin{equation*}
\Gamma =\gamma _{0}-I_{q_{1}q_{m}}=\gamma _{0}+\overline{q_{m}q_{1}}
\end{equation*}%
is contained in some open hemisphere of $S$.

(iii) If, in addition, $q_{1}=0,$ then $\Gamma $ is strictly convex at $%
q_{m}.$
\end{lemma}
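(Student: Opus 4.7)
The plan is to handle the three parts in order, reducing (ii) to Lemma~\ref{con-path-hemi} by cutting $\Gamma$ at whichever of $q_1,q_m$ happens to be a convex vertex of $\Gamma$, and then reading (iii) off a direct angle analysis at $q_m$.

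For (i), I would first note that $\gamma_0\cap[0,+\infty]=\{q_1,q_m\}$ says the interior of $\gamma_0$ misses $I_{q_1q_m}$ entirely, so $\Gamma=\gamma_0-I_{q_1q_m}$ is a polygonal Jordan curve. Convexity is automatic in the interior of every straight edge. At each interior natural vertex $q_2,\dots,q_{m-1}$ of $\gamma_0$, the strict convexity of $\gamma_0$ (observation~(a) following (\ref{6-1})) passes to $\Gamma$. The added edge $-I_{q_1q_m}$ is straight, and any point of $E$ lying in its interior (only $1$ can occur) is a straight point of $\Gamma$ and hence convex. Thus only $q_1$ and $q_m$ remain as possible failures of convexity. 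A local-side argument then shows that at least one succeeds: near each endpoint, $\gamma_0$ stays on one side of the great circle $C$ containing $[0,+\infty]$, so combining the orientation induced on $\Gamma$ by the enclosed domain $T_\Gamma$ with the fact that the outgoing direction at $q_1$ along $\overline{q_1q_2}$ and the incoming direction at $q_m$ along $\overline{q_{m-1}q_m}$ lie on the same side of $C$ shows the two ``turns'' at $q_1$ and $q_m$ cannot both be right turns.

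For (ii), let $q^*\in\{q_1,q_m\}$ be a vertex at which $\Gamma$ is convex (existing by (i)), and let $q'$ denote the other endpoint. Re-parametrizing $\Gamma$ as a polygonal Jordan path starting and ending at $q'$, I view it as a locally convex polygonal Jordan path whose only possible exceptional vertex is $q'$. Its intermediate natural vertices are $q_2,\dots,q_{m-1},q^*$ together with at most one additional point, namely $1$ if $1\in I_{q_1q_m}^{\circ}$; of these, $q_2,\dots,q_{m-1}$ are strictly convex and lie off $[0,+\infty]$, $q^*$ is convex by choice, and at $1$ the curve $\Gamma$ is actually straight. I then apply Lemma~\ref{con-path-hemi} --- with the trivial modification that straight intermediate natural vertices on $[0,+\infty]$ merely subdivide a single straight edge and play no role in that proof --- to conclude that $\overline{T_\Gamma}$ sits inside an open hemisphere of $S$. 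Remark~\ref{u}(3) then gives $L(I_{q_1q_m})<\pi$ automatically.

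For (iii), assume $q_1=0$. The outgoing direction from $q_m$ along $-I_{q_1q_m}$ is the unique tangent to $[0,+\infty]$ at $q_m$ pointing toward $0$, while the incoming direction along $\overline{q_{m-1}q_m}$ lies strictly off $C$ since $q_{m-1}\notin[0,+\infty]$. Near $q_m$, the path $\gamma_0$ lies in a single open hemisphere bounded by $C$; since (ii) already places $T_\Gamma$ inside an open hemisphere of $S$, the domain $T_\Gamma$ must lie locally on the same side of $C$ as $\gamma_0$, which forces the turn of $\Gamma$ at $q_m$ to be strictly to the left, i.e.\ strictly convex. The main obstacle I anticipate is the orientation bookkeeping in (i) and (iii) --- namely checking that the ``single-side'' argument picks out the correct sign at each of $q_1,q_m$ --- together with the verification that Lemma~\ref{con-path-hemi} applies despite a possible extra straight natural vertex at $1\in I_{q_1q_m}^\circ$; once those are settled, (ii) and (iii) follow essentially as formal consequences of the hemispheric containment machinery already in place.
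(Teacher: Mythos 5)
Your reduction of (ii) to Lemma~\ref{con-path-hemi} once a convex endpoint is found, and your bookkeeping of which vertices of $\Gamma$ are automatically (strictly) convex, match the paper's strategy. But every step where something must actually be \emph{ruled out} is handled by a one-sentence ``local side/orientation'' assertion, and those assertions do not hold locally. In (i), non-convexity of $\Gamma$ at both $q_{1}$ and $q_{m}$ is \emph{equivalent} to $q_{2}$ and $q_{m-1}$ both lying in the open hemisphere $S'$ on one fixed side of the great circle $C_{1m}$ --- i.e.\ the ``outgoing and incoming directions lie on the same side of $C$'' picture you describe is exactly the offending configuration, not a refutation of it. That configuration is locally realizable; the paper kills it globally: it extends the straight edge $I_{q_{1}q_{m}}$ on both sides into $T_{\Gamma}$ to a maximal chord $J$ with $L(J)>\pi$ (this is where $L([0,+\infty])=\pi$ enters), shows the resulting curve $\gamma_{0}'-J$ is a locally convex Jordan curve with a straight edge longer than $\pi$, and invokes Lemma~\ref{con-hemi}(i) (such a curve must be a great circle) to contradict strict convexity at the chord's endpoints. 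The same global mechanism, not a tangent-direction comparison, is what rules out non-convexity at $q_{m}$ in (iii): if $\Gamma$ turns right at $q_{m}$, the whole open arc from $q_{m}$ to $\infty$ lies in $T_{\Gamma}$, so $\overline{T_{\Gamma}}\supset[0,+\infty]$ and one again extends to a chord of length $>\pi$. Your observation that hemisphere containment forces the turn to be left is not enough: $\overline{T_{\Gamma}}$ containing a short arc of $C$ beyond $q_{m}$ is perfectly compatible with lying in an open hemisphere.

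There is a second, logical gap in (ii): you propose to apply Lemma~\ref{con-path-hemi} first and then read off $L(I_{q_{1}q_{m}})<\pi$ from Remark~\ref{u}(3), but that lemma's hypotheses (via Remark~\ref{u}(2)) already require every edge of the closed curve, including the closing edge, to have length $<\pi$, so the inequality $L(I_{q_{1}q_{m}})<\pi$ is a prerequisite, not a corollary. The boundary case $\{q_{1},q_{m}\}=\{0,\infty\}$, $L(I_{q_{1}q_{m}})=\pi$, must be excluded by a separate argument (the paper again does this with the $J'$-extension and Lemma~\ref{con-hemi}(i)). Relatedly, your ``trivial modification'' for a straight vertex is not trivial when the straight vertex is $q^{*}$ itself: merging $\overline{q_{m-1}q_{m}}$ with $I_{q_{m}q_{1}}$ can produce an edge of length $\geq\pi$, which is exactly why the paper inserts a perturbation of $q_{2}$ (the curve $\gamma_{0}^{\ast}$) before concluding strict convexity at the other endpoint. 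In short, the proposal has the right skeleton but omits the extend-a-chord-and-apply-Lemma~\ref{con-hemi} argument that carries all three parts of the lemma.
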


\begin{proof}
It is clear that $\Gamma =\gamma _{0}-I_{q_{1}q_{m}}$ is simple, and by (a)
we have

(b) $q_{2},\dots ,q_{m-1}$ are strictly convex vertices of $\Gamma .$

Thus the possible nonconvex vertices of $\Gamma $ are $q_{1}$ and $q_{m}.$
We show that $\Gamma $ is convex at $q_{1}$ or $q_{m}.$ We assume the
contrary that both $q_{1}$ and $q_{m}$ are nonconvex vertices and without
loss of generality, we assume
\begin{equation}
q_{1}<q_{m}.  \label{aaa1}
\end{equation}
Then we have\footnote{%
Note that under this contrary assumption, $\Gamma $ does not go straight at $%
q_{1},$ nor at $q_{m}$, but turn right at both $q_{1}$ and $q_{m}$. On the
other hand, $\overline{q_{1}q_{2}},\overline{q_{m-1}q_{m}}$ make sense if
and only if $d\{q_{1},q_{2}\}<\pi $ and $d\{q_{m-1},q_{m}\}<\pi .$ Thus, $%
\overline{q_{1}q_{2}}\cap C_{1m}=\{q_{1}\}\ $and and $\overline{q_{m-1}q_{m}}%
\cap C_{1m}=\{q_{m}\}$, which, with the assumption $q_{1}<q_{m},$ implies
(c).}

(c) Both $q_{2}$ and $q_{m-1}$ are contained in the open hemisphere $%
S^{\prime }$ inside the great circle $C_{1m}$ determined by $I_{q_{1}q_{m}}.$

Then by (\ref{3.2+1}), $I_{q_{1}q_{m}}$ has a neighborhood $J_{1}$ in the
great circle $C_{1m}$ determined by $I_{q_{1}q_{m}}\subset \lbrack 0,+\infty
]$ such that $J_{1}^{\circ }\supset \lbrack 0,+\infty ]$ and $%
J_{1}\backslash I_{q_{1}q_{m}}\subset T_{\Gamma },$ where $T_{\Gamma }$ is
the domain inside $\Gamma $ and $J_{1}^{\circ }$ is the interior of $J_{1}.$

It is clear that there are only two cases need to discuss:

\noindent \textbf{Case 1.} $C_{1m}\cap \Gamma =I_{q_{1}q_{m}}.$

\noindent \textbf{Case 2. }$\left( C_{1m}\cap \Gamma \right) \backslash
I_{q_{1}q_{m}}\neq \emptyset .$

Assume Case 1 occurs. Then $C_{1m}\cap \gamma _{0}=\{q_{1},q_{m}\},$ and for
the section\footnote{%
Recall that, by the appointment, a section of a curve inherits the
orientation of the curve, and so $I_{q_{m}q_{1}}^{\prime }$ is the
complementary of $I_{q_{1}q_{m}}^{\circ }$ in $C_{1m}.$} $%
I_{q_{m}q_{1}}^{\prime }$ of $C_{1m}$ from $q_{m}$ to $q_{1},$ by (\ref{aaa1}%
) and (c) we conclude that
\begin{equation*}
\Gamma ^{\prime }=\gamma _{0}+I_{q_{m}q_{1}}^{\prime }
\end{equation*}%
is a Jordan curve that is strictly convex at $q_{1}$ and $q_{m},$ and then
by (a) and Remark \ref{loc-con} we can conclude that $\Gamma ^{\prime }$ is
a convex polygonal Jordan curve in $S$ and is strictly convex at $%
q_{1},\dots ,q_{m},$ and thus we have by Lemma \ref{con-hemi} that $%
L(I_{q_{m}q_{1}}^{\prime })<\pi ,$ but on the other hand
\begin{equation*}
L(I_{q_{m}q_{1}}^{\prime })=L(C_{1m})-L(I_{q_{1}q_{m}})\geq 2\pi
-L([0,+\infty ])=\pi ,
\end{equation*}%
which is a contradiction. Thus, Case 1 can not occur, and then, Case 2 must
occur.

Then, we can extend the path $J_{1}$ past both sides to be a longer path $J$
from $q^{\prime }$ to $q^{\prime \prime }$ such that

(d) $\{q^{\prime },q^{\prime \prime }\}\subset \Gamma ,\ J$ is oriented by $%
I_{q_{1}q_{m}},$ the interior of the section of $J$ from $q^{\prime }$ to $%
q_{1}$ and the interior of the section of $J$ from $q_{m}$ to $q^{\prime
\prime }$ are both contained in $T_{\Gamma }.$

Then
\begin{equation}
L(J)>\pi ,  \label{qqqq}
\end{equation}
for $J\supset J_{1}^{\circ }\supset \lbrack 0,+\infty ].$

We first show that $q^{\prime }\neq q^{\prime \prime }.$ We assume the
contrary that $q^{\prime }=q^{\prime \prime }.$ Then it is clear that $%
q^{\prime }$ is in the interior $\gamma _{0}^{\circ }$ of $\gamma _{0}$ and,
by (d), we have
\begin{equation}
C_{1m}\cap \gamma _{0}^{\circ }=\{q^{\prime }\}.  \label{aaa}
\end{equation}%
Then, by (c), (\ref{aaa}) and the fact that $\gamma _{0}$ is simple and
connected, we have

\begin{equation}
\gamma _{0}^{\circ }\backslash \{q^{\prime }\}\subset S^{\prime }.
\label{qqq}
\end{equation}

Since $q^{\prime }\neq q_{1},q_{m},$ $\gamma _{0}$ is convex at $q^{\prime }$
by the assumption. Then by (\ref{qqq}), $\gamma _{0}$ is strictly convex at $%
q^{\prime },$ and thus the domain $T_{\Gamma }$ is a polygonal Jordan domain
with an angle at $q^{\prime }$ strictly less than $\pi .$ But by (d) and the
assumption $q^{\prime }=q^{\prime \prime },$ $J\backslash I_{q_{1}q_{m}}$ is
a neighborhood of $q^{\prime }$ in $C_{1m}$ and $\left( J\backslash
I_{q_{1}q_{m}}\right) \backslash \{q^{\prime }\}\subset T_{\Gamma }$. This
is a contradiction. Thus, $q^{\prime }\neq q^{\prime \prime }.$

Let $\gamma _{0}^{\prime }$ be the section of $\gamma _{0}$ from $q^{\prime
} $ to $q^{\prime \prime }.$ Then
\begin{equation*}
\Gamma ^{\prime }=\gamma _{0}^{\prime }-J
\end{equation*}%
is a polygonal Jordan curve that is strictly convex at $q^{\prime }\ $and $%
q^{\prime \prime },$ for $\gamma _{0}$ is a locally convex path, $%
\{q^{\prime },q^{\prime \prime }\}\subset \gamma _{0}^{\prime }\subset
\gamma _{0}^{\circ }$, $q^{\prime }$ and $q^{\prime \prime }$ have
neighborhoods in $J$ contained in $T_{\Gamma }.$ Thus, $\Gamma ^{\prime }$
is convex everywhere by the assumption on $\gamma _{0}$, and then $\Gamma
^{\prime }$ is convex by Remark \ref{loc-con}, and then by (\ref{qqqq}) and
Lemma \ref{con-hemi} (i), $\Gamma ^{\prime }$ is a great circle, which is a
contradiction since $\Gamma ^{\prime }$ strictly convex at $q^{\prime }.$

Summarizing the above argument, we can conclude that $\Gamma $ must be
convex at $q_{1}$ or $q_{m}$, and (i) is proved.

To prove the inequality in (ii), assume the contrary, that is, $%
L(I_{q_{1}q_{m}})\geq \pi .$ Then
\begin{equation*}
q_{1}=0\ \mathrm{and\ }q_{m}=\infty ,
\end{equation*}
and so $L(I_{q_{1}q_{m}})=\pi .$ Without loss of generality, by (i), we may
assume that

(e) $\Gamma =\gamma _{0}-I_{q_{1}q_{m}}$ is convex at $q_{1}=0.$

If $\Gamma $ is also convex at $q_{m},$ then $\Gamma $ is a convex curve in $%
S,$ and then by Lemma \ref{con-hemi} (i), $\Gamma $ is a biangle with
vertices $0$ and $\infty $, and then $[0,+\infty ]$ and $\gamma _{0}$ should
be the two straight edges of the biangle $\Gamma ;$ but by (b) this is a
contradiction.

We first assume that $\Gamma $ is not convex at $q_{m}.$ Then we can extend $%
I_{q_{1}q_{m}}$ past $q_{m}$ to obtain a longer line segment $J^{\prime }$
from $q_{1}$ to $q^{\prime }$ so that
\begin{equation}
q^{\prime }\in \gamma _{0}\ \mathrm{and\ }\left( J^{\prime }\backslash
I_{q_{1}q_{m}}\right) \backslash \{q^{\prime }\}\subset T_{\Gamma }.
\label{aaa4}
\end{equation}%
If $q^{\prime }=q_{1},$ then we have $J^{\prime }=C_{1m}\ $and then
\begin{equation}
J^{\prime }\backslash I_{q_{1}q_{m}}=C_{1m}\backslash \lbrack 0,+\infty
]\subset T_{\Gamma }.  \label{aaa2}
\end{equation}
But on the other hand, by (e), $\overline{q_{1}q_{2}}\backslash \{q_{1}\}$
is either contained in the open hemisphere $S\backslash \overline{S^{\prime }%
}$ outside $C_{1m},$ or $\overline{q_{1}q_{2}}\subset C_{1m}.$ Then, in the
case $q^{\prime }=q_{1},$ we have $\overline{q_{1}q_{2}}\backslash
\{q_{1}\}\subset S\backslash \overline{S^{\prime }}$ by (\ref{aaa2}), and
then $q^{\prime }=q_{1}$ has a neighborhood in $J^{\prime }$ that is outside
$T_{\Gamma },$ which contradicts (\ref{aaa2}). Thus, $q^{\prime }\neq q_{1}.$

Then $-J^{\prime }$ and the segment of $\gamma _{0}$ from $q_{1}$ to $%
q^{\prime }$ compose a polygonal Jordan curve $\Gamma ^{\prime },\ $and $%
\Gamma ^{\prime }$ is strictly convex at $q^{\prime },$ since $J^{\prime
}\backslash I_{q_{1}q_{m}}$ intersects $\Gamma $ at $q^{\prime }$ from the
left hand side of $\Gamma ,$ by (\ref{aaa4}), and $\Gamma $ is convex at $%
q^{\prime }(\neq q_{1},q_{m}).$ Hence, by (b) and (e), $\Gamma ^{\prime }$
is locally convex polygonal Jordan curve with the straight edge $-J^{\prime
} $ with $L(J^{\prime })>\pi ,$ which implies that $\Gamma ^{\prime }$ is a
great circle in $S$ by Lemma \ref{con-hemi} (i). But this contradicts that $%
\Gamma ^{\prime }$ is strictly convex at $q^{\prime },$ and we obtain a
contradiction again.

Summarizing the above discussion, we have proved
\begin{equation}
L(I_{q_{1}q_{m}})<\pi ,  \label{aaa5}
\end{equation}
the inequality in (ii). Then, we can write $-I_{q_{1}q_{m}}=-\overline{%
q_{1}q_{m}}=\overline{q_{m}q_{1}},$ and%
\begin{equation*}
\Gamma =\overline{q_{1}q_{2}}+\overline{q_{2}q_{2}}+\dots \overline{%
q_{m-1}q_{m}}+\overline{q_{m}q_{1}}.
\end{equation*}

Now we prove that $\Gamma $ is contained in some open hemisphere of $S.$

If $\{q_{1},q_{m}\}\subset (0,\infty ),$ then by (\ref{3.2+1}), neither $%
q_{2},$ nor $q_{m-1}$ can lie in $C_{1m}$ and thus by (i) $\Gamma $ is
strictly convex at $q_{1}$ or $q_{m}.$

Assume $q_{1}=0$. Then, by (\ref{aaa5}), $q_{m}\in (0,\infty )$ and by (\ref%
{3.2+1})%
\begin{equation}
q_{m-1}\notin C_{1m}.  \label{zz1}
\end{equation}
If $\Gamma $ is not convex at $0,$ then by (i) and by (\ref{3.2+1}), $\Gamma
$ is strictly convex at $q_{m}.$ If $\Gamma $ is straight near $q_{1},$ then
$\overline{q_{1}q_{2}}\subset C_{1m}$ and by (b), $\overline{q_{2}q_{3}}%
\backslash \{q_{2}\}\subset S\backslash \overline{S^{\prime }},$ and then we
can extend $\overline{q_{3}q_{2}}$ past $q_{2}$ to a point $q_{2}^{\prime }$
so that $\overline{q_{3}q_{2}^{\prime }}$ makes sense and $\overline{%
q_{1}q_{2}^{\prime }q_{3}}$ is still strictly convex at $q_{2}^{\prime }.$
Then the curve $\gamma _{0}^{\ast }=\overline{q_{1}q_{2}^{\prime }q_{3}\dots
q_{m}}$ satisfies all the assumptions of $\gamma _{0}$ but the curve $\Gamma
^{\ast }=\gamma _{0}^{\ast }-I_{q_{1}q_{m}}$ is not convex at $q_{1},$ and
thus by (i) and (\ref{zz1}) $\Gamma ^{\ast }$ is strictly convex at $q_{m}.$
But $\Gamma ^{\ast }$ and $\Gamma $ coincide near $q_{m},$ and thus $\Gamma $
is strictly convex at $q_{m}.$ If $q_{m}=\infty ,$ the discussion is similar.

Summarizing the above discussion, we can conclude that $\Gamma $ is strictly
convex at $q_{1}$ or $q_{m},$ and thus, by (b), either $\overline{%
q_{1}q_{2}\dots q_{m}q_{1}}$, or $\overline{q_{m}q_{1}\dots q_{m-1}q_{m}},$
is a locally convex path that is strictly convex at each natural vertices,
and then $\Gamma =\overline{q_{1}q_{2}\dots q_{m}q_{1}}$ is contained in
some open hemisphere of $S.$ The second part of (ii) is proved, and (ii) is
proved completely.

Now, assume $q_{1}=0.$ Then $q_{m}\in (0,+\infty )$ by the assumption and
(ii). Thus, by the assumption, $q_{m-1}$ is either contained in the open
hemisphere $S^{\prime }$ inside the great circle determined by $[0,+\infty
], $ or $q_{m-1}\in S\backslash \overline{S^{\prime }}.$ If $q_{m-1}\in
S^{\prime },$ then $\Gamma $ is not convex at $q_{m}$ and the open interval
of the great circle $C$ determined by $[0,+\infty ]$ from $q_{m}$ to $\infty
$ is contained in $T_{\Gamma },$ and then we can obtain a contradiction as
the above argument involving $J^{\prime }$. Thus $q_{m-1}\in S\backslash
\overline{S^{\prime }},$ i.e. $\Gamma $ is strictly convex at $q_{m},$ and
(iii) is proved.
\end{proof}

\begin{lemma}
\label{con-path}If $q_{1}=0$ and $q_{m}\in (0,+\infty ),$ then for each $%
j=1,\dots ,m-2,$ $L_{j}=\overline{q_{1}q_{j+1}q_{j+2}q_{1}}$ is a generic
convex triangle and for the triangle domain $T_{j}$ inside $L_{j},$
\begin{equation*}
T_{j}\cap T_{k}=\emptyset ,1\leq j<k\leq m-2,
\end{equation*}%
\begin{equation*}
\overline{T_{j}}\cap \lbrack 0,+\infty ]=\{0\},\ \mathrm{for\ }j=1,\dots
,m-3,
\end{equation*}%
\begin{equation*}
\overline{T_{m-2}}\cap \lbrack 0,+\infty ]=\overline{q_{m}q_{1}}.
\end{equation*}
\end{lemma}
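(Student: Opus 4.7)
The plan is to invoke Lemma \ref{L6-1} with $q_{1}=0$ and $q_{m}\in (0,+\infty )$. That lemma yields that $\Gamma =\gamma _{0}+\overline{q_{m}q_{1}}$ is a polygonal Jordan curve, convex everywhere with at most one exceptional point at $q_{1}$, strictly convex at $q_{m}$ (by part (iii)), with $L(\overline{q_{m}q_{1}})<\pi $ and $\overline{T_{\Gamma }}$ contained in some open hemisphere $S^{\ast }$ of $S.$ Because the natural vertices $q_{2},\dots ,q_{m-1}$ of $\gamma _{0}$ avoid $E=\{0,1,\infty \}$ (by the assumption $\gamma _{0}\cap [0,+\infty ]=\{q_{1},q_{m}\}$) and $\gamma _{0}$ is locally convex at each, Definition \ref{convex} forces them to be strictly convex vertices of $\Gamma $ as well; so $\Gamma $ is strictly convex at every vertex except possibly $q_{1}.$

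Next I would read off the fan triangulation of $\overline{T_{\Gamma }}$ from $q_{1}.$ Since $\overline{T_{\Gamma }}\subset S^{\ast }$, all pairwise distances within $\overline{T_{\Gamma }}$ are strictly less than $\pi $, so each segment $\overline{q_{1}q_{j+1}}$ is unambiguously defined and lies in $\overline{T_{\Gamma }}$ by convexity. The triangle $L_{j}=\overline{q_{1}q_{j+1}q_{j+2}q_{1}}$ is then contained in the open hemisphere $S^{\ast }$, which forces all three of its angles to be strictly less than $\pi $, so $L_{j}$ is generic convex in the sense of Definition \ref{gen-tri}. Any two distinct diagonals $\overline{q_{1}q_{j}}$ and $\overline{q_{1}q_{k}}$ $(3\leq j<k\leq m-1)$ are geodesic arcs in $S^{\ast }$ sharing the endpoint $q_{1}$ and ending at distinct points; since two great circles in an open hemisphere cannot share two distinct points, they meet only at $q_{1}.$ Combined with strict convexity of $\Gamma $ at $q_{2},\dots ,q_{m}$, the diagonals $\overline{q_{1}q_{3}},\dots ,\overline{q_{1}q_{m-1}}$ partition $T_{\Gamma }$ into the disjoint open triangles $T_{1},\dots ,T_{m-2},$ which proves the required pairwise disjointness.

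For the intersection properties, the key step is to locate $[0,+\infty ]$ in the angle of $T_{\Gamma }$ at $q_{1}=0.$ That angle is subdivided by the diagonals into $m-2$ subangles, the subangle of $T_{j}$ bounded by $\overline{q_{1}q_{j+1}}$ and $\overline{q_{1}q_{j+2}};$ the initial direction of $[0,+\infty ]$ at $q_{1}$ going into $T_{\Gamma }$ is exactly the direction of $\overline{q_{1}q_{m}}\subset [0,+\infty ]$, which lies on the boundary of the subangle of $T_{m-2}.$ Hence for $1\leq j\leq m-3$ the curve $[0,+\infty ]$ does not enter $T_{j}$ through $q_{1}$, and the remaining edges of $T_{j}$ (the two diagonals $\overline{q_{1}q_{j+1}},\overline{q_{1}q_{j+2}}$ going from $0$ to points $q_{j+1},q_{j+2}\notin [0,+\infty ]$, and the natural edge $\overline{q_{j+1}q_{j+2}}$ of $\gamma _{0}$ disjoint from $[0,+\infty ]$ by hypothesis) meet $[0,+\infty ]$ only at $0,$ giving $\overline{T_{j}}\cap [0,+\infty ]=\{0\}.$ For $j=m-2$, the edge $\overline{q_{1}q_{m}}\subset [0,+\infty ]$ already lies in $\partial T_{m-2};$ strict convexity of $\Gamma $ at $q_{m}$ forces the continuation of $[0,+\infty ]$ past $q_{m}$ to leave $\overline{T_{m-2}}$ immediately, yielding $\overline{T_{m-2}}\cap [0,+\infty ]=\overline{q_{m}q_{1}}.$ The main technical obstacle will be the rigorous verification of the fan triangulation --- that the diagonals from $q_{1}$ really cut $T_{\Gamma }$ into exactly these $m-2$ disjoint triangles in cyclic order --- which rests essentially on the strict convexity of $\Gamma $ at $q_{2},\dots ,q_{m}$ and the global convexity of $\overline{T_{\Gamma }}$ inside the open hemisphere $S^{\ast }.$
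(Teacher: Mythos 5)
Your proposal is correct and follows essentially the same route as the paper: Lemma \ref{L6-1} (ii)--(iii) gives the hemisphere containment and the strict convexity of $\Gamma=\gamma_0+\overline{q_m q_1}$ at $q_2,\dots ,q_m$, after which the fan triangulation from $q_1$ yields all the stated conclusions. The ``main technical obstacle'' you flag at the end --- rigorously verifying that the diagonals from $q_1$ cut $T_{\Gamma}$ into the $m-2$ pairwise disjoint generic convex triangles --- is exactly Lemma \ref{con-path-hemi} (i) and (iii) (applicable here via Remark \ref{u} (1)), which the paper simply cites, so no new work is needed there.
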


\begin{proof}
By (b) in the above proof and by Lemma \ref{L6-1} (ii) and (iii),
\begin{equation*}
\Gamma =\gamma _{0}+\overline{q_{m}q_{1}}=\overline{q_{1}q_{2}}+\overline{%
q_{2}q_{3}}+\dots +\overline{q_{m-1}q_{m}}+\overline{q_{m}q_{1}}
\end{equation*}%
is contained in some open hemisphere of $S$ and is strictly convex at $%
q_{2},\dots ,q_{m}$. Then by Lemma \ref{con-path-hemi} (iii) and Remark \ref%
{u} (1), the conclusion follows.
\end{proof}

\begin{lemma}
\label{con-path1}If $q_{2}$ is contained in the open hemisphere $S^{\prime }$
inside the great circle $C$ determined by $[0,+\infty ]$, $q_{m-1}$ is
contained in $S\backslash \overline{S^{\prime }}$, and if
\begin{equation}
\left\{ q_{1},q_{m}\right\} \subset (0,+\infty ),  \label{3.2+2}
\end{equation}%
then the followings hold.

(i) $\Gamma =\gamma _{0}+\overline{q_{m}q_{1}}=\overline{q_{1}q_{2}\dots
q_{m}q_{1}}$ is a Jordan curve such that $0$ is contained in the domain $%
T_{\Gamma }$ inside $\Gamma $.

(ii) For $q_{0}=0$ and $j=1,\dots ,m-1,$ $L_{j}=\overline{%
q_{0}q_{j}q_{j+1}q_{0}}$ is a generic convex triangle; and for the triangle
domain $T_{j}$ inside $L_{j},$
\begin{equation*}
\overline{T_{j}}\cap \lbrack 0,+\infty ]=\{0\},\ \mathrm{for\ }j=2,\dots
,m-2,
\end{equation*}%
\begin{equation*}
\overline{T_{1}}\cap \lbrack 0,+\infty ]=\overline{q_{0}q_{1}},\ \overline{%
T_{m-1}}\cap \lbrack 0,+\infty ]=\overline{q_{m}q_{0}}.
\end{equation*}
\end{lemma}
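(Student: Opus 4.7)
The plan is to prove (i) and (ii) separately, using Lemma~\ref{L6-1} as the main input. For (i), I would first invoke Lemma~\ref{L6-1} applied to $\gamma_0$: its hypotheses are satisfied since $\gamma_0$ is a locally convex polygonal Jordan path meeting $[0,+\infty]$ only at $\{q_1,q_m\}$ with $q_1 \neq q_m$. This yields that $\Gamma = \gamma_0 + \overline{q_m q_1}$ is a polygonal Jordan curve, that $L(I_{q_1 q_m}) < \pi$ so $\overline{q_m q_1}$ coincides with $-I_{q_1 q_m}$ and lies along $(0,+\infty)$, and that $\overline{T_\Gamma}$ is contained in some open hemisphere. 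Since $\gamma_0$ avoids $\{0,\infty\}$ and $\overline{q_m q_1} \subset (0,+\infty)$, one checks $[0,+\infty] \cap \Gamma = \overline{q_m q_1}$, a single closed arc; hence $0$ and $\infty$ lie in opposite components of $S \setminus \Gamma$. To decide which component is $T_\Gamma$, I would compute the winding number of $\Gamma$ about $0$: the path $\gamma_0$ starts at $q_1 \in (0,+\infty)$, enters $S'$ at $q_2$, must reach the opposite hemisphere at $q_{m-1}$ without crossing $[0,+\infty]$, and returns to $q_m \in (0,+\infty)$, which forces one net counterclockwise loop around $0$, while $\overline{q_m q_1} \subset (0,+\infty)$ contributes nothing; hence $0 \in T_\Gamma$.

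For (ii), I would set $q_0 = 0$ and analyze each triangle $L_j = \overline{q_0 q_j q_{j+1} q_0}$. Since $\overline{T_\Gamma}$ lies in an open hemisphere containing $0$, the antipode $\infty$ is outside $\overline{T_\Gamma}$, so every $d(0, q_j) < \pi$ and the edges $\overline{q_0 q_j}$ are well-defined. All three vertices of $L_j$ lie in a common open hemisphere, so $L_j$ bounds a generic convex triangle domain, provided the vertices are not collinear; collinearity is ruled out by the strict convexity of $\gamma_0$ at its natural vertices together with the hypothesis $\gamma_0 \cap [0,+\infty] = \{q_1, q_m\}$. The intersection claims then follow directly: for $2 \leq j \leq m-2$, both $q_j, q_{j+1} \notin [0,+\infty]$, so the geodesic arcs $\overline{q_0 q_j}$ and $\overline{q_{j+1} q_0}$ meet $[0,+\infty]$ only at $0$, the edge $\overline{q_j q_{j+1}} \subset \gamma_0$ avoids $[0,+\infty]$ entirely, and $T_j$ lies in one open hemisphere so touches $[0,+\infty]$ only at the vertex $0$. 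For $j=1$, the edge $\overline{q_0 q_1} \subset [0,+\infty]$ while the other two edges touch $[0,+\infty]$ only at $\{0,q_1\}$, giving $\overline{T_1}\cap[0,+\infty]=\overline{q_0 q_1}$; the case $j = m-1$ is symmetric.

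The main obstacle will be rigorously establishing $0 \in T_\Gamma$ in part (i); the winding-number argument I sketched is intuitive, but a careful proof must account for intermediate natural vertices $q_3,\dots,q_{m-2}$ possibly lying on $C \setminus [0,+\infty]$ or oscillating between the two hemispheres. A cleaner local alternative is to argue at $q_1$ directly: using $q_2 \in S'$, the tangent of $\overline{q_1 q_2}$ at $q_1$ points strictly into $S'$, while the tangent of $\overline{q_m q_1}$ at $q_1$ points along $(0,+\infty)$ toward $0$. A direct angle computation then shows the interior angle of $T_\Gamma$ at $q_1$ is reflex, and its angular sector contains the direction from $q_1$ toward $0$ along $[0,+\infty]$. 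Since the subsegment from $0$ to $q_1$ is disjoint from $\Gamma$ except at $q_1$, walking from $q_1$ along it toward $0$ enters $T_\Gamma$ immediately and remains there, yielding $0 \in T_\Gamma$.
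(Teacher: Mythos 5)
Your route is genuinely different from the paper's. The paper's own proof is a one\--line reduction: pick $q_{1}^{\prime }\in \overline{q_{1}q_{2}}^{\circ }$ close to $q_{1}$, note that $\overline{0q_{1}^{\prime }q_{2}\dots q_{m}}$ is a locally convex Jordan path satisfying the standing hypotheses with initial point $0$, apply Lemma \ref{con-path}, and let $q_{1}^{\prime }\rightarrow q_{1}$; both (i) and (ii) then drop out. Your direct argument via Lemma \ref{L6-1} could in principle work, but as written it has two genuine gaps. The more serious one is in (ii): the step ``all three vertices of $L_{j}$ lie in a common open hemisphere, so $L_{j}$ bounds a generic convex triangle domain, provided the vertices are not collinear'' is false. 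By Definition \ref{gen-tri} a triangle encloses the domain on its \emph{left}, so the orientation of $\overline{q_{0}q_{j}q_{j+1}q_{0}}$ matters: with the wrong cyclic orientation the curve encloses the complement of the small triangle, which is not convex. The substantive content of (ii) is precisely that $0$ lies strictly in the open hemisphere on the left of the great circle determined by each edge $\overline{q_{j}q_{j+1}}$ of $\gamma _{0}$; this is also what rules out collinearity (strict convexity at the $q_{j}$ by itself does not prevent an intermediate edge from lying on a great circle through $0$). You never establish this, and it is exactly what the paper's perturbation to a path based at $0$ buys for free.

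In (i), the deduction ``$[0,+\infty ]\cap \Gamma $ is a single closed arc, hence $0$ and $\infty $ lie in opposite components of $S\backslash \Gamma $'' is a non sequitur: an arc can meet a Jordan curve along a sub-arc and leave on the same side at both ends. Your local fallback is the right idea, but it is stated only for the configuration in which $\overline{q_{m}q_{1}}$ arrives at $q_{1}$ from the direction of $\infty $, i.e. $q_{1}<q_{m}$ on $(0,+\infty )$. In the opposite case $q_{m}<q_{1}$ the interior angle of $T_{\Gamma }$ at $q_{1}$ is convex rather than reflex, and the segment from $0$ to $q_{1}$ meets $\Gamma $ along all of $\overline{q_{m}q_{1}}$, so the argument must instead be run at $q_{m}$, where the incoming edge comes from $q_{m-1}\in S\backslash \overline{S^{\prime }}$ and the reflex angle actually sits. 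Both configurations are compatible with the hypotheses, so the case split cannot be omitted. With the orientation claim in (ii) properly proved and the missing case in (i) supplied, your argument would go through, but as it stands the key geometric facts are asserted rather than derived.
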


\begin{proof}
For any point $q_{1}^{\prime }$ that is in the interior of $\overline{%
q_{1}q_{2}}$ and is sufficient close to $q_{1},$ by the assumption of the
lemma, the polygonal curve%
\begin{equation*}
\gamma _{0}^{\prime }=\overline{0q_{1}^{\prime }q_{2}\dots q_{m}}=\overline{%
0q_{1}^{\prime }}+\dots +\overline{q_{m-1}q_{m}}
\end{equation*}%
is a locally convex Jordan path and satisfies the assumption on $\gamma _{0}$
just with more edges, then applying Lemma \ref{con-path} to $\gamma
_{0}^{\prime }$ and taking $q_{1}^{\prime }\rightarrow q_{1},$ we can obtain
(i) and (ii).
\end{proof}

\begin{lemma}
\label{con-path2}If%
\begin{equation}
\left\{ q_{1},q_{m}\right\} \subset (0,+\infty ),  \label{6--2}
\end{equation}
and%
\begin{equation}
\{q_{2},q_{m-1}\}\subset S^{\prime },  \label{6--1}
\end{equation}%
where $S^{\prime }$ is the open hemisphere inside the great circle
determined by $[0,+\infty ]$, then the curve $\Gamma =\gamma _{0}+\overline{%
q_{m}q_{1}}=\overline{q_{1}q_{2}\dots q_{m}q_{1}}$ is a convex polygonal
Jordan curve, $q_{m}\leq q_{1}$ and $\Gamma $ is strictly convex at $%
q_{j},j=1,2,\dots ,m.$
\end{lemma}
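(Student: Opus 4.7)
The plan is to deduce the lemma from Lemma~\ref{L6-1} together with a local analysis at the two special vertices $q_{1}$ and $q_{m}$. Since by (a) the path $\gamma_{0}$ is already strictly convex at $q_{2},\dots,q_{m-1}$, and since Lemma~\ref{L6-1}(i) asserts that $\Gamma$ is a polygonal Jordan curve convex at every point except possibly one of $q_{1},q_{m}$, it suffices to prove three things: that the closing segment $\overline{q_{m}q_{1}}$ is a subarc of $[0,+\infty]$ traversed in the positive direction of $[0,+\infty]$ (which amounts to $q_{m}\leq q_{1}$), and that $\Gamma$ is strictly convex at both $q_{1}$ and $q_{m}$.

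First I would observe that since $\{q_{1},q_{m}\}\subset (0,+\infty)$ both lie on the geodesic arc $[0,+\infty]$ of length $\pi$ and $L(\overline{q_{m}q_{1}})<\pi$ by Lemma~\ref{L6-1}(ii), the shortest path $\overline{q_{m}q_{1}}$ must be contained in $[0,+\infty]$. Next I would pin down the orientation. Let $C$ be the great circle through $[0,+\infty]$. The edges $\overline{q_{1}q_{2}}$ and $\overline{q_{m-1}q_{m}}$ are geodesics from points of $C$ to points of $S'$, and are not contained in $C$ (otherwise $q_{2}$ or $q_{m-1}$ would lie on $C$, contradicting $\{q_{2},q_{m-1}\}\subset S'$); hence each enters $S'$ immediately after leaving $C$. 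Therefore the Jordan domain $T_{\Gamma}$ on the left of $\Gamma$ meets $S'$ arbitrarily close to both $q_{1}$ and $q_{m}$. Since Lemma~\ref{L6-1}(ii) forces $\overline{T_{\Gamma}}$ to lie in some open hemisphere of $S$, it cannot contain a whole hemisphere, so the only consistent choice is $\overline{T_{\Gamma}}\subset\overline{S'}$. The closing segment $\overline{q_{m}q_{1}}$ therefore has $\overline{S'}$ on its left, which means it is traversed in the same direction as $[0,+\infty]$ (from $0$ through $1$ to $\infty$). This yields $q_{m}\leq q_{1}$.

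With $q_{m}\leq q_{1}$ in hand, strict convexity at $q_{1}$ follows at once: $\Gamma$ arrives at $q_{1}$ along $\overline{q_{m}q_{1}}\subset[0,+\infty]$ with tangent in the positive direction of $[0,+\infty]$ and departs along $\overline{q_{1}q_{2}}$ whose tangent points strictly into $S'$; these two tangent directions are distinct and the turn is a strict left turn into the interior, so the interior angle is strictly less than $\pi$. A symmetric argument applies at $q_{m}$. Combined with (a) and Lemma~\ref{L6-1}(i), this gives that $\Gamma$ is a convex polygonal Jordan curve that is strictly convex at every $q_{j}$, $j=1,\dots,m$.

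The main obstacle is the orientation step, i.e.\ the inclusion $\overline{T_{\Gamma}}\subset\overline{S'}$. The two-sidedness of $\Gamma$ a priori allows $T_{\Gamma}$ to be either side of the Jordan curve, and it is only the combination of Lemma~\ref{L6-1}(ii) (which rules out $T_{\Gamma}$ occupying nearly all of $S$) together with the local observation that $T_{\Gamma}$ brushes against $S'$ near $q_{1}$ (and $q_{m}$) that pins down the correct side, and therefore the direction of traversal of $\overline{q_{m}q_{1}}$ and the inequality $q_{m}\leq q_{1}$.
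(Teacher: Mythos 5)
Your overall strategy --- reduce to Lemma \ref{L6-1}, determine the direction in which the closing arc $\overline{q_{m}q_{1}}$ traverses $[0,+\infty ]$, and then read off strict convexity at $q_{1}$ and $q_{m}$ --- is the same as the paper's, and you correctly identify the orientation step as the crux. But your justification of that step has a genuine gap. You argue: $T_{\Gamma }$ meets $S^{\prime }$ near $q_{1}$ and $q_{m}$; $\overline{T_{\Gamma }}$ lies in some open hemisphere by Lemma \ref{L6-1} (ii); hence $\overline{T_{\Gamma }}\subset \overline{S^{\prime }}$. That last inference is a non sequitur: containment in \emph{some} open hemisphere together with $T_{\Gamma }\cap S^{\prime }\neq \emptyset $ does not force $\overline{T_{\Gamma }}\subset \overline{S^{\prime }}$, since a set may meet both $S^{\prime }$ and $S\backslash \overline{S^{\prime }}$ and still sit inside a hemisphere whose boundary circle is transverse to $C$. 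In the configuration you need to exclude, namely $q_{1}<q_{m}$, the domain $T_{\Gamma }$ would lie on the non-$S^{\prime }$ side of the interior of $\overline{q_{m}q_{1}}$ while still meeting $S^{\prime }$ near the edges $\overline{q_{1}q_{2}}$ and $\overline{q_{m-1}q_{m}}$ (the vertices $q_{1},q_{m}$ being reflex); all the points involved lie near an arc of length $<\pi $, so nothing in your argument prevents such a $T_{\Gamma }$ from lying in a tilted hemisphere. The paper closes this gap with part (i) of Lemma \ref{L6-1} rather than part (ii): if $q_{1}<q_{m}$, then $\overline{q_{m}q_{1}}$ runs against the orientation of $[0,+\infty ]$, so at $q_{1}$ the curve arrives in the direction opposite to the positive tangent of $[0,+\infty ]$ and departs into $S^{\prime }$ (because $q_{2}\in S^{\prime }$), which is a right turn; symmetrically at $q_{m}$. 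Hence $\Gamma $ would be convex at neither $q_{1}$ nor $q_{m}$, contradicting Lemma \ref{L6-1} (i), which guarantees convexity at at least one of them. This forces $q_{m}<q_{1}$, after which your final paragraph goes through.

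A second, smaller omission: the statement allows $q_{m}\leq q_{1}$, and the case $q_{1}=q_{m}$ must be treated separately, since Lemma \ref{L6-1} is stated under the hypothesis $q_{1}\neq q_{m}$ and your closing segment degenerates to a point. In that case $\Gamma =\gamma _{0}$ is a closed locally convex path with $m\geq 4$; the paper applies Lemma \ref{con-path-hemi} to place $\overline{T_{\Gamma }}$ in an open hemisphere and then observes that if $\Gamma $ failed to be convex at $q_{1}$, then by (\ref{3.2+1}), (\ref{6--2}) and (\ref{6--1}) the whole of $[0,+\infty ]\backslash \{q_{1}\}$ would lie in $T_{\Gamma }$, contradicting that hemisphere containment.
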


\begin{proof}
We first assume $q_{1}=q_{m}.$ Then
\begin{equation*}
\Gamma =\gamma _{0}=\overline{q_{1}q_{2}}+\dots +\overline{q_{m-1}q_{1}}
\end{equation*}%
is a locally convex Jordan path, and then $m\geq 4$ and, by (a), $\Gamma $
is strictly convex at $q_{2},\dots ,q_{m-1}$, and considering that in this
case, (\ref{3.2+1}) is reduced to (\ref{3.002}), we can conclude by Lemma %
\ref{con-path-hemi} that the closure $\overline{T_{\Gamma }}$ of the domain $%
T_{\Gamma }$ enclosed $\Gamma $ is contained in some open hemisphere of $S.$
On the other hand, by (\ref{3.2+1}), (\ref{6--2}) and (\ref{6--1}) and the
assumption that $q_{1}=q_{m},$ it is easy to see that, if $\overline{%
q_{m-1}q_{1}q_{2}}$ is not convex at $q_{1},$ then $[0,+\infty ]\backslash
\{q_{1}\}$ will be contained in $T_{\Gamma }$, and then $T_{\Gamma }$ can
not be contained in any open hemisphere of $S.$ This is a contradiction.
Thus, by (\ref{6--2}) and (\ref{6--1}), $\Gamma $ is strictly convex at $%
q_{1},$ and then by (a), $\Gamma $ is strictly convex at $q_{j},j=1,2,\dots
,m.$

Now, we assume $q_{1}\neq q_{m}.$ Then by Lemma \ref{L6-1}, $\Gamma $ is
convex at $q_{1}$ or $q_{m}.$ If $q_{1}<q_{m},$ then $\Gamma $ is neither
convex at $q_{1},$ nor at $q_{m}.$ Thus, we must have $q_{m}<q_{1}.$ Then,
by (\ref{6--2}) and (\ref{6--1}), $\Gamma $ is strictly convex at $q_{1}$
and $q_{m},$ and then by (a), $\Gamma $ is strictly convex at $%
q_{j},j=1,2,\dots ,m.$
\end{proof}

\section{Lifting Lemmas for normal mappings\label{ss-6lift}}

In this section, we prove Theorem \ref{pre-key-2} that is used to prove
Theorem \ref{decom}. Theorem \ref{decom} is the second key step to prove the
main theorem.

\begin{lemma}
\label{continue0}Let $f:\overline{\Delta }\rightarrow S$ be a normal mapping
and let $D$ be a polygonal Jordan domain in $S$ such that $f^{-1}$ has a
univalent branch\footnote{%
"univalent branch" always means that the branch is a homeomorphism.} $g$
defined on $D.$ Then $g$ can be extended to be a homeomorphism $\widetilde{g}
$ from $\overline{D}$ onto $\widetilde{g}(\overline{D})$.
\end{lemma}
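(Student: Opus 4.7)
The plan is to extend $g$ pointwise to $\partial D$ by defining $\widetilde{g}(q)$ as the unique cluster value of $g(q_n)$ for any sequence $q_n\in D$ converging to $q\in\partial D$, then verify that this extension is a continuous injection, and finally appeal to compactness of $\overline{D}$ to conclude it is a homeomorphism onto its image.

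First, for each $q\in\partial D$ I would fix a sufficiently small disk $D(q)\subset S$ as in Lemma \ref{cov-1} and write $f^{-1}(\overline{D(q)})=\bigcup_{j=1}^{n}\overline{U_j}$ as a disjoint union with $f^{-1}(q)\cap\overline{U_j}=\{x_j\}$. Because $D$ is a polygonal Jordan domain, the intersection $D\cap D(q)$ is connected (a half-disk at a boundary edge point, a sector at a vertex), hence its image $g(D\cap D(q))$ is connected. Its closure in $\overline{\Delta}$ is therefore a connected subset of the closed set $f^{-1}(\overline{D(q)})=\bigcup\overline{U_j}$, and since the pieces $\overline{U_j}$ are pairwise disjoint, this closure must lie entirely in one piece $\overline{U_{j_0}}$. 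Any subsequential limit of $g(q_n)$ with $q_n\to q$ then lies in $\overline{U_{j_0}}\cap f^{-1}(q)=\{x_{j_0}\}$, so one may unambiguously set $\widetilde{g}(q)=x_{j_0}$.

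For continuity at $q\in\partial D$, the same connectedness argument shows that $\widetilde{g}(\overline{D}\cap\overline{D(q)})\subset\overline{U_{j_0}}$: values at points of $D\cap\overline{D(q)}$ lie in $\overline{U_{j_0}}$ by the above, and values at points of $\partial D\cap\overline{D(q)}$ are limits of such values and hence still lie in this closed set. Since $f$ restricted to $\overline{U_{j_0}}$ is a continuous finite-to-one proper map onto $\overline{D(q)}$ whose fiber over $q$ is exactly $\{x_{j_0}\}$, and since $f(\widetilde{g}(q'))=q'$ for all $q'\in\overline{D}$, every accumulation point of $\widetilde{g}(q')$ as $q'\to q$ must equal $x_{j_0}=\widetilde{g}(q)$, giving continuity. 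Injectivity is automatic: $\widetilde{g}(q_1)=\widetilde{g}(q_2)$ forces $q_1=f(\widetilde{g}(q_1))=f(\widetilde{g}(q_2))=q_2$.

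Finally, since $\overline{D}$ is compact, $\widetilde{g}$ is continuous and injective into the Hausdorff space $\overline{\Delta}$, so $\widetilde{g}$ is a closed map and therefore a homeomorphism onto $\widetilde{g}(\overline{D})$. The only step that requires any care is the uniqueness-of-limit argument, which has to accommodate the three local pictures in Lemma \ref{cov-1} (interior point, smooth boundary point, boundary ramification point producing a sector of $\overline{D(q)}$); but in every case $D\cap D(q)$ is connected and the preimage pieces $\overline{U_j}$ are pairwise disjoint, so the argument uniformly reduces to selecting the single component that contains $g(D\cap D(q))$.
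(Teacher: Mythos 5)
Your proposal is correct and is precisely the ``simple and standard'' argument via Lemma \ref{cov-1} that the paper invokes without writing out: define $\widetilde{g}$ on $\partial D$ by limits, use connectedness of $D\cap D(q)$ together with the disjointness of the pieces $\overline{U_j}$ to pin down a unique component and hence a unique limit, and finish with the compact-to-Hausdorff criterion. No gaps; the only care needed is choosing $D(q)$ small enough that both Lemma \ref{cov-1} applies and $D\cap D(q)$ is connected, which you note.
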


\begin{proof}
There is a simple and standard way to prove this by Lemma \ref{cov-1}.
\end{proof}

The following result is obvious but useful.

\begin{lemma}
\label{b-in}Let $D_{1}$ and $D_{2}$ be Jordan domains in $\mathbb{C}$ and
let $f:\overline{D_{1}}\rightarrow \overline{D_{2}}$ be a mapping such that $%
f:\overline{D_{1}}\rightarrow f(\overline{D_{1}})$ is a homeomorphism. If $%
f(\partial D_{1})\subset \partial D_{2},$ Then $f(\overline{D_{1}})=%
\overline{D_{2}}$.
\end{lemma}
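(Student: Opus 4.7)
The plan is to show first that $f(\partial D_{1}) = \partial D_{2}$, and then that $f(D_{1}) = D_{2}$; together these give $f(\overline{D_{1}}) = \overline{D_{2}}$.

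For the first step, since $f$ is a homeomorphism onto its image, the set $f(\partial D_{1})$ is itself a Jordan curve, that is, a subset of $\mathbb{C}$ homeomorphic to $S^{1}$. By hypothesis it is contained in $\partial D_{2}$, which is also homeomorphic to $S^{1}$. A compact subset of $S^{1}$ that is itself homeomorphic to $S^{1}$ must be the whole of $S^{1}$, since every proper closed connected subset of $S^{1}$ is an arc. Hence $f(\partial D_{1}) = \partial D_{2}$.

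For the second step, I would appeal to Brouwer's invariance of domain: because $D_{1}$ is open in $\mathbb{R}^{2}$ and $f|_{D_{1}}$ is a continuous injection into $\mathbb{R}^{2}$, the image $f(D_{1})$ is open in $\mathbb{C}$. Injectivity of $f$ on $\overline{D_{1}}$ gives the disjoint decomposition $f(\overline{D_{1}}) = f(D_{1}) \sqcup f(\partial D_{1}) = f(D_{1}) \sqcup \partial D_{2}$, so $f(D_{1}) \subset \overline{D_{2}} \setminus \partial D_{2} = D_{2}$, and the topological frontier of $f(D_{1})$ in $\mathbb{C}$ equals $f(\overline{D_{1}}) \setminus f(D_{1}) = \partial D_{2}$. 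Since $D_{2}$ is connected and contains the non-empty open set $f(D_{1})$, if $D_{2} \setminus f(D_{1})$ were non-empty, the connectedness of $D_{2}$ would force a frontier point of $f(D_{1})$ to lie inside $D_{2}$, contradicting that the frontier is contained in $\partial D_{2}$. Therefore $f(D_{1}) = D_{2}$, and passing to closures yields $f(\overline{D_{1}}) = \overline{D_{2}}$.

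The only real obstacle is the topological fact used in the first step (that a Jordan curve cannot be a proper subset of another Jordan curve), but this reduces to an elementary observation about $S^{1}$; once the boundary-to-boundary surjectivity and invariance of domain are in hand, the rest is a short connectedness argument.
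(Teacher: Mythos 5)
Your proof is correct and complete. The paper itself offers no argument here — it simply labels the lemma ``obvious but useful'' — so there is no authorial proof to diverge from; your write-up supplies the standard justification one would expect. Both steps are sound: a Jordan curve cannot be a proper subset of another Jordan curve because every proper closed connected subset of $S^{1}$ is an arc or a point, and the second step (invariance of domain to see $f(D_{1})$ is open, injectivity to place it inside $D_{2}$ with frontier exactly $\partial D_{2}$, then connectedness of $D_{2}$ to force $D_{2}=f(D_{1})$) is the canonical argument.
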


\begin{lemma}
\label{position}Let $p_{1}$ and $p_{2}$ be two distinct points in $\partial
\Delta ,$ let $\alpha $ be the section\footnote{%
Recall that $\partial \Delta $ is always orientated anticlockwise, and a
section of a curve inherits the orientation of the curve.} of $\partial
\Delta $ from $p_{1}$ to $p_{2}$ and let $\beta $ be a Jordan path in $%
\overline{\Delta }$ from $p_{2}$ to $p_{1}$ such that $\alpha $ and $\beta $
have a common point $p_{0}$ with $p_{0}\neq p_{1},p_{2}.$ Assume that $f:%
\overline{\Delta }\rightarrow S$ is a normal mapping such that the
followings hold.

(a) The curve $\Gamma _{\alpha }=f(z),z\in \alpha ,$ and $\Gamma _{\beta
}=f(z),z\in \beta ,$ are polygonal paths and are both convex at $p_{0}$.

(b) $f$ is regular\footnote{%
This means that $f$ is homeomorphic in a neighborhood of $p_{0}.$} at $%
p_{0}. $

Then $p_{0}$ has a neighborhood $\beta ^{\prime }$ in $\beta $ such that $%
\beta ^{\prime }\subset \alpha \subset \partial \Delta $ and $f$ restricted
to $\beta ^{\prime }$ is a line segment in $S$.
\end{lemma}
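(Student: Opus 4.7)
The plan is to work locally near $p_0$. Since $f$ is regular at $p_0\in\partial\Delta$, condition (c) of Definition \ref{722-1} with $d=1$ provides a half-disk neighborhood $U$ of $p_0$ in $\overline{\Delta}$ on which $f|_U:U\to f(U)$ is a homeomorphism onto a topological closed half-disk $f(U)\subset S$, with $\alpha\cap U=\partial\Delta\cap U$ mapped to the polygonal ``diameter'' arc $L_\alpha:=\Gamma_\alpha\cap f(U)$. By the orientation-preservation condition (e) of Definition \ref{722-1} together with the anticlockwise orientation of $\partial\Delta$, the interior image $f(U\cap\Delta)$ lies on the left of $\Gamma_\alpha$ near $q_0:=f(p_0)$ in $\alpha$'s direction. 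Because $\beta\subset\overline{\Delta}$, the local image $L_\beta:=\Gamma_\beta\cap f(U)$ must lie in the closed left region of $L_\alpha$.

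The heart of the argument is the claim that some small neighborhood $\beta'$ of $p_0$ in $\beta$ is contained in $\alpha$. I would argue this by contradiction: if some portion of $\beta$ near $p_0$ entered the interior of $\Delta$, then at least one of the two arms of $L_\beta$ at $q_0$ would extend into the open left region of $L_\alpha$. Using that $\Gamma_\beta$ is convex at $q_0$ (a left turn in $\beta$'s direction from $p_2$ to $p_1$) together with the constraint that both arms of $L_\beta$ lie in the closed left region of $L_\alpha$, I would perform an angular case analysis on the tangent directions $v_\beta^{-},v_\beta^{+}$ of $L_\beta$ at $q_0$: split according to whether one arm of $\beta$ at $p_0$ lies on the $\alpha_1$-side (toward $p_1$), on the $\alpha_2$-side (toward $p_2$), or neither arm stays on $\partial\Delta$. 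In each sub-case, the required left-turn, combined with the direction constraints forced by $\beta$'s global orientation $p_2\to p_1$ and the position of $q_0$ on the boundary of $f(U)$, forces the leaving tangent to point outside the closed left region of $L_\alpha$ unless the arm in question in fact lies on $L_\alpha$, contradicting the assumption.

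Once the claim is proved, $\beta'$ is a sub-arc of $\alpha$ through $p_0$, and since $\beta$ is oriented from $p_2$ to $p_1$ while $\alpha$ is oriented from $p_1$ to $p_2$, the restriction $\beta'$ traverses $\alpha$ in the opposite direction. Hence $\Gamma_\beta$ on $\beta'$ coincides with $-\Gamma_\alpha$ on the corresponding sub-arc. A convex (left-turning) curve becomes right-turning when its orientation is reversed, so the convexity of $\Gamma_\beta$ at $q_0$ in $\beta$'s direction translates into a right turn of $\Gamma_\alpha$ at $q_0$ in $\alpha$'s direction. Combined with the convexity of $\Gamma_\alpha$ at $q_0$ (a left turn in $\alpha$'s direction), these two conditions can only be simultaneously satisfied if no turn occurs at all; therefore $\Gamma_\alpha$, and hence $\Gamma_\beta$ on $\beta'$, is straight at $q_0$, so $f|_{\beta'}$ is a line segment in $S$.

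The main obstacle is the angular case analysis in the middle paragraph, in particular the sub-case where $\beta$ bounces off $\partial\Delta$ at $p_0$ with both local arms strictly inside $\Delta$; here one has to exploit the convex-triangle structure of $\Gamma_\beta$ at $q_0$ together with the placement of the chord $\overline{q''q'}$ relative to $L_\alpha$ and the global orientation of $\beta$ to produce the contradiction.
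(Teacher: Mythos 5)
Your overall strategy (localize at $p_{0}$ via regularity, map the half-disk $U$ to a one-sided region on the left of $\Gamma _{\alpha }$, and play the two convexity conditions against each other) is the natural one, and it is essentially all the paper itself offers: the paper's proof is the single remark that $f(\alpha ^{\prime \prime })$ and $f(\beta ^{\prime \prime })$ must intersect ``tangently.'' But the step you yourself flag as the main obstacle is a genuine gap, and the angular case analysis you propose cannot close it. Concretely: place $q_{0}$ at the origin with $L_{\alpha }$ along the $x$-axis oriented in the $+x$ direction, so that $f(U\cap \Delta )$ is the open upper half-plane. Let $\Gamma _{\beta }$ near $q_{0}$ consist of a segment arriving at $q_{0}$ from the direction $e^{3\pi i/4}$ (direction of travel $e^{-\pi i/4}$) followed by a segment leaving in the direction $e^{\pi i/2}$. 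Both arms lie in the closed upper half-plane and meet the $x$-axis only at $q_{0}$; the turn is a left turn of $3\pi /4$, and the triangle $\overline{p^{\prime }q_{0}p^{\prime \prime }p^{\prime }}$ is traversed anticlockwise and encloses a small convex triangle domain on its left, so $\Gamma _{\beta }$ is convex at $p_{0}$ in the sense of Definition \ref{convex}; $\Gamma _{\alpha }$ is straight, hence convex; and such a $\beta $ (bouncing off $\partial \Delta $ at $p_{0}$ with both local arms in $\Delta $) is easily realized as a Jordan path from $p_{2}$ to $p_{1}$. Every hypothesis you invoke is satisfied, yet $\beta $ is not locally contained in $\alpha $. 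The global orientation of $\beta $ from $p_{2}$ to $p_{1}$ imposes no constraint on the local direction of $\Gamma _{\beta }$ at $q_{0}$, so it cannot rescue the case analysis; for the same reason, your closing claim that $\beta ^{\prime }$ must traverse $\alpha $ in the direction opposite to $\alpha $'s is not forced by the endpoint data alone (a simple path from $p_{2}$ to $p_{1}$ can run along a subarc of $\alpha $ in $\alpha $'s own direction).

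What actually makes the conclusion work where the lemma is applied (the proof of (j) in Lemma \ref{c1}) is an angular-measure comparison that uses one further piece of information: there, $\beta $ bounds a closed Jordan subdomain of $\overline{\Delta }$ on which $f$ is a homeomorphism, so a \emph{full} one-sided neighborhood of $p_{0}$ on the right of $\beta $ is carried onto a full one-sided neighborhood of $q_{0}$ on the right of $\Gamma _{\beta }$. Convexity of $\Gamma _{\beta }$ makes that right-hand sector have angle at least $\pi $ at $q_{0}$, while convexity of $\Gamma _{\alpha }$ makes $f(U)$ a sector of angle at most $\pi $; containment of the former in the latter forces both angles to equal $\pi $ and the two boundary germs to coincide, which yields $\beta ^{\prime }\subset \alpha $, the orientation reversal, and the straightness of $f(\beta ^{\prime })$ all at once. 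Your proof needs to import this extra structure (or an equivalent hypothesis); without it the ``bounce'' sub-case cannot be excluded, because the implication you are trying to prove is not a consequence of the hypotheses you are using.
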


\begin{proof}
By the assumption, $p_{0}$ has a neighborhood $\alpha ^{\prime \prime }$ in $%
\alpha $ and a neighborhood $\beta ^{\prime \prime }$ in $\beta ,$
such that the curves $f(\alpha ^{\prime \prime })$ and $f(\beta
^{\prime \prime })$ intersect "tangently".

%\begin{figure}[tbp]
%\centering
%\includegraphics[scale=0.70]{position.eps} \label{posi-1}
%\end{figure}
\end{proof}

\begin{lemma}
\label{m-tri}Let $p_{j}=e^{i\theta _{j}}$ be a number of $m$ distinct points
in $\partial \Delta $ with%
\begin{equation*}
\theta _{1}<\theta _{2}<\dots <\theta _{m}<\theta _{1}+2\pi ,
\end{equation*}%
let $\alpha _{j}$ be the section of $\partial \Delta $ from $p_{j}$ to $%
p_{j+1},j=1,\dots ,m-1,$ let $f:\overline{\Delta }\rightarrow S$ be a normal
mapping and let
\begin{equation}
q_{j}=f(p_{j}),j=1,\dots m.  \label{x}
\end{equation}
Assume that the followings hold.

(a) The section
\begin{equation*}
\Gamma _{0}=f(z),z\in \alpha _{0}=\alpha _{1}+\dots +a_{m}
\end{equation*}%
of the boundary curve $\Gamma _{f}=f(z),z\in \partial \Delta ,$ is a
polygonal Jordan path and each section $\Gamma _{j}=f(\alpha _{j})$ of $%
\Gamma _{0}$ is a natural edge of $\Gamma _{0}$ with
\begin{equation}
L(\Gamma _{j})<\pi ,j=1,\dots ,m.  \label{y}
\end{equation}

(b) $L_{j}=\overline{q_{1}q_{j+1}q_{j+2}q_{1}},j=1,\dots ,m-2,$ are generic
convex triangles in $S$, the triangle domains $T_{j}$ enclosed by $L_{j}$
are disjoint each other, and
\begin{equation*}
\Gamma =\Gamma _{0}+\overline{q_{m}q_{1}}
\end{equation*}%
is a polygonal Jordan curve.

(c) For the domain $T$ enclosed by $\Gamma ,$ $f$ has no branched point in $%
\overline{T}\backslash \overline{q_{m}q_{1}}.$

(d) The boundary curve $\Gamma _{f}=f(z),z\in \partial \Delta ,$ is locally
convex in $T.$

Then, the followings hold true.

(i) $f^{-1}$ has a univalent branch $g$ defined on $\overline{T}$ such that $%
g$ maps $\Gamma _{0}=\overline{q_{1}q_{2}\dots q_{m}}$ onto $\alpha
_{0}=\alpha _{1}+\dots +\alpha _{m-1}$ with $g(q_{j})=p_{j},j=1,2,\dots ,m.$

(ii) If in addition, for some\emph{\ open} interval $\gamma $ of $\overline{%
q_{m}q_{1}},$ $f$ has no branched point in $\gamma $ and $\Gamma
_{f}=f(z),z\in \partial \Delta ,$ is locally convex in $\gamma ,$ then
either $g(\gamma )\subset \partial \Delta $ or $g(\gamma )\subset \Delta .$
\end{lemma}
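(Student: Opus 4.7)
The plan is to construct $g$ on $T^{\circ}$ by inverting $f$ on the component of $f^{-1}(T^{\circ})$ that accumulates on $\alpha_{0}$, and then extend to $\overline{T}$ via Lemma \ref{continue0}. Part (ii) then follows by a connectedness argument that invokes Lemma \ref{position}.

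For (i), hypothesis (c) combined with Lemma \ref{cov-1} implies that every point of $f^{-1}(\overline{T}\setminus\overline{q_{m}q_{1}})$ is a regular point of $f$, so $f:f^{-1}(T^{\circ})\to T^{\circ}$ is a topological covering. Since $T^{\circ}$ is simply connected, every component of $f^{-1}(T^{\circ})$ is mapped homeomorphically onto $T^{\circ}$. At each interior point of $\alpha_{j}$ and at each vertex $p_{j}$ with $2\le j\le m-1$ (whose image $q_{j}$ lies off $\overline{q_{m}q_{1}}$ because $\Gamma=\Gamma_{0}+\overline{q_{m}q_{1}}$ is Jordan by (b)), Lemma \ref{cov-1} yields a half-disk or sector neighborhood in $\overline{\Delta}$ that $f$ maps homeomorphically onto a neighborhood of its image on the $T$-side. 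By continuity and one-sidedness, all of these local half-neighborhoods lie in a single component $U\subset\Delta$ of $f^{-1}(T^{\circ})$; define $g_{0}=(f|_{U})^{-1}:T^{\circ}\to U$. Lemma \ref{continue0} extends $g_{0}$ to a homeomorphism $g:\overline{T}\to g(\overline{T})\subset\overline{\Delta}$, and by construction $g$ carries $\Gamma_{0}$ onto $\alpha_{0}$ with $g(q_{j})=p_{j}$, proving (i).

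For (ii), $g(\gamma)$ is a connected arc in $\overline{\Delta}$, and I claim the two sets
\begin{equation*}
A=\{z\in\gamma:g(z)\in\Delta\}\quad\text{and}\quad B=\{z\in\gamma:g(z)\in\partial\Delta\}
\end{equation*}
are both open in $\gamma$. Openness of $A$ is immediate from continuity. For $B$, fix $z_{0}\in B$ and set $z_{0}^{*}=g(z_{0})\in\partial\Delta$. The hypotheses of (ii) keep $z_{0}$ off the branched values of $f$, so $z_{0}^{*}$ is a regular point of $f$, and they make $\Gamma_{f}$ locally convex at $z_{0}$. The two paths $\partial\Delta$ and $g(\gamma)$ meet at $z_{0}^{*}$, and their $f$-images are respectively $\Gamma_{f}$ (convex at $z_{0}$ by hypothesis) and a piece of the straight segment $\gamma\subset\overline{q_{m}q_{1}}$ (trivially convex). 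Lemma \ref{position} then forces $g(\gamma)$ to agree with $\partial\Delta$ in a neighborhood of $z_{0}^{*}$, so $B$ is open; connectedness of $\gamma$ yields the dichotomy in (ii).

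I expect the main obstacle to be the identification of a single component $U$ in (i). Since Definition \ref{722-1}(c) permits $f$ to wrap with degree $d\ge 3$ at a boundary point, a priori the lifts from $\alpha_{j-1}$ and $\alpha_{j}$ at a vertex $p_{j}$ could fall on distinct sheets of the covering $f^{-1}(T^{\circ})\to T^{\circ}$; the generic convexity of $L_{j-1},L_{j}$ in (b) forces $q_{j}\notin\overline{q_{m}q_{1}}$, hence $p_{j}$ is a regular point of $f$, and the two sides join continuously within one component.
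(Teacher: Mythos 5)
Your proof of part (i) has a genuine gap at its central step: the assertion that ``$f:f^{-1}(T^{\circ})\to T^{\circ}$ is a topological covering, so every component of $f^{-1}(T^{\circ})$ is mapped homeomorphically onto $T^{\circ}$.'' Absence of branched points over $T$ makes $f$ a local homeomorphism on $f^{-1}(T)\cap\Delta$, but it does not make it proper over $T$: nothing in (a)--(c) prevents the boundary curve $\Gamma _{f}$ from entering $T$ (hypothesis (d) presupposes exactly that it may), and at a point $p\in\partial\Delta$ with $f(p)\in T$ Lemma \ref{cov-1}(ii) gives only a half-disk-to-disk or half-disk-to-sector model, not a local homeomorphism onto a neighborhood in $S$. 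Consequently a path in $T$ lifted into the component $U$ adjacent to $\alpha_{0}$ can terminate on $\partial\Delta$ before the path ends, and $f|_{U}$ need be neither surjective onto $T$ nor injective; the monodromy/simple-connectivity argument simply does not apply. A telling symptom is that your proof of (i) never uses hypothesis (d), which is essential: if $\Gamma_{f}$ were allowed to dip non-convexly into $T$ (say $f$ a homeomorphism onto a Jordan domain whose boundary enters $T$), part of $T$ would not even lie in $f(\overline{\Delta})$ and no univalent branch on $\overline{T}$ could exist. The obstacle you flag at the end (matching sheets at the vertices $p_{j}$) is real but minor compared with this one.

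The paper closes precisely this gap with a continuation argument (its Lemma \ref{c1}): it exhausts the first triangle $\overline{T_{1}}$ by quadrilaterals $T_{s}$ bounded by $\gamma_{0}-\gamma_{s}$, where $\gamma_{s}=\overline{q_{1}v_{s}q_{3}}$ is strictly convex at $v_{s}$, shows the set of $s$ admitting a univalent branch of $f^{-1}$ on $\overline{T_{s}}$ is open and closed in $(0,1]$, and — this is where (d) enters — rules out the lift $c_{\delta_{0}}$ of $\gamma_{\delta_{0}}$ touching $\partial\Delta$ at an interior point: by (d) and Lemma \ref{position} the two curves would have to coincide locally and $f(c_{\delta_{0}})$ would be straight, contradicting strict convexity at $v_{\delta_{0}}$ when $\delta_{0}<1$. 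It then proceeds triangle by triangle over $T_{1},\dots,T_{m-2}$, gluing the branches along $\overline{q_{1}q_{j+1}}$ by uniqueness of lifts (no branched points there by (c)). Your argument for (ii) — the open/open dichotomy on $\gamma$ via Lemma \ref{position} — is sound and is essentially what the paper does, but it rests on (i), so the proof as a whole is incomplete until the covering claim is replaced by an argument of the above type.
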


%\begin{figure}[tbp]
%\centering
%\includegraphics[scale=0.85]{lift-1.eps} \label{lift-1}
%\end{figure}

We first prove the following lemma under the same assumption as that in
Lemma \ref{m-tri}. Note that by (\ref{x}) and (\ref{y}), we can write
\begin{equation*}
\Gamma _{j}=\overline{q_{j}q_{j+1}},j=1,2,\dots ,m-1.
\end{equation*}

\begin{lemma}
\label{c1}(i). $f^{-1}$ has a univalent branch $g_{1}$ defined on $\overline{%
T_{1}},$ such that $g_{1}$ restricted to $\overline{q_{1}q_{2}q_{3}}$ is a
homeomorphisms onto $\alpha _{1}+\alpha _{2}$ with $%
g_{1}(q_{j})=p_{j},j=1,2,3.$

(ii). If $m>3,$ then $\beta _{1}=g_{1}(\overline{q_{1}q_{3}})$ is a Jordan
path in $\overline{\Delta }$ from $p_{1}$ to $p_{3}$ and the interior of $%
\beta _{1}$ is contained in $\Delta .$

(iii). If $m=3$ and for some \emph{open} interval $\gamma $ contained $%
\overline{q_{m}q_{1}},$ $f$ has no branched point in $\gamma $ and $\Gamma
_{f}=f(z),z\in \partial \Delta ,$ is locally convex in $\gamma ,$ then
either $g(\gamma )\subset \partial \Delta $ or $g(\gamma )\subset \Delta .$
\end{lemma}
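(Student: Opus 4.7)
\emph{Proof strategy.} The plan is to build the branch $g_1$ by analytic continuation from a regular point on $\alpha_1 \cup \alpha_2$, extend to the closure via Lemma~\ref{continue0}, and then control the boundary behaviour in (ii) and (iii) by combining Lemma~\ref{position} with the convexity hypothesis~(d).

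For (i), by (b) the interior $T_1$ of $L_1 = \overline{q_1 q_2 q_3 q_1}$ is a generic convex triangle domain; by Lemma~\ref{con-path-hemi}(iii) $T_1 \subset T$, and by (c) $f$ has no branched point in $T_1$. The point $q_2$ is not a branched point (it lies on $\Gamma_0 \subset \overline{T} \setminus \overline{q_m q_1}$), so by Lemma~\ref{cov-1} a neighbourhood $U_2$ of $p_2$ in $\overline{\Delta}$ maps homeomorphically to a neighbourhood $V_2$ of $q_2$. Because $\alpha_1$ and $\alpha_2$ are distinct natural edges meeting at $p_2$ with images filling the two sides $\overline{q_1 q_2}$ and $\overline{q_2 q_3}$ of $V_2 \cap L_1$, one half of $U_2$ maps into $\overline{T_1}$ while the other maps to the exterior side. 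Thus $f^{-1}$ has a single-valued branch $g_0$ defined on $V_2 \cap \overline{T_1}$ with $g_0(q_2) = p_2$. Since $T_1$ is simply connected and free of branched points of $f$, monodromy is trivial and $g_0$ extends to a univalent branch $g_1 : T_1 \to \Delta$; Lemma~\ref{continue0} extends $g_1$ to a homeomorphism $\overline{T_1} \to g_1(\overline{T_1}) \subset \overline{\Delta}$. Tracking the three boundary segments of $L_1$ by continuity yields $g_1(q_j) = p_j$ for $j = 1, 2, 3$ and $g_1$ restricted to $\overline{q_1 q_2 q_3}$ is the claimed homeomorphism onto $\alpha_1 + \alpha_2$.

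For (ii), suppose $m > 3$; then $\overline{q_1 q_3}$ is shared by $L_1$ and $L_2$, and by Lemma~\ref{con-path-hemi} its interior lies in $T$. Set $\beta_1 := g_1(\overline{q_1 q_3})$ (with the appropriate orientation) and $A := \{ t \in (0,1) : \beta_1(t) \in \partial \Delta \}$. Closedness of $A$ in $(0,1)$ is immediate. To show openness, let $p^* = \beta_1(t^*) \in A$: then $f(p^*) \in \overline{q_1 q_3}^{\circ} \subset T$, so by (d) the curve $\Gamma_f$ is convex at $f(p^*)$; and because $\beta_1$ is the $g_1$-image of a geodesic, the path $f \circ \beta_1$ is locally straight, hence convex, at $p^*$. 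Since $f$ is regular at $p^*$ (no branched points in $T_1$), Lemma~\ref{position}, applied with $\alpha$ a small arc of $\partial \Delta$ through $p^*$ and $\beta$ a small sub-arc of $\beta_1$ through $p^*$, produces a neighbourhood of $p^*$ in $\beta_1$ lying in $\partial \Delta$, so $A$ is open. By connectedness $A$ is empty or equals $(0,1)$. In the latter case $\beta_1 \subset \partial \Delta$, whence Lemma~\ref{b-in} applied to $g_1 : \overline{T_1} \to \overline{\Delta}$ forces $g_1(\overline{T_1}) = \overline{\Delta}$; then $f = g_1^{-1}$ would send all of $\partial \Delta$ onto $L_1$, forcing the natural vertices $q_4, \dots, q_{m-1}$ of $\Gamma_0$ to lie on $L_1$ and hence on $\overline{q_1 q_3}$, contradicting disjointness of the $T_j$. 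So $A = \emptyset$, i.e.\ $\beta_1^{\circ} \subset \Delta$.

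For (iii), when $m = 3$ the edge $\overline{q_3 q_1}$ coincides with $\overline{q_m q_1}$. On the open sub-interval $\gamma$ where $f$ has no branched point and $\Gamma_f$ is locally convex, the identical argument using Lemma~\ref{position} shows that the subset of $\gamma$ mapped by $g$ into $\partial \Delta$ is both open and closed in $\gamma$, hence equals $\emptyset$ or $\gamma$ by connectedness. The main obstacle is verifying the hypotheses of Lemma~\ref{position} at the candidate intersection point $p^*$: one needs convexity of both $\Gamma_\alpha$ and $\Gamma_\beta$ at $f(p^*)$ with matching orientation, plus regularity of $f$ at $p^*$; these follow from (c), (d) and the fact that $\beta_1$ is a geodesic lift, but the book-keeping near natural vertices of $\Gamma_f$ requires some care.
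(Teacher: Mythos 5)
There is a genuine gap in your proof of (i), at the sentence ``Since $T_{1}$ is simply connected and free of branched points of $f$, monodromy is trivial and $g_{0}$ extends to a univalent branch $g_{1}:T_{1}\rightarrow \Delta$.'' Trivial monodromy only gives path-independence of an analytic continuation that is already known to exist along every path; it does not guarantee existence. Here $f$ is not a covering of $T_{1}$: a lift of a path in $T_{1}$ started from $U_{2}$ can terminate by running into $\partial \Delta$ at a point $p^{\ast }$ with $f(p^{\ast })\in T_{1}$, because near such a boundary point $f$ is only a homeomorphism from a half-neighbourhood (Lemma \ref{cov-1}(ii)), not from a full neighbourhood, so the branch of $f^{-1}$ cannot be continued past it. Ruling this out is exactly where hypothesis (d) of Lemma \ref{m-tri} (local convexity of $\Gamma _{f}$ in $T$) and Lemma \ref{position} must be used --- and your part (i) never invokes either. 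The paper's proof of this lemma is essentially nothing but the machinery needed to close this gap: it exhausts $\overline{T_{1}}$ by quadrilaterals $T_{\delta }$ bounded by $\gamma _{0}-\gamma _{\delta }$, defines $\delta _{0}$ as the supremum of the $\delta $ for which the branch exists on $\overline{T_{\delta }}$ with the lifted frontier curve $c_{\delta }$ interior to $\Delta $, and then shows (step (j)) that if $\delta _{0}<1$ the interior of $c_{\delta _{0}}$ still lies in $\Delta $ --- precisely by applying Lemma \ref{position} at a hypothetical point $p_{0}\in c_{\delta _{0}}\cap \partial \Delta $ and deriving that $f(c_{\delta _{0}})$ would be straight, contradicting that $\gamma _{\delta _{0}}$ is strictly convex for $\delta _{0}<1$. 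You use this very mechanism in (ii) and (iii), where the frontier curve is the straight segment $\overline{q_{1}q_{3}}$, but for (i) the argument must be run while the frontier is still a genuinely bent path inside $T_{1}$, and that is the step your proof omits.

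Your parts (ii) and (iii) are in the right spirit and close to the paper's (which proves them ``as the proof of (j)''), though two details deserve attention. First, in the open--closed argument for (ii), once a point $p^{\ast }\in \beta _{1}^{\circ }\cap \partial \Delta$ is found, Lemma \ref{position} already gives that a neighbourhood of $p^{\ast }$ in $\beta _{1}$ lies in $\partial \Delta$ \emph{and} that $f$ is straight there; the cleaner conclusion (and the one the paper draws) is that a continuation forces the whole of $f(\beta _{1})$ to coincide with a stretch of $\Gamma _{f}$, after which the contradiction with $m>3$ (via Lemma \ref{b-in} and the disjointness of the $T_{j}$) goes through as you sketch. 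Second, your closing remark in (iii) that ``the book-keeping near natural vertices requires some care'' flags a real issue but does not resolve it; the hypothesis that $\Gamma _{f}$ is locally convex in the open interval $\gamma \subset \overline{q_{m}q_{1}}$ is exactly what lets Lemma \ref{position} apply at every point of $g(\gamma )\cap \partial \Delta $, so this should be stated rather than deferred.
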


\begin{proof}
Write $c_{0}=\alpha _{1}+\alpha _{2}$, $\gamma _{0}=\Gamma _{1}+\Gamma _{2}=%
\overline{q_{1}q_{2}q_{3}}$ and $\gamma
_{1}=\overline{q_{1}q_{3}}$ .

Let $v_{1}$ be an interior point of $\gamma _{1}=\overline{q_{1}q_{3}}$ and
let $v=v_{s}=v(s)$, $s\in \lbrack 0,1],$ be a Jordan path that represents
the straight path from $q_{2}$ to $v_{1}$ in the closed triangle domain $%
\overline{T_{1}}$ enclosed by the triangle $L_{1}=\overline{%
q_{1}q_{2}q_{3}q_{1}}=\gamma _{0}-\gamma _{1}$ $.$ Then, by (b)
and Lemma \ref{convextri}, for each $s\in (0,1),$ the polygonal
Jordan path $\gamma _{s}=\overline{q_{1}v_{s}q_{3}}$ in
$\overline{T_{1}}$ is strictly convex at $v_{s}$, and $\gamma
_{s},s\in \lbrack 0,1],$ is a family of curves exhausting the
closed domain $\overline{T_{1}}$ and satisfying the following
condition (e).

(e) For each $s\in (0,1],$ the domain $T_{s}$ inside $\gamma _{0}-\gamma
_{s} $ is a (spherical) quadrilateral domain contained in $T_{1}$ and for
any pair $s_{1},s_{2}\in (0,1]$ with $s_{1}<s_{2},$%
\begin{equation*}
T_{s_{1}}\cup (\gamma _{s_{1}}\backslash \{q_{1},q_{3}\})=\overline{T_{s_{1}}%
}\backslash \gamma _{0}\subset T_{s_{2}}.
\end{equation*}

Since $f$ is normal, by the definition, there exists a point $q_{1}^{\prime
} $ in the interior of $\Gamma _{1}=\overline{q_{1}q_{2}}$ and there exists
a point $q^{\prime }$ in the domain $T_{1}$ such that $f^{-1}$ has a
univalent branch defined on the closure of the triangle domain inside the
triangle $\overline{q_{1}q_{1}^{\prime }q^{\prime }q_{1}}\subset \overline{%
T_{1}}$ and this branch restricted to $\overline{q_{1}q_{1}^{\prime }}$ is a
homeomorphism onto a section of $\alpha _{1}\ $from $p_{1}$ to some interior
point of $\alpha _{1}.$ At $q_{3}$ we can do this similarly. On the other
hand, considering that $\overline{q_{1}q_{2}q_{3}}$ is simple and $f$ is a
normal mappings, by (b) and (c), we can conclude that for each $q_{0}$
contained in the interior\footnote{%
Note that the interior of $\gamma _{0}$ does not intersects $\overline{%
q_{m}q_{1}},$ and thus $f$ has no branched point in the interior of $\gamma
_{0}.$} of $\gamma _{0}=\Gamma _{1}+\Gamma _{2}=\overline{q_{1}q_{2}q_{3}},$
there exists a disk $V_{q_{0}}$ in $S$ such that $f^{-1}$ has a univalent
branch defined on $V_{q_{0}}\cap \overline{T_{1}}$ and this branch maps $%
\gamma _{0}\cap V_{q_{0}}$ onto a section of $c_{0}.$ Summarizing these
discussion, we conclude that, for sufficiently small $\delta >0,$ $\delta $
satisfies the following property:

(f) $f^{-1}$ has a univalent branch $g_{\delta }$ defined on $\overline{%
T_{\delta }}$ with
\begin{equation*}
g_{\delta }(q_{j})=p_{j},j=1,2,3,
\end{equation*}%
$g_{\delta }$ restricted to $\gamma _{0}=\overline{q_{1}q_{2}q_{3}}$ is a
homeomorphism onto $c_{0}=\alpha _{1}+\alpha _{2}$ and $c_{\delta
}=g_{\delta }(\gamma _{\delta })$ is a Jordan path from $p_{1}$ to $p_{3}$
whose interior is contained in $\Delta .$

If $\delta $ satisfies (f) and $\delta <1$, then $c_{0}-c_{\delta }$ is a
Jordan curve, the domain $\widetilde{\Delta }_{\delta }$ inside $%
c_{0}-c_{\delta }$ is a Jordan domain, $\overline{\Delta }\backslash
\widetilde{\Delta }_{\delta }$ is a closed Jordan domain\footnote{%
By (f), $\alpha _{\delta }$ divides $\Delta $ into two Jordan domains.} and $%
f$ restricted $\overline{\Delta }\backslash \widetilde{\Delta }_{\delta }$
is a normal mapping (note that $f(\partial (\overline{\Delta }\backslash
\widetilde{\Delta }_{\delta }))$ is polygonal). In this case, replacing $%
\overline{\Delta }$ by $\overline{\Delta }\backslash \widetilde{\Delta }%
_{\delta },$ $c_{0}$ by $c_{\delta },$ $\gamma _{0}$ by $\gamma _{\delta }$
and applying the above argument once more, we can also prove the following
property for $\delta :$

(g) For each $\delta \in (0,1),$ if $\delta $ satisfies (f), then for
sufficiently small $\varepsilon >0$, $\delta +\varepsilon $ satisfies (f) as
well.

On the other hand, it is clear that, if $\delta $ satisfies (f), then each
positive number $\delta ^{\prime }<\delta $ satisfies (f) as well. Thus, for%
\begin{equation*}
\delta _{0}=\sup \{\delta \in (0,1);\ \delta \ \mathrm{satisfies}\ \mathrm{%
(f)}\},
\end{equation*}%
we have

(h) Each $\delta \in (0,\delta _{0})$ satisfies (f).

To show $\delta _{0}=1,$ we first show that $\delta _{0}$ satisfies (f) if $%
\delta _{0}<1$.

By (e), (f) and (h), $f^{-1}$ has a univalent branch $\widetilde{g}_{\delta
_{0}}$ defined on $T_{\delta _{0}}\cup \gamma _{0}$. By Lemma \ref{continue0}%
, $\widetilde{g}_{\delta _{0}}$ can be extended to be a homeomorphism $%
g_{\delta _{0}}$ defined on $\overline{T_{\delta _{0}}}.$ Thus, $\gamma
_{\delta _{0}}$ has a lift $c_{\delta _{0}}=g_{\delta _{0}}(w),w\in \gamma
_{\delta _{0}},$ by $f,$ and $c_{\delta _{0}}$ is a Jordan path from $p_{1}$
to $p_{3}$ in $\overline{\Delta }.$ Let $\overline{\widetilde{\Delta }%
_{\delta _{0}}}=g_{\delta _{0}}(\overline{T_{\delta _{0}}}),$ then $f$
restricted to $\overline{\widetilde{\Delta }_{\delta _{0}}}$ is a
homeomorphism onto $\overline{T_{\delta _{0}}}$, and maps $c_{\delta _{0}}$
onto $\gamma _{\delta _{0}}$.

Now, we show that the following hold.

(j) If $\delta _{0}<1,$ then the interior of $c_{\delta _{0}}$ is contained
in $\Delta .$

Assume $\delta _{0}<1$ and let $p_{0}\in c_{\delta _{0}}$ be any interior
point of $c_{\delta _{0}}\ $with $p_{0}\in \partial \Delta .$ Then $p_{0}\in
\alpha :=\left( \partial \Delta \right) \backslash c_{0}$ and $f(p_{0})$ is
in the interior of $\gamma _{\delta _{0}},$ and then $f(p_{0})\in T_{1}.$
Thus, by (d), the curves $\Gamma _{\alpha }=f(z),z\in \alpha =\left(
\partial \Delta \right) \backslash c_{0},$ and $\Gamma _{\beta }=f(z),z\in
\beta =c_{\delta _{0}},$ are both convex at $p_{0}$ (note that $\Gamma
_{\beta }$ is the path $\gamma _{\delta _{0}})$. Therefore, by (c) and Lemma %
\ref{position}, $p_{0}$ has a neighborhood $\beta ^{\prime }$ in $\beta
=c_{\delta _{0}}$ such that $\beta ^{\prime }\subset \alpha =\left( \partial
\Delta \right) \backslash c_{0}$ and $f(\beta ^{\prime })$ is straight. But
then, with a continuation argument, we can prove that the whole of $%
f(c_{\delta _{0}})$ is also straight, which contradicts the fact that $%
\gamma _{\delta _{0}}=f(c_{\delta _{0}})$ is not straight if $\delta _{0}<1.$
Thus, the interior of $c_{\delta _{0}}$ must be in $\Delta $ and (j) is
proved.

(j) implies that $\delta _{0}$ satisfies (f) if $\delta _{0}<1.$ This, with
(g), implies that if $\delta _{0}<1,$ then $\delta _{0}+\varepsilon $
satisfies (f) for sufficiently small $\varepsilon >0$. This contradicts the
definition of $\delta _{0}.$ Thus we have proved $\delta _{0}=1.$

Now that $\delta _{0}=1,$ by (e)--(h), $f^{-1}$ has a univalent branch $%
\widetilde{g}_{1}$ defined on $T_{1}\cup \gamma _{0},$ and by Lemma \ref%
{continue0}, $\widetilde{g}_{1}$ can be extended to be a homeomorphism $%
g_{1} $ defined on $\overline{T_{1}}$. Thus, (i) holds. (ii) can be proved
as the proof of (j), by (i) and Lemma \ref{position}, and (iii) can be
proved similarly.
\end{proof}

\begin{proof}[Proof of Lemma \protect\ref{m-tri}]
If $m=3,$ then Lemma \ref{m-tri} follows from (i) and (iii) of Lemma \ref{c1}%
. So we may assume $m\geq 4.$ But, without loss of generality, we complete
the proof only for the case $m=4.$

We continue the proof of Lemma \ref{c1}. Let $\beta _{1}=g_{1}(\overline{%
q_{1}q_{3}}).$ Then by Lemma \ref{c1} (ii), $\beta _{1}^{\circ }\subset
\Delta ,$ where $\beta _{1}^{\circ }$ is the interior of $\beta _{1}.$ Then $%
\beta _{1}$ divides $\Delta $ into two Jordan domains. We denote by $\Delta
_{1}$ the component of $\Delta \backslash \beta _{1}$ that is on the left
hand side of $\beta _{1},$ i.e. $\Delta _{1}=\Delta \backslash g_{1}(%
\overline{T_{1}}).$

Then, by the assumption $m=4,$ $\Delta _{1}$ is enclosed by
\begin{equation*}
\beta _{1}+\alpha _{3}+\alpha ^{\ast }
\end{equation*}%
where $\alpha ^{\ast }$ is the section of $\partial \Delta $ from $%
p_{m}=p_{4}$ to $p_{1}.$

Again by (a)--(d) and Lemma \ref{c1} (i), $f^{-1}$ has a univalent branch $%
g_{2}$ defined on $\overline{T_{2}}$ such that $g_{2}:\overline{T_{2}}%
\rightarrow g_{2}(\overline{T_{2}})$ is a homeomorphism, restricted to $%
\overline{q_{1}q_{3}q_{4}}$ is a homeomorphism onto $\beta _{1}+\alpha _{3}$
and
\begin{equation*}
g_{2}(q_{j})=p_{j},j=1,3,4.
\end{equation*}

Since $f$ has no branched point in $\overline{T}\backslash \overline{%
q_{4}q_{1}}$ (note that $m=4),$ $f$ has no branched point on $\overline{%
q_{3}q_{1}}\backslash \{q_{1}\}.$ Thus, $g_{1}$ and $g_{2}$ must be
identical on $\overline{q_{3}q_{1}}.$ Then $g_{1}$ and $g_{2}$ make up a
univalent branch $g$ of $f^{-1},$ such that $g:\overline{T}=\overline{%
T_{1}\cup T_{2}}\rightarrow g(\overline{T})$ is a homeomorphism with $g(%
\overline{q_{1}q_{2}q_{3}q_{4}})=\alpha _{1}+\alpha _{2}+\alpha _{3}$. (i)
is proved.

(ii) can be proved as the proof of (j). This completes the proof of Lemma %
\ref{m-tri}.
\end{proof}

\begin{remark}
Lemma \ref{m-tri} implies an interesting proposition: let $f:\overline{%
\Delta }\rightarrow \mathbb{C}$ be an open mapping that is orientation
preserved and is locally homeomorphism. Then, $f$ is a homeomorphism,
provided that the boundary curve $\Gamma _{f}=f(z),z\in \partial \Delta ,$
is locally convex.

Here \textquotedblleft locally convex\textquotedblright\ means that when $z$
goes around $\partial \Delta $ anticlockwise, $f(z)$ always go straight or
turn left. For example, if we assume that the curve $\Gamma _{f}$ is smooth
and is locally straight, or locally on the left hand side of its tangent
line, then $\Gamma _{f}$ is locally convex.

For later use, we only prove this in a special version for normal mappings,
which is the following corollary.
\end{remark}

\begin{corollary}
\label{bv-1}Let $\alpha _{0}\ $be a section of $\partial \Delta $ from $%
p_{1} $ to $p_{m}$ with
\begin{equation}
p_{1}\neq p_{m},  \label{5.1}
\end{equation}%
let $f:\overline{\Delta }\rightarrow S$ be a normal mapping such that the
section $\gamma _{0}=f(z),z\in \alpha _{0},$ is a closed Jordan path that
has the natural partition%
\begin{equation}
\gamma _{0}=\overline{q_{1}q_{2}}+\overline{q_{2}q_{3}}+\dots +\overline{%
q_{m-1}q_{1}},  \label{7-5}
\end{equation}%
with $q_{1}=f(p_{1})=f(p_{m})$ and%
\begin{equation*}
\{q_{2},\dots ,q_{m-1}\}\cap E=\emptyset .
\end{equation*}%
Assume that for the domain $T_{\gamma _{0}}\subset S$ enclosed by $\gamma
_{0},$ the boundary curve $\Gamma _{f}=f(z),z\in \partial \Delta ,$ is
locally convex in $\overline{T_{\gamma _{0}}}\backslash \{q_{1}\}.$ Then $f$
has a branched point in $\overline{T_{\gamma _{0}}}\backslash \{q_{1}\}.$
\end{corollary}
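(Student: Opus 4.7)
The plan is to argue by contradiction: assuming that $f$ has no branched point in $\overline{T_{\gamma_0}}\setminus\{q_1\}$, I would construct a single-valued homeomorphic inverse $\widetilde g:\overline{T_{\gamma_0}}\to\widetilde g(\overline{T_{\gamma_0}})\subset\overline{\Delta}$ of $f$ restricted to this set, and then observe that the value $\widetilde g(q_1)$ would have to coincide simultaneously with the two distinct points $p_1$ and $p_m$, a contradiction. This runs parallel to the lifting strategy of Lemma \ref{m-tri}, except that here $\gamma_0$ is already a closed Jordan curve (rather than a path to be closed by adding $\overline{q_m q_1}$), so the ``closing edge'' collapses to the single point $q_1$ and is precisely what forces the contradiction.

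To build $\widetilde g$, I would first pick an interior point $q_0$ of the first natural edge $\overline{q_1 q_2}$ of $\gamma_0$, together with its unique pre-image $p_0$ on the corresponding section of $\alpha_0$. Since $p_0$ is a regular boundary point of the normal map $f$, there is a disk neighborhood $V_0$ of $q_0$ carrying a univalent local branch $g_0$ of $f^{-1}$ with $g_0(q_0)=p_0$, mapping $V_0\cap\overline{T_{\gamma_0}}$ into $\overline{\Delta}$; the $T_{\gamma_0}$ side of $\gamma_0$ corresponds to the $\Delta$ side of $\partial\Delta$ because $f$ is orientation preserving and $T_{\gamma_0}$ lies inside $\gamma_0$. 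Because $T_{\gamma_0}$ is a Jordan (hence simply connected) polygonal domain and, by the contrary assumption, contains no branched points of $f$, the germ $g_0|_{V_0\cap T_{\gamma_0}}$ continues along paths in $T_{\gamma_0}$ to a univalent branch $g$ of $f^{-1}$ on all of $T_{\gamma_0}$; Lemma \ref{continue0} then yields a homeomorphic extension $\widetilde g$ to $\overline{T_{\gamma_0}}$.

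To identify $\widetilde g|_{\gamma_0}$, I would observe that at any $q'\in\gamma_0\setminus\{q_1\}$ the same local construction (with base point $q'$) furnishes a unique local inverse branch from the $T_{\gamma_0}$ side of $q'$ onto a half-disk neighborhood of the pre-image in $\alpha_0$, and uniqueness of continuation (no branched points in $\overline{T_{\gamma_0}}\setminus\{q_1\}$) forces this local branch to agree with the germ of $\widetilde g$ at $q'$. Hence $\widetilde g$ sends each natural edge $\overline{q_j q_{j+1}}$ homeomorphically onto the corresponding section $\alpha_j$ of $\alpha_0$, giving $\widetilde g(q_1)=p_1$ when $q_1$ is approached along $\overline{q_1 q_2}$ and $\widetilde g(q_1)=p_m$ when it is approached along $\overline{q_{m-1}q_1}$; continuity of $\widetilde g$ at $q_1$ then forces $p_1=p_m$, contradicting the hypothesis $p_1\neq p_m$. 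The main obstacle will be to guarantee that the monodromy continuation of $g_0$ faithfully tracks $\alpha_0$ all the way around $\gamma_0\setminus\{q_1\}$, rather than peeling off into $\Delta$ at some intermediate point; the local convexity of $\Gamma_f$ in $\overline{T_{\gamma_0}}\setminus\{q_1\}$, combined with Lemma \ref{position}, should rule this out in exactly the manner of step (j) in the proof of Lemma \ref{c1}.
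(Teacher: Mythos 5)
Your proposal is correct and follows essentially the same route as the paper: assume no branched point, produce a univalent branch of $f^{-1}$ over $\overline{T_{\gamma_0}}$ whose restriction to $\gamma_0$ tracks $\alpha_0$, and derive the contradiction from $p_1\neq p_m$. The only difference is packaging — the paper first decomposes $\overline{T_{\gamma_0}}$ into generic convex triangles via Lemma \ref{con-path-hemi} and then cites Lemma \ref{m-tri} wholesale, whereas you re-run the underlying continuation argument (with Lemma \ref{position} handling the boundary-tracking issue, exactly as in step (j) of Lemma \ref{c1}), which amounts to the same work.
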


\begin{proof}
Since $\gamma _{0}$ is a closed Jordan path, by (\ref{7-5}) we have\footnote{%
Note that (\ref{7.5}) makes sense iff each term on the right hand side has
spherical length $<\pi .$} $m\geq 4.$ Since $\Gamma _{f}$ is locally convex
in $\overline{T_{\gamma _{0}}}\backslash \{q_{1}\},$ $\gamma _{0}$ is a
locally convex path, and then by Lemma \ref{con-path-hemi} and Remark \ref{u}
(1), for each $j=1,\dots ,m-3,$ $L_{j}=\overline{q_{1}q_{j+1}q_{j+2}q_{1}}$
is a generic convex triangle such that the triangle domains $T_{j}$ enclosed
by $L_{j}$ are disjoint each other and $\overline{T_{\gamma _{0}}}=\cup
_{j=1}^{m-3}\overline{T_{j}}$.

Assume $f$ has no branched point in $\overline{T_{\gamma _{0}}}\backslash
\{q\}$. Then, Lemma \ref{m-tri} applies, i.e. $f^{-1}$ has a univalent
branch $g$ defined on $\overline{T_{\gamma _{0}}}$ such that $g$ restricted
to $\overline{q_{1}q_{2}\dots q_{m-1}}$ is a homeomorphism onto a section $%
\alpha _{0}^{\prime }$ of $\alpha _{0}$ from $p_{1}$ to some point $%
p_{m-1}^{\prime }\in \alpha _{0}^{\circ },$ here $\alpha _{0}^{\circ }$ is
the interior $\alpha _{0}\backslash \{p_{1},p_{2}\}$ of $\alpha _{0}.$

Let $\alpha _{0}^{\prime \prime }$ be the section of $\alpha _{0}$ from $%
p_{m-1}^{\prime }$ to $p_{m},$ then, by the assumption, it is clear that $f$
maps $\alpha _{0}^{\prime \prime }$ homeomorphically onto $\overline{%
q_{m-1}q_{m}}=\overline{q_{m-1}q_{1}}.$ Since $f$ has no branched point on $%
\overline{T_{\gamma _{0}}}\backslash \{q_{1}\}$ and $q_{m-1}\in \overline{%
T_{\gamma _{0}}}\backslash \{q_{1}\},$ after an argument of uniqueness of
the lifting, we have $g(\overline{q_{m-1}q_{1}})=\alpha _{0}^{\prime \prime
}\subset \partial \Delta .$ Then we have $g(\gamma _{0})\subset \partial
\Delta ,$ and then $g(\gamma _{0})=\partial \Delta $ by Lemma \ref{b-in}.
Thus $f$ is a homeomorphism, and $\gamma _{0}$ is the whole curve $\Gamma
_{f},$ which contradicts (\ref{5.1}). The proof is completed.
\end{proof}

In the rest of this section, let $p_{j}=e^{i\theta _{j}}$ be $m$ distinct
points in $\partial \Delta ,j=1,\dots ,m,$ with%
\begin{equation*}
m\geq 3\ \mathrm{and\ }\theta _{1}<\theta _{2}<\dots <\theta _{m}\leq \theta
_{1}+2\pi ,
\end{equation*}%
let $\alpha _{j}$ be the section of $\partial \Delta $ from $p_{j}$ to $%
p_{j+1},j=1,\dots ,m-1,$ and let
\begin{equation*}
\alpha _{0}=\alpha _{1}+\alpha _{1}+\dots +\alpha _{m-1}.
\end{equation*}

\begin{definition}
\label{FM}The family $\mathcal{F}_{m}$ is defined to be the family of all
normal mappings $f:\overline{\Delta }\rightarrow S$ that satisfies all the
following conditions (A)--(E).

(A) The section $\gamma _{0}=f(z),z\in \alpha _{0},$ of the boundary curve $%
\Gamma _{f}=f(z),z\in \partial \Delta ,$ is a Jordan path.

(B) $\gamma _{0}$ has the natural partition
\begin{equation}
\gamma _{0}=\overline{q_{1}q_{2}}+\overline{q_{2}q_{3}}+\dots +\overline{%
q_{m-1}q_{m}},  \label{7-6}
\end{equation}%
with%
\begin{equation}
\gamma _{0}\cap \lbrack 0,+\infty ]=\{q_{1},q_{m}\},  \label{7-3}
\end{equation}%
where, $q_{j}=f(p_{j}),$ $j=1,\dots ,m,$ and $\overline{q_{j}q_{j+1}}$ is
the section%
\begin{equation*}
\Gamma _{j}=f(z),z\in \alpha _{j},j=1,\dots ,m-1.
\end{equation*}

(C) The boundary curve $\Gamma _{f}=f(z),z\in \partial \Delta ,$ is locally
convex in $S\backslash \{0,\infty \}.$

(D) $f$ has no ramification point in $\overline{\Delta }.$

(E) $f(\Delta )\cap \lbrack 0,+\infty ]=\emptyset .$
\end{definition}

Each $f\in \mathcal{F}_{m}$ will be endowed with all the notations in the
definition. By (A) and (B) the curve%
\begin{equation}
\Gamma =\gamma _{0}+\overline{q_{m}q_{1}}  \label{7.6}
\end{equation}
is a polygonal Jordan curve$.$ Here it is permitted that $q_{1}=q_{m},$ and
in this case $\Gamma =\gamma _{0}.$

Note that by (A), (B), (C) and Definition \ref{convex}, we have

(F) $\gamma _{0}$ is a locally convex polygonal Jordan path that is strictly
convex at $q_{2},\dots ,q_{m-1}.$

Then by (B) and Lemma \ref{L6-1} (ii), $\overline{q_{m}q_{1}}$ in (\ref{7.6}%
) makes sense. On the other hand, if $q_{1}=q_{m},$ then, by (A) and (\ref%
{7-6}), $m\geq 4.$ Therefore, by Lemma \ref{con-path-hemi} (for the case $%
q_{1}=q_{m}$ here) and Lemma \ref{L6-1} (for the case $q_{1}\neq q_{m})$ the
following holds true.

(G) The closure $\overline{T_{\Gamma }}$ of the domain $T_{\Gamma }$
enclosed by $\Gamma =\gamma _{0}+\overline{q_{m}q_{1}}$ is contained in some
open hemisphere of $S.$

\begin{theorem}
\label{pre-key-2}Let $f\in \mathcal{F}_{m}$ and denote by $T_{\Gamma }$ the
domain enclosed by $\Gamma .$ Then the followings hold.

(i) The restriction $f|_{\Delta }:\Delta \rightarrow T_{\Gamma }\backslash
\lbrack 0,+\infty ]$ is a homeomorphism. (ii) $f(\overline{\Delta })$ is
contained in some open hemisphere of $S$.

(iii) For $\alpha _{0}^{\circ }=\alpha _{0}\backslash \{p_{1},p_{m}\},$%
\begin{equation}
f(\alpha _{0}^{\circ })\cap \lbrack 0,+\infty ]=\emptyset ,  \label{ooo}
\end{equation}%
\begin{equation}
f(\left( \partial \Delta \right) \backslash \alpha _{0}^{\circ })\subset
\lbrack 0,+\infty ],  \label{o}
\end{equation}%
and
\begin{equation}
L(f,\alpha _{0})>L(f,\left( \partial \Delta \right) \backslash \alpha _{0}).
\label{oo}
\end{equation}
\end{theorem}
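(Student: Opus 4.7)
The plan is to apply Lemma \ref{m-tri} (or its apex-$0$ variant) to produce a univalent branch $g$ of $f^{-1}$ on $\overline{T_\Gamma}$, then use (E) to pin down where $g$ sends $\overline{q_mq_1}$, and read off (i)--(iii) from the resulting global structure. By (B) and Lemma \ref{L6-1}(ii), $\overline{q_mq_1}\subset[0,+\infty]$ has spherical length strictly less than $\pi$, so $\Gamma=\gamma_0+\overline{q_mq_1}$ (or $\Gamma=\gamma_0$ if $q_1=q_m$) is a polygonal Jordan curve, and property (G) places $\overline{T_\Gamma}$ in an open hemisphere of $S$.

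To set up Lemma \ref{m-tri} I would split into cases on the positions of $q_1,q_m$ on $[0,+\infty]$ and of $q_2,q_{m-1}$ relative to the open hemisphere $S'$ inside the great circle through $[0,+\infty]$: (A) $q_1=q_m$, using Lemma \ref{con-path-hemi}; (B) $q_1\neq q_m$ and $\{q_1,q_m\}\cap\{0,\infty\}\neq\emptyset$ (WLOG $q_1=0$), using Lemma \ref{con-path}; (C) $\{q_1,q_m\}\subset(0,+\infty)$ with $q_2,q_{m-1}$ on the same side of $S'$, using Lemma \ref{con-path2}; (D) the same but on opposite sides, using Lemma \ref{con-path1} with apex $0\in T_\Gamma$. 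In each case I obtain a decomposition of $\overline{T_\Gamma}$ into generic convex triangles sharing a common apex, and verify the four hypotheses of Lemma \ref{m-tri}: (a) each natural edge of $\gamma_0$ has length $<\pi$ by Remark \ref{u}(3); (b) the triangulation from the chosen case lemma; (c) no branched points by (D); (d) local convexity of $\Gamma_f$ throughout $T_\Gamma$ from (C), with vacuous convexity at $0$ in Case D to be justified below via the dichotomy plus (E).

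Lemma \ref{m-tri}(i) then furnishes a homeomorphism $g\colon\overline{T_\Gamma}\to g(\overline{T_\Gamma})\subset\overline{\Delta}$ inverse to $f$, with $g(\gamma_0)=\alpha_0$ and $g(q_j)=p_j$. Applying Lemma \ref{m-tri}(ii) to $\gamma=\overline{q_mq_1}^\circ\subset[0,+\infty]\setminus\{0,\infty\}$ (where (C) supplies local convexity of $\Gamma_f$ and (D) supplies the absence of branched points) yields the dichotomy $g(\overline{q_mq_1}^\circ)\subset\Delta$ or $\subset\partial\Delta$. The first alternative is excluded at once by (E): a point $p'\in\Delta$ with $f(p')\in\overline{q_mq_1}^\circ\subset[0,+\infty]$ would contradict $f(\Delta)\cap[0,+\infty]=\emptyset$. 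Hence $g(\overline{q_mq_1})=\alpha^*:=(\partial\Delta)\setminus\alpha_0^\circ$, so $g(\overline{T_\Gamma})=\overline{\Delta}$ and $f\colon\overline{\Delta}\to\overline{T_\Gamma}$ is a global homeomorphism in Cases A, B, C.

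Conclusions (ii) and (iii) then follow at once: (ii) is (G) applied to $f(\overline{\Delta})=\overline{T_\Gamma}$; (\ref{ooo}) is $\gamma_0^\circ\cap[0,+\infty]=\emptyset$ from (B); (\ref{o}) is $f(\alpha^*)=\overline{q_mq_1}\subset[0,+\infty]$; and (\ref{oo}) holds because the locally convex polygonal path $\gamma_0$ is strictly convex at $q_2,\dots,q_{m-1}$ and hence strictly longer than the geodesic $-\overline{q_mq_1}$ in the hemisphere containing $\overline{T_\Gamma}$. Finally (i) follows by restricting the global homeomorphism to $\Delta$ and using (E) to identify the image with $T_\Gamma\setminus[0,+\infty]$. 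The main obstacle is Case D, where $0\in T_\Gamma$ and $[0,+\infty]$ cuts a slit through $T_\Gamma$; here one must carefully adapt the argument by subdividing $\overline{T_\Gamma}$ along the slit so that hypothesis (d) of Lemma \ref{m-tri} is satisfied piecewise, and show that $\alpha^*$ wraps around the slit so that $f|_\Delta$ is still a homeomorphism onto $T_\Gamma\setminus[0,+\infty]$.
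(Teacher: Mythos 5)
Your proposal follows essentially the same route as the paper: the same case split on the positions of $q_1,q_m,q_2,q_{m-1}$, the same convexity lemmas (\ref{con-path-hemi}, \ref{con-path}, \ref{con-path1}, \ref{con-path2}) to triangulate $\overline{T_\Gamma}$ into generic convex triangles with a common apex, Lemma \ref{m-tri} to produce the univalent branch, and condition (E) to exclude the alternative $g(\overline{q_mq_1}^{\circ})\subset\Delta$. For your Cases A--C this matches the paper's Cases 1--2 and is correct (the paper handles the sub-case $q_1=q_m$ via Corollary \ref{bv-1} rather than directly via Lemma \ref{con-path-hemi}, but that is a cosmetic difference).

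The one genuine gap is your Case D, which you explicitly defer; this is the paper's Case 3 and the only case in which the slit actually enters $T_\Gamma$, so it cannot be waved through. Note first that "subdividing along the slit" is not quite the right cut: the slit $\overline{0q_1}\cup\overline{q_m0}$ terminates at the interior point $0$ of $T_\Gamma$ and does not separate it. The paper instead extends $\overline{q_10}$ past $0$ to a point $q'\in\Gamma$ so that $\overline{q'q_1}=\overline{q'0}+\overline{0q_1}$ is a full chord; by Lemma \ref{con-path1} and (F) the two resulting pieces $T_1^\ast,T_2^\ast$ are convex polygonal Jordan domains, so Lemma \ref{m-tri} applies to each and yields branches $g_1,g_2$. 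One must then (1) use (D) to see that $g_1$ and $g_2$ agree along $\overline{q'0}$, (2) use (E) to force $g_1(0)=g_2(0)\in\partial\Delta$, so that the lifted chord $\alpha=g_1(\overline{q'0})$ has both endpoints on $\partial\Delta$ and interior in $\Delta$, and (3) use (E) again to identify $g_2(\overline{q_m0})$ and $g_1(\overline{0q_1})$ as arcs of $\partial\Delta$, whence $\partial\Delta=\alpha_0+\alpha'+\alpha''$ and $f|_\Delta$ is a homeomorphism onto $T_\Gamma\setminus\overline{0q_1}=T_\Gamma\setminus[0,+\infty]$. Also, in this case the strict inequality (\ref{oo}) does not follow from convexity of $\gamma_0$ against a single geodesic as in your last paragraph (there is no single geodesic $\overline{q_mq_1}$ being compared); it follows instead from $L(\gamma_1)>L(\overline{0q_1})$ and $L(\gamma_2)>L(\overline{q_m0})$ for the two halves of $\gamma_0$ separately. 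Without these steps the claim that "$\alpha^\ast$ wraps around the slit" is an assertion, not a proof.
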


\begin{proof}
By (A) and (B), it is clear that (\ref{ooo}) holds true. To complete the
remained proof, it suffices to consider three cases.

\noindent \textbf{Case 1. }%
\begin{equation}
0=q_{1}\leq q_{m}<+\infty .  \label{7.5}
\end{equation}

If $q_{1}=q_{m}=0,$ then by (G) we have%
\begin{equation*}
\overline{T_{\Gamma }}\cap \lbrack 0,+\infty ]=\{0\},
\end{equation*}%
and then by (C), (D) and Corollary \ref{bv-1} we have $p_{1}=p_{m},$ and
then, by (A), $f$ maps $\alpha _{0}=\partial \Delta $ homeomorphically onto
the closed Jordan curve $\Gamma =\gamma _{0},$ and since $f$ is normal we
conclude that $f:\Delta \rightarrow T_{\Gamma }=T_{\Gamma }\backslash
\lbrack 0,+\infty ]$ is a homeomorphism, and other conclusions of Theorem %
\ref{pre-key-2} is trivially hold with $\alpha _{0}=\partial \Delta $, by
(G).

If $q_{1}\neq q_{m},$ i.e. $q_{1}=0$ and $q_{m}\in (0,+\infty ),$ then by
Lemma \ref{con-path} the triangles $L_{j}=\overline{q_{1}q_{j+1}q_{j+2}q_{1}}
$ are generic convex for $j=1,2,\dots ,m-2,$ the domains $T_{j}$ enclosed by
$L_{j}$ are disjoint each other and for the domain $T_{\Gamma }$ enclosed by
$\Gamma =\gamma _{0}+\overline{q_{m}q_{1}}$ we have%
\begin{equation*}
\overline{T_{\Gamma }}=\cup _{j}^{m-2}\overline{T_{j}},\ 0\notin \overline{%
T_{\Gamma }},
\end{equation*}%
and
\begin{equation*}
\overline{T_{\Gamma }}\backslash \overline{q_{m}q_{1}}=\overline{T_{\Gamma }}%
\backslash \overline{q_{m}0}\subset S\backslash \{0,\infty \}.
\end{equation*}

Then, by (C) and (D), Lemma \ref{m-tri} applies, and then, $f^{-1}$ has a
univalent branch $g$ defined on $\overline{T_{\Gamma }}$ such that $g$
restricted to $\gamma _{0}$ is a homeomorphism onto $\alpha _{0}.$ Let $%
\alpha ^{\ast }=g(\overline{q_{m}q_{1}}).$ Then $\alpha ^{\ast }$ is a
Jordan path in $\overline{\Delta }$ from $p_{m}$ to $p_{1}$ and by (E) we
have $\alpha ^{\ast }\subset \partial \Delta ,$ and then $\alpha ^{\ast
}=\left( \partial \Delta \right) \backslash \alpha _{0}^{\circ }$. This
implies that $g(\partial T_{\Gamma })=\partial \Delta ,$ and then $f:%
\overline{\Delta }\rightarrow \overline{T_{\Gamma }}$ and $f:\Delta
\rightarrow T_{\Gamma }=T_{\Gamma }\backslash \lbrack 0,+\infty ]$ are
homeomorphisms, with
\begin{equation*}
f(\left( \partial \Delta \right) \backslash \alpha _{0}^{\circ })=f(\alpha
^{\ast })=\overline{q_{m}q_{1}}\subset \lbrack 0,+\infty ],
\end{equation*}%
and%
\begin{equation*}
L(f,\alpha _{0})=L(\gamma _{0})>L(\overline{q_{m}q_{1}})=L(f,\left( \partial
\Delta \right) \backslash \alpha _{0}).
\end{equation*}%
Then, by (G), the proof is complete for Case 1.

\noindent \textbf{Case 2. }%
\begin{equation}
\{q_{1},q_{m}\}\subset (0,+\infty ),  \label{7.0}
\end{equation}%
and%
\begin{equation}
\{q_{2},q_{m-1}\}\subset S^{\prime },  \label{7-1}
\end{equation}%
where $S^{\prime }$ is the open hemisphere inside the great circle
determined by $[0,+\infty ].$

By (A), (B), (C), (\ref{7.0}), (\ref{7-1}) and Lemma \ref{con-path2}, we have

(H) $\Gamma =\gamma _{0}+\overline{q_{m}q_{1}}$ is a convex Jordan curve
that is strictly convex at all vertices $q_{1},q_{2},\dots ,q_{m}.$

We first assume $q_{1}=q_{m}.$ Then the closed curve $\Gamma =\gamma _{0}=%
\overline{q_{1}q_{2}}+\overline{q_{2}q_{3}}+\dots +\overline{q_{m-1}q_{1}}$
is strictly convex at all its vertices $q_{1},\dots ,q_{m-1},$ and, by Lemma %
\ref{con-hemi} (ii)
\begin{equation*}
\overline{T_{\gamma _{0}}}\backslash \{q_{1}\}\subset S^{\prime },
\end{equation*}%
where $T_{\gamma _{0}}$ is the domain enclosed by $\gamma _{0}.$ Then by
(C), (D) and Corollary \ref{bv-1}, $\alpha _{0}=\alpha _{1}+\alpha
_{1}+\dots +\alpha _{m-1}=\partial \Delta ,$ i.e. $p_{1}=p_{m}.$ This
implies that $f$ restricted to $\partial \Delta $ is a homeomorphism onto $%
\gamma _{0}$ and then $f$ is a homeomorphism, and the other conclusions are
trivial in this setting.

Now, we assume $q_{1}\neq q_{m}.$ Then $\Gamma =\gamma _{0}+\overline{%
q_{m}q_{1}}$ has the following natural partition
\begin{equation*}
\Gamma =\overline{q_{1}q_{2}}+\overline{q_{2}q_{3}}+\dots +\overline{%
q_{m-1}q_{m}}+\overline{q_{m}q_{1}},
\end{equation*}%
and by (H) and Lemma \ref{con-hemi} (ii), $\overline{T_{\Gamma }}\subset
S^{\prime }\cup \overline{q_{m}q_{1}},$ which, with (\ref{7.0}), implies
that
\begin{equation}
\overline{T_{\Gamma }}\cap \{0,\infty \}=\emptyset \ \mathrm{and\ }\overline{%
T_{\Gamma }}\cap \lbrack 0,+\infty ]=\overline{q_{m}q_{1}}.  \label{7-2}
\end{equation}

Then again by (H), the triangles $L_{j}=\overline{q_{1}q_{j+1}q_{j+2}q_{1}}$
are all generic convex triangles and the domains $T_{j}$ enclosed by $L_{j}$
are disjoint each other, and $\overline{T_{\Gamma }}=\cup _{j}^{m-2}%
\overline{T_{j}}.$ By (C) and (\ref{7-2}), $\Gamma _{f}=f(z),z\in \partial
\Delta ,$ is locally convex in $\overline{T_{\Gamma }}$ and by (D), $f$ has
no branched point in $\overline{T_{\Gamma }}.$ Thus, by Lemma \ref{m-tri}, $%
f^{-1}$ has a univalent branch $g$ defined on $\overline{T_{\Gamma }}$ such
that $g$ maps $\gamma _{0}=\overline{q_{1}q_{2}\dots q_{m}}$ onto $\alpha
_{0}$.

Let $\alpha ^{\ast }=g(\overline{q_{m}q_{1}}).$ Then $\alpha ^{\ast }$ is a
Jordan path in $\overline{\Delta }$ from $p_{m}$ to $p_{1}.$ By (E), we have
$\alpha ^{\ast }\subset \partial \Delta ,$ and then we have $\alpha ^{\ast
}=\partial \Delta \backslash \alpha _{0}^{\circ }$ and $g(\partial T_{\Gamma
})=g(\Gamma )\subset \partial \Delta $, which, with Lemma \ref{b-in},
implies that $g(\overline{T_{\Gamma }})=\overline{\Delta },$ and then $f:%
\overline{\Delta }\rightarrow \overline{T_{\Gamma }}$ is a homeomorphism.

Thus $f:\Delta \rightarrow T_{\Gamma }=T_{\Gamma }\backslash \lbrack
0,+\infty ]$ is a homeomorphism%
\begin{equation*}
f(\left( \partial \Delta \right) \backslash \alpha _{0})=\overline{q_{m}q_{1}%
}\subset \lbrack 0,+\infty ],
\end{equation*}%
and, by the fact that $L(\gamma _{0})>L(\overline{q_{m}q_{1}}),$ we have%
\begin{equation*}
L(f,\alpha _{0})>L(f,\alpha ^{\ast })=L(f,\left( \partial \Delta \right)
\backslash \alpha _{0}).
\end{equation*}%
Then, by (G), The proof is complete for Case 2.

\noindent \textbf{Case 3.}%
\begin{equation}
\{q_{1},q_{m}\}\subset (0,+\infty ),  \label{7.1}
\end{equation}%
and
\begin{equation}
q_{2}\in S^{\prime },q_{m-1}\in S\backslash \overline{S^{\prime }}.
\label{7.2}
\end{equation}

%\begin{figure}[tbp]
%\centering
%\includegraphics[scale=0.50]{con-lift3.eps} \label{con-lift3}
%\end{figure}

By (A), (B), (C), (\ref{7.1}) and (\ref{7.2}), Lemma \ref{con-path1} apply
to $\gamma _{0}$, and then we have the following.

(I) $0$ is contained in the domain $T_{\Gamma }$, $L_{j}=\overline{%
0q_{j}q_{j+1}0}$ is a generic convex triangle for $j=1,\dots ,m-1$; and for
the triangle domain $T_{j}$ enclosed by $L_{j},$
\begin{equation*}
\overline{T_{j}}\cap \lbrack 0,+\infty ]=\{0\},\ \mathrm{for\ }j=2,\dots
,m-2,
\end{equation*}%
\begin{equation*}
\overline{T_{1}}\cap \lbrack 0,+\infty ]=\overline{0q_{1}},\ \overline{%
T_{m-1}}\cap \lbrack 0,+\infty ]=\overline{q_{m}0}.
\end{equation*}

By (I), we can extend $\overline{q_{1}0}$ past $0$ to some point $q^{\prime
}\in \Gamma $ such that the open line segment $\overline{q^{\prime }0}%
^{\circ }$ is contained in $T_{\Gamma }$ (note that by (G) the notations $%
\overline{q^{\prime }0}$ and $\overline{q^{\prime }q_{1}}=\overline{%
q^{\prime }0}+\overline{0q_{1}}$ make sense, i.e. $d(q^{\prime },q_{1})<\pi
).$ By (G) and (I), $\overline{q^{\prime }q_{1}}$ divides $T_{\Gamma }$ into
two polygonal Jordan domains $T_{1}^{\ast }$ and $T_{2}^{\ast }$ with $%
q_{2}\in \partial T_{1}^{\ast }$ and $q_{m-1}\in \partial T_{2}^{\ast }$,
both $T_{1}^{\ast }$ and $T_{2}^{\ast }$ are strictly convex at $q^{\prime
}, $ $T_{1}^{\ast }$ is on the left hand side of $\overline{q^{\prime }q_{1}}
$ and $T_{2}^{\ast }$ is on the right hand side of $\overline{q^{\prime
}q_{1}},$ $q_{1}$ is a strictly convex vertex of of $T_{1}^{\ast }$ and $%
q_{m}$ is a strictly convex vertex of $T_{2}^{\ast }$. Thus, by (F), both $%
T_{1}^{\ast }$ and $T_{2}^{\ast }$ are polygonal convex Jordan domains.

Considering that $q^{\prime }$, $q_{1}$ and $q_{2}$ are strictly convex
vertices of $T_{1}^{\ast },$ by Lemma \ref{con-hemi}, we have
\begin{equation}
\overline{T_{1}^{\ast }}\backslash \overline{q^{\prime }q_{1}}\subset
S^{\prime }.  \label{7-4}
\end{equation}

Let $\gamma _{1}$ be the section of $\gamma _{0}$ from $q_{1}$ to $q^{\prime
}$, $p^{\prime }$ the unique point in $\alpha _{0}$ such that $f(p^{\prime
})=q^{\prime }$, $\alpha _{0}^{1}$ the section of $\alpha _{0}$ from $p_{1}$
to $p^{\prime }$ and let $\alpha _{0}^{2}$ be the section of $\alpha _{0}$
from $p^{\prime }$ to $p_{m}.$ We may assume $q^{\prime }\in \overline{%
q_{s}q_{s+1}}^{\circ }$ (in the case $q^{\prime }=q_{s}$ or $q_{s+1},$ the
proof is the same). Then
\begin{equation*}
\partial T_{1}^{\ast }=\gamma _{1}+\overline{q^{\prime }q_{1}}=\overline{%
q_{1}q_{2}}+\dots +\overline{q_{s}q^{\prime }}+\overline{q^{\prime }q_{1}},
\end{equation*}%
and $T_{1}^{\ast }$ is strictly convex at $q_{1},q_{2},\dots
,q_{s},q^{\prime }$, and then $\overline{q_{1}q_{2}q_{3}q_{1}},\dots ,%
\overline{q_{1}q_{s-1}q_{s}q_{1}},$ $\overline{q_{1}q_{s}q^{\prime }q_{1}}$
are generic convex triangles that triangulate $\overline{T_{1}^{\ast }}$.
Hence, by (C), (D), (\ref{7-4}), Lemma \ref{m-tri} applies to $\overline{%
T_{1}^{\ast }}$, and then $f^{-1}$ has a univalent branch $g_{1}$ defined on
$\overline{T_{1}^{\ast }}$ such that $g_{1}$ restricted to $\gamma _{1}$ is
a homeomorphism onto $\alpha _{0}^{1}$ with
\begin{equation*}
g_{1}(q_{1})=p_{1}\text{\textrm{and\ }}g_{1}(q^{\prime })=p^{\prime }.
\end{equation*}

For the same reason, $f^{-1}$ has a univalent branch $g_{2}$ defined on $%
\overline{T_{2}^{\ast }}$ such that $g_{2}$ restricted to $\gamma _{2}=%
\overline{q^{\prime }q_{s+1}}+\dots +\overline{q_{m-1}q_{m}}$ is a
homeomorphism onto $\alpha _{0}^{2}$ with
\begin{equation*}
g_{2}(q_{m})=p_{m}\mathrm{\ and\ }g_{2}(q^{\prime })=p^{\prime }.
\end{equation*}%
Considering that $f$ has no branched point in $S$ and $g_{1}(q^{\prime
})=g_{2}(q^{\prime }).$ We have
\begin{equation}
g_{1}(w)=q_{2}(w),w\in \overline{q^{\prime }0},  \label{7.3}
\end{equation}%
and we denote by $\alpha =g_{1}(\overline{q^{\prime }0})=g_{2}(\overline{%
q^{\prime }0}).$ By (E) we have%
\begin{equation*}
g_{1}(0)=g_{2}(0)\in \partial \Delta ,
\end{equation*}%
and thus, the initial and terminal points of $\alpha $, the points $%
p^{\prime }$ and $g_{1}(0)=g_{2}(0),$ are contained in $\partial \Delta .$

Then we can glue $g_{1}$ and $g_{2}$ along $\overline{q^{\prime }0}$ to be a
multivalent function $G$ such that $G$ restricted to $T_{\Gamma }\backslash
\left( \overline{0q_{1}}\cap \overline{0q_{m}}\right) $ is a homeomorphism
and restricted to $\overline{T_{j}^{\ast }}$ is the homeomorphism $g_{j}$, $%
j=1,2.$ Then, it is clear that the interior of $\alpha =g_{1}(\overline{%
q^{\prime }0})=g_{2}(\overline{q^{\prime }0})$ is contained in $\Delta ,$
and thus $\alpha $ divides $\Delta $ into two Jordan domains $\Delta _{1}$
and $\Delta _{2},$ and we assume $\Delta _{1}$ is on the left hand side of $%
\alpha .$

Let $\alpha ^{\prime }=g_{2}(\overline{q_{m}0})$ and $\alpha ^{\prime \prime
}=g_{1}(\overline{0q_{1}}).$ Then by (E), $\alpha ^{\prime }$ is a section
of $\partial \Delta $ from $p_{m}$ to $g_{2}(0)$ and $\alpha ^{\prime \prime
}$ is a section of $\partial \Delta $ from $g_{1}(0)=g_{2}(0)$ to $p_{1}$,
since $g_{1}(q_{1})=p_{1}$ and $g_{2}(q_{m})=p_{m}.$ Thus, we can conclude
that $f$ maps $\alpha _{0},\alpha ^{\prime },\alpha ^{\prime \prime }$
homeomorphically onto $\gamma _{0},\overline{q_{m}0},\overline{0q_{1}},$
respectively, and
\begin{equation*}
\partial \Delta =\alpha _{0}+\alpha ^{\prime }+\alpha ^{\prime \prime }\text{%
\textrm{with\ }}\alpha _{0}^{\circ }\cap \left( \alpha ^{\prime }+\alpha
^{\prime \prime }\right) =\emptyset .
\end{equation*}%
This implies that $f$ maps $\Delta $ homeomorphically onto $T_{\Gamma
}\backslash \overline{0q_{1}}=T_{\Gamma }\backslash \overline{0q_{m}},$
since $f$ is normal.

On the other hand, it is clear that $L(\gamma _{1})>L(\overline{0q_{1}})$
and $L(\gamma _{2})>L(\overline{q_{m}0}).$ Thus, we have
\begin{eqnarray*}
L(f,\alpha _{0}) &=&L(f(\alpha _{0}))=L(\gamma _{0})=L(\gamma _{1})+L(\gamma
_{2})>L(\overline{0q_{1}})+L(\overline{q_{m}0}) \\
&=&L(f,\alpha ^{\prime \prime }+\alpha ^{\prime })=L(f,(\partial \Delta
)\backslash \alpha _{0}).
\end{eqnarray*}%
By (G), $f(\overline{\Delta })\subset \overline{T_{\Gamma }}$ is contained
in some open hemisphere of $S$ and it is clear that
\begin{equation*}
f((\partial \Delta )\backslash \alpha _{0}^{\circ })=\overline{q_{m}0}\cup
\overline{0q_{1}}\subset \lbrack 0,+\infty ].
\end{equation*}%
This completes the proof for Case 3, and we have finally proved Theorem \ref%
{pre-key-2}.
\end{proof}

\section{Cutting Riemann surfaces along $[0,+\infty ]$ \label{ss-p1}}

In this section we prove the following theorem, which is the second key step
to prove the main theorem in Section \ref{ss-p2} and is also used to prove
Theorem \ref{fat}. Recall that we denote by $[0,+\infty ]$ the line segment
in $S$ from $0$ to $\infty $ that passes through $1.$

\begin{theorem}
\label{decom}Let $f:\overline{\Delta }\rightarrow S$ be a normal mapping and
assume that the followings hold.

(a) Each natural edge of the boundary curve $\Gamma _{f}=f(z),z\in \partial
\Delta ,$ has length strictly less than $\pi $.

(b) $\Gamma _{f}=f(z),z\in \partial \Delta ,$ is locally convex in\footnote{%
See Definition \ref{convex1}.} $S\backslash E,E=\{0,1,\infty \}.$

(c) $f$ has no branched point in $S\backslash E.$

(d) $\Gamma _{f}\cap \lbrack 0,+\infty ]$ contains at most finitely many
points.

Then, in the case $\Delta \cap f^{-1}\left( [0,+\infty ]\right) =\emptyset ,$
$f(\overline{\Delta })$ is contained in some open hemisphere of $S,$ $f:%
\overline{\Delta }\rightarrow f(\overline{\Delta })$ is a homeomorphism and%
\begin{equation*}
\left( \partial \Delta \right) \cap f^{-1}([0,+\infty ])
\end{equation*}%
contains at most one point; and in the case $\Delta \cap f^{-1}\left(
[0,+\infty ]\right) \neq \emptyset ,$ the following (i)--(v) hold:

(i) Each component of $f^{-1}\left( [0,+\infty ]\right) \cap \Delta $ is a
Jordan path with distinct endpoints contained in $\partial \Delta $ and
divides $\Delta $ into two Jordan domains.

(ii) Any pair of two distinct components of $f^{-1}([0,+\infty ])\cap \Delta
$ have at most one common endpoint.

(iii) For each component $D$ of $\Delta \backslash f^{-1}\left( [0,+\infty
]\right) ,$ $D$ is a Jordan domain and $f$ restrict to $D$ is a
homeomorphism.

(iv) For each component $D$ of $\Delta \backslash f^{-1}\left( [0,+\infty
]\right) ,$ $\left( \partial D\right) \cap (\partial \Delta )$ is consisted
of a connected open subset $\alpha _{0}$ and a number of finite points such
that%
\begin{equation*}
f(\alpha _{0})\cap \lbrack 0,+\infty ]=\emptyset \ \mathrm{and\ }f(\partial
D\backslash \alpha _{0})\subset \lbrack 0,+\infty ],
\end{equation*}%
and $f$ restricted to $\alpha _{0}$ is a homeomorphism.

(v) For $\alpha _{0}$ in (iv), if $\alpha _{0}\neq \emptyset $, then $f(%
\overline{D})$ is contained in some hemisphere of $S$ and
\begin{equation*}
L(f(\alpha _{0}))>L(f,\partial D\backslash \alpha _{0}),
\end{equation*}%
that is%
\begin{equation*}
L(f,\partial D\cap (\partial \Delta ))>L(f,\left( \partial D\right)
\backslash (\partial \Delta )).
\end{equation*}
\end{theorem}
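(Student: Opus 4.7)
The plan is to analyze the set $N = f^{-1}([0,+\infty])$ and its components in $\overline{\Delta}$ using the local structure of normal mappings, then to slice $\Delta$ along the interior components of $N$ and apply Theorem \ref{pre-key-2} to each piece. The starting observation is that by (c) every ramification point of $f$ maps into $E \subset [0,+\infty]$, so the local $\zeta^{d}$ model of Definition \ref{722-1}(b),(c) shows that $N$ is a smooth arc near every non-ramification point and looks locally like $d$ arcs meeting at an interior ramification point (respectively $d$ half-arcs emerging into $\overline{\Delta}$ at a boundary ramification point). By hypothesis (d), $N \cap \partial \Delta$ is a finite set.

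Next I would prove (i) and (ii). A component $C$ of $N \cap \Delta$ is a finite graph whose vertices are interior ramification points and whose edges are smooth arcs, meeting $\partial\Delta$ only at finitely many points. First one rules out closed loops: if some subgraph of $C$ bounded a Jordan subdomain $D' \subset \Delta$, then $f|_{\overline{D'}}$ would be a normal mapping (Lemma \ref{p3}) with $f(\partial D') \subset [0,+\infty]$, and openness of $f$ together with Lemma \ref{cov-1} would force $f(D')$ to cover a substantial piece of $S$, producing either a nondegenerate arc of $\Gamma_f$ in $[0,+\infty]$ or branched points of $f$ in $D'$ mapping outside $E$, each contradicting (c) or (d). Hence every component of $N \cap \Delta$ is a Jordan arc with two endpoints in $\partial\Delta$; Lemma \ref{cut-3} applied at boundary ramification points forces these endpoints to be distinct, giving (i), and rules out two different components sharing both endpoints, giving (ii).

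For the degenerate case $\Delta \cap f^{-1}([0,+\infty]) = \emptyset$, let $P = \partial\Delta \cap f^{-1}([0,+\infty])$, finite by (d). If $|P| \geq 2$, one enumerates $P = \{p_1,\dots,p_m\}$ cyclically and checks that $f$ together with the arcs $\alpha_j$ from $p_j$ to $p_{j+1}$ lies in the family $\mathcal{F}_m$ of Definition \ref{FM}; conditions (a)--(d) translate into (A)--(D) and the case assumption yields (E). But then Theorem \ref{pre-key-2}(iii) would force the complementary arc $(\partial\Delta)\setminus\alpha_0^{\circ}$ into $[0,+\infty]$, violating (d) unless it collapses to a point. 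Therefore $|P|\leq 1$; the hemisphere containment and homeomorphism then follow from Theorem \ref{pre-key-2}(i),(ii) applied to the trivial partition, completing this case.

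For the main case $\Delta \cap f^{-1}([0,+\infty]) \neq \emptyset$, I fix a component $D$ of $\Delta \setminus N$. By (i), (ii), $\overline{D}$ is a closed Jordan subdomain of $\overline{\Delta}$ whose boundary alternates between arcs of $\partial\Delta$ and arcs of $N$, with vertices at a finite subset of $f^{-1}([0,+\infty])$. After a conformal parametrization $\overline{D} \cong \overline{\Delta}$, the restriction $f|_{\overline{D}}$ becomes a normal mapping (Lemma \ref{p3}), free of ramification in the interior, with $f(D)\cap[0,+\infty] = \emptyset$. I then verify that $f|_{\overline{D}}$ lies in some $\mathcal{F}_m$: $\alpha_0$ is identified with the portion of $\partial D$ on $\partial\Delta$, the rest of $\partial D$ consists of lifts of subarcs of $[0,+\infty]$ forcing $\gamma_0\cap[0,+\infty] = \{q_1,q_m\}$, while (C), (D), (E) transfer directly from (b), (c), and the defining property of $D$. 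Conclusions (iii), (iv), (v) then follow from Theorem \ref{pre-key-2}(i),(ii),(iii). The subcase $\alpha_0=\emptyset$ (with $\partial D$ entirely in $N$) is excluded by a gluing argument using Lemma \ref{glue}, as in the degenerate case. The principal technical obstacle is exactly this verification that each sliced piece lies in $\mathcal{F}_m$ — one must read off the natural partition of $\gamma_0$, confirm its intersection with $[0,+\infty]$, and handle boundary ramification points where several arcs of $N$ meet, which is where the local $\zeta^{d}$ model, Lemma \ref{cut-3}, and the finiteness in (d) all must combine cleanly.
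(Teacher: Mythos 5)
Your proposal follows essentially the same route as the paper: slice $\Delta $ along $f^{-1}([0,+\infty ])$, check that each resulting piece (after reparametrization) belongs to the family $\mathcal{F}_{m}$, and invoke Theorem \ref{pre-key-2}; the paper merely packages this verification into an intermediate step (Lemma \ref{ok}) and otherwise argues (i), (ii) and the degenerate case exactly as you do. The one step you flag but do not resolve --- condition (A) of Definition \ref{FM}, that the boundary section $\gamma _{0}$ off $[0,+\infty ]$ is a \emph{Jordan} path --- is settled in the paper via Corollary \ref{bv-1}: a self-intersection of the locally convex path $\gamma _{0}$ would yield a closed locally convex subpath avoiding $[0,+\infty ]$, hence contained in a hemisphere by Lemma \ref{con-path-hemi}, and Corollary \ref{bv-1} would then force a branched point outside $E$, contradicting (c).
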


This theorem has very simple geometrical explanation: when we cut the
Riemann surface of $f$ along $[0,+\infty ]$ in the case
\begin{equation*}
\Delta \cap f^{-1}\left( [0,+\infty ]\right) \neq \emptyset ,
\end{equation*}%
we obtain a finite number of pieces, each of which is either the whole
sphere $S$ with folded boundary $[0,+\infty ],$ or is contained in some open
hemisphere of $S$ such that the length of the boundary located in $%
S\backslash \lbrack 0,+\infty ],$ which is a part of the original boundary
of the Riemann surface of $f,$ is larger than the length of the boundary
located in $[0,+\infty ],$ which is a part of the the new boundary, the cut
edges.

This geometrical understand of the mapping in the theorem plays an important
role in this paper. We first prove the following lemma.

\begin{lemma}
\label{ok}Let $g:\overline{\Delta }\rightarrow S$ be a normal mapping that
satisfies (a)--(c) of the previous theorem and
\begin{equation}
g(\Delta )\subset S\backslash \lbrack 0,+\infty ].  \label{12.1}
\end{equation}%
Then,

(i) $g$ restricted to $\Delta $ is a homeomorphism onto $g(\Delta ).$

(ii) $\partial \Delta $ has an open connected subset $\alpha _{0}$ of $%
\partial \Delta ,$ such that
\begin{equation*}
g(\alpha _{0})\cap \lbrack 0,+\infty ]=\emptyset \ \mathrm{and\ }g(\partial
\Delta \backslash \alpha _{0})\subset \lbrack 0,+\infty ].
\end{equation*}

(iii) If in (ii) $\alpha _{0}\neq \emptyset $, then $g$ restricted to $%
\alpha _{0}$ is a homeomorphism onto the curve $g(\alpha _{0})$ in $S$ and $%
L(g(\alpha _{0}))>L(g,\partial \Delta \backslash \alpha _{0})$.

(iv) If in (ii), $\alpha _{0}\neq \emptyset ,$ then $g(\overline{\Delta })$
is contained in some open hemisphere of $S$.
\end{lemma}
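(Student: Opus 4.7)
The plan is to adapt the proof of Theorem \ref{pre-key-2} to the present setting, in which $g$ satisfies slightly weaker conditions than membership in $\mathcal{F}_m$: branching is permitted at preimages of $E = \{0,1,\infty\}$, and local convexity is only assumed outside $E$ (rather than outside $\{0,\infty\}$). Both discrepancies will be resolved by cutting $\overline{\Delta}$ along lifts of sub-arcs of $[0,+\infty]$ (via Corollary \ref{branched-lift} and Lemma \ref{cut-3}) and reassembling via Lemma \ref{patch}, reducing matters to a mapping in $\mathcal{F}_m$ to which Theorem \ref{pre-key-2} directly applies.

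I first analyze the set $K := \partial \Delta \cap g^{-1}([0,+\infty])$. Because every natural edge of $\Gamma_g$ has length strictly less than $\pi$ by hypothesis (a) and lies on a unique great circle, each natural edge either lies entirely in $[0,+\infty]$ or meets $[0,+\infty]$ in at most one point; moreover, no point of $E$ may lie in the interior of a natural edge. Therefore $K$ is a finite union of closed arcs and isolated points, so $\partial \Delta \setminus K$ is a finite disjoint union of open arcs. The main obstacle is then to prove that $\partial \Delta \setminus K$ has at most one component. Suppose, toward a contradiction, that $\alpha'$ and $\alpha''$ were two distinct components of $\partial \Delta \setminus K$, separated on one side of $\partial \Delta$ by a sub-arc $\beta \subset K$ with $g(\beta) \subset [0,+\infty]$. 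Using hypotheses (b) and (c) together with the local univalence arguments of Lemma \ref{m-tri} and Lemma \ref{position}, one can construct a univalent branch of $g^{-1}$ in a neighborhood of an interior point of $g(\beta)$ that straddles $[0,+\infty]$; the resulting preimage would then contain points of $\Delta$ mapping into $[0,+\infty]$, contradicting hypothesis (\ref{12.1}). This connectedness step is the principal difficulty.

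Setting $\alpha_0$ to be the unique (possibly empty) component of $\partial \Delta \setminus K$ then establishes part (ii). For parts (i), (iii), (iv), I would verify that $g$ realizes a member of $\mathcal{F}_m$: conditions (A), (B), and (E) of Definition \ref{FM} follow from the construction of $\alpha_0$ and hypothesis (\ref{12.1}); condition (C) at any boundary preimage $p$ of $1$ follows from hypothesis (b) together with the observation that $g(\Delta)$ avoids $1$, which forces $\Gamma_g$ near $g(p) = 1$ to turn to the same side as the rest of the boundary curve; and condition (D) follows from (c) after excising ramification at preimages of $E$ via the cutting-and-gluing procedure described in the first paragraph. Theorem \ref{pre-key-2} then delivers the homeomorphism of (i), the length inequality and boundary decomposition of (iii), and the open-hemisphere containment of (iv). In the degenerate case $\alpha_0 = \emptyset$, parts (iii) and (iv) are vacuous and (i) is obtained by a direct neighborhood analysis using Lemma \ref{cov-1}, noting that $g(\partial \Delta) \subset [0,+\infty]$ together with $g(\Delta) \subset S \setminus [0,+\infty]$ forces $g|_\Delta$ to be injective.
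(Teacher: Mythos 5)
Your overall plan---verify that $g$ belongs to $\mathcal{F}_m$ with respect to a suitable boundary arc and then invoke Theorem \ref{pre-key-2}---is indeed the paper's route, and your treatment of conditions (B), (C), (E) and of the degenerate case $g(\partial \Delta )\subset \lbrack 0,+\infty ]$ is sound in substance. But the step you yourself single out as the principal difficulty is handled incorrectly, and it is in fact not a prerequisite at all. The local contradiction you propose does not exist: at an interior point $p$ of an arc $\beta \subset \partial \Delta $ with $g(\beta )\subset \lbrack 0,+\infty ]$ and $g(p)\notin E$, hypotheses (b), (c) and (\ref{12.1}) force $g$ to be regular at $p$, so $g$ maps a half-neighborhood of $p$ in $\overline{\Delta }$ homeomorphically onto a half-disk whose diameter lies on $[0,+\infty ]$ and whose interior lies entirely on one side of $[0,+\infty ]$. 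Nothing ``straddles'' the segment and (\ref{12.1}) is not violated, so the existence of a second component of $\partial \Delta \backslash g^{-1}([0,+\infty ])$ cannot be refuted by any such local analysis; the obstruction is global. The paper sidesteps the issue by reversing the order: it fixes one maximal section $\gamma _{0}$ of $\Gamma _{g}$ whose interior avoids $[0,+\infty ]$ and whose endpoints lie on it, verifies $g\in \mathcal{F}_{m}$ for that single arc, and then reads the connectedness off from conclusion (iii) of Theorem \ref{pre-key-2}, namely $f(\left( \partial \Delta \right) \backslash \alpha _{0}^{\circ })\subset \lbrack 0,+\infty ]$, which is proved there by the global lifting argument of Lemma \ref{m-tri} together with condition (E). You should delete your connectedness step and derive connectedness as a consequence instead.

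Two further gaps. First, you dismiss condition (A)---that the section $\gamma _{0}$ over your chosen arc is a Jordan path---as following ``from the construction,'' but this is a substantive step: a priori $\Gamma _{g}$ could be non-simple there. The paper rules this out by extracting from a hypothetical self-intersection a closed, locally convex sub-path avoiding $[0,+\infty ]$ with at least three strictly convex vertices, placing it in an open hemisphere via Lemma \ref{con-path-hemi} (ii), and then invoking Corollary \ref{bv-1} to manufacture a branched point, contradicting the absence of ramification. Second, your plan to ``excise'' ramification at preimages of $E$ by cutting and regluing is both unusable and unnecessary: if $g$ genuinely had a ramification point, conclusion (i) would simply be false, so no surgery on $g$ can rescue the statement; fortunately (c) together with (\ref{12.1}) already excludes every ramification point of $g$ in $\overline{\Delta }$ (an interior one would have image in $g(\Delta )\cap E=\emptyset $ by normality and (c), and a boundary one, having local degree $d\geq 3$, would force $g(\Delta )$ to cover a punctured neighborhood of a point of $E\subset \lbrack 0,+\infty ]$, contradicting (\ref{12.1})). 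This last observation is the paper's very first step, and the rest of the argument depends on having no ramification anywhere.
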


\begin{proof}
By condition (c) and (\ref{12.1}) we have

(e) $g$ has no ramification point in $\overline{\Delta }$.

Let $p$ be any point in $\partial \Delta $ such that
\begin{equation*}
f(p)=1\in (0,+\infty )\subset S.
\end{equation*}
If $\Gamma _{g}=g(z),z\in \partial \Delta ,$ is not convex at $p,$ then,
since $f$ is normal, there is an open interval $I\subset (0,+\infty )$ whose
one endpoint is $1$ such that $I\subset g(\Delta ),$ which contradicts (\ref%
{12.1}). Thus, $\Gamma _{g}$ is convex at\footnote{%
This means that $\Gamma _{g}=g(z),z\in \partial \Delta ,$ is convex at each
point $p\in \left( \partial \Delta \right) \cap g^{-1}(1),$ by Definition %
\ref{convex1}.} $1,$ and then by (b), we have

(f) $\Gamma _{g}$ is locally convex in $S\backslash \{0,\infty \}.$

If $g(\partial \Delta )\cap \lbrack 0,+\infty ]=\emptyset ,$ then by (f) $%
\Gamma _{g}$ is locally convex everywhere, which implies that $\Gamma _{g}$
is locally simple by the definition, and then by Corollary \ref{bv-1} and
(e), $\Gamma _{g}$ is a simple curve and then $g$ is a homeomorphism from $%
\overline{\Delta }$ onto $g(\overline{\Delta }).$ On the other hand, in this
case, by (a), (f) and Definition \ref{convex1}, $\Gamma _{g}$ is a locally
convex curve and has at least three natural vertices, at each of which $%
\Gamma _{g}$ is strictly convex. Thus, by Lemma \ref{con-path-hemi} (ii),
the closure $\overline{T_{\Gamma _{g}}}$ of the domain $T_{\Gamma _{g}}$
enclosed by $\Gamma _{g}$ is contained in some open hemisphere of $S,$ and
thus, $g(\overline{\Delta })\subset \overline{T_{\Gamma _{g}}}$ is contained
in some open hemisphere of $S.$ Hence, putting $\alpha _{0}=\partial \Delta
, $ (i)--(iv) hold.

Consider the case $g(\partial \Delta )\subset \lbrack 0,+\infty ].$ Then $%
g(\partial \Delta )$ must be a closed interval in $[0,+\infty ]$. If $%
g(\partial \Delta )\neq \lbrack 0,+\infty ],$ then by the fact that $g$ is
normal, $g(\Delta )$ contains $0$ or $\infty ,$ but this contradicts the
assumption. Thus, $g(\partial \Delta )=[0,+\infty ],$ and then by (\ref{12.1}%
), $g$ restricted to $\Delta $ is a covering onto $S\backslash \lbrack
0,+\infty ]$, which, together with (e), implies that $g$ restricted to $%
\Delta $ is a homeomorphism, and putting $\alpha _{0}=\emptyset $, we have
(ii). Then, in the case
\begin{equation*}
g(\partial \Delta )\cap \lbrack 0,+\infty ]=\emptyset \ \mathrm{or\ }%
g(\partial \Delta )\subset \lbrack 0,+\infty ],
\end{equation*}%
the lemma is proved.

Now, we assume that $g(\partial \Delta )\cap \lbrack 0,+\infty ]\neq
\emptyset $ and $g(\partial \Delta )\backslash \lbrack 0,+\infty ]\neq
\emptyset $.

Then by (a), $\Gamma _{g}$ has a section%
\begin{equation}
\gamma _{0}=\overline{q_{1}q_{2}}+\overline{q_{2}q_{3}}+\dots +\overline{%
q_{m-1}q_{m}},m\geq 3,  \label{12.1+1}
\end{equation}%
such that each $q_{2},\dots ,q_{m-1}$ are natural vertices of $\Gamma _{g},$
the edges $\overline{q_{2}q_{3}},\dots ,\overline{q_{m-2}q_{m-1}}$ are
natural edges of $\Gamma _{g}$,%
\begin{equation}
\left[ \{q_{2},\dots ,q_{m-1}\}\cup \cup _{j=2}^{m-2}\overline{q_{j}q_{j+1}}%
\right] \cap \lbrack 0,+\infty ]=\emptyset  \label{8-1}
\end{equation}%
and%
\begin{equation}
\{q_{1},q_{m}\}\subset \lbrack 0,+\infty ]\ \mathrm{but\ }\gamma
_{0}\backslash \{q_{1},q_{m}\}\subset S\backslash \lbrack 0,+\infty ].
\label{12.2}
\end{equation}

By (f) and (\ref{12.1+1})--(\ref{12.2}), $\gamma _{0}$ is locally simple,
i.e. $\overline{q_{j}q_{j+1}}\cap \overline{q_{j+1}q_{j+2}}=\{q_{j+1}\}$ for
$j=1,\dots ,m-2.$ Then, by (\ref{12.1+1})--(\ref{12.2}), in the case that $%
\gamma _{0}$ is not simple, there exist integers $s$ and $t$ with $1\leq
s<s+1<t\leq m-1$ and a point
\begin{equation*}
q_{s}^{\prime }\in \left( \overline{q_{s}q_{s+1}}\cap \overline{q_{t}q_{t+1}}%
\right) \backslash \{q_{1},q_{m}\}
\end{equation*}%
such that
\begin{equation*}
\Gamma ^{\prime }=\overline{q_{s}^{\prime }q_{s+1}}+\overline{q_{s+1}q_{s+2}}%
+\dots +\overline{q_{t-1}q_{t}}+\overline{q_{t}q_{s}^{\prime }}
\end{equation*}%
is a section of $\gamma _{0}$ that is a simple path from $q_{s}^{\prime }$
to $q_{s}^{\prime },$ and
\begin{equation*}
\Gamma ^{\prime }\cap \lbrack 0,+\infty ]=\emptyset .
\end{equation*}%
Therefore, by (f) and Definition \ref{convex}, $\Gamma ^{\prime }$ is a
locally convex Jordan path\footnote{%
This does not mean that as a closed curve $\Gamma ^{\prime \prime }$ is
convex at $q_{s}^{\prime },$ by the definition of locally convex path and
locally convex closed curves.} that is strictly convex at $%
q_{s+1},q_{s+2},\dots ,q_{t}$, and then, by (a), $\Gamma ^{\prime }$ has at
least three strictly convex vertices. Therefore, by Lemma \ref{con-path-hemi}
(ii), $\Gamma ^{\prime }$ is contained in some open hemisphere, which
implies that $[0,+\infty ]\cap \overline{T_{\Gamma ^{\prime }}}=\emptyset ,$
where $T_{\Gamma ^{\prime }}$ is the domain inside $\Gamma ^{\prime },$ and
then by (f), $\Gamma _{g}=g(z),z\in \partial \Delta ,$ is locally convex in $%
\overline{T_{\Gamma ^{\prime }}}.$ Then, $g$ and $\Gamma ^{\prime }$
satisfies the assumption of Corollary \ref{bv-1}, and then $g$ has a
branched point in $\overline{T_{\Gamma ^{\prime }}}$, which contradicts (e).
Thus we have proved

(g) $\gamma _{0}$ is a Jordan path.

Then $\partial \Delta $ has an open section $\alpha _{0}$ such that $g$
restricted to $\alpha _{0}$ is a homeomorphism onto $\gamma _{0}^{\circ }.$
Then, by (e), (f), (g), (\ref{12.1}), (\ref{12.1+1}) and (\ref{12.2}), we
have $g\in \mathcal{F}_{m},$ and Theorem \ref{pre-key-2} applies. Then $g$
restricted to $\Delta $ is a homeomorphism onto the domain $T_{\Gamma
}\backslash \lbrack 0,+\infty ]\subset S,$ where $T_{\Gamma }$ is the domain
enclosed by $\Gamma =\gamma _{0}+\overline{q_{m}q_{1}},$%
\begin{equation*}
g(\left( \partial \Delta \right) \backslash \alpha _{0})\subset \lbrack
0,+\infty ],
\end{equation*}%
\begin{equation*}
L(g,\alpha _{0})>L(g,\left( \partial \Delta \right) \backslash \alpha _{0}),
\end{equation*}%
and $g(\overline{\Delta })$ is contained in some open hemisphere of $S.$
Thus, (i)--(v) hold, and the proof is complete.
\end{proof}

\begin{proof}[Proof of Theorem \protect\ref{decom}]
We first assume
\begin{equation}
\Delta \cap f^{-1}([0,+\infty ])=\emptyset .  \label{end-2}
\end{equation}%
Then Lemma \ref{ok} applies, and then $f:\Delta \rightarrow f(\Delta )$ is a
homeomorphism, $f(\overline{\Delta })$ is contained in some open hemisphere
of $S$ and there exists a connected open subset $\alpha _{0}\subset \partial
\Delta $ such that%
\begin{equation}
f(\alpha _{0})\cap \lbrack 0,+\infty ]=\emptyset ,  \label{end-3}
\end{equation}%
\begin{equation}
f(\left( \partial \Delta \right) \backslash \alpha _{0})\subset \lbrack
0,+\infty ],  \label{end-4}
\end{equation}%
and $f$ restricted to $\alpha _{0}$ is also a homeomorphism onto some curve
in $S.$ Then, $\left( \partial \Delta \right) \backslash \alpha _{0}$ is
also a connected section of $\partial \Delta $, and $f$ restricted to $%
\Delta \cup \alpha _{0}$ is also a homeomorphism.

By (d) and (\ref{end-4}),
\begin{equation*}
f(\left( \partial \Delta \right) \backslash \alpha _{0})\subset f(\partial
\Delta )\cap \lbrack 0,+\infty ]
\end{equation*}%
is a finite set. Then, since $\left( \partial \Delta \right) \backslash
\alpha _{0}$ is connected, $f(\left( \partial \Delta \right) \backslash
\alpha _{0})$ is a singleton, or is empty, which implies that $\left(
\partial \Delta \right) \backslash \alpha _{0}$ is a singleton, or is empty,
which, with (\ref{end-3}) and the above argument, implies that $\left(
\partial \Delta \right) \cap f^{-1}([0,+\infty ])$ contains at most one
point and $f:\overline{\Delta }\rightarrow f(\overline{\Delta })$ is a
homeomorphism. The theorem is proved under the assumption (\ref{end-2}).

Now, we assume%
\begin{equation*}
f(\Delta )\cap \lbrack 0,+\infty ]\neq \emptyset .
\end{equation*}%
Then by (c) and (d) and the assumption that $f$ is normal, each component of
$f^{-1}([0,+\infty ])\cap \Delta $ is a simple path in $\Delta $ whose
endpoints are distinct and contained in $\partial \Delta ,$ i.e. (i) holds
true, and $f^{-1}([0,+\infty ])\cap \Delta $ has only a finite number of
components. This implies that $\Delta \backslash f^{-1}([0,+\infty ])$ has a
finite number of components, each of which is a Jordan domain.

(ii) follows from Lemma \ref{cut-3}.

Let $D$ be any component of $\Delta \backslash f^{-1}([0,+\infty ]).$ Then
by (i), $D$ is a Jordan domain. Let $g$ be the restricted mapping $g=f|_{%
\overline{D}}.$ Then $g$ is a normal mapping and each natural edge of $%
\Gamma _{g}=g(z),z\in \partial D,$ is either a natural edge of $\Gamma
_{f}=f(z),z\in \partial \Delta ,$ or a section of some natural edge of $%
\Gamma _{f},$ or an interval contained in $\overline{0,1}$ or $\overline{%
1,\infty }.$ Thus (a) is satisfied by $g.$ By (b) and (c), $g$ also
satisfies (b) and (c), and it is clear that%
\begin{equation*}
g(D)\cap \lbrack 0,+\infty ]=\emptyset .
\end{equation*}%
Thus $g$ satisfies all the assumption of Lemma \ref{ok}, by ignoring a
coordinate transform that maps $\overline{D}$ homeomorphically onto $%
\overline{\Delta }.$ Thus, Lemma \ref{ok} applies to $g$ and (iii) follows.
By Lemma \ref{ok}, $\partial D$ has a connected open subset $\alpha _{0}$ of
$\partial D,$ such that (\ref{end-3}) and (\ref{end-4}) still hold.

It is clear, by (\ref{end-3}) and (\ref{end-4}), that
\begin{equation*}
\alpha _{0}=(\partial D)\backslash f^{-1}([0,+\infty ]),
\end{equation*}%
and, by (i) and (ii), that
\begin{equation*}
\left( \partial D\right) \cap \Delta \subset f^{-1}([0,+\infty ].
\end{equation*}%
Then, we have
\begin{equation*}
\alpha _{0}=\left( (\partial D)\backslash \Delta \right) \backslash
f^{-1}([0,+\infty ]),
\end{equation*}%
and, considering that $(\partial D)\backslash \Delta =\left( \partial
D\right) \cap (\partial \Delta ),$ we have%
\begin{equation*}
\alpha _{0}=\left[ \left( \partial D\right) \cap (\partial \Delta )\right]
\backslash f^{-1}([0,+\infty ]).
\end{equation*}%
Then, considering that, by (d), $(\partial \Delta )\cap f^{-1}([0,+\infty ])$
is a finite set, we conclude that $\left( \partial D\right) \cap (\partial
\Delta )\backslash \alpha _{0}$ is a finite set, and thus $\alpha _{0}$ is
the interior of $\left( \partial D\right) \cap (\partial \Delta )$ in $%
\partial \Delta .$ Therefore, by (\ref{end-3}) and (\ref{end-4}), we have
(iv).

(v) follows from (iv)\ and Lemma \ref{ok}. This completes the proof.
\end{proof}

In the above two proofs, we have also proved that:

\begin{corollary}
\label{by the way}Let $f:\overline{\Delta }\rightarrow S$ be a normal
mapping that satisfies all assumptions of Theorem \ref{decom} and let $%
\Delta _{1}$ be any component of $\Delta \backslash f^{-1}([0,+\infty ]).$
Then the restriction $g=f|_{\overline{\Delta _{1}}}$ satisfies all the
assumptions of Lemma \ref{ok}, and, furthermore, if $g(\partial \Delta
_{1})\subset \lbrack 0,+\infty ],$ then $g(\partial \Delta _{1})=[0,+\infty
] $ and $g$ restricted to $\Delta _{1}$ is a homeomorphism onto $S\backslash
\lbrack 0,+\infty ].$\newpage
\end{corollary}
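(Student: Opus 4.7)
The plan is to reuse, almost verbatim, the arguments already contained in the proofs of Lemma \ref{ok} and Theorem \ref{decom}. First I would identify $\overline{\Delta _{1}}$ homeomorphically with $\overline{\Delta }$ (legitimate since Theorem \ref{decom}(iii) guarantees that $\Delta _{1}$ is a Jordan domain) and then verify that $g=f|_{\overline{\Delta _{1}}}$ inherits all five hypotheses of Lemma \ref{ok}. Conditions (b) and (c) of Theorem \ref{decom} pass to $g$ immediately, being local properties of $f$. For hypothesis (a), I would observe that each natural edge of $\Gamma _{g}=g(z),z\in \partial \Delta _{1},$ is either a natural edge of $\Gamma _{f}$, a section of such an edge, or a subinterval of $\overline{0,1}$ or $\overline{1,\infty }$; in each case the spherical length is at most $\pi /2<\pi $. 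The defining property of $\Delta _{1}$ as a component of $\Delta \backslash f^{-1}([0,+\infty ])$ furnishes the remaining condition $g(\Delta _{1})\subset S\backslash [0,+\infty ]$, i.e.\ (\ref{12.1}) for $g$.

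For the ``furthermore'' clause, suppose $g(\partial \Delta _{1})\subset [0,+\infty ]$. Since $\partial \Delta _{1}$ is compact and connected, $g(\partial \Delta _{1})$ is a closed connected subset of the arc $[0,+\infty ]$, hence a closed subarc. If this subarc were proper, then $U:=S\backslash g(\partial \Delta _{1})$ would be a connected open subset of $S$ meeting $E=\{0,1,\infty \}$ in at least one point. Openness of the normal map $g$, together with the inclusion $\partial g(\Delta _{1})\subset g(\partial \Delta _{1})$, would force $g(\Delta _{1})$ to be both open and closed in $U$; being nonempty (as $f$ is nonconstant), it would equal $U$ and so meet $E$, contradicting that $f$ is normal. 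Therefore $g(\partial \Delta _{1})=[0,+\infty ]$.

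To finish, I would note that hypothesis (c) of Theorem \ref{decom} together with $\Delta _{1}\cap f^{-1}(E)=\emptyset $ (an immediate consequence of $E\subset [0,+\infty ]$ and $\Delta _{1}\cap f^{-1}([0,+\infty ])=\emptyset $) implies that $g$ has no ramification point in $\Delta _{1}$, so $g|_{\Delta _{1}}:\Delta _{1}\rightarrow S\backslash [0,+\infty ]$ is a local homeomorphism. A straightforward compactness argument using $g(\partial \Delta _{1})\subset [0,+\infty ]$ shows that $g|_{\Delta _{1}}$ is proper, hence a covering map onto the simply connected domain $S\backslash [0,+\infty ]$, and therefore a homeomorphism. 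No genuine obstacle arises: the argument is essentially a rereading of the last case of the proof of Lemma \ref{ok} specialized to the present situation.
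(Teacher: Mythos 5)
Your proposal is correct and follows essentially the same route as the paper, which simply observes that these facts were already established inside the proofs of Lemma \ref{ok} (the case $g(\partial \Delta )\subset [0,+\infty ]$) and Theorem \ref{decom} (the verification that the restriction to a component satisfies (a)--(c) and (\ref{12.1})). The only blemish is the claim that every natural edge of $\Gamma _{g}$ has length at most $\pi /2$: edges inherited from $\Gamma _{f}$ are only known to have length strictly less than $\pi $, which is all that hypothesis (a) requires anyway.
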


The condition (d) in Theorem \ref{decom} may be removed by the following
lemma.

\begin{lemma}
\label{pertur}Let $f:\overline{\Delta }\rightarrow S$ be a normal mapping
satisfying the conditions (a)--(c) of Theorem \ref{decom}.

Then for any $\varepsilon >0,$ there exists a normal mapping $g:\overline{%
\Delta }\rightarrow S$ such that%
\begin{equation*}
A(g,\Delta )\geq A(f,\Delta ),L(g,\partial \Delta )<L(f,\partial \Delta
)+\varepsilon ,
\end{equation*}%
and $g$ satisfies all conditions (a)--(d) of Theorem \ref{decom}.
\end{lemma}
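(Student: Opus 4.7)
The plan is to remove the pathological intersections of $\Gamma_f$ with $[0,+\infty]$ by attaching thin convex flaps to $\overline{\Delta}$ along each boundary arc whose image under $f$ sits inside $[0,+\infty]$. Any natural edge of $\Gamma_f$ is a sub-arc of a great circle of $S$, and two distinct great circles meet only in a pair of antipodal points, so $\Gamma_f\cap[0,+\infty]$ is infinite only when some natural edges of $\Gamma_f$ (viewed as intervals in $\partial\Delta$, in the sense of Definition \ref{v-e}(2)) have image contained in $[0,+\infty]$; by Definition \ref{v-e}(1) the interior of each such edge avoids $E$, so its image lies in $(0,1)$ or $(1,\infty)$. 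Enumerate these natural edges as $I_1,\dots,I_k\subset\partial\Delta$, with images $e_j=f(I_j)\subset[0,+\infty]$ and endpoints $p_j^{\pm}=f(\partial I_j)$.

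Fix $\delta_j>0$ with $\sum_j\delta_j<\varepsilon$. For each $j$ choose a short, locally convex polygonal arc $e_j'$ joining $p_j^-$ and $p_j^+$, lying on the side of the great circle of $[0,+\infty]$ opposite from $f(\Delta)$ locally near the interior of $e_j$, and satisfying $L(e_j')<L(e_j)+\delta_j$. The arcs $e_j$ and $e_j'$ bound a thin convex lens $L_j\subset S$. Attach a thin Jordan flap region $F_j$ to $\overline{\Delta}$ along $I_j$ (so $F_j\cap\overline{\Delta}=I_j$) and define a map on the enlarged Jordan domain $D:=\overline{\Delta}\cup\bigcup_j F_j$ to equal $f$ on $\overline{\Delta}$ and to map each $F_j$ homeomorphically onto $L_j$, with boundary values on $I_j$ agreeing with $f|_{I_j}$. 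By condition (c) of Theorem \ref{decom}, $f$ is a local homeomorphism on $\mathrm{int}(I_j)$, and since the flap map is also a local homeomorphism and agrees with $f$ on $I_j$, the glued mapping has normal local form along $\mathrm{int}(I_j)$; a homeomorphic reparametrization $D\to\overline{\Delta}$ produces the desired normal mapping $g$.

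Condition (d) now holds by construction, since $\Gamma_g\cap[0,+\infty]$ is reduced to the finite set $\{p_j^{\pm}\}_j\cup\bigl(\Gamma_f\cap[0,+\infty]\setminus\bigcup_j e_j\bigr)$. Condition (a) holds because every new natural edge of $\Gamma_g$ is either an unchanged natural edge of $\Gamma_f$ or a segment of some $e_j'$, and by construction each $e_j'$ has total length $<L(e_j)+\delta_j<\pi$. Condition (c) is preserved because the homeomorphic flap maps add no new branched points. The area and length estimates are immediate:
\begin{equation*}
A(g,\Delta)=A(f,\Delta)+\sum_j A(L_j)\geq A(f,\Delta),
\end{equation*}
\begin{equation*}
L(g,\partial\Delta)=L(f,\partial\Delta)-\sum_j L(e_j)+\sum_j L(e_j')<L(f,\partial\Delta)+\varepsilon.
\end{equation*}

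The main obstacle is verifying condition (b), local convexity of $\Gamma_g$ in $S\setminus E$. On the interior of each $e_j'$ this is automatic from the convex choice of $e_j'$ on the correct side of $[0,+\infty]$. At an endpoint $p_j^{\pm}\notin E$, the adjacent natural edge of $\Gamma_f$ cannot also lie on the great circle of $[0,+\infty]$, for otherwise its concatenation with $e_j$ would be locally straight at $p_j^{\pm}$ and Definition \ref{v-e}(1)(b) would prevent $p_j^{\pm}$ from being a natural vertex. Hence by condition (b) of Theorem \ref{decom}, $\Gamma_f$ makes a strictly positive left turn of some angle $\theta_j^{\pm}>0$ at $p_j^{\pm}$; choosing $\delta_j$ small enough that the initial tangent of $e_j'$ at $p_j^{\pm}$ deviates from the direction of $e_j$ by less than $\theta_j^{\pm}$ preserves the left turn, so $\Gamma_g$ remains locally convex at $p_j^{\pm}$. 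No convexity is required at endpoints lying in $E$, and the construction is complete.
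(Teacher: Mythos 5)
Your proposal is correct and follows essentially the same route as the paper: the paper also attaches, along each natural edge lying on $[0,+\infty]$, a thin flap on the right-hand side (a triangle with apex near the midpoint of the edge, via Lemma \ref{patch}), which increases the area, increases the length by less than $\varepsilon/V(f)$, and removes that edge from $[0,+\infty]$, iterating over the finitely many offending edges. Your version merely does all edges simultaneously with lens-shaped flaps and spells out the endpoint-convexity check in more detail; the substance is the same.
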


\begin{proof}
This can be proved by perturb the natural edges of $f$ lying on $[0,+\infty
] $ slightly.

%\begin{figure}[tbp]
%\centering
%\includegraphics[scale=0.50]{non-real.eps} \label{non-real}
%\end{figure}

Let $\Gamma _{1}$ be any natural edge of $\Gamma _{f}$ such that $\Gamma
_{1}\cap \lbrack 0,+\infty ]$ contains more that one point. Then since $%
\Gamma _{1}$ is a natural edge, it is either contained in the interval $%
[0,1] $ in $S$, or in the interval $[1,\infty ].$ Without loss of generality
we assume $\Gamma _{1}\subset \lbrack 0,1]$ and the orientation of $\Gamma
_{1}$ is the same as $\overline{0,1}.$ Then there are four cases:

(i) $\Gamma _{1}=\overline{0,1}.$

(ii) $\Gamma _{1}=\overline{0,t_{0}}$ for some $t_{0}\in (0,1).$

(iii) $\Gamma _{1}=\overline{t_{0},t_{1}}$ for some $t_{0},t_{1}\in (0,1).$

(iv) $\Gamma _{1}=\overline{t_{0},1}$ for some $t_{0}\in (0,1).$

In these cases, we can extend the Riemann surface of $f$ by
patching a closed triangle domain along $\Gamma _{1}$
so that the vertex $p_{1}^{\prime }$ is very close the middle point of $%
\Gamma _{1}$ and is on the right hand side of $\Gamma _{1}.$ By Lemma \ref%
{patch}, the new Riemann surface can be realized by a normal mapping $f_{1}:%
\overline{\Delta }\rightarrow S.$ It is clear that when $p_{1}^{\prime }$ is
sufficiently close to $\frac{1}{2}$ in case (i), or $\frac{t_{0}}{2}$ in
case (ii), or $\frac{t_{0}+t_{1}}{2}$ in case (iii), or $\frac{t_{0}+1}{2}$
in case (iv), $f_{1}$ satisfies (a)--(c) of the lemma and%
\begin{equation*}
\left\vert L(f_{1},\partial \Delta )-L(f,\partial \Delta )\right\vert <\frac{%
\varepsilon }{V(f)},A(f_{1},\Delta )\geq A(f,\Delta ),
\end{equation*}%
while the number of natural edges that lie on $[0,+\infty ]$ is dropped by
one.

Then repeating the above argument for $f_{1},$ and so on, and finally we can
obtain the desired mapping.
\end{proof}

\section{Deformation of edges of normal mappings with length larger than $%
\protect\pi $ \label{ss-7to<pi}}

In this section we will prove the following theorem, which is prepared for
proving Theorem \ref{1-br-2-map} and Theorem \ref{1-nconvex}.

\begin{theorem}
\label{>pi}Let $f:\overline{\Delta }\rightarrow S$ be a normal mapping whose
boundary curve $\Gamma _{f}=f(z),z\in \partial \Delta ,$ has the natural
partition%
\begin{equation*}
\Gamma _{f}=\Gamma _{1}+\Gamma _{2}+\dots +\Gamma _{n},n=V(f),
\end{equation*}%
that satisfies one of the following conditions (a)--(d).

(a) $\pi \leq L(\Gamma _{1})<2\pi ,$ $L(\Gamma _{j})<\pi $ for all $j\geq 2,$
and $\Gamma _{1}$ has an endpoint contained in $E=\{0,1,\infty \}.$

(b) $\pi \leq L(\Gamma _{1})<2\pi ,$ $L(\Gamma _{j})<\pi $ for all $j\geq 2,$
and $\Gamma _{1}$ has no endpoint contained in $E.$

(c) $\pi \leq L(\Gamma _{1})<2\pi $ and $\pi \leq L(\Gamma _{j_{0}})<2\pi $
for some $j_{0}\geq 2,$ $L(\Gamma _{j})<\pi $ for each $j\neq 1,j_{0};$ $%
\Gamma _{1}$ has an endpoint contained in $E,$ and so does $\Gamma _{j_{0}}$.

(d) $2\pi \leq L(\Gamma _{1})<3\pi ,$ while $L(\Gamma _{j})<\pi $ for all $%
j\geq 2.$

Then, there exists a normal mapping $g:\overline{\Delta }\rightarrow S$ such
that

(i) $L(g,\partial \Delta )\leq L(f,\partial \Delta )$ and $A(g,\Delta )\geq
A(f,\Delta )$

(ii) Each natural edge of $g$ has spherical length strictly less than $\pi ,$

(iii) In case (a), $V_{NE}(g)\leq V_{NE}(f),\mathrm{\ }V_{E}(g)\geq
V_{E}(f)+1,$ and $V(g)\leq V(f)+1;$

In case (b), $V_{NE}(g)\leq V_{NE}(f),\mathrm{\ }V_{E}(g)\geq V_{E}(f)+1,$
and $V(g)\leq V(f)+2;$

In case (c), $V_{NE}(g)\leq V_{NE}(f),\mathrm{\ }V_{E}(g)\geq V_{E}(f)+2,$
and $V(g)\leq V(f)+2;$

In case (d), $V_{NE}(g)=V_{NE}(f)+2,V_{E}(g)=V_{E}(f)+1,\ $and $V(g)=V(f)+3.$
\end{theorem}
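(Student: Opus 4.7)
The plan is to perform explicit surgery on the Riemann surface of $f$, localized near each natural edge of length $\geq \pi$, and then reparameterize the resulting surface over $\overline{\Delta}$ to obtain $g$. The starting observation is that any natural edge $\Gamma_j$ of length $\geq \pi$ lies on a uniquely determined great circle $C_j$, and since natural edges admit no interior natural vertex, $C_j \cap E \subseteq \{f(p_j), f(p_{j+1})\}$. This already forces: in case (a) the $E$-endpoint of $\Gamma_1$ must be $1$ (if it were $0$ or $\infty$, the antipodal $E$-point would also lie on $C_1$ and force the other endpoint into $E$, contradicting (a)); and in case (d) the image $\Gamma_1(I)$ covers all of $C_1$ since $L(\Gamma_1)\geq 2\pi$, so again $C_1 \cap E \subseteq \{f(p_1), f(p_2)\}$.

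The main move is to attach, via Lemma \ref{patch}, thin spherical ``triangles'' along selected subarcs of the long edge $\Gamma_j$: a subarc of length exceeding $\pi$ between two points $a,b$ on $C_j$ is replaced by a shorter two-edge detour $\overline{a q^{\ast}} + \overline{q^{\ast} b}$ through an auxiliary apex $q^{\ast}\in S$, typically chosen inside $E$; the triangular region between the original subarc and the detour has positive spherical area, and gluing it back onto the surface produces a normal mapping whose boundary is shorter by the length deficit of the detour and whose area is larger by that of the triangle. For case (d) three such attachments are used, with cut points chosen so that the resulting three subarcs of $\Gamma_1$ each have length $<\pi$ and exactly one of the apices lies in $E$, yielding the stated counts $V_E(g)=V_E(f)+1$, $V_{NE}(g)=V_{NE}(f)+2$, $V(g)=V(f)+3$. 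Case (a) uses a single triangle whose apex $q^{\ast}\in E$ splits $\Gamma_1$ into two edges of length $<\pi$, and the non-$E$ endpoint of $\Gamma_1$ can be absorbed into a straight portion of the modified boundary, giving $V_E(g)\geq V_E(f)+1$, $V_{NE}(g)\leq V_{NE}(f)$ and $V(g)\leq V(f)+1$. Case (b) first uses one triangle to move an endpoint of $\Gamma_1$ to $E$ and then reduces to case (a), accounting for the extra vertex in $V(g)\leq V(f)+2$; case (c) applies case (a) independently to $\Gamma_1$ and $\Gamma_{j_0}$ and the counts add.

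The hardest part is the vertex-count bookkeeping in (iii): each triangular attachment can both create a new natural vertex at its gluing points and destroy a pre-existing one by straightening, and these two effects must be balanced by choosing the attached triangles so that their boundary arcs meet the adjacent portions of $\Gamma_f$ with compatible tangent directions. The attached triangles must also be checked to assemble into a genuine normal mapping, for which one uses the standard covering-disk model of Lemma \ref{cov-1} restricted to a spherical triangle together with the orientation and ramification conditions of Definition \ref{722-1}. Once these verifications are in place, the inequalities $L(g,\partial \Delta)\leq L(f,\partial \Delta)$ and $A(g,\Delta)\geq A(f,\Delta)$ follow from the elementary spherical triangle inequality for each detour, and every resulting natural edge of $g$ has length $<\pi$ by construction.
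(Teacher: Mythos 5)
Your overall strategy --- attaching regions along the long edge via Lemma \ref{patch} so that the long arc is rerouted through points of $E$, with the length controlled because a detour through any point off a geodesic arc of length $\geq \pi$ is not longer than the arc --- is the same as the paper's. But several concrete steps fail as written. First, your exclusion in case (a) is wrong: if the $E$-endpoint of $\Gamma_1$ is $0$ (or $\infty$), the antipode does lie on $C_1$ and is then forced to be the other endpoint, but that does not contradict (a), which only requires \emph{an} endpoint in $E$; the configuration $\Gamma_1=$ a half great circle from $0$ to $\infty$ avoiding $1$ genuinely occurs and must be handled (the paper reroutes it through $1$ along $[0,+\infty]$). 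Second, your case (d) construction is incorrect: if you cut $\Gamma_1$ into three subarcs each of length $<\pi$ and attach a triangle along each, every detour is \emph{strictly longer} than the subarc it replaces (the strict spherical triangle inequality now points the wrong way), so $L(g,\partial\Delta)>L(f,\partial\Delta)$, and with three apices plus the cut points the vertex count cannot come out to $V(g)=V(f)+3$. The correct move is a single attachment: choose the middle subarc to have length exactly $\pi$, so its endpoints are antipodal, and replace it by the other geodesic arc of length $\pi$ through a point of $E$; this preserves length exactly, adds one $E$-vertex and two non-$E$ vertices, and adds the area of a lune.

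The third and most serious omission is that your construction does not yet guarantee conclusion (ii) or the vertex bounds. After an attachment, each new piece $\Gamma_1'$ or $\Gamma_1''$ has length $<\pi$, but it need not be a \emph{natural edge} of $g$: if its free endpoint is the non-$E$ endpoint of $\Gamma_1$ and the turning there is straightened, it merges with the adjacent $\Gamma_2$ into a single natural edge whose length can be as large as $2\pi$. So condition (ii) can fail after one surgery, and the process must be iterated; one then needs a monotone quantity to show the iteration terminates, and one must track how each iteration affects $V$, $V_E$, $V_{NE}$ to get the stated inequalities. This iteration (with $V_{NE}$ strictly decreasing whenever the merge occurs) is the substance of the paper's Lemmas \ref{ini-1-pi}, \ref{bd-bd>pi} and \ref{no-end-pt}, and your proposal, which defers all of this to unexamined ``bookkeeping,'' does not supply it. Finally, the length inequality you invoke is not the triangle inequality but its reversal for arcs of length $\geq\pi$ (the paper's Lemma \ref{pre-large}, proved by splitting the arc at the antipode of one endpoint); this needs to be stated and proved, since it is exactly the point where the hypothesis $L(\Gamma_1)\geq\pi$ enters.
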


The proof is divided into four parts: Lemmas \ref{ini-1-pi}--\ref{>=2pi}.

\begin{lemma}
\label{pre-large}Let $\Gamma $ be a line segment in $S$ with endpoints $%
q_{1} $ and $q_{2}$ and $\pi \leq L(\Gamma )<2\pi ,$ and let $q_{0}$ be any
point in $S\backslash \Gamma .$ Then
\begin{equation}
d(q_{0},q_{1})<\pi ,d(q_{0},q_{2})<\pi ,  \label{9-1}
\end{equation}%
and%
\begin{equation*}
L(\overline{q_{1}q_{0}})+L(\overline{q_{0}q_{2}})\leq L(\Gamma ).
\end{equation*}
\end{lemma}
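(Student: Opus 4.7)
The plan is to use two elementary facts about the unit sphere $S$: the antipodal identity $d(q, p^{*}) = \pi - d(q, p)$ for all $q, p \in S$ (where $p^{*}$ denotes the antipode of $p$), and the spherical triangle inequality. First I would observe that since $\Gamma$ is a geodesic arc of length at least $\pi$ emanating from $q_1$, the antipode $q_1^{*}$ of $q_1$ must lie on $\Gamma$ (namely, at the point reached after arc-length $\pi$ from $q_1$ along $\Gamma$); symmetrically $q_2^{*} \in \Gamma$. Because $q_0 \notin \Gamma$, neither antipode equals $q_0$, so $d(q_0, q_1) < \pi$ and $d(q_0, q_2) < \pi$, which is (\ref{9-1}).

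For the second inequality I would first identify $d(q_1^{*}, q_2)$. The sub-arc of $\Gamma$ from $q_1^{*}$ to $q_2$ has length $L(\Gamma) - \pi$, which lies in $[0, \pi)$ by the hypothesis $\pi \leq L(\Gamma) < 2\pi$; since a geodesic segment of length strictly less than $\pi$ realizes the spherical distance between its endpoints (and the case of length $0$ is trivial), this gives $d(q_1^{*}, q_2) = L(\Gamma) - \pi$. Then the triangle inequality applied to $q_0, q_1^{*}, q_2$, combined with the antipodal identity $d(q_0, q_1^{*}) = \pi - d(q_0, q_1)$, yields
\[
d(q_0, q_2) \;\leq\; d(q_0, q_1^{*}) + d(q_1^{*}, q_2) \;=\; \bigl(\pi - d(q_0, q_1)\bigr) + \bigl(L(\Gamma) - \pi\bigr),
\]
which rearranges to $L(\overline{q_1 q_0}) + L(\overline{q_0 q_2}) = d(q_0, q_1) + d(q_0, q_2) \leq L(\Gamma)$, as required.

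There is no real obstacle here; the main conceptual point is that when $L(\Gamma) \geq \pi$ the antipode $q_1^{*}$ lies on $\Gamma$, which is precisely what converts a spherical-triangle estimate into the desired bound by $L(\Gamma)$. The inequality is non-strict, which is consistent with the fact that already in the boundary case $L(\Gamma) = \pi$ one has $q_2 = q_1^{*}$ and $d(q_0, q_1) + d(q_0, q_2) = \pi$ identically for every $q_0$, so saturation can occur.
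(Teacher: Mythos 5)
Your proof is correct and follows essentially the same route as the paper: both arguments rest on the observation that the antipode $q_{1}^{\ast }$ of $q_{1}$ lies on $\Gamma $, use $d(q_{0},q_{1})+d(q_{0},q_{1}^{\ast })=\pi $, and then bound $d(q_{0},q_{2})$ by going through $q_{1}^{\ast }$ (the paper phrases this as comparing the concatenated path $\overline{q_{0}q_{1}^{\ast }}+\Gamma _{1}^{\prime \prime }$ with the shortest path $\overline{q_{0}q_{2}}$, which is exactly your triangle inequality). No gaps.
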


By the first two inequalities, $\overline{q_{1}q_{0}}$ and $\overline{%
q_{0}q_{2}}$ make sense, which is the shortest paths.

\begin{proof}
Since $L(\Gamma )\geq \pi ,$ the antipodal points of $q_{1}$ and $q_{2}$ are
both contained in $\Gamma ,$ and thus, neither $q_{1}$, nor $q_{2},$ can be
an antipodal point of $q_{0}\in S\backslash \Gamma .$ This implies (\ref{9-1}%
).

Let $q_{1}^{\prime }$ be the antipodal point of $q_{1}$ in $S$. Then $%
q_{1}^{\prime }\in \Gamma .$ Let $\Gamma _{1}^{\prime }$ be the section of $%
\Gamma $ from $q_{1}$ to $q_{1}^{\prime }$ and let $\Gamma _{1}^{\prime
\prime }$ be the section of $\Gamma $ from $q_{1}^{\prime }$ to $q_{2}.$
Then it is clear that $\overline{q_{1}q_{0}}+\overline{q_{0}q_{1}^{\prime }}$
is a straight path in $S$ from $q_{1}$ to $q_{1}^{\prime }.$ Thus we have%
\begin{eqnarray*}
L(\Gamma ) &=&L(\Gamma _{1}^{\prime })+L(\Gamma _{1}^{\prime \prime })=\pi
+L(\Gamma _{1}^{\prime \prime }) \\
&=&L(\overline{q_{1}q_{0}}+\overline{q_{0}q_{1}^{\prime }})+L(\Gamma
_{1}^{\prime \prime }) \\
&=&L(\overline{q_{1}q_{0}})+L(\overline{q_{0}q_{1}^{\prime }}+\Gamma
_{1}^{\prime \prime }) \\
&\geq &L(\overline{q_{1}q_{0}})+L(\overline{q_{0}q_{2}}),
\end{eqnarray*}%
and equality holds if and only if $q_{1}$ and $q_{2}$ are a pair of
antipodal points of $S.$
\end{proof}

\begin{lemma}
\label{parti-ini}Let $f:\overline{\Delta }\rightarrow S$ be a normal mapping
and assume that $\Gamma _{f}$ has a natural partition
\begin{equation}
\Gamma _{f}=\Gamma _{1}+\Gamma _{2}+\dots +\Gamma _{n},n=V(f),  \label{a8}
\end{equation}%
such that $L(\Gamma _{1})\geq \pi $ and the initial point of $\Gamma _{1}$
is in $E.$ Then, there exists a normal mapping $f_{1}:\overline{\Delta }%
\rightarrow S$ such that
\begin{equation*}
L(f_{1},\partial \Delta )\leq L(f,\partial \Delta ),A(f_{1},\Delta )\geq
A(f,\Delta ),
\end{equation*}%
and the boundary curve $\Gamma _{f_{1}}$ has a permitted partition%
\begin{equation}
\Gamma _{f_{1}}=\Gamma _{1}^{\prime }+\Gamma _{1}^{\prime \prime }+\Gamma
_{2}+\dots +\Gamma _{n},  \label{1021-1}
\end{equation}%
such that the end point of $\Gamma _{1}^{\prime }$, which is also the
initial point of $\Gamma _{1}^{\prime \prime },$ is contained in $E,$ and%
\begin{equation*}
L(\Gamma _{1}^{\prime })<\pi ,L(\Gamma _{1}^{\prime \prime })<\pi .
\end{equation*}
\end{lemma}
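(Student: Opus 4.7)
The plan is to split the long edge $\Gamma_1$ by patching a triangular flap onto the Riemann surface of $f$, thereby inserting a new vertex lying in $E$. Let $q_1\in E$ denote the initial point of $\Gamma_1$, $q_2$ its terminal point, and $C$ the great circle containing $\Gamma_1$; since $\Gamma_1$ is a natural edge, its interior misses $E$. My first step is to choose a suitable $q_0\in E\setminus\{q_1\}$ with $q_0\notin\Gamma_1$. If $q_1\in\{0,\infty\}$, then $C$ also passes through the antipode $q_1^*\in\{0,\infty\}$ of $q_1$, and the no-$E$-in-interior condition on $\Gamma_1$ forces $L(\Gamma_1)=\pi$, $q_2=q_1^*$, and $C\neq[0,+\infty]$; I then take $q_0=1$, so that the three vertices $\{q_1,q_0,q_2\}$ of the candidate triangle exhaust $E$ and its interior automatically avoids $E$. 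If instead $q_1=1$, then $C\neq[0,+\infty]$ (otherwise $\Gamma_1$ of length $\geq\pi$ starting at $1$ would meet $0$ or $\infty$ in its interior), so $\{0,\infty\}\cap\Gamma_1=\emptyset$; I select $q_0\in\{0,\infty\}$ such that the closed triangular region $T\subset S$ bounded by $\Gamma_1$, $\overline{q_2q_0}$, $\overline{q_0q_1}$ and lying on the side of $\Gamma_1$ opposite $f(\Delta)$ has interior disjoint from the remaining point of $E$. By Lemma \ref{pre-large}, for this $q_0$ the shortest paths $\overline{q_1q_0}$ and $\overline{q_0q_2}$ are well defined, each of spherical length strictly less than $\pi$, and $L(\overline{q_1q_0})+L(\overline{q_0q_2})\leq L(\Gamma_1)$.

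Next I construct $f_1$ by attaching the flap. Let $\alpha_1\subset\partial\Delta$ be the subarc with $f|_{\alpha_1}=\Gamma_1$. I glue a triangular Jordan domain $T'$ onto $\overline{\Delta}$ externally along $\alpha_1$, and extend $f$ over $T'$ by a homeomorphism $T'\to T$ compatible with $f|_{\alpha_1}$. The enlarged domain is again a topological disk, and Lemma \ref{patch} (applied after a homeomorphism identifying this disk with $\overline{\Delta}$) produces a normal mapping $f_1:\overline{\Delta}\to S$; the condition $T^{\circ}\cap E=\emptyset$ is used precisely to guarantee that $f_1(\Delta)\cap E=\emptyset$, i.e.\ condition (d) of Definition \ref{722-1}. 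By construction, $\Gamma_{f_1}$ agrees with $\Gamma_f$ outside the replaced arc and traverses $\overline{q_1q_0}+\overline{q_0q_2}$ in place of $\Gamma_1$.

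The conclusions then follow: the length estimate
\begin{equation*}
L(f_1,\partial\Delta)=L(f,\partial\Delta)-L(\Gamma_1)+L(\overline{q_1q_0})+L(\overline{q_0q_2})\leq L(f,\partial\Delta)
\end{equation*}
uses Lemma \ref{pre-large}; the area estimate $A(f_1,\Delta)=A(f,\Delta)+A(T)\geq A(f,\Delta)$ is immediate; and setting $\Gamma_1'=\overline{q_1q_0}$, $\Gamma_1''=\overline{q_0q_2}$ realizes the permitted partition $\Gamma_{f_1}=\Gamma_1'+\Gamma_1''+\Gamma_2+\dots+\Gamma_n$ with split point $q_0\in E$ and both pieces straight and of length less than $\pi$. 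The main obstacle is the case $q_1=1$ in the first step: one must verify that a choice of $q_0\in\{0,\infty\}$ yielding an admissible triangle always exists, which is done by a short geometric case analysis on the position of $q_2$ and the relevant side of $\Gamma_1$; once $q_0$ is fixed, the remainder is a routine patching via the machinery of Sections \ref{ss-2appoint}--\ref{ss-3con-norm}.
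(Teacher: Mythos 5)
Your proposal is correct and follows essentially the same route as the paper: split $\Gamma_1$ by patching (via Lemma \ref{patch}) a flap on the right-hand side of $\Gamma_1$ whose apex is a point of $E$, with Lemma \ref{pre-large} giving $L(\Gamma_1')+L(\Gamma_1'')\leq L(\Gamma_1)$ and the added area being nonnegative; your two cases ($q_1=1$ versus $q_1\in\{0,\infty\}$, the latter forcing $L(\Gamma_1)=\pi$ and $q_2$ antipodal to $q_1$) match the paper's case split on whether the antipode of $q_1$ lies in $E$. The one step you defer — choosing $q_0\in\{0,\infty\}$ so that the attached region misses $E$ — is resolved in the paper in one line: since $C$ contains $1$ but neither $0$ nor $\infty$, it separates the antipodal pair $\{0,\infty\}$, and one takes $q_0$ to be the point lying in the open hemisphere on the side of $C$ where the flap is attached, so that hemisphere meets $E$ only at the vertex $q_0$.
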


See definitions in Section \ref{ss-2appoint} for the terms \emph{natural
partition }and \emph{permitted partition}. Since (\ref{a8}) is a natural
partition and the initial point of $\Gamma _{1}$ is in $E,$ it is clear by (%
\ref{1021-1}) that the initial point of $\Gamma _{1}^{\prime },$ which is
the terminal point of $\Gamma _{n},$ is still the initial point of $\Gamma
_{1},$ and then $\Gamma _{3},\dots ,\Gamma _{n}$ are still natural edges of $%
\Gamma _{f_{1}}.$ Then the two endpoints of $\Gamma _{1}^{\prime }$ are both
in $E,$ and so $\Gamma _{1}^{\prime }$ is a natual edge$.$ But $\Gamma _{2}$
may not be a natural edge of $\Gamma _{f_{1}}$ and $\Gamma _{2}$ is a
natural edge if and only if $\Gamma _{1}^{\prime }$ is. In the case $\Gamma
_{2}$ is not a natural edge of $\Gamma _{f_{1}},$ $\Gamma _{1}^{\prime
\prime }+\Gamma _{2}$ must be a natural edge.

\begin{proof}
Let $q_{1}$ and $q_{2}$ be the initial and terminal point of $\Gamma _{1},$
respectively. Then, $\Gamma _{1}$ contains the antipodal point $%
q_{1}^{\prime }$ of $q_{1}.$ Let $C$ be the great circle determined\footnote{%
Recall that this means that $C$ contains $\Gamma _{1}$ and is oriented by $%
C. $} by $\Gamma _{1}$ and let $S^{\prime }$ be the open hemisphere outside%
\footnote{%
Recall that this means $S^{\prime }$ is on the right hand side of $C.$} $C$.

There are only two cases (note that we assumed $q_{1}\in E$ in the lemma):

\noindent \textbf{Case 1.} $q_{1}\in E,$ $q_{1}^{\prime }\notin E.$

\noindent \textbf{Case 2.} $q_{1}\in E,q_{1}^{\prime }\in E.$

Assume the first case occurs. Then we must have $q_{1}=1$ and $q_{1}^{\prime
}=-1.$ Then $C$ must separate $0$ and $\infty .$ Without loss of generality,
we assume $0\in S^{\prime }.$ Let $\Gamma _{1}^{\prime }=\overline{q_{1}0}=%
\overline{1,0},$ which is the shortest path in $S$ from $q_{1}=1$ to $0$ and
let $\Gamma _{1}^{\prime \prime }=\overline{0q_{2}}.$ Then the curve $\Gamma
_{1}^{\prime }+\Gamma _{1}^{\prime \prime }$ is a simple path from $q_{1}=1$
to $q_{2}$ and $\Gamma _{1}^{\prime }+\Gamma _{1}^{\prime \prime }-\Gamma
_{1}$ is a Jordan curve that encloses a domain $T$ in $S^{\prime }$ such
that $T$ is on the right hand side of $\Gamma _{1},$
\begin{equation*}
T\cap E=\emptyset ,
\end{equation*}
\begin{equation*}
L(\Gamma _{1}^{\prime })=\frac{\pi }{2},
\end{equation*}%
and by Lemma \ref{pre-large},%
\begin{equation*}
L(\Gamma _{1}^{\prime \prime })<\pi \ \mathrm{and\ }L(\Gamma _{1}^{\prime
})+L(\Gamma _{1}^{\prime \prime })\leq L(\Gamma _{1}).
\end{equation*}

Let
\begin{equation*}
\partial \Delta =\alpha _{1}+\dots +\alpha _{n}
\end{equation*}%
be a natural partition of $\partial \Delta $ for $\Gamma _{f},$ corresponding%
\footnote{%
See Definition \ref{natural} and Remark \ref{closed-c} (2).} to (\ref{a8}),
let $V$ be a Jordan domain outside $\Delta $ with $\left( \partial V\right)
\cap \partial \Delta =\alpha _{1},$ and let $g$ be a homeomorphism from $%
\overline{V}$ onto $\overline{T}$ such that%
\begin{equation*}
f|_{\alpha _{1}}=g|_{\alpha _{1}}.
\end{equation*}%
Then, by Lemma \ref{patch},
\begin{equation*}
g_{1}=\left\{
\begin{array}{l}
f(z),z\in \overline{\Delta }, \\
g(z),z\in \overline{V}\backslash \overline{\Delta },%
\end{array}%
\right.
\end{equation*}%
is a normal mapping defined on the closure of the Jordan domain $D=\Delta
\cup \alpha _{1}^{\circ }\cup V,$ where $\alpha _{1}^{\circ }$ is the
interior of $\alpha _{1}.$ Then the boundary curve $\Gamma _{g_{1}}$ of $%
g_{1}$ has the permitted partition%
\begin{equation}
\Gamma _{g_{1}}=\Gamma _{1}^{\prime }+\Gamma _{1}^{\prime \prime }+\Gamma
_{2}+\dots +\Gamma _{n}  \label{121-1}
\end{equation}%
and $A(g_{1},D)=A(f,\Delta )+A(T).$ Then we have
\begin{equation*}
L(g_{1},\partial D)\leq L(f,\partial \Delta )\ \mathrm{and\ }%
A(g_{1},D)>A(f,\Delta ).
\end{equation*}

Let $h$ be any homeomorphism from $\overline{D}$ onto $\overline{\Delta }.$
Then $f_{1}=g_{1}\circ h^{-1}$ satisfies all the desired conditions in (ii).

Assume Case 2 occur. Then $q_{1}$ and $q_{1}^{\prime }$ must be the pair $%
\{0,\infty \}$, $q_{1}^{\prime }=q_{2},$ and there is no point in $E$
located in the interior of $\Gamma _{1},$ for $\Gamma _{1}$ is a natural
edge of $f$. Without loss of generality, we assume that $q_{1}=0$ and $%
q_{2}=q_{1}^{\prime }=\infty .$ Let $L$ be the straight path from $0$ to $%
\infty $ that passes through $1.$ Then, the domain $T$ enclosed by $\Gamma
_{1}$ and $L$ that is on the right hand side of $\Gamma _{1}$ does not
contains point in $E\ $and $\{\Gamma _{1}\cup T\}\cap E=\{0,\infty \}.$ Let $%
\Gamma _{1}^{\prime }$ be the section of $L$ from $0$ to $1$ and $\Gamma
_{1}^{\prime \prime }$ be the section of $L$ from $1$ to $\infty .$ Then%
\begin{equation*}
L(\Gamma ^{\prime })=\frac{\pi }{2},L(\Gamma _{1}^{\prime \prime })=\frac{%
\pi }{2},
\end{equation*}%
and repeating the process in Case 1, we can obtain a desired $f_{1}$
satisfies all the conditions.
\end{proof}

\begin{lemma}
\label{ini-1-pi}Let $f:\overline{\Delta }\rightarrow S$ be a normal mapping,
and let
\begin{equation}
\Gamma _{f}=\Gamma _{1}+\dots +\Gamma _{n},n=V(f),  \label{9-2}
\end{equation}%
be the natural partition of $\Gamma _{f}=f(z),z\in \partial \Delta .$ Assume
that

(a) At least one endpoint of $\Gamma _{1}$ is contained in $E.$

(b) $\pi \leq L(\Gamma _{1})<2\pi ,$ but $L(\Gamma _{j})<\pi ,j=2,\dots ,n.$

Then, there exists a normal mapping $g:\overline{\Delta }\rightarrow S$ such
that

(i) $L(g,\partial \Delta )\leq L(f,\partial \Delta )$ and $A(g,\Delta )\geq
A(f,\Delta )$

(ii) Each natural edge of $g$ has spherical length strictly less than $\pi ,$

(iii) $V_{NE}(g)\leq V_{NE}(f)$ and $V_{E}(g)\geq V_{E}(f)+1.$

(iv) $V(g)\leq V(f)+1.$
\end{lemma}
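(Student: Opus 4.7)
My plan is to reduce directly to Lemma \ref{parti-ini}. By hypothesis (a), at least one endpoint of $\Gamma_1$ lies in $E$. If this is the initial endpoint, I would apply Lemma \ref{parti-ini} directly; if only the terminal endpoint is in $E$, I would either invoke the symmetric version of Lemma \ref{parti-ini} (whose proof is identical up to swapping the roles of initial and terminal points in the triangle-patching construction of its Cases 1 and 2), or equivalently cyclically relabel the natural partition (\ref{9-2}) so that the edge of length $\ge\pi$ starts at its $E$-endpoint before appealing to the lemma.

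Once reduced to the initial-in-$E$ case, Lemma \ref{parti-ini} supplies a normal mapping $g$ whose boundary curve admits the permitted partition
$$\Gamma_g = \Gamma_1' + \Gamma_1'' + \Gamma_2 + \dots + \Gamma_n,$$
with the junction point $p$ between $\Gamma_1'$ and $\Gamma_1''$ lying in $E$, with $L(\Gamma_1') < \pi$ and $L(\Gamma_1'') < \pi$, and with $L(g,\partial\Delta) \le L(f,\partial\Delta)$ and $A(g,\Delta) \ge A(f,\Delta)$. This gives (i) at once. For (ii), I combine $L(\Gamma_j) < \pi$ for $j \ge 2$ from hypothesis (b) with the length bounds on $\Gamma_1'$ and $\Gamma_1''$; since $p \in E$ is automatically a natural vertex of $\Gamma_g$ by Definition \ref{v-e}, no natural edge of $g$ can cross $p$, so every natural edge of $g$ is contained in one of $\Gamma_1', \Gamma_1'', \Gamma_2, \dots, \Gamma_n$ and therefore has length strictly less than $\pi$.

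The content of (iii) and (iv) is a bookkeeping of the natural vertices of $\Gamma_g$. I would enumerate them as follows: the midpoint $p \in E$ supplies exactly one new natural vertex in $V_E$; the vertices between consecutive $\Gamma_j, \Gamma_{j+1}$ for $j \ge 2$ are untouched by the construction; the initial point of $\Gamma_1$ (in $E$) remains a natural vertex in $V_E$; the terminal point of $\Gamma_1$ either lies in $E$ and remains a natural vertex in $V_E$, or it lay in $V_{NE}(f)$ because $\Gamma_1$ and $\Gamma_2$ were not collinearly continuing at that point, in which case after the modification it either remains a natural vertex in $V_{NE}(g)$ or drops out entirely if the new adjacent edges $\Gamma_1''$ and $\Gamma_2$ happen to be locally straight and simple at that point. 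In every scenario one obtains
$$V_{NE}(g) \le V_{NE}(f), \qquad V_E(g) \ge V_E(f) + 1,$$
hence $V(g) \le V(f)+1$, which is (iii) and (iv).

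The main obstacle I anticipate is verifying the symmetric case cleanly. The construction in Lemma \ref{parti-ini} is manifestly symmetric under reversing the orientation of $\Gamma_1$ (the triangle is patched on the appropriate side of the new broken path regardless of which endpoint of $\Gamma_1$ lies in $E$), so this should be a formality rather than a genuine difficulty; nevertheless a careful restatement and one-line verification is needed to avoid any hidden orientation issue from the normal-mapping convention.
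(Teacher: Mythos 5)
Your reduction to Lemma \ref{parti-ini} is the same first step as the paper's proof, and your treatment of the endpoint case via a symmetric version of that lemma is acceptable (though the alternative you offer, cyclically relabelling the partition, does not work: relabelling changes which edge is called $\Gamma_1$ but cannot make $\Gamma_1$ ``start'' at its terminal endpoint). The genuine gap is in your argument for conclusion (ii). You assert that every natural edge of $g$ is contained in one of $\Gamma_1',\Gamma_1'',\Gamma_2,\dots,\Gamma_n$ because the new point $p\in E$ must be a natural vertex. That handles $p$, but it does not handle the junction between $\Gamma_1''$ and $\Gamma_2$, namely the old terminal point $q_2$ of $\Gamma_1$. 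The path $\Gamma_1''$ is a \emph{new} geodesic segment arriving at $q_2$ from a direction generally different from that of $\Gamma_1$, so even though $q_2$ was a natural vertex of $\Gamma_f$, it may fail to be a natural vertex of $\Gamma_g$: precisely when $\Gamma_1''+\Gamma_2$ is straight and simple at $q_2$, the two pieces merge into a single natural edge. You even acknowledge this merging when counting vertices for (iii), but you overlook that the merged natural edge has length $L(\Gamma_1'')+L(\Gamma_2)$, which is only bounded by $2\pi$ and can well be $\geq\pi$ (e.g.\ $0.9\pi+0.5\pi$). In that case conclusion (ii) fails for the mapping you have constructed, and your proof stops short.

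The paper closes this gap with an iteration: when $\pi\leq L(\Gamma_1''+\Gamma_2)<2\pi$, the new mapping again satisfies the hypotheses of the lemma with the merged edge playing the role of $\Gamma_1$ (its initial point, the new $p\in E$, lies in $E$), and one applies the whole argument again. Termination is guaranteed because each such merging step strictly decreases $V_{NE}(\cdot)$ while increasing $V_E(\cdot)$ by one and keeping $V(\cdot)$ fixed, and $V_{NE}\geq 0$; the bounds in (iii) and (iv) survive the iteration for the same reason. Your proof needs this loop (or some equivalent device) to be complete.
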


\begin{proof}
Without loss of generality, we assume the initial point $q_{1}$ of $\Gamma
_{1}$ is in $E.$ Then by Lemma \ref{parti-ini}, there exists a normal
mapping $f_{1}:\overline{\Delta }\rightarrow S$ such that
\begin{equation}
L(f_{1},\partial \Delta )\leq L(f,\partial \Delta ),A(f_{1},\Delta )\geq
A(f,\Delta ),  \label{a12}
\end{equation}%
and the boundary curve $\Gamma _{f_{1}}$ has a permitted partition%
\begin{equation}
\Gamma _{f_{1}}=\Gamma _{1}^{\prime }+\Gamma _{1}^{\prime \prime }+\Gamma
_{2}+\dots +\Gamma _{n}  \label{125-1}
\end{equation}%
such that
\begin{equation}
L(\Gamma _{1}^{\prime })=\frac{\pi }{2},L(\Gamma _{1}^{\prime \prime })<\pi ,
\label{125-2}
\end{equation}%
the end point of $\Gamma _{1}^{\prime }$, which is also the initial point of
$\Gamma _{1}^{\prime \prime },$ is contained in $E.$ It is clear that the
initial points of $\Gamma _{1}^{\prime }$ and $\Gamma _{1}$ are the same
point and so is in $E,$ for they are both the terminal point of $\Gamma
_{n}, $ by (\ref{9-2}) and (\ref{125-1}), and thus, $\Gamma _{1}^{\prime }$
is a natural edge of $\Gamma _{f_{1}}$ whose two endpoints are in $E,$ the
terminal point of $\Gamma _{1}^{\prime \prime }$ is a natural vertex of $%
\Gamma _{f}$, which may not be a natural vertex of $\Gamma _{f_{1}}$. On the
other hand, by (\ref{9-2}) and (\ref{125-1}), the terminal points of $\Gamma
_{2},\dots ,\Gamma _{n}$ are still natural vertices of $\Gamma _{f_{1}}.$
Then we have%
\begin{equation}
V_{NE}(f_{1})\leq V_{NE}(f),V_{E}(f_{1})=V_{E}(f)+1,V(f_{1})\leq V(f)+1.
\label{125-3}
\end{equation}

\noindent \textbf{Case 1.} If the terminal point of $\Gamma _{1}$ is also in
$E,$ then both $\Gamma _{1}^{\prime }$ and $\Gamma _{1}^{\prime \prime }$
have initial and terminal points in $E,$ and then they are natural edges of $%
f_{1}$ and then (\ref{125-1}) is a natural partition$;$ therefore, by (\ref%
{a12})--(\ref{125-3}) and (b), $g=f_{1}$ is the desired mapping.

\noindent \textbf{Case 2. }Now assume that the terminal point of $\Gamma
_{1} $ is not in $E.$

If\textbf{\ }$\Gamma _{1}^{\prime \prime }$ is still a natural edge, then (%
\ref{125-1}) is still a natural partition, and $g=f_{1}$ is the desired
mapping by (b) and (\ref{125-3}).

Assume $\Gamma _{1}^{\prime \prime }$ is not a natural edge. Then $\Gamma
_{1}^{\prime \prime }+\Gamma _{2}$ will be a natural edge, and then $\Gamma
_{f_{1}}$ has the natural partition%
\begin{equation}
\Gamma _{f_{1}}=(\Gamma _{1}^{\prime \prime }+\Gamma _{2})+\Gamma _{3}+\dots
+\Gamma _{n}+\Gamma _{1}^{\prime },  \label{1212-4}
\end{equation}%
the initial point of $\Gamma _{2},$ which is also the terminal point of $%
\Gamma _{1}^{\prime \prime }$ is now in the interior of the the natural edge
$\left( \Gamma _{1}^{\prime \prime }+\Gamma _{2}\right) $ and then we have
by (\ref{1212-4})
\begin{equation}
V_{NE}(f_{1})=V_{NE}(f)-1,V_{E}(f_{1})=V_{E}(f)+1,V(f_{1})=V(f).
\label{1212-3}
\end{equation}%
On the other hand, by (b) and (\ref{125-2}) we have
\begin{equation}
L(\Gamma _{1}^{\prime \prime }+\Gamma _{2})<2\pi .  \label{1212-1}
\end{equation}

If $L(\Gamma _{1}^{\prime \prime }+\Gamma _{2})<\pi ,$ then $g=f_{1}$ is the
desired mapping.

If $L(\Gamma _{1}^{\prime \prime }+\Gamma _{2})\geq \pi ,$ then $f_{1}$
satisfies all the assumption of the lemma with natural partition (\ref%
{1212-4}), and the above argument applies to $f_{1}.$ By (\ref{1212-4})--(%
\ref{1212-1}), if we repeat the above argument once, and if we do not arrive
at the desired mapping, then $V_{NE}(\cdot )$ drops by one, $V_{E}(\cdot )$
increases by one and $V(\cdot )$ keep invariant. But $V_{NE}(\cdot )\geq 0$
in any case, and so we can reach the desired mapping by repeating the above
argument finitely many times. This completes the proof.
\end{proof}

\begin{lemma}
\label{bd-bd>pi}Let $f:\overline{\Delta }\rightarrow S$ be a normal mapping,
and let
\begin{equation*}
\Gamma _{f}=\Gamma _{1}+\Gamma _{2}+\dots +\Gamma _{n},n=V(f),
\end{equation*}%
be a natural partition of $\Gamma _{f}=f(z),z\in \partial \Delta .$ Assume
that for some positive integer $k_{0}$ with $1<k_{0}\leq n$ the followings
hold.

(a) $\Gamma _{1}$ has an endpoint contained in $E,$ and so does $\Gamma
_{k_{0}}$.

(b) $\pi \leq L(\Gamma _{1})<2\pi ,\pi \leq L(\Gamma _{k_{0}})<2\pi ,$ but $%
L(\Gamma _{j})<\pi ,j\neq 1,k_{0}.$

Then, there exists a normal mapping $g:\overline{\Delta }\rightarrow S$ such
that

(i) $L(g,\partial \Delta )\leq L(f,\partial \Delta )$ and $A(g,\Delta )\geq
A(f,\Delta )$

(ii) Each natural edge of $g$ has spherical length strictly less than $\pi ,$

(iii) $V_{NE}(g)\leq V_{NE}(f)$ and $V_{E}(g)\geq V_{E}(f)+2.$

(iv) $V(g)\leq V(f)+2.$
\end{lemma}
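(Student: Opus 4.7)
The plan is to adapt the argument of Lemma~\ref{ini-1-pi}, applying it twice in succession to handle each of the two long natural edges $\Gamma_1$ and $\Gamma_{k_0}$. By possibly reversing the orientation of $\partial\Delta$ (or using a symmetric ``mirror'' version of Lemma~\ref{parti-ini} that splits from the terminal side), I can arrange that the endpoint of $\Gamma_1$ lying in $E$ plays the role of the initial endpoint required in the hypothesis of Lemma~\ref{parti-ini}; similarly for $\Gamma_{k_0}$.

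First I apply Lemma~\ref{parti-ini} to $\Gamma_1$, obtaining a normal mapping $f_1$ with $L(f_1,\partial\Delta) \leq L(f,\partial\Delta)$, $A(f_1,\Delta) \geq A(f,\Delta)$, whose boundary has the permitted partition
\[
\Gamma_{f_1} = \Gamma_1' + \Gamma_1'' + \Gamma_2 + \dots + \Gamma_n,
\]
with $L(\Gamma_1') = \pi/2$, $L(\Gamma_1'') < \pi$, and the common endpoint of $\Gamma_1'$ and $\Gamma_1''$ in $E$. If $\Gamma_1''$ is itself a natural edge of $f_1$, the first round is complete. Otherwise $\Gamma_1''$ merges with a short neighboring edge to form a natural edge of length $< 2\pi$, and — exactly as in Case~2 of the proof of Lemma~\ref{ini-1-pi} — I iterate Lemma~\ref{parti-ini} on this merged edge. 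Since each iteration strictly decreases $V_{NE}(\cdot)$, the process terminates in finitely many steps, producing a normal mapping $f_2$ in which every natural edge on the $\Gamma_1$ side has length $<\pi$, while $\Gamma_{k_0}$ is preserved intact.

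Next I apply the identical procedure to $\Gamma_{k_0}$ in $f_2$, yielding the final normal mapping $g$ that satisfies (i) and (ii). For the vertex accounting, each of the two rounds contributes exactly one new natural vertex in $E$ (the split point produced by Lemma~\ref{parti-ini}) and may only absorb, never create, natural vertices outside $E$ (when merges occur during the iteration). Hence $V_E(g) \geq V_E(f) + 2$ and $V_{NE}(g) \leq V_{NE}(f)$, which together give $V(g) \leq V(f) + 2$ and establish (iii) and (iv).

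The main obstacle is the case $k_0 = 2$ in which $\Gamma_1$ and $\Gamma_{k_0}$ are adjacent: a naïve iteration on the $\Gamma_1$ side could try to merge $\Gamma_1''$ with $\Gamma_{k_0}$, producing a natural edge of length up to $3\pi$ — outside the scope of Lemma~\ref{parti-ini}, which splits an edge only when its length is in $[\pi,2\pi)$. This is avoided by exploiting the freedom to choose the order and direction of processing: one arranges that the endpoint of $\Gamma_{k_0}$ in $E$ is the one shared with $\Gamma_1$ (by swapping which long edge is relabelled $\Gamma_1$, or by a reversal of orientation). That shared endpoint is then already in $E$, forcing it to remain a natural vertex throughout the first round, so the merge chain on the $\Gamma_1$ side halts at $\Gamma_{k_0}$ without absorbing it. With this choice, the two rounds are genuinely independent, and the vertex bookkeeping above goes through unchanged.
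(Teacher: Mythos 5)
Your overall strategy (split each long edge with Lemma \ref{parti-ini} and iterate as in Lemma \ref{ini-1-pi}) matches the paper's, and your identification of the obstacle --- the merge chain emanating from $\Gamma_1$ reaching $\Gamma_{k_0}$ --- is exactly right. But your fix for it does not work. You claim one can always arrange, by relabelling or reversing orientation, that the vertex shared by the two long edges lies in $E$. Hypothesis (a) only guarantees that $\Gamma_1$ and $\Gamma_{k_0}$ each have \emph{some} endpoint in $E$; in the adjacent case $k_0=2$ it is entirely possible that $q_1\in E$ and $q_3\in E$ while the shared vertex $q_2\notin E$, and no relabelling or orientation reversal changes which points lie in $E$. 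In that configuration, after Lemma \ref{parti-ini} replaces $\Gamma_1$ by $\Gamma_1'+\Gamma_1''$, the new geodesic $\Gamma_1''$ may continue straight into $\Gamma_2$ at $q_2$, so $\Gamma_1''+\Gamma_2$ becomes a single natural edge whose length can lie in $[2\pi,3\pi)$, outside the range where Lemma \ref{parti-ini} applies. (The same situation arises for $k_0\geq 3$ once the merge chain has absorbed $\Gamma_2,\dots,\Gamma_{k_0-1}$; the paper organizes this as an induction on $k_0$ whose base case is precisely the adjacent one.)

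The missing ingredient is the paper's treatment of this base case: the merged edge is a natural edge whose two endpoints both lie in $E$ (the split point by construction, and $q_3$ because $q_2\notin E$ forces the $E$-endpoint of $\Gamma_{k_0}$ to be $q_3$), so its length must be $\frac{\pi}{2}$, $\pi$ or $2\pi$; being greater than $\pi$ it equals $2\pi$, and the edge is a great circle $C$ with $C\cap E=\{1\}$. One then patches (Lemma \ref{patch}) a folded hemisphere along $C$, replacing the $2\pi$ of boundary by $\overline{1,0}+\overline{0,1}$ of length $\pi$ and adding only a single new vertex, at $0\in E$. Note that simply invoking Lemma \ref{>=2pi} on the merged edge instead would add three vertices and break conclusions (iii) and (iv), so this step cannot be absorbed into the generic iteration. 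Without it, your picture of two independent rounds is incomplete.
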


\begin{proof}
Without loss of generality, we assume that the initial point $q_{1}$ of $%
\Gamma _{1}$ is in $E.$

By Lemma \ref{parti-ini}, there exists a normal mapping $f_{1}:\overline{%
\Delta }\rightarrow S$ such that
\begin{equation}
L(f_{1},\partial \Delta )\leq L(f,\partial \Delta )\ \mathrm{and\ }%
A(f_{1},\Delta )\geq A(f,\Delta ),  \label{q1}
\end{equation}
and the boundary curve $\Gamma _{f_{1}}$ has a permitted partition%
\begin{equation}
\Gamma _{f_{1}}=\Gamma _{1}^{\prime }+\Gamma _{1}^{\prime \prime }+\Gamma
_{2}+\dots +\Gamma _{n}  \label{11}
\end{equation}%
such that
\begin{equation}
L(\Gamma _{1}^{\prime })=\frac{\pi }{2},L(\Gamma _{1}^{\prime \prime })<\pi ,
\label{12}
\end{equation}%
and the end point of $\Gamma _{1}^{\prime }$, which is also the initial
point of $\Gamma _{1}^{\prime \prime },$ is contained in $E.$ Then, $\Gamma
_{1}^{\prime }$ is a natural edge of $\Gamma _{f_{1}},$ because its
endpoints are both in $E.$

If $\Gamma _{1}^{\prime \prime }$ is a natural edge of $f_{1},$ then (\ref%
{11}) is a natural partition and by (a), (b), (\ref{11}) and (\ref{12}), $%
f_{1}$ satisfies all assumptions of Lemma \ref{ini-1-pi}, with
\begin{equation*}
V_{NE}(f_{1})=V_{NE}(f),V_{E}(f_{1})=V_{E}(f)+1,V(f_{1})=V(f)+1,
\end{equation*}%
and then, by (\ref{q1}), we can apply Lemma \ref{ini-1-pi} to deform $f_{1}$
to be another normal mapping $g$ satisfying (i)--(iv) with%
\begin{equation*}
\left\{
\begin{array}{l}
V_{NE}(g)\leq V_{NE}(f_{1})=V_{NE}(f), \\
V_{E}(g)\geq V_{E}(f_{1})+1=V_{E}(f)+2, \\
V(g)\leq V(f_{1})+1=V(f)+2.%
\end{array}%
\right.
\end{equation*}

Now, assume that

(c) $\Gamma _{1}^{\prime \prime }$ is not a natural edge of $\Gamma _{f_{1}}$%
.

We complete the proof by induction on $k_{0}\geq 2.$ We first assume $%
k_{0}=2.$

Then by the assumption (c), $\Gamma _{1}^{\prime \prime }+\Gamma _{2}$ must
be a natural edge, $\Gamma _{f_{1}}$ has the natural partition%
\begin{equation}
\Gamma _{f_{1}}=\Gamma _{1}^{\prime }+\left( \Gamma _{1}^{\prime \prime
}+\Gamma _{2}\right) +\Gamma _{3}+\dots +\Gamma _{n},  \label{9-3}
\end{equation}%
with%
\begin{equation}
V_{NE}(f_{1})=V_{NE}(f)-1,V_{E}(f_{1})=V_{E}(f)+1,V(f_{1})=V(f),  \label{13}
\end{equation}%
and the initial point of $\Gamma _{2}$ is not contained in $E,$ for,
otherwise, $\Gamma _{1}^{\prime \prime }$ has two endpoints in $E$, which
implies that $\Gamma _{1}^{\prime \prime }$ is a natural edge. Therefore, by
(a), the initial and terminal points of the natural edge $\Gamma
_{1}^{\prime \prime }+\Gamma _{2}$ of $\Gamma _{f_{1}}$ are both contained
in $E.$

By the definition, each natural edge of a closed polygonal curve with
initial and terminal points in $E$ is simple and has length $\frac{\pi }{2},$
$\pi ,$ or $2\pi .$ Then by (b) and the fact that $L(\Gamma _{1}^{\prime
\prime }+\Gamma _{2})>L(\Gamma _{2})\geq \pi $ we have%
\begin{equation*}
L(\Gamma _{1}^{\prime \prime }+\Gamma _{2})=2\pi .
\end{equation*}%
Note that we are in the situation that $\Gamma _{1}^{\prime \prime }+\Gamma
_{2}$ is a natural edge of $\Gamma _{f_{1}}$ and a natural edge never
contains any point of $E$ in its interior, and then we conclude that
\begin{equation*}
C=\Gamma _{1}^{\prime \prime }+\Gamma _{2}
\end{equation*}%
is a great circle passing through $1$ with $C\cap E=\{1\}$ (so, $1$ is the
initial and terminal point of $C).$

Then $C$ separates $0$ and $\infty ,$ without loss of generality we assume $%
0 $ is on the right hand side of $C.$ Let
\begin{equation*}
\partial \Delta =\alpha _{1}+\alpha _{2}+\dots +\alpha _{n}
\end{equation*}%
be a natural partition of $\partial \Delta $ corresponding to (\ref{9-3})
(see Definition \ref{natural}). Then $\Gamma _{1}^{\prime \prime }+\Gamma
_{2}$ is the section of $\Gamma _{f_{1}}$ restricted to $\alpha _{2}$. Let $%
V $ be a bounded Jordan domain in $\mathbb{C}$ that is outside $\Delta $
with $\left( \partial \Delta \right) \cap \left( \partial V\right) =\alpha
_{2}$, let $p_{1}$ and $p_{2}$ be the initial and terminal points of $\alpha
_{2},$ respectively, and let $T$ be the hemisphere on the right hand side of
$C$ with the path $\overline{0,1}$ being removed. Then, there exists a
continuous mapping $\tau $ from $\overline{V}$ onto $\overline{T}$ such that
$\tau |_{\alpha _{2}}=f_{1}|_{\alpha _{2}},$ $\tau $ restricted to $\alpha
_{2}\cup V$ is a homeomorphism onto $(\Gamma _{1}^{\prime \prime }+\Gamma
_{2})\cup T$ with $\tau (\alpha _{2})=\Gamma _{1}^{\prime \prime }+\Gamma
_{2},$ and $\tau $ restricted to $\left( \partial V\right) \backslash \alpha
_{2}=\left( \partial V\right) \backslash \overline{\Delta }$ is a folded $2$
to $1$ mapping onto $\overline{0,1}$. Then by Lemma \ref{patch}, the mapping
\begin{equation*}
f^{\ast }=\left\{
\begin{array}{l}
f_{1}(z),z\in \overline{\Delta }, \\
\tau (z),z\in \overline{V}\backslash \overline{\Delta },%
\end{array}%
\right.
\end{equation*}%
is a normal mapping defined on the closure of the Jordan domain
\begin{equation*}
\Delta ^{\ast }=\Delta \cup V\cup \alpha _{2}\backslash \{p_{1},p_{2}\},
\end{equation*}%
with
\begin{equation*}
A(f^{\ast },\Delta ^{\ast })=A(f_{1},\Delta )+A(T),\
\end{equation*}%
and%
\begin{eqnarray*}
L(f^{\ast },\partial \Delta ^{\ast }) &=&L(f_{1},\left( \left( \partial
\Delta \right) \backslash \alpha _{2}\right) )+L(f,\left( \partial V\right)
\backslash \alpha _{2}) \\
&=&L(f_{1},\partial \Delta )-L(f_{1},\alpha _{2})+L(f,\left( \partial
V\right) \backslash \alpha _{2}) \\
&=&L(f_{1},\partial \Delta )-L(\Gamma _{1}^{\prime \prime }+\Gamma _{2})+L(%
\overline{1,0})+L(\overline{0,1}) \\
&=&L(f_{1},\partial \Delta )-2\pi +\frac{\pi }{2}+\frac{\pi }{2} \\
&<&L(f_{1},\partial \Delta ),
\end{eqnarray*}%
and the boundary curve $\Gamma _{f^{\ast }}$ has a natural partition%
\begin{equation*}
\Gamma _{f^{\ast }}=\Gamma _{1}^{\prime }+\Gamma _{2}^{\prime }+\Gamma
_{2}^{\prime \prime }+\Gamma _{3}+\dots +\Gamma _{n},
\end{equation*}%
and we have $V_{E}(f^{\ast })=V_{E}(f_{1})+1$ and $V(f^{\ast })=V(f_{1})+1,$
and then by (\ref{13}) we have
\begin{equation*}
V_{E}(f^{\ast })\geq V_{E}(f)+2,V(f^{\ast })\leq V(f)+2.
\end{equation*}

Considering that $V(f^{\ast })=V_{E}(f^{\ast })+V_{NE}(f^{\ast })$ and
regarding $\Delta ^{\ast }$ as a disk, we obtained the desired mapping $%
g=f^{\ast }$ that satisfies (i)--(iv)$.$ The proof is complete for the case $%
k_{0}=2$ under the assumption (c).

Then we have in fact prove the lemma in the case $k_{0}=2.$

Now, assume that for some positive integer $m$ with $2\leq m<n=V(f),$ Lemma %
\ref{bd-bd>pi} holds true for all $k_{0}$ with $2\leq k_{0}\leq m.$ We prove
that Lemma \ref{bd-bd>pi} holds true for $k_{0}=m+1.$

To prove the lemma for $k_{0}=m+1,$ it is suffices to prove the lemma under
the assumption (c).

By the assumption (c), $\Gamma _{1}^{\prime \prime }+\Gamma _{2}$ is still a
natural edge of $\Gamma _{f_{1}}$ and since $k_{0}=m+1\geq 3$ we have, by
(b) and (\ref{12}), that%
\begin{equation*}
L(\Gamma _{1}^{\prime \prime }+\Gamma _{2})<2\pi ,
\end{equation*}%
the initial point of $\Gamma _{1}^{\prime \prime }+\Gamma _{2}$ is in $E$,
and (\ref{13}) still holds.

If $L(\Gamma _{1}^{\prime \prime }+\Gamma _{2})<\pi ,$ then $f_{1}$ also
satisfies all assumptions of Lemma \ref{ini-1-pi} and by (\ref{13}) we can
again deform $f_{1}$ to be another normal mapping $g$ such that (i)--(iv)
hold.

If
\begin{equation*}
\pi \leq L(\Gamma _{1}^{\prime \prime }+\Gamma _{2})<2\pi ,
\end{equation*}%
then, considering that by (\ref{11}) $\Gamma _{f_{1}}$ also has the
following natural partition%
\begin{equation*}
\Gamma _{f_{1}}=\left( \Gamma _{1}^{\prime \prime }+\Gamma _{2}\right)
+\Gamma _{3}+\dots +\Gamma _{n}+\Gamma _{1}^{\prime },
\end{equation*}%
$f_{1}$ satisfies all the assumption of Lemma \ref{bd-bd>pi}, with $k_{0}=m.$
Then by the induction hypothesis, the proof is complete.
\end{proof}

\begin{lemma}
\label{no-end-pt}Let $f:\overline{\Delta }\rightarrow S$ be a normal mapping
and let
\begin{equation*}
\Gamma _{f}=\Gamma _{1}+\dots +\Gamma _{n},n=V(f),
\end{equation*}%
be a natural partition of $\Gamma _{f}=f(z),z\in \partial \Delta .$ Assume
that the following hold.

(a) $\pi \leq L(\Gamma _{1})<2\pi $ but $L(\Gamma _{j})<\pi $ for all $%
j=2,\dots ,n.$

(b) The two endpoints of $\Gamma _{1}$ are outside $E.$

Then, there exists a normal mapping $g:\overline{\Delta }\rightarrow S$ such
that

(i) $L(g,\partial \Delta )\leq L(f,\partial \Delta )$ and $A(g,\Delta )\geq
A(f,\Delta )$

(ii) Each natural edge of $g$ has spherical length strictly less than $\pi ,$

(iii) $V_{NE}(g)\leq V_{NE}(f)\ \mathrm{and\ }V_{E}(g)\geq V_{E}(f)+1.$

(iv) $V(g)\leq V(f)+2.$
\end{lemma}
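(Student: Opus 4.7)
The plan is to extend the triangle-patching argument of Lemma \ref{parti-ini} to the situation where neither endpoint of $\Gamma_1$ lies in $E$. Let $q_1, q_2$ be the endpoints of $\Gamma_1$, $C$ the great circle in $S$ determined by $\Gamma_1$, and $S^+$ the open hemisphere on the right hand side of $C$. The first step is to produce $e \in E \cap S^+$. Since $0$ and $\infty$ are antipodal, any great circle that does not contain both separates them strictly; so $E \cap S^+$ could be empty only if $C \supseteq \{0, \infty\}$ or $C = C'$ (the great circle through $[0,+\infty]$), but in each such case $\Gamma_1$, being a natural edge of length at least $\pi$ whose interior is disjoint from $E$, would coincide with an arc of $C$ whose endpoints are in $E$, contradicting hypothesis (b). Hence $e \in E \cap S^+$ always exists.

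Next, I would patch onto the right of $\Gamma_1$ the triangular region $T$ with vertices $q_1, e, q_2$, following the construction of Lemma \ref{parti-ini}. By Lemma \ref{pre-large}, since $e \notin \Gamma_1$, the two replacement edges $\overline{q_1 e}$ and $\overline{e q_2}$ have length strictly less than $\pi$ and satisfy $L(\overline{q_1 e}) + L(\overline{e q_2}) \leq L(\Gamma_1)$. Applying Lemma \ref{patch} and then a coordinate homeomorphism would produce a normal mapping $f_1 : \overline{\Delta} \to S$ whose boundary admits the permitted partition
\begin{equation*}
\Gamma_{f_1} = \overline{q_1 e} + \overline{e q_2} + \Gamma_2 + \dots + \Gamma_n,
\end{equation*}
with $L(f_1, \partial \Delta) \leq L(f, \partial \Delta)$ and $A(f_1, \Delta) \geq A(f, \Delta) + A(T)$. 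Every edge of this permitted partition has length strictly less than $\pi$, so every natural edge obtained by refinement does too, yielding (i) and (ii).

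The last step is the bookkeeping that converts the permitted partition into the natural partition and verifies (iii) and (iv). Inserting $e$ contributes $+1$ to $V_E$. Further natural vertices are created only if $\overline{q_1 e}^{\circ}$ or $\overline{e q_2}^{\circ}$ meets $E$; a short case analysis in $e$ shows that an interior $E$-point in $\overline{q_1 e}^{\circ}$ forces $q_1 \in C'$ (the antipode of $e$ is the only $E$-point automatically on the great circle through $q_1, e$, and lies at distance $\pi$ from $e$, hence outside any shorter arc of length less than $\pi$; any further $E$-point on that great circle forces the circle to equal $C'$), with the symmetric statement for $\overline{e q_2}^{\circ}$. A length-and-position argument parallel to the one ruling out $C = C'$ at the outset then shows that at most one of $\overline{q_1 e}^{\circ}, \overline{e q_2}^{\circ}$ can contain an $E$-point, so $V(f_1) \leq V(f) + 2$ and $V_{NE}(f_1) \leq V_{NE}(f)$. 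Finally, if at $q_1$ or $q_2$ two consecutive edges of $\Gamma_{f_1}$ become collinear and amalgamate into a natural edge of length $\geq \pi$, that amalgamated edge has $e \in E$ as an endpoint, and one application of Lemma \ref{ini-1-pi} produces the desired $g$ without exceeding the $V$-budget. The main obstacle I foresee is this last bookkeeping, especially verifying the at-most-one-interior-subdivision bound in the delicate boundary case where $q_1$ and $q_2$ are antipodes lying on $C \cap C'$; this likely requires a direct case analysis against the three possible choices of $e \in E \cap S^+$ forced by the orientation of $\Gamma_1$.
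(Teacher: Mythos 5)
Your construction handles the cases where the right-hand hemisphere $S^{+}$ of $C$ contains essentially one relevant point of $E$, but it has a genuine gap when $S^{+}$ contains two points of $E$ (e.g.\ $0$ and $1$, or $1$ and $\infty$, which happens whenever $C\cap E=\emptyset$). The obstruction is not the one you focus on (extra natural vertices from $E$-points landing in the interiors of $\overline{q_1e}$ or $\overline{eq_2}$); it is that the \emph{open} triangle domain $T$ you patch on may contain the other $E$-point in its interior. Since $T$ becomes part of $f_1(\Delta)$, this violates condition (d) of Definition \ref{722-1} and the patched mapping is simply not normal, so the construction fails outright rather than costing extra vertices. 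This configuration really occurs: it is exactly the paper's Case 2.2, where both $E$-points of $S^{+}$ lie on the geodesic joining a pair of antipodal points of $\Gamma_1$, so that whichever of the two you choose as the apex $e$, the other falls inside $T$. The paper's fix is to replace the triangle by a quadrilateral whose new boundary is $\overline{q_1q_0}+\overline{q_0q_0'}+\overline{q_0'q_2}$, routing the cut through \emph{both} $E$-points (the middle segment has length $\pi/2$ and is automatically a natural edge); your proposal has no mechanism for this, and it is the reason the lemma's conclusion allows $V(g)\leq V(f)+2$ rather than $+1$.

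A secondary issue: in your final step you invoke ``one application of Lemma \ref{ini-1-pi}'' to repair an amalgamated long edge, but if the edges amalgamate at \emph{both} $q_1$ and $q_2$ you may end up with two natural edges of length $\geq\pi$ simultaneously, and Lemma \ref{ini-1-pi} hypothesizes only one such edge; you then need Lemma \ref{bd-bd>pi} (both long edges have an endpoint in $E$, namely the newly inserted vertices, so it applies). Also, your phrase ``every natural edge obtained by refinement'' inverts the actual difficulty: natural edges are obtained from a permitted partition by amalgamation, which can only lengthen edges, never shorten them.
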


\begin{proof}
Let $C$ be the great circle determined by $\Gamma _{1}.$ Then there are two
cases:

\noindent \textbf{Case 1.} $C\cap E\neq \emptyset .$

\noindent \textbf{Case 2.} $C\cap E=\emptyset .$

Assume Case 1 occurs. Then by (a)$,$ $C$ contains only one point $p_{0}$ in $%
E$ and this point must be $1.$ Otherwise, $C$ contains the antipodal points $%
0$ and $\infty ,$ and either $0$ or $\infty $ is in the interior of $\Gamma
_{1}$ by (a) and (b), which contradicts the assumption that $\Gamma _{1}$ is
a natural edge. Then $C$ must separates $0$ and $\infty ,$ without loss of
generality, assume $0$ is on the right hand side of $C.$ Let $q_{j}$ be the
initial point of $\Gamma _{j},j=1,\dots ,n.$ Then $q_{j+1}$ is the endpoint
of $\Gamma _{j},j=1,\dots ,n,$ where $q_{n+1}=q_{1}.$ Let
\begin{equation}
\Gamma _{1}^{\prime }=\overline{q_{1}0}\ \mathrm{and\ }\Gamma _{1}^{\prime
\prime }=\overline{0q_{2}}.  \label{9-8}
\end{equation}%
Then, by Lemma \ref{pre-large}, $\Gamma _{1}^{\prime }$ and $\Gamma
_{1}^{\prime \prime }$ make sense and
\begin{equation}
L(\Gamma _{1}^{\prime })<\pi ,L(\Gamma _{1}^{\prime \prime })<\pi \ \mathrm{%
and\ }L(\Gamma _{1}^{\prime })+L(\Gamma _{1}^{\prime \prime })\leq L(\Gamma
_{1}),  \label{9-4}
\end{equation}%
and $\Gamma _{1}^{\prime }+\Gamma _{1}^{\prime \prime }-\Gamma _{1}$
encloses a domain $T$ that is on the right hand side of $C$ and $(\Gamma
_{1}\cup T)\cap E=\emptyset .$

By Lemma \ref{patch}, ignoring a coordinate transform, there exists a normal
mapping $g_{1}:\overline{\Delta }\rightarrow S$, which will be regarded as
an extension of $f,$ such that $\Gamma _{g_{1}}$ has the permitted partition%
\begin{equation*}
\Gamma _{g_{1}}=\Gamma _{1}^{\prime }+\Gamma _{1}^{\prime \prime }+\Gamma
_{2}+\dots +\Gamma _{n}
\end{equation*}%
and%
\begin{equation}
L(g_{1},\partial \Delta )\leq L(f,\partial \Delta ),A(g_{1},\Delta )\geq
A(f,\Delta ).  \label{125-5}
\end{equation}%
It is clear that we have%
\begin{equation}
V_{NE}(g_{1})\leq V_{NE}(f),V_{E}(g_{1})=V_{E}(f)+1,V(g_{1})\leq V(f)+1,
\label{9-7}
\end{equation}%
and we can rewrite the permitted partition of $\Gamma _{g_{1}}$ as%
\begin{equation*}
\Gamma _{g_{1}}=\Gamma _{n}+\Gamma _{1}^{\prime }+\Gamma _{1}^{\prime \prime
}+\Gamma _{2}+\dots +\Gamma _{n-1}.
\end{equation*}

Then there are three cases:

\noindent \textbf{Case 1.1.} Both $\Gamma _{1}^{\prime }$ and $\Gamma
_{1}^{\prime \prime }$ are natural edges of $\Gamma _{g_{1}}.$

\noindent \textbf{Case 1.2.} One of $\Gamma _{1}^{\prime }$ and $\Gamma
_{1}^{\prime \prime }$ is a natural edge, while the other is not.

\noindent \textbf{Case 1.3.} Neither $\Gamma _{1}^{\prime }$ nor $\Gamma
_{1}^{\prime \prime }$ is a natural edge.

In Case 1.1, it is clear that $g=g_{1}$ satisfies all the desired
conclusions with
\begin{equation}
V_{NE}(g_{1})=V_{NE}(f),\mathrm{\ }V_{E}(g_{1})=V_{E}(f)+1,V(g_{1})=V(f)+1.
\label{125-9}
\end{equation}

Assume Case 1.2 occurs. Without loss of generality, assume that $\Gamma
_{1}^{\prime \prime }$ is a natural edge. Then $\Gamma _{g_{1}}$ has the
natural partition%
\begin{equation*}
\Gamma _{g_{1}}=\left( \Gamma _{n}+\Gamma _{1}^{\prime }\right) +\Gamma
_{1}^{\prime \prime }+\Gamma _{2}+\dots +\Gamma _{n-1}
\end{equation*}%
where $\left( \Gamma _{n}+\Gamma _{1}^{\prime }\right) \ $is a natural edge$%
, $ and (\ref{9-7}) becomes
\begin{equation}
V_{NE}(g_{1})=V_{NE}(f)-1,\mathrm{\ }V_{E}(g_{1})=V_{E}(f)+1\ \mathrm{and\ }%
V(g_{1})=V(f).  \label{128-1}
\end{equation}

Then, in the case $L(\Gamma _{n}+\Gamma _{1}^{\prime })<\pi ,$ by (a) and (%
\ref{9-4}), $g=g_{1}$ satisfies (i)--(iv) with (\ref{128-1}); and in the
case $L(\Gamma _{n}+\Gamma _{1}^{\prime })\geq \pi ,$ by (a), (\ref{9-8})
and (\ref{9-4}), $\pi \leq L(\Gamma _{n}+\Gamma _{1}^{\prime })<2\pi $ and $%
g_{1}$ satisfies the assumption of Lemma \ref{ini-1-pi} with (\ref{128-1}),
and then, by (\ref{125-5}), there exists a normal mapping $g:\overline{%
\Delta }\rightarrow S$ that satisfies (i) and (ii), and
\begin{equation*}
V_{NE}(g)\leq V_{NE}(g_{1}),\mathrm{\ }V_{E}(g)\geq V_{E}(g_{1})+1\ \mathrm{%
and\ }V(g)\leq V(g_{1})+1,
\end{equation*}%
and so by (\ref{128-1}), (iii) and (iv) are satisfied by $g\ $with
\begin{equation}
V_{NE}(g)\leq V_{NE}(f)-1,\mathrm{\ }V_{E}(g)\geq V_{E}(f)+2\ \mathrm{and\ }%
V(g)\leq V(f)+1.  \label{125-11}
\end{equation}

Assume Case 1.3 occurs. Then both $\Gamma _{n}+\Gamma _{1}^{\prime }$ and $%
\Gamma _{1}^{\prime \prime }+\Gamma _{2}$ are natural edges of $g_{1},$ $%
\Gamma _{g_{1}}$ has the natural partition
\begin{equation*}
\Gamma _{g_{1}}=\left( \Gamma _{n}+\Gamma _{1}^{\prime }\right) +\left(
\Gamma _{1}^{\prime \prime }+\Gamma _{2}\right) +\Gamma _{3}+\dots +\Gamma
_{n-1}
\end{equation*}%
and (\ref{9-7}) becomes
\begin{equation}
V_{NE}(g_{1})=V_{NE}(f)-2,V_{E}(g_{1})=V_{E}(f)+1,V(g_{1})=V(f)-1.
\label{10501}
\end{equation}%
By (a) and (\ref{9-4}) we have
\begin{equation}
L(\Gamma _{n}+\Gamma _{1}^{\prime })<2\pi \ \mathrm{and\ }L(\Gamma
_{1}^{\prime \prime }+\Gamma _{2})<2\pi .  \label{9-10}
\end{equation}

Then, by (a), in the case
\begin{equation}
L(\Gamma _{n}+\Gamma _{1}^{\prime })<\pi \ \mathrm{and\ }L(\Gamma
_{1}^{\prime \prime }+\Gamma _{2})<\pi ,  \label{9-6}
\end{equation}%
$g=g_{1}$ is the desired mapping satisfying (i)--(iv) with (\ref{10501});
and in the case that (\ref{9-6}) fails, by (a), (\ref{9-8}) and (\ref{9-10}%
), Lemma \ref{ini-1-pi} or Lemma \ref{bd-bd>pi} applies, and then there
exists a normal mapping $g$ satisfies (i), (ii) and%
\begin{equation*}
V_{NE}(g)\leq V_{NE}(g_{1}),V_{E}(g)\geq V_{E}(g_{1})+1,V(g)\leq V(g_{1})+2,
\end{equation*}%
which, with (\ref{10501}), implies%
\begin{equation*}
V_{NE}(g)\leq V_{NE}(f),V_{E}(g)\geq V_{E}(f)+1,V(g)\leq V(f)+1,
\end{equation*}
i.e. (iii) and (iv) hold. This completes the proof in Case 1.3.

Now, assume Case 2 occurs. Then the hemisphere $S^{\prime }$ outside $C$
contains one or two points of $E.$ If $S^{\prime }$ contains only one point
of $E,$ the proof is exactly the same as the above arguments. So, we assume
that $S^{\prime }$ contains two points $q_{0}$ and $q_{0}^{\prime }$ of $E.$
Then either $\{q_{0},q_{0}^{\prime }\}=\{0,1\}$ or $\{1,\infty \},$ and then
there are two cases:

%\begin{figure}[tbp]
%\centering
%\includegraphics[scale=0.60]{two-pts.eps} \label{two-pts}
%\end{figure}

\noindent \textbf{Case 2.1. }The great circle of $S$ containing $q_{0}$ and $%
q_{0}^{\prime }$ intersects $C\backslash \Gamma _{1}$ $.$

\noindent \textbf{Case 2.2. }The great circle containing $q_{0}$ and $%
q_{0}^{\prime }$ does not intersects $C\backslash \Gamma _{1}$$.$

In Case 2.1, the argument for Case 1 exactly applies.

In Case 2.2, it is easy to show that the exists two points $r_{1}$ and $%
r_{1}^{\prime }$ on $\Gamma _{1}$ such that $r_{1}$ close to $q_{1}$ and $%
r_{1}^{\prime }$ close to $q_{2}$ (in $\Gamma _{1}),$ and $%
r_{1},q_{0},q_{0}^{\prime },r_{1}^{\prime }$ or $r_{1},q_{0}^{\prime
},q_{0},r_{1}^{\prime }$ are in order on the geodesic path from $r_{1}$ to $%
r_{1}^{\prime }$ in $S^{\prime },$ (then $r_{1}$ and $r_{1}^{\prime }$ are
antipodal). We assume $r_{1},q_{0},q_{0}^{\prime },r_{1}^{\prime }$ is
ordered in the orientation of the geodesic path from $r_{1}$ to $%
r_{1}^{\prime }$ in $S^{\prime }.$ It is clear that the notations
\begin{equation*}
\Gamma _{1}^{\prime }=\overline{q_{1}q_{0}},\gamma =\overline{%
q_{0}q_{0}^{\prime }},\Gamma _{1}^{\prime \prime }=\overline{q_{0}^{\prime
}q_{2}}.
\end{equation*}%
make sense. Then%
\begin{equation}
L(\Gamma _{1}^{\prime })<\pi ,L(\Gamma _{1}^{\prime \prime })<\pi ,L(\gamma
)=\frac{\pi }{2},  \label{18}
\end{equation}%
and it is also clear that%
\begin{equation}
L(\Gamma _{1}^{\prime })+L(\gamma )+L(\Gamma _{1}^{\prime \prime })=L(\Gamma
_{1}^{\prime })+\frac{\pi }{2}+L(\Gamma _{1}^{\prime \prime })<L(\Gamma
_{1})<2\pi ,  \label{19}
\end{equation}%
and $\Gamma _{1}^{\prime }+\gamma +\Gamma _{1}^{\prime \prime }-\Gamma _{1}$
encloses a Jordan domain $T$ in $S^{\prime }$ with $\left( \Gamma _{1}\cup
T\right) \cap E=\emptyset .$

By Lemma \ref{patch}, there exists a normal mapping $g_{1}$, such that $%
\Gamma _{g_{1}}$ has the permitted partition%
\begin{equation*}
\Gamma _{g_{1}}=\Gamma _{1}^{\prime }+\gamma +\Gamma _{1}^{\prime \prime
}+\Gamma _{2}+\dots +\Gamma _{n}
\end{equation*}%
and by (\ref{19}),%
\begin{equation*}
L(g_{1},\partial \Delta )\leq L(f,\partial \Delta ),A(f,\Delta )\geq
A(g_{1},\Delta ).
\end{equation*}%
It is clear that we have%
\begin{equation}
V_{NE}(g_{1})\leq V_{NE}(f),V_{E}(g_{1})=V_{E}(f)+2\ \mathrm{and\ }%
V(g_{1})\leq V(f)+2  \label{14}
\end{equation}%
and we rewrite the permitted partition of $\Gamma _{g_{1}}$ as%
\begin{equation}
\Gamma _{g_{1}}=\Gamma _{n}+\Gamma _{1}^{\prime }+\gamma +\Gamma
_{1}^{\prime \prime }+\Gamma _{2}+\dots +\Gamma _{n-1}.  \label{15}
\end{equation}%
Note that $\gamma $ is always a natural edge of $\Gamma _{g_{1}},$ because
the endpoints of $\gamma $ are both in $E.$

Then there are three cases:

\noindent \textbf{Case 2.2.1. }Both $\Gamma _{1}^{\prime }$ and $\Gamma
_{1}^{\prime \prime }$ are natural edges of $\Gamma _{g_{1}}$.

\noindent \textbf{Case 2.2.2. }One of $\Gamma _{1}^{\prime }$ and $\Gamma
_{1}^{\prime \prime }$ is a natural edge, while the other is not.

\noindent \textbf{Case 2.2.3. }Neither $\Gamma _{1}^{\prime }$ nor $\Gamma
_{1}^{\prime \prime }$ is a natural edge.

In Case 2.2.1, (\ref{15}) is a natural partition, and by (\ref{18}), $%
g=g_{1} $ satisfies (i)--(iv) with (\ref{14}).

In Case 2.2.2, we may assume $\Gamma _{1}^{\prime }$ is a natural edge, and
then by (\ref{15}), $\Gamma _{g_{1}}$ has the natural partition
\begin{equation*}
\Gamma _{g_{1}}=\Gamma _{n}+\Gamma _{1}^{\prime }+\gamma +\left( \Gamma
_{1}^{\prime \prime }+\Gamma _{2}\right) +\Gamma _{3}\dots +\Gamma _{n-1}.
\end{equation*}%
Then, by (\ref{14}) and (\ref{15}),%
\begin{equation}
V_{NE}(g_{1})\leq V_{NE}(f),V_{E}(g_{1})=V_{E}(f)+2\ \mathrm{and\ }%
V(g_{1})=V(f)+1.  \label{14+1}
\end{equation}%
Then, by (a) and (\ref{18}), in the case $L\left( \Gamma _{1}^{\prime \prime
}+\Gamma _{2}\right) <\pi ,$ $g=g_{1}$ satisfies (i)--(iv) with (\ref{14}),
and otherwise, Lemma \ref{ini-1-pi} applies to $g_{1},$ and then there
exists a normal mapping $g:\overline{\Delta }\rightarrow S$ satisfying
(i)--(iv), by (\ref{14+1}), with%
\begin{equation}
\left\{
\begin{array}{l}
V_{NE}(g)\leq V_{NE}(g_{1})\leq V_{NE}(f), \\
V_{E}(g)\geq V_{E}(g_{1})+1=V_{E}(f)+3, \\
V(g)\leq V(g_{1})+1=V(f)+2.%
\end{array}%
\right.  \label{16}
\end{equation}

In Case 2.2.3, $\Gamma _{n}+\Gamma _{1}^{\prime }$ and $\Gamma _{1}^{\prime
\prime }+\Gamma _{2}$ are two natural edges of $\Gamma _{g_{1}}$ with%
\begin{equation*}
L(\Gamma _{n}+\Gamma _{1}^{\prime })<2\pi \text{ and }L(\Gamma _{1}^{\prime
\prime }+\Gamma _{2})<2\pi ;
\end{equation*}%
$\Gamma _{g_{1}}$ has the natural partition%
\begin{equation*}
\Gamma _{g_{1}}=\left( \Gamma _{n}+\Gamma _{1}^{\prime }\right) +\gamma
+\left( \Gamma _{1}^{\prime \prime }+\Gamma _{2}\right) +\Gamma _{3}+\dots
+\Gamma _{n-1}.
\end{equation*}%
and (\ref{14}) becomes
\begin{equation}
V_{NE}(g_{1})\leq V_{NE}(f),V_{E}(g_{1})=V_{E}(f)+2\ \mathrm{and\ }%
V(g_{1})=V(f).  \label{9-5}
\end{equation}

Then, by (a) and (\ref{18}), in the case $L(\Gamma _{n}+\Gamma _{1}^{\prime
})<\pi $ and $L(\Gamma _{1}^{\prime \prime }+\Gamma _{2})<\pi ,$ $g=g_{1}$
satisfies (i)--(iv) with (\ref{9-5}), and in other cases, Lemma \ref%
{ini-1-pi} or Lemma \ref{bd-bd>pi} applies to $g_{1},$ and then there exists
a normal mapping $g:\overline{\Delta }\rightarrow S$ satisfying (i)--(iv)
with%
\begin{equation}
\left\{
\begin{array}{l}
V_{NE}(g)\leq V_{NE}(g_{1})\leq V_{NE}(f), \\
V_{E}(g)\geq V_{E}(g_{1})+1=V_{E}(f)+3, \\
V(g)\leq V(g_{1})+2=V(f)+2.%
\end{array}%
\right.  \label{17}
\end{equation}%
This completes the proof.
\end{proof}

\begin{lemma}
\label{>=2pi}Let $f:\overline{\Delta }\rightarrow S$ be a normal mapping,
and let
\begin{equation}
\Gamma _{f}=\Gamma _{1}+\Gamma _{2}+\dots +\Gamma _{n},n=V(f),  \label{25}
\end{equation}%
be a natural partition of $\Gamma _{f}=f(z),z\in \partial \Delta .$ Assume
that
\begin{equation}
2\pi \leq L(\Gamma _{1})<3\pi ,  \label{27}
\end{equation}%
and
\begin{equation}
L(\Gamma _{j})<\pi \ \mathrm{for\ }j=2,\dots ,n.  \label{28}
\end{equation}

Then, there exists a normal mapping $g:\overline{\Delta }\rightarrow S$ such
that

(i) $L(g,\partial \Delta )\leq L(f,\partial \Delta )$ and $A(g,\Delta )\geq
A(f,\Delta )$

(ii) Each natural edge of $g$ has spherical length strictly less than $\pi ,$

(iii) $V_{NE}(g)=V_{NE}(f)+2$, $V_{E}(g)=V_{E}(f)+1$ and $V(g)=V(f)+3.$
\end{lemma}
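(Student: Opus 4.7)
The plan is to attach a hemisphere-with-geodesic-slit along the portion of the boundary corresponding to one full revolution of $\Gamma_{1}$ around the great circle $C$ it determines. This adapts the hemisphere-gluing trick employed in Case~1 of Lemma~\ref{bd-bd>pi}, now accommodating the extra ``overhang'' in $\Gamma_{1}$ beyond one full turn.

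First I would exploit that $\Gamma_{1}$ is a natural edge of length in $[2\pi,3\pi)$ to deduce that any point of $C\cap E$ is traversed by $\Gamma_{1}$ and must coincide with an endpoint (else $E$ would appear in the interior of a natural edge); this forces either $C\cap E=\emptyset$, or $C\cap E=\{1\}$ with $q_{1}=q_{2}=1$ and $L(\Gamma_{1})=2\pi$. Next I would select a closed sub-arc $\beta\subset\alpha_{1}$ on which $f$ traces $C$ exactly once, with endpoints $a,b$ satisfying $f(a)=f(b)=r\in C$. When $L(\Gamma_{1})>2\pi$, the slack $\ell=L(\Gamma_{1})-2\pi\in(0,\pi)$ permits sliding $\beta$ so that $a,b$ lie interior to $\alpha_{1}$, making $r$ an interior (hence non-$E$) point of $\Gamma_{1}$. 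Let $H$ denote the hemisphere of $S\setminus C$ on the left of $\Gamma_{1}$'s orientation; since $0$ and $\infty$ are antipodal, $H\cap E\neq\emptyset$, and I pick $p\in H\cap E$.

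I would then apply Lemma~\ref{patch} to glue a Jordan domain $V$ outside $\Delta$ along $\beta$ carrying an extension of $f$ that is a homeomorphism of $V$ onto $H\setminus\overline{r,p}$ (extending the slit through a second $E$-point of $H$ if one exists) and is $2$-to-$1$ on $\partial V\setminus\beta$ onto the slit. A coordinate change converts the result into a normal mapping $g:\overline{\Delta}\rightarrow S$ with $A(g,\Delta)=A(f,\Delta)+A(H)=A(f,\Delta)+2\pi$ and $L(g,\partial\Delta)=L(f,\partial\Delta)-2\pi+2L(\overline{r,p})\leq L(f,\partial\Delta)$, since a geodesic inside a hemisphere has length $\leq\pi$; this establishes~(i). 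The remnants of $\Gamma_{1}$ outside $\beta$ have combined length $\ell<\pi$ and each new slit edge has length $d(r,p)<\pi$, giving~(ii). For~(iii), the fold at $p$ provides the single new $E$-vertex, while the preimages $a,b$ of $r$ (where the tangent jumps from $C$-direction to $\overline{r,p}$-direction) provide the two new non-$E$ vertices, so $V_{E}(g)=V_{E}(f)+1$, $V_{NE}(g)=V_{NE}(f)+2$, and $V(g)=V(f)+3$.

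The main obstacle will be the extremal case $L(\Gamma_{1})=2\pi$ with $q_{1}=q_{2}=r$, in which the ``two new vertices at preimages of $r$'' would naively collapse onto the single pre-existing vertex at $q_{1}=q_{2}$. Resolving this will require a refined construction — for instance, breaking the slit as $\overline{r,r'}+\overline{r',p}$ with an auxiliary bend-point $r'$ injected into the interior of $H$, or letting the slit emerge from two distinct nearby points on $C$ — so that the prescribed increment of exactly three natural vertices (two non-$E$, one $E$) is still realized. A secondary technical point is routing the slit through both $E$-points when $|H\cap E|=2$ while maintaining total length $\leq\pi$ and introducing no extraneous natural vertices; this requires a careful choice of the geodesic segments linking $r$, the two $E$-points, and the termination of the slit.
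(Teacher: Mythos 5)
Your route — deleting one full $2\pi$-turn of $\Gamma_{1}$ around its great circle $C$ and gluing in a whole slit hemisphere — is genuinely different from the paper's, which replaces only \emph{half} of $C$: one chooses antipodal points $q_{2},q_{3}$ in the interior of $\Gamma_{1}$, generically so that no half-great-circle from $q_{2}$ to $q_{3}$ contains two points of $E$, and swaps the arc $\Gamma_{2}'$ of $C$ between them for the half-great-circle $L$ through the $E$-point of $S'$ whose meridian is met first; the patch is then the lune bounded by $L-\Gamma_{2}'$, whose closure meets $E$ in the single point $q\in L$. Since $L(L)=\pi=L(\Gamma_{2}')$ the length is exactly preserved, the area strictly increases, and the only new natural vertices are $q_{2},q_{3}$ (non-$E$) and $q$ (in $E$), giving (iii) on the nose.

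The gap in your version is the vertex count in the case you only partially flag. When $C\cap E=\emptyset$ the hemisphere to be patched can contain \emph{two} points of $E$ (e.g.\ $\{0,1\}$), and then every point of $E$ in the open hemisphere must lie on the slit, since the interior of the patch must avoid $E$. The new boundary traverses each non-terminal point of the slit twice, so a slit through both $E$-points produces at least three new $E$-vertices (two passages through the inner one plus the fold at the outer one) on top of the two turns at the preimages of $r$: $V(g)\geq V(f)+5$, and re-routing (two separate slits, a branched slit) only makes it worse. This is not merely a failure of the exact equalities in (iii): the bound $V(g)\leq V(f)+3$ is precisely what Theorem \ref{>pi}(d) delivers to Lemma \ref{CB} (Case 4), so the overshoot breaks the downstream induction. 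You identified the length constraint in this case but not the vertex-count obstruction, which is fatal to the slit-hemisphere scheme; likewise your fixes for the degenerate case $L(\Gamma_{1})=2\pi$ (an auxiliary bend point $r'$, traversed twice) again inflate $V_{NE}$. A minor further point: the hemisphere you glue in must be the one on the \emph{right}-hand side of $C$ (the paper's $S'$), not the left, or Lemma \ref{patch} does not produce an orientation-preserving open mapping. All of these difficulties evaporate once you replace only half of $C$, as in the paper's lune construction.
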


\begin{proof}
Let $C$ be the great circle determined by $\Gamma _{1}.$ Then by (\ref{27})
we have $C\subset \Gamma _{1},$ and by the definition of natural edges, in
the case that $L(\Gamma _{1})=2\pi ,$ the only possible point of $%
E=\{0,1,\infty \}$ contained in $C$ is $1,$ and in the case that $2\pi
<L(\Gamma _{1})<3\pi ,$ $C$ does not intsects $E,$ for otherwise the
interior $\alpha _{1}^{\circ }$ of $\alpha _{1}$ contains at least one point
of $f^{-1}(E).$

Let $S^{\prime }$ be the hemisphere outside\footnote{%
This means that $S^{\prime }$ is on the right hand side of $C.$ Note that $C$
is oriented by $\Gamma _{1}.$} $C$. Then%
\begin{equation}
1\leq \#\left( S^{\prime }\cap E\right) \leq 2.  \label{26}
\end{equation}%
Let
\begin{equation*}
\partial \Delta =\alpha _{1}+\dots +\alpha _{n}
\end{equation*}%
be a natural partition of $\partial \Delta $ corresponding the partition (%
\ref{25}) and let $p_{1}$ and $p_{4}$ be the initial and terminal point of $%
\alpha _{1}$, respectively$.$ Then by (\ref{27}) and (\ref{26}), summarizing
what we have, there exists $p_{2}$ and $p_{3}$ in the interior of $\alpha
_{1}$ such that $p_{1},p_{2},p_{3},p_{4}$ are in order anticlockwise and the
followings hold.

(a) $f$ restricted to each section $\alpha _{j}^{\prime }$ of $\alpha _{1}$
from $p_{j}$ and $p_{j+1}$ is a homeomorphism, $j=1,2,3.$

(b) For the sections $\Gamma _{j}^{\prime }=f(z),z\in \alpha _{j},j=1,2,3$
\begin{equation*}
L(\Gamma _{2}^{\prime })=\pi ,\ L(\Gamma _{1}^{\prime })<\pi ,\ L(\Gamma
_{3}^{\prime })<\pi .
\end{equation*}

(c) Any shortest path from $q_{2}\mathrm{\ }$to $q_{3}\ $contains at most
one point of $E$.

Then by the definition of natural edges,
\begin{equation*}
\Gamma _{2}^{\prime }\cap E=\emptyset ,
\end{equation*}%
and by (b), $q_{2}$ and $q_{3}\ $are antipodal. Therefore, by (\ref{26}) and
(c), there exists a unique shortest path $L$ from $q_{2}$ to $q_{3}$ such
that $L-\Gamma _{2}^{\prime }$ enclose a domain $T$ such that $\overline{T}%
\cap E$ contains exactly one point $q\in E$, which lies in $L\cap S^{\prime
}.$ We denote by $\Gamma ^{\prime }$ the section of $L$ from $q_{2}=f(p_{2})$
to $q=f(q)$ and by $\Gamma ^{\prime \prime }$ the section of $L$ from $q$ to
$q_{3}=f(p_{3}).$

Then we can extend the Riemann surface of $f$ to be a new Riemann surface so
that in the new Riemann surface, $\overline{T}$ is patched along $\Gamma
_{2}^{\prime }.$ By Lemma \ref{patch}, this can be realized by a normal
mapping $g:\overline{\Delta }\rightarrow S.$ Then the boundary curve $\Gamma
_{g}=g(z),\in \partial \Delta ,$ has the following natural partition%
\begin{equation}
\Gamma _{g}=\Gamma _{1}^{\prime }+\Gamma ^{\prime }+\Gamma ^{\prime \prime
}+\Gamma _{3}^{\prime }+\Gamma _{2}+\dots +\Gamma _{n},  \label{29}
\end{equation}%
because $\Gamma _{1}^{\prime },\Gamma _{3}^{\prime }$ is in $\Gamma _{1}$
and $\Gamma ^{\prime }$ and $\Gamma ^{\prime \prime }$ are clearly natural
edges.

It is clear that $\Gamma _{g}$ satisfies (ii) and%
\begin{equation}
L\left( \Gamma _{1}^{\prime }\right) +L\left( \Gamma ^{\prime }\right)
+L\left( \Gamma ^{\prime \prime }\right) +L\left( \Gamma _{3}^{\prime
}\right) =L\left( \Gamma _{1}^{\prime }\right) +L(\Gamma _{2}^{\prime
})+L\left( \Gamma _{3}^{\prime }\right) =L(f,\alpha _{1}),  \label{30}
\end{equation}%
and then by (\ref{29}) we have%
\begin{equation*}
L(g,\partial \Delta )=L(f,\partial \Delta ).
\end{equation*}%
On the other hand, it is also clear that
\begin{equation*}
A(g,\Delta )=A(f,\Delta )+A(T)>A(f,\Delta ).
\end{equation*}%
Thus, $g$ satisfies (i).

On the other hand, by (\ref{29}), considering that all the natural vertices
of $\Gamma _{f}$ are natural vertices of $\Gamma _{g}$ and $q_{2}^{\prime
},q_{3}^{\prime }$ and $q^{\prime }$ are the three new natural vertices of $%
g,$ we have%
\begin{equation*}
V_{NE}(g)=V_{NE}(f)+2,V_{E}(g)=V_{E}(f)+1,V(g)=V(f)+3.
\end{equation*}%
Thus, (iii) is satisfied by $g.$ This completes the proof.
\end{proof}

\section{Movement of branched points\label{ss-8move-br}}

This section is prepared for prove Theorem \ref{1-br-2-map}.

\begin{lemma}
\label{pre-move}Let $f:\overline{\Delta ^{+}}\rightarrow \overline{\Delta }$
be an orientation preserved open mapping that satisfies the following
conditions:

(a) f restricted to the upper half circle $\left( \partial \Delta \right)
^{+}=\{z\in \partial \Delta ;\mathrm{Im}z\geq 0\}$ is given by $f(e^{i\theta
})=e^{\phi (\theta )i},$ where $\phi $ is a strictly increasing function
defined on $[0,\pi ]$ with $\phi (0)=0$ and $\phi (\pi )=(2d+1)\pi $, where $%
d$ is a positive integer.

(b) $f$ maps the interval $[-1,-1]$ homeomorphically onto the interval $%
[-1,1].$

(c) $p_{0}\in \Delta ^{+}$ is the unique ramification point of $f$ in $%
\overline{\Delta ^{+}}.$

Then there exists an orientation preserved open mapping $g:\overline{\Delta
^{+}}\rightarrow \overline{\Delta }$ such that the followings hold.

(I) $z=0$ is the unique ramification point of $g$ in $\overline{\Delta ^{+}}$
and $g(0)=0.$

(II) $g|_{\left( \partial \Delta \right) ^{+}}=f|_{\left( \partial \Delta
\right) ^{+}}$ and $g$ restricted to the interval $[-1,1]$ is a
homeomorphism onto the interval $[-1,1].$
\end{lemma}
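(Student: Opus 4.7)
The plan is to construct $g$ as a composition $g=g_0\circ h_1$, where $g_0$ is a holomorphic ``model'' map already carrying a ramification point of the right type at the origin, and $h_1$ is a self-homeomorphism of $\overline{\Delta^+}$ that corrects the boundary parametrization so that $g$ matches $f$ on the upper half-circle. This circumvents any need to ``push'' the branch point $p_0$ through the interior along a path, which would be the obvious but technically awkward alternative.

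For the model I will take $g_0:\overline{\Delta^+}\to\overline{\Delta}$, $g_0(z)=z^{2d+1}$. Being holomorphic, $g_0$ is orientation-preserving and open; since $g_0'(z)=(2d+1)z^{2d}$ vanishes only at the origin, $z=0$ is the unique ramification point of $g_0$ in $\overline{\Delta^+}$. On the upper half-circle one has $g_0(e^{i\theta})=e^{i(2d+1)\theta}$, and because $2d+1$ is odd, $g_0|_{[-1,1]}$ is the homeomorphism $r\mapsto r^{2d+1}$ of $[-1,1]$ onto itself fixing $0$.

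Next I will define $h_1$ on $\partial\overline{\Delta^+}$ by $h_1(t)=t$ for $t\in[-1,1]$ and $h_1(e^{i\theta})=e^{i\phi(\theta)/(2d+1)}$ for $\theta\in[0,\pi]$. Since $\phi$ is a strictly increasing homeomorphism $[0,\pi]\to[0,(2d+1)\pi]$, the map $\theta\mapsto\phi(\theta)/(2d+1)$ is a strictly increasing self-homeomorphism of $[0,\pi]$ with the correct endpoint values, so $h_1$ is a well-defined orientation-preserving homeomorphism of the Jordan curve $\partial\overline{\Delta^+}$ onto itself. By the Jordan--Schoenflies theorem (or an elementary radial construction respecting the diameter), $h_1$ extends to an orientation-preserving homeomorphism $\overline{\Delta^+}\to\overline{\Delta^+}$; since $0$ lies on the boundary arc $[-1,1]$ where $h_1$ is already the identity, the extension automatically satisfies $h_1(0)=0$.

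Finally set $g=g_0\circ h_1$. Then $g$ is orientation-preserving and open as a composition of such. On the upper half-circle, $g(e^{i\theta})=g_0\bigl(e^{i\phi(\theta)/(2d+1)}\bigr)=e^{i\phi(\theta)}=f(e^{i\theta})$; on $[-1,1]$, $h_1$ is the identity so $g|_{[-1,1]}=g_0|_{[-1,1]}$ is the required homeomorphism onto $[-1,1]$; and since $h_1$ is unramified with $h_1(0)=0$ and $g_0$ is ramified only at $0$, $z=0$ is the unique ramification point of $g$ in $\overline{\Delta^+}$, with $g(0)=g_0(0)=0$. The argument is largely mechanical; the only mildly technical point is the Schoenflies-type extension of $h_1$ to the interior while keeping the diameter pointwise fixed, which is a routine construction and presents no real obstacle.
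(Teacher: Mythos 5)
Your argument is correct, and it reaches the conclusion by a more direct construction than the paper's. The paper first doubles $f$ across the diameter to a degree-$(d+1)$ branched covering $f_{2}$ of the whole disk with $p_{0}$ as its unique ramification point, replaces $f_{2}$ by a covering $f_{3}$ with the same boundary values whose unique ramification point is $0$ with $f_{3}(0)=0$, lifts the diameter $[-1,1]$ by $f_{3}$ to an arc $\alpha$ from $-1$ to $1$ with interior in $\Delta$, and takes $g$ to be $f_{3}$ restricted to the region between $\left( \partial \Delta \right)^{+}$ and $\alpha$, pulled back to $\overline{\Delta ^{+}}$ by a homeomorphism sending $[-1,1]$ to $\alpha$ and $0$ to $0$. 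Both proofs exploit the same underlying freedom --- nothing about $f$ in the interior needs to be preserved, so one simply manufactures a map with the prescribed boundary data and a single branch point at the origin --- but you realize the model explicitly as $z^{2d+1}$ and absorb the mismatch with $f$ into a Schoenflies extension of a boundary reparametrization, thereby avoiding the doubling, the appeal to the existence of the standard covering $f_{3}$, and the lifting of the diameter. The paper's formulation has the minor advantage that it yields Remark \ref{first} (placing $g(0)$ at an arbitrary $t\in (-1,1)$) without further work, but your construction accommodates that too after post-composing with a homeomorphism of $\overline{\Delta }$ that fixes $\partial \Delta$ pointwise and preserves $[-1,1]$.
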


\begin{remark}
$g$ acts as an orientation preserved open mapping that moves the unique
ramification point $p_{0}$ of $f$ into the boundary of $\Delta ^{+}$ with
the same branched number, while none other ramification points appear.
\end{remark}

\begin{proof}
There exists an orientation preserved homeomorphism $f_{1}$ from $\overline{%
\Delta ^{-}}$ onto $\overline{\Delta ^{-}}$ such that
\begin{equation*}
f_{1}|_{[-1,1]}=f|_{[-1,1]}.
\end{equation*}

Then%
\begin{equation*}
f_{2}(z)=\left\{
\begin{array}{l}
f(z),z\in \overline{\Delta ^{+}}, \\
f_{1}(z),z\in \overline{\Delta ^{-}}\backslash \overline{\Delta ^{+}},%
\end{array}%
\right.
\end{equation*}%
is an orientation preserved $d+1$ to $1$ covering mapping from $\overline{%
\Delta }$ onto $\overline{\Delta }$ such that $p_{0}$ is the unique
ramification point of $f_{2},$ and then there exists another covering
mapping $f_{3}:\overline{\Delta }\rightarrow \overline{\Delta }$ such that
\begin{equation*}
f_{3}|_{\partial \Delta }=f_{2}|_{\partial \Delta }
\end{equation*}%
and that $0$ is the unique ramification point of $f_{3}$ with $f_{3}(0)=0.$

It is clear that the path $\beta =\beta (t),t\in \lbrack -1,1]$ in $%
\overline{\Delta }$ has a unique lift $\alpha =\alpha (t),t\in \lbrack
-1,1], $ in $\overline{\Delta }$ such that
\begin{equation*}
\alpha (-1)=-1,\alpha (1)=1,
\end{equation*}
$f_{3}$ restricted to $\alpha $ is a homeomorphism with%
\begin{equation*}
f_{3}(\alpha (t))=\beta (t)=t,t\in \lbrack -1,1],
\end{equation*}%
and $\alpha $ is a Jordan path and the interior of $\alpha $ is contained in
$\Delta .$ Thus $\alpha $ divides $\Delta $ into two Jordan domains and one
of these domains is enclosed by $\left( \partial \Delta \right) ^{+}$ and $%
\alpha ,$ and we denote this domain by $U^{+}.$ Then $f_{3}$ restricted to $%
\overline{\Delta ^{+}\backslash U^{+}}$ is a homeomorphism.

Now consider the restriction $f_{3}|_{\overline{U^{+}}}.$ Let $h$ be a
homeomorphism from $\overline{\Delta ^{+}}$ onto $\overline{U^{+}}$ such
that $h$ restricted to $\left( \partial \Delta \right) ^{+}$ is an identity
mapping, restricted to the interval $[-1,1]$ is a homeomorphism onto $\alpha
$ with $h(0)=0$ (note that $0\in \alpha ),$ and finally let%
\begin{equation*}
g=f_{3}\circ h(z),z\in \overline{\Delta ^{+}}.
\end{equation*}%
Then $g$ satisfies all the desired conditions.
\end{proof}

\begin{remark}
\label{first}By the proof we may construct that $g$ such that $g(0)=t$ for
any fixed $t\in (-1,1)$, $0$ is the unique ramification point of $g$ and all
other conclusions hold.
\end{remark}

\begin{lemma}
\label{move}Let $f:\overline{\Delta }\rightarrow S$ be a normal mapping, let
$p_{0}\in \Delta $ be a ramification point of $f$ with $v_{f}(p_{0})=d+1,$
and assume that $\beta =\beta (t),t\in \lbrack 0,1],$ is a polygonal Jordan
path in $S$ that satisfies the followings:

(a) $\beta (0)=f(p_{0}),$ $\beta (0)\neq \beta (1)$ and $\beta $ has a
number of $d+1$ lifts $\alpha _{j}=\alpha _{j}(t),t\in \lbrack 0,1],$ by $f$
in $\overline{\Delta },$ such that
\begin{equation*}
\cup _{j=2}^{d+1}\alpha _{j}\subset \Delta ,
\end{equation*}%
\begin{equation*}
f(\alpha _{j}(t))=\beta (t),t\in \lbrack 0,1],j=1,\dots ,d+1,
\end{equation*}%
and
\begin{equation*}
\alpha _{j}(0)=p_{0},\mathrm{\ }j=1,\dots ,d+1,
\end{equation*}

(b) $\alpha _{1}(t)\in \Delta $ for all $t\in \lbrack 0,1)\ $but $%
p_{1}=\alpha _{1}(1)\in \partial \Delta .$

(c) $f$ has no ramification point on $\cup _{j=1}^{d+1}\alpha _{j}(0,1],$
where $\alpha _{j}(0,1]$ is the curve $\alpha _{j}(t),t\in (0,1],$ which is
the curve $\alpha _{j}$ without initial point, $j=1,2,\dots d+1.$

(d) $f$ restricted to a neighborhood of $p_{1}=\alpha _{1}(1)$ in $\overline{%
\Delta }$ is a homeomorphism.\label{If folded then convex it ok}

Then, there exist a normal mapping $g:\overline{\Delta }\rightarrow S$ such
that
\begin{equation*}
A(g,\Delta )=A(f,\Delta ),L(g,\partial \Delta )=L(f,\partial \Delta ),
\end{equation*}%
and the followings hold.

(I) The ramification point $p_{0}$ of $f$ is no longer a ramification point
of $g,$ while the regular point $p_{1}$ of $f$ is a regular point of $g$ with%
\begin{equation*}
g(p_{1})=\beta (1)=f(p_{1})\mathrm{\ and\ }v_{f}(p_{0})=v_{g}(p_{1}).
\end{equation*}

(II) The boundary curves $\Gamma _{f}=f(e^{i\theta }),\theta \in \lbrack
0,2\pi ],$ and $\Gamma _{g}=g(e^{i\theta }),\theta \in \lbrack 0,2\pi ],$
are the same curves after a parameter transformation.

(III) The ramification point sets of $f$ and $g$ in $\overline{\Delta }%
\backslash \{p_{0},p_{1}\}$ are the same, and $f$ and $g$ coincide in a
neighborhood of this ramification point set.
\end{lemma}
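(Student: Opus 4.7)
The strategy is to localize the modification to a small neighborhood of the ``tree'' $T=\bigcup_{j=1}^{d+1}\alpha_j$ in $\overline{\Delta}$, reduce to the model situation of Lemma \ref{pre-move}, and then glue the resulting map to $f$ on the complement via Lemma \ref{patch}. First I will choose a thin closed polygonal Jordan neighborhood $W$ of $\beta$ in $S$, with $\beta(0),\beta(1)\in W^{\circ}$, thin enough so that (i) $\overline{W}$ contains no branched value of $f$ other than $\beta(0)=f(p_0)$, (ii) among the preimages of $W$, the only one that meets $T$ is a single component $N$ which contains the entire tree $T$, and (iii) by hypothesis (d), $f$ is a homeomorphism on a small neighborhood of $p_1$ intersected with $N$. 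Let $N$ be this component.

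The key geometric claim is that $N$ is a closed Jordan subdomain of $\overline{\Delta}$ with $p_0$ as an interior ramification point of $f|_N$, with $p_1\in\partial N\cap\partial\Delta$, and with $N\cap\partial\Delta$ equal to a single arc $I$ around $p_1$ on which $f$ is a homeomorphism; and moreover $f|_N:N\to\overline{W}$ is a $(d+1)$-to-$1$ branched covering whose unique branch point is $p_0$. Granting this, $N$ is topologically a closed half-disk (a $(d+1)$-fold branched cover of the disk $\overline{W}$ branched at one interior point, with the extra feature that the sheet through $p_1$ meets $\partial\Delta$ along the arc $I$). Up to orientation-preserving homeomorphism this is exactly the situation of Lemma \ref{pre-move}: fix homeomorphisms $\phi:\overline{\Delta^+}\to N$ sending $[-1,1]$ onto $I$ and sending the upper semicircle onto $\partial N\setminus I^{\circ}$, and $\psi:\overline{W}\to\overline{\Delta}$, so that $F=\psi\circ f\circ\phi$ satisfies the hypotheses of Lemma \ref{pre-move} with unique interior ramification at $\phi^{-1}(p_0)$ and with $\phi^{-1}(p_1)\in(-1,1)$.

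Applying Lemma \ref{pre-move} together with Remark \ref{first} (to place the new branch point at the specific interior point $\phi^{-1}(p_1)$ of the diameter rather than at $0$), I obtain an orientation-preserving open mapping $G:\overline{\Delta^+}\to\overline{\Delta}$ that agrees with $F$ on the upper semicircle, maps $[-1,1]$ homeomorphically onto $[-1,1]$, and has $\phi^{-1}(p_1)$ as its unique ramification, with multiplicity $d+1$. Pulling back, $g_N=\psi^{-1}\circ G\circ\phi^{-1}:N\to\overline{W}$ agrees with $f$ on $\partial N$ and its unique ramification is $p_1$, with $v_{g_N}(p_1)=d+1=v_f(p_0)$. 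I now define $g=g_N$ on $N$ and $g=f$ on $\overline{\Delta}\setminus N^{\circ}$. Since $g_N$ and $f$ agree on $\partial N$ and each is a normal mapping on its domain (interpreting $g_N$ via the local model, in view of the orientation-preserving property and the fact that $G$ has a single interior-type ramification placed at a boundary point $\phi^{-1}(p_1)$), Lemma \ref{patch} yields that $g$ is a normal mapping $\overline{\Delta}\to S$. Conclusions (I)--(III) are then immediate: $p_0$ is a regular point of $g$, $p_1$ is a ramification point of $g$ with $v_g(p_1)=d+1$, ramifications outside $N$ are unchanged, and $\Gamma_g$ coincides with $\Gamma_f$ up to parametrization, because $g$ and $f$ take the same values on $\partial\Delta$. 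The equalities $A(g,\Delta)=A(f,\Delta)$ and $L(g,\partial\Delta)=L(f,\partial\Delta)$ follow because $g|_N$ covers $\overline{W}$ exactly as many times as $f|_N$ does and $g=f$ elsewhere.

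The main obstacle is the geometric setup in the first paragraph: showing that for all sufficiently thin $W$, the component $N$ of $f^{-1}(\overline{W})$ containing $T$ is a single closed Jordan domain meeting $\partial\Delta$ in exactly one arc $I\ni p_1$, and that the restriction $f|_N$ carries the model $(d+1)$-to-$1$ branched covering structure over $\overline{W}$. This requires tracking the $d+1$ sheets of $f$ along $\beta$ using hypotheses (a)--(c) (no other branched values along $\beta$, no other ramification along the lifts), and using the local normal-mapping structure at $p_0$ (the $(d+1)$-to-$1$ branched model) together with hypothesis (d) at $p_1$ (a local homeomorphism) to identify the boundary behavior of $f|_N$ with the boundary data in Lemma \ref{pre-move}. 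Once this structural picture is established, everything else is a routine assembly.
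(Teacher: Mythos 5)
Your proposal follows essentially the same route as the paper's proof: identify a neighborhood of $\bigcup_{j}\alpha_{j}$ on which $f$ is the model branched half-disk cover, replace it there by the map supplied by Lemma \ref{pre-move} (which relocates the ramification to a boundary point of the half-disk), and reglue with Lemma \ref{patch}; the area, length, boundary-curve and ramification bookkeeping is then identical. The one genuine difference is in how the model structure is established. You take the component $N$ of $f^{-1}(\overline{W})$ containing the tree of lifts and must prove directly that $N$ is a Jordan domain meeting $\partial \Delta$ in a single arc $I$ about $p_{1}$, with $(f|_{N})^{-1}(\partial W)$ a single arc wrapping $d+\tfrac12$ times around $\partial W$ — which you correctly single out as the main obstacle. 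The paper instead first extends $f$ \emph{outward} across $\partial \Delta$ near $p_{1}$, gluing a thin quadrilateral onto the Riemann surface along $f(c)$; the relevant preimage component $U$ of a disk neighborhood $T^{\ast}$ of $\beta$ is then an honest $(d+1)$-fold branched cover of a disk with a single interior branch point, hence automatically a disk with the right boundary behavior, and the half-disk $V=U\cap \Delta$ is recovered by cutting along the boundary arc. Your route is viable (the degree count $d+1$ on one side of $f(I)$ and $d$ on the other does match the model, and a Riemann--Hurwitz or doubling argument gives $\chi(N)=1$), but it leaves exactly the verification that the paper's extension trick is designed to make automatic; if you flesh out the proof, that is where the work goes.
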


\label{If folded, then first convex it}.

\begin{remark}
$g$ acts as a normal mapping that moves the ramification point $p_{0}$ of $f$
into the boundary $\partial \Delta $ with the same branched number, while
all other ramification points, as well as their branched number, remain
unchanged, and no other new ramification point appear, and the length and
the area also remains unchanged.
\end{remark}

\begin{proof}
Let $\delta <\frac{1}{2}$ be a positive number, let $D_{\delta }$ be the
disk $|z-p_{1}|<\delta $ in $\mathbb{C},$ let $c=\left( \partial \Delta
\right) \cap \overline{D_{\delta }}$, which will be regarded as a path from $%
p_{2}\in \partial \Delta $ to $p_{3}\in \partial \Delta $ (anticlockwise).
Then $p_{1}$ is the middle point of $c$. We write
\begin{equation*}
q_{j}=f(p_{j}),j=1,2,3;
\end{equation*}%
\begin{equation*}
D_{\delta }^{+}=D_{\delta }\backslash \overline{\Delta },
\end{equation*}%
\begin{equation*}
\Delta ^{\ast }=\Delta \cup D_{\delta }\cup c\backslash \{p_{2},p_{3}\};
\end{equation*}%
\begin{equation*}
\gamma =f(c);
\end{equation*}%
\begin{equation*}
e=\partial \Delta ^{\ast }\cap \overline{D_{\delta }}=\left( \partial
D_{\delta }\right) \backslash \Delta ,
\end{equation*}%
which is the boundary of $\Delta ^{\ast }$ outside $\Delta ,$ and is the
boundary of $\partial D_{\delta }$ outside $\Delta $ as well. It is clear
that $\Delta ^{\ast }$ is a Jordan domain (note that $\delta <\frac{1}{2})$
and
\begin{equation*}
\partial \Delta ^{\ast }=e\cup \left( \left( \partial \Delta \right)
\backslash c\right) ,\ e\cap \overline{\left( \partial \Delta \right)
\backslash c}=\{p_{2},p_{3}\}.
\end{equation*}

Since $f$ is normal and $\alpha _{2}\subset \Delta $ (by (a)), we have that
\begin{equation*}
f(p_{1})=f(\alpha _{1}(1))=f(\alpha _{2}(1))\notin E.
\end{equation*}%
On the other hand, by (c) and (d), we may take the number $\delta $
sufficiently small such that the following conditions (e)--(f) are satisfied.

(e) $f$ restricted to $c$ is a homeomorphism onto $\gamma =f(c)=\overline{%
q_{2}q_{1}q_{3}},$ i.e., $\gamma $ is a polygonal Jordan path with only one
possible vertex at $q_{1}.$

(f) There exists a point $q_{1}^{\prime }$ in $S\backslash \gamma $ such
that $q_{1}^{\prime }$ is very close to $\gamma $ and is on the right hand
side of $\gamma ,$ and the quadrangle $\overline{q_{2}q_{1}^{\prime
}q_{3}q_{1}q_{2}}$ encloses a domain $T$ that is on the right hand side of $%
\gamma ,$ with $\overline{T}\cap E=\emptyset ,$ and $f^{-1}(\overline{T})$
has $d$ components $A_{j}$ with $f(\alpha _{j}(1)\in A_{j}$ and $f$
restricted to each $A_{j}$ is a homeomorphism onto $\overline{T},$ for $%
j=2,\dots ,d+1.$

(g) $\beta $ intersects $\overline{T}$ only at $q_{1}=f(p_{1}).$

Let $f_{1}$ be an orientation preserved homeomorphism from $\overline{%
D_{\delta }^{+}}$ onto $\overline{T}$ such that $f_{1}$ and $f$ restricted
to $c$ are equal to each other. Then
\begin{equation*}
f_{2}(z)=\left\{
\begin{array}{l}
f(z),z\in \overline{\Delta }, \\
f_{1}(z),z\in \overline{D_{\delta }^{+}}\backslash \overline{\Delta },%
\end{array}%
\right.
\end{equation*}%
is a normal mapping defined on $\overline{\Delta ^{\ast }}$.

The above argument show that $T$ can be extended to be a polygonal Jordan
domain $T^{\ast }$ such that the followings hold.

(h) $\beta \subset T^{\ast }$, the path $\gamma ^{\prime }=\overline{%
q_{2}q_{1}^{\prime }q_{3}}$ is still a section of $\partial T^{\ast },$ and
\begin{equation*}
(\gamma \cup T)\backslash \{q_{2},q_{3}\}\subset T^{\ast }.
\end{equation*}

(i) $f_{2}$ restricted to the component $\overline{U}$ of $f_{2}^{-1}(%
\overline{T^{\ast }})$ with $p_{0}\in U$ is a $d+1$ to $1$ covering with the
unique ramification point $p_{0},$ and $f_{2}(\overline{U})=\overline{%
T^{\ast }}.$

(j) The boundary of $U$ is composed of $e$ and a Jordan path $\alpha $ in $%
\overline{\Delta }$ whose interior is in $U\ $and endpoints are $p_{2}$ and $%
p_{3}.$

Then $V=U\cap \Delta $ is also a Jordan domain. Let $h_{1}$ be a
homeomorphism from $\overline{V}$ onto $\overline{\Delta ^{+}}$ such that $%
h_{1}$ maps $\alpha $ homeomorphically onto $\left( \partial \Delta \right)
^{+},$ maps $c$ homeomorphically onto the interval $[-1,1]\ $with%
\begin{equation*}
h_{1}(p_{1})=0;
\end{equation*}%
let $h_{2}$ be a homeomorphism from $\overline{T^{\ast }}$ onto $\overline{%
\Delta }$ such that $h_{2}$ maps $\left( \partial T^{\ast }\right)
\backslash \{\gamma ^{\prime }\backslash \{q_{2},q_{3}\}\}$ homeomorphically
onto $\left( \partial \Delta \right) ^{+}$, maps $\gamma ^{\prime }$
homeomorphically onto $\left( \partial \Delta \right) ^{-},$ and maps $%
\gamma $ homeomorphically onto the interval $[-1,1]$ with%
\begin{equation*}
h_{2}(q_{1})=0;
\end{equation*}%
and finally let
\begin{equation*}
g_{1}=h_{2}\circ f_{2}|_{\overline{V}}\circ h_{1}^{-1}(\zeta ):\overline{%
\Delta ^{+}}\rightarrow \overline{\Delta }.
\end{equation*}%
Then $g_{1}$ is an orientation preserved open mapping that satisfies all the
assumptions of Lemma \ref{pre-move}, and then there exists an orientation
preserved open mapping $g_{2}:\overline{\Delta ^{+}}\rightarrow \overline{%
\Delta }$ such that the followings hold.

(k) $0$ is the unique ramification point of $g_{2}$ in $\overline{\Delta ^{+}%
}$ and $g_{2}(0)=0.$

(l) $g_{2}|_{\left( \partial \Delta \right) ^{+}}=f|_{\left( \partial \Delta
\right) ^{+}}$ and both $f$ and $g_{2}$ restricted to the interval $[-1,1]$
are homeomorphisms onto the interval $[-1,1].$

Let
\begin{equation*}
g_{3}=h_{2}^{-1}\circ g_{2}\circ h_{1}(z),z\in \overline{V}.
\end{equation*}%
Then $g_{3}$ restricted to a neighborhood of $\alpha $ in $\overline{V}$ is
a homeomorphism, $g_{3}$ maps $c$ homeomorphically onto $\gamma $ and $g_{3}$
restricted to $\alpha $ equals the restriction of $f$ to $\alpha $ and
\begin{equation*}
A(g_{3},V)=\left( d+1\right) A(T^{\ast })-A(T)=A(f,U)-A(f,D_{\delta
}^{+})=A(f,V).
\end{equation*}

Now,
\begin{equation*}
g(z)=\left\{
\begin{array}{l}
f(z),z\in \overline{\Delta }\backslash \overline{V}, \\
g_{3}(z),z\in \overline{V},%
\end{array}%
\right.
\end{equation*}%
is the desired mapping.
\end{proof}

\begin{lemma}
\label{move-bd}Let
\begin{equation*}
\alpha _{1}=\alpha _{1}(\theta )=e^{i\theta },\theta \in \lbrack \theta
_{1},\theta _{2}]
\end{equation*}%
with $\theta _{1}<\theta _{2}<\theta _{1}+2\pi ,$ be a section of $\partial
\Delta $ and let%
\begin{equation*}
p_{j}=\alpha _{1}(e^{i\theta _{j}}),j=1,2;
\end{equation*}%
let $f:\overline{\Delta }\rightarrow S$ be a normal mapping such that $p_{1}$
is a ramification point of $f$ with
\begin{equation*}
v_{f}(p_{1})=d.
\end{equation*}%
Assume that the section
\begin{equation*}
\beta =\beta (\theta )=f(e^{i\theta }),\theta \in \lbrack \theta _{1},\theta
_{2}],
\end{equation*}%
of $\Gamma _{f}=f(z),z\in \partial \Delta ,\ $is a Jordan path with
\begin{equation*}
\beta \cap E=\emptyset ,
\end{equation*}%
and $\beta $ has $d=v_{f}(p_{1})$ distinct lifts%
\begin{equation*}
\alpha _{j}=\alpha _{j}(\theta ),\theta \in \lbrack \theta _{1},\theta
_{2}],j=1,\dots ,d
\end{equation*}%
in $\overline{\Delta }$ by $f,$ such that

(a) For each $j=1,\dots ,d,$ $f(\alpha _{j}(\theta ))=f(\alpha _{1}(\theta
))=\beta (\theta )$ for $\theta \in \lbrack \theta _{1},\theta _{2}]\ $and $%
\alpha _{j}(\theta _{1})=p_{1}.$

(b) For each $j=2,\dots ,d,$ $\alpha _{j}(\theta )\in \Delta $\ for $\theta
\in (\theta _{1},\theta _{2}].$

(c) There is no ramification point of $f$ in $\cup _{j=1}^{d}\alpha _{j}$
other than $p_{1}.$

(d) $f$ restricted to a neighborhood of $p_{j}$ in $\partial \Delta $ is a
homeomorphism, for $j=1,2.$

Then, there exists a normal mapping $g:\overline{\Delta }\rightarrow S$ such
that%
\begin{equation*}
A(g,\Delta )=A(f,\Delta ),L(g,\partial \Delta )=L(f,\partial \Delta ),
\end{equation*}%
and the followings hold.

(I) The ramification point $p_{1}=e^{i\theta _{1}}$ of $f$ is no longer a
ramification point of $g,$ while the regular point $p_{2}=e^{i\theta _{2}}$
of $f$ is a ramification point of $g$ with $g(p_{2})=\beta (\theta
_{2})=f(p_{2})$ and
\begin{equation*}
b_{f}(p_{1})=b_{g}(p_{2}).
\end{equation*}

(II) The boundary curves $f(e^{i\theta })$ and $g(e^{i\theta })$ are the
same after a parameter transform$.$

(III) In $\overline{\Delta }\backslash \{p_{1},p_{2}\},$ $f$ and $g$ has the
same set of ramification points and $f$ and $g$ coincide in a neighborhood
of this ramification point set.
\end{lemma}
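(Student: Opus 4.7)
Since the hypotheses (a)--(d) of the present lemma are the boundary analogues of those of Lemma \ref{move}, the plan is to reduce this lemma directly to Lemma \ref{move} by enlarging $\overline{\Delta}$ across $\partial\Delta$ in a small neighborhood of $p_1$, so that $p_1$ becomes an interior ramification point of a new normal mapping $f^*$ defined on a slightly larger Jordan domain $\Delta^*$. One then applies Lemma \ref{move} on $\Delta^*$ to transport this interior ramification along $\beta$ to $p_2$ (which remains on $\partial\Delta \subset \partial\Delta^*$ by choosing the extension small and away from $p_2$), and restricts the resulting map back to $\overline{\Delta}$.

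Concretely, I would first choose a short arc $c \subset \partial\Delta$ containing $p_1$ in its interior, disjoint from $p_2$ and from all other ramification points of $f$; a Jordan patch $R \subset \mathbb{C} \setminus \overline{\Delta}$ attached to $\Delta$ along $c$; a polygonal Jordan region $T \subset S\setminus E$; and a continuous mapping $\rho : \overline{R} \to \overline{T}$ agreeing with $f$ on $c$, so that the combined map
\[
f^*(z)=\begin{cases} f(z), & z \in \overline{\Delta},\\ \rho(z), & z \in \overline{R}\setminus c, \end{cases}
\]
is, by Lemma \ref{patch}, a normal mapping on the enlarged Jordan domain $\Delta^* = \Delta \cup c^\circ \cup R$. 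Using condition (d), the patch $(R, T, \rho)$ can be chosen so that $p_1$ becomes an interior ramification point of $f^*$ of effective multiplicity equal to the number $d$ of distinct lifts of $\beta$ from $p_1$, and so that $\alpha_1, \dots, \alpha_d$ remain the complete list of lifts of $\beta$ starting at $p_1$ under $f^*$.

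Lemma \ref{move} then applies to $f^*$ on $\Delta^*$ with the path $\beta$, the interior ramification point $p_1$ of multiplicity $d$, and the lifts $\alpha_1, \dots, \alpha_d$: its hypotheses (a)--(d) are immediate from (a)--(d) of the present lemma plus the patch construction, noting that $p_2 \in \partial\Delta^*$ is a regular point of $f^*$ (the patch being disjoint from $p_2$). The resulting mapping $g^* : \overline{\Delta^*} \to S$ has $p_1$ regular, $p_2$ a boundary ramification of multiplicity $d$, the boundary curve and remaining ramification data identical to those of $f^*$, and $A(g^*, \Delta^*) = A(f^*, \Delta^*)$, $L(g^*, \partial\Delta^*) = L(f^*, \partial\Delta^*)$. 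Crucially, $g^*$ and $f^*$ coincide outside a tubular neighborhood of $\bigcup\alpha_j$, which by construction is disjoint from $R$ (the Lemma \ref{move} modification happens near $p_2$, while $R$ was placed near $p_1$); hence $g^* = f^*$ on $R$, so restricting $g^*$ to $\overline{\Delta}$ --- equivalently, detaching the unchanged patch --- produces the desired $g$, and (I)--(III) together with the area and length equalities descend directly. The main technical obstacle is the patch construction in the first step: verifying that the boundary ramification at $p_1$, which by hypothesis already has $d$ lifts of $\beta$, can be extended by a suitable choice of $(R, T, \rho)$ to an interior ramification of $f^*$ of effective multiplicity $d$, without introducing spurious ramification or disturbing other features.
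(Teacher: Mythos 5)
Your opening move --- patch a region $R$ onto $\overline{\Delta}$ along an arc $c\ni p_1$ so that the boundary ramification at $p_1$ becomes an interior one, then invoke Lemma \ref{move} --- is exactly the strategy of the paper's proof. But the step where you return from $\Delta^*$ to $\overline{\Delta}$ by ``detaching the unchanged patch'' contains a genuine gap, for two reasons. First, the modification performed by Lemma \ref{move} is not localized near the target point $p_2$: in its proof the map is replaced on the whole component $U$ of the preimage of a polygonal neighborhood $T^*$ that contains the \emph{entire} path $\beta$ together with the patched triangle, and $U$ therefore contains a full neighborhood of the starting ramification point --- precisely where $R$ is glued. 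So $g^*$ and $f^*$ do \emph{not} coincide on $R$, and restriction of $g^*$ to $\overline{\Delta}$ is not even well defined as ``removing the unchanged patch.'' Second, even if one could localize, deleting the point set $R$ does not restore the area: $A(f^*,\Delta^*)=A(f,\Delta)+A(T)$, and after Lemma \ref{move} the region of $\Delta^*$ carrying the extra mass $A(T)$ is no longer $R$ but the preimage of $T$ under the \emph{modified} covering. One must cut along a lift $\alpha$ of $f(c)$ by the modified map; only then does the piece bounded by $\alpha$ and the outer arc of $R$ map homeomorphically onto $T$ and can be discarded while keeping a Jordan domain with the original boundary curve and area.

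This forces the extra intermediate step that your proposal omits and that occupies most of the paper's argument: before any cutting, the interior ramification point must be displaced from $p_1$ to a nearby point $p_1'$ whose image $q_1'$ lies \emph{off} $f(c)$ and off $\overline{T}$ (the paper replaces the covering on $\overline{U}$ by another $d$-to-$1$ covering with unique ramification point $p_1'$, $f_2(p_1')=q_1'\in T^*\setminus\overline{T}$). If the ramification stays at $p_1$ with image $q_1\in f(c)$, the lift of $f(c)$ is not a simple arc avoiding the ramification point, and the cutting step fails. After cutting one also no longer has the original path $\beta$ available --- it must be replaced by a path $\beta_3$ from $q_1'$ to $q_2$ meeting $\beta$ only at $q_2$ and having $d$ lifts from $p_1'$ with the required interior/boundary behaviour; only then does Lemma \ref{move} apply on the trimmed domain $\Delta'$. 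Your ``main technical obstacle'' (realizing the boundary ramification as an interior one of multiplicity $d$) is real but is the easy part; the missing ideas are the displacement of the ramification point off $f^{-1}(f(c))$ and the cut-and-discard along the resulting lift, in that order, before Lemma \ref{move} is ever invoked.
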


\label{If folded, then convex it, then ok.}

\begin{proof}
There are two ways to prove this lemma. One way is to use Remark \ref{first}%
. Here we use Lemma \ref{move} to give another proof.

We will first construct a normal mapping $f_{2}$ that is defined on some
closed Jordan domain $\overline{\Delta ^{\prime }}\ni p_{2}$ such that the
length and the area concerned in the lemma unchanged, the boundary curve $%
\Gamma _{f_{2}}$ of $f_{2}$ is the same as that of $f,$ $f$ and $f_{2}$ have
the same set $B$ of ramification points in $\overline{\Delta }\backslash
\{p_{1},p_{2}\}$, $f_{2}$ and $f$ coincide in a neighborhood of this
ramification point set, and $f_{2}$ has only one more ramification point $%
p_{1}^{\prime }$ outside $B,$ while $p_{1}^{\prime }$ is in the interior of
the domain $\Delta ^{\prime },$ and there is a path $\beta _{3}$ whose
interior and initial point are located in $\Delta ^{\prime }$ and the
terminal point is $p_{2}\in \partial \Delta ^{\prime },$ and $f_{2}$ and $%
\beta _{3}$ satisfies all assumptions of Lemma \ref{move} if $\Delta
^{\prime }$ is regarded as a disk. Then by applying Lemma \ref{move}, we
obtain the desired conclusion.

Let $\delta <\frac{1}{2}$ be a positive number, $D_{\delta }$ the disk $%
|z-p_{1}|<\delta ,$ $c$ the section of $\partial \Delta $ that is contained
in $\overline{D_{\delta }}$ and regarded as a path from $s_{1}$ to $s_{2}$
anticlockwise, $e$ the section of $\partial D_{\delta }$ that is outside $%
\Delta $ $V$ the part of $D_{\delta }$ outside $\Delta $ and write
.
\begin{eqnarray*}
\gamma &=&f(c), \\
t_{1} &=&f(s_{1}),t_{2}=f(s_{2}), \\
q_{1} &=&f(p_{1}),q_{2}=f(p_{2}), \\
V^{\ast } &=&\Delta \cup D_{\delta }, \\
\gamma &=&f(c),\gamma ^{\prime }=f(e).
\end{eqnarray*}

By the assumption, we may assume that $\delta $ is sufficiently small such
that the followings hold.

%\begin{figure}[tbp]
%\centering
%\includegraphics[scale=0.85]{move-bd.eps} \label{move-bd1}
%\end{figure}

(e) $f$ can be extended to be a normal mapping $f_{1}$ defined on $\overline{%
\Delta ^{\ast }}.$

(f) $f_{1}$ restricted to $\overline{V}$ is a homeomorphism onto the closure
of a polygonal Jordan domain $T.$

(g) $q_{1}=f(p_{1})$ has a neighborhood $T^{\ast }$ such that $T^{\ast
}\supset T\cup \gamma \backslash \{t_{1},t_{2}\}$, $T^{\ast }$ is a
polygonal Jordan domain and for the component $U$ of $f_{1}^{-1}(\overline{%
T^{\ast }})$ with $p_{1}\in U,$ $f_{1}$ restricted to $\overline{U}$ is a $d$
to $1$ covering mapping onto $\overline{T^{\ast }},$ with the unique
ramification point at $p_{1}.$

(h) $\beta \cap \partial T^{\ast }=\{t_{2}\}$.

Then there is another normal mapping $f_{2}:\overline{\Delta ^{\ast }}%
\rightarrow S$ such that%
\begin{equation*}
f_{2}|_{\overline{\Delta ^{\ast }}\backslash U}=f_{1}|_{\overline{\Delta
^{\ast }}\backslash U},
\end{equation*}%
the restriction $f_{2}|_{\overline{U}}$ is also a $d$ to $1$ covering with a
unique ramification point $p_{1}^{\prime }$ in $U$ such that $p_{1}^{\prime
}\in \Delta $ and $q_{1}^{\prime }=f_{2}(p_{1}^{\prime })\in T^{\ast
}\backslash \overline{T}.$

Consider the lift of the path $\gamma =f(c)=f_{1}(c)$ by $f_{2}$. Since $%
f_{2}|_{\overline{U}}$ is a covering with the unique ramification point $%
p_{1}^{\prime }$ with $f_{2}(p_{1}^{\prime })=q_{1}^{\prime }\notin \gamma ,$
$\gamma =f(c)$ has a unique lift $\alpha $ in $\overline{U}$ by $f_{2}$ such
that the interior of $\alpha $ is in $U$ with endpoints $s_{1}$ and $s_{2}$
(note that $\overline{T}$ can be lifted by $f_{2}|_{\overline{U}},$ because $%
\overline{T}$ is simple connected and there is no branched point in $T).$
Then $\alpha $ divides $\Delta ^{\ast }$ in to two Jordan domains $\Delta
^{\prime }$ and $\Delta ^{\prime \prime }$ such that $f_{2}(\partial \Delta
^{\prime \prime })=\gamma \cup \gamma ^{\prime }=\partial T,$ and $f_{2}$
restricted to $\partial \Delta ^{\prime \prime }=\alpha \cup e$ is a
homeomorphism onto the boundary $\partial T.$ Then $f_{2}|_{\overline{\Delta
^{\prime \prime }}}$ is a homeomorphism from $\Delta ^{\prime \prime }$ onto
$T.$

Then the restriction of $f_{2}$ to $\overline{\Delta ^{\prime }}$ is a
normal mapping such that
\begin{equation*}
A(f_{2},\Delta ^{\prime })=A(f_{2},\Delta ^{\ast })-A(f_{2},\Delta ^{\prime
\prime })=A(f_{1},\Delta ^{\ast })-A(T)=A(f,\Delta ),
\end{equation*}%
and $\Gamma _{f}$ and $\Gamma _{f_{2}|_{\overline{\Delta ^{\prime }}}}$ are
the same, ignoring a parameter transformation.

By (h), there exists a unique $\theta _{1}^{\prime }\in (\theta _{1},\theta
_{2})$ such that $t_{2}=\beta (\theta _{1}^{\prime }),$ which is the unique
point in $\beta \cap \partial T^{\ast }.$ Let $\beta _{1}(\theta ),\theta
\in \lbrack \theta _{1},\theta _{1}^{\prime }],$ be a polygonal Jordan path
in $T^{\ast }\backslash T$ such that
\begin{equation*}
\beta _{1}(\theta _{1})=q_{1}^{\prime },\beta _{1}(\theta _{1}^{\prime
})=t_{2},
\end{equation*}%
and the interior of $\beta _{1}$ is contained in $T^{\ast }\backslash
\overline{T}.$ Then since $f_{2}|_{\overline{U}}$ is a covering with the
unique ramification point $p_{1}^{\prime }$ and $f_{2}(p_{1}^{\prime
})=q_{1}^{\prime },$ $\beta _{1}$ has $d$ lifts $\alpha _{j}^{\ast }=\alpha
_{j}^{\ast }(\theta ),\theta \in \lbrack \theta _{1},\theta _{1}^{\prime
}],j=1,\dots ,d,$ by $f_{2}|_{\overline{U}},$ such that

(i) $\cup _{j=2}^{d}\alpha _{j}^{\ast }\subset \Delta ^{\prime },$ the
interior of $\alpha _{1}^{\ast }$ is also contained in $\Delta ^{\ast },$
while $\alpha _{1}^{\ast }(\theta _{1}^{\prime })=s_{2}=\alpha _{1}(\theta
_{1}^{\prime })\in \partial \Delta ^{\prime }.$

(j) $\alpha _{j}^{\ast }(\theta _{1}^{\prime })=\alpha _{j}(\theta
_{1}^{\prime }),$and $\alpha _{j}^{\ast }(\theta _{1})=p_{1}^{\prime
},j=1,\dots ,d.$

Let
\begin{equation*}
\beta _{2}(\theta )=\left\{
\begin{array}{c}
\beta _{1}(\theta ),\theta \in \lbrack \theta _{1},\theta _{1}^{\prime }],
\\
\beta (\theta ),\theta \in \lbrack \theta _{1}^{\prime },\theta _{2}];%
\end{array}%
\right.
\end{equation*}%
and let%
\begin{equation*}
\alpha _{j}^{\prime \prime }=\left\{
\begin{array}{c}
\alpha _{j}^{\prime }(\theta ),\theta \in \lbrack \theta _{1},\theta
_{1}^{\prime }], \\
\alpha _{j}(\theta ),\theta \in \lbrack \theta _{1}^{\prime },\theta _{2}].%
\end{array}%
\right.
\end{equation*}%
Then, by the assumption of the lemma, we have%
\begin{equation*}
f_{2}(\alpha _{j}^{\prime \prime }(\theta ))=\beta _{2}(\theta ),\theta \in
\lbrack \theta _{1},\theta _{2}],j=1,\dots ,d
\end{equation*}%
\begin{equation*}
\alpha _{j}^{\prime \prime }\subset \Delta ^{\prime },j=2,\dots ,d,
\end{equation*}%
and $f_{2}$ has no ramification point in $\cup _{j=1}^{d}\alpha _{j}^{\prime
\prime }\backslash \{p_{1}^{\prime }\}.$ Since $\Gamma _{f_{2}|_{\overline{%
\Delta ^{\prime }}}}=f_{2}(z),z\in \partial \Delta ^{\prime }$, is
polygonal, there exists another polygonal Jordan path $\beta _{3}(\theta
),\theta \in \lbrack \theta _{1},\theta _{2}],$ such that $\beta _{3}\cap
\beta =\{q_{2}\}$, $\beta _{3}$ is so close to $\beta _{2}$ that $\beta _{3}$
has a number of $d$ lifts $\gamma _{j}=\gamma _{j}(\theta ),\theta \in
\lbrack \theta _{1},\theta _{2}]$ such that
\begin{equation*}
f_{2}(\gamma _{j}(\theta ))=\beta _{3}(\theta ),\theta \in \lbrack \theta
_{1},\theta _{2}],j=1,\dots ,d,
\end{equation*}%
\begin{equation*}
\gamma _{j}(\theta _{1})=p_{1}^{\prime },j=1,\dots ,d,
\end{equation*}%
\begin{equation*}
\cup _{j=2}^{d}\gamma _{j}\subset \Delta ^{\prime },\gamma \backslash
\{p_{2}\}\subset \Delta ^{\prime }
\end{equation*}%
and $f_{2}$ has no ramification point in $\cup _{j}^{d}\gamma _{j}\backslash
\{p_{1}^{\prime }\}.$ Then by Lemma \ref{move}, there exists a normal
mapping $f_{3}$ defined on $\Delta ^{\prime }$ such that the followings hold.

(k) $p_{2}=e^{i\theta _{2}}$ is a ramification point of $f_{3}$ with $%
f_{3}(p_{2})=q_{2}^{\prime }=\beta (\theta _{2}),\ $while $p_{1}$ is not a
ramification point of $f_{3}$, and
\begin{equation*}
b_{f_{2}}(p_{1})=b_{f_{3}}(p_{2}),
\end{equation*}

(l) The boundary curves $f_{3}(e^{i\theta })$ and $f_{2}(e^{i\theta })$ are
the same after a parameter transform$.$

(m) In $\overline{\Delta ^{\prime }}\backslash \{p_{1}^{\prime },p_{2}\},$ $%
f_{3}$ and $f_{2}$ has the same set of ramification points and $f$ and $g$
coincide in a neighborhood of this ramification point set.

(n) $A(f_{3},\Delta ^{\prime })=A(f_{2},\Delta ^{\prime }).$

Let $B$ be the set of all ramification points of $f_{3},$ then it is clear
that $B\backslash p_{2}\subset \Delta \cap \Delta ^{\prime }.$ Let $h$ be a
homeomorphism from $\overline{\Delta ^{\prime }}$ to $\overline{\Delta },$
such that $h$ restricted to a neighborhood of $B\backslash p_{2}$ is an
identity, and let $g=f_{2}\circ h^{-1}.$ Then $g$ is the desired mapping.
\end{proof}

\section{Cutting and Gluing Riemann surfaces of normal mappings\label%
{ss-9cut-glue}}

In this section, we will prove the following theorem, which is used in the
proof of Theorem \ref{key}.

\begin{theorem}
\label{1-br-2-map}Let $f:\overline{\Delta }\rightarrow S$ be a normal
mapping and assume that each natural edge of $f$ has spherical length
strictly less than $\pi $. If $f$ has a branched point in $S\backslash E$,
then there exist two normal mappings $f_{j}:\overline{\Delta }\rightarrow
S,j=1,2,$ such that the followings hold.

(i) Each natural edge of $f_{j}$ has spherical length strictly less than $%
\pi ,j=1,2.$

(ii) $\sum_{j=1}^{2}L(f_{j},\partial \Delta )\leq L(f,\partial \Delta
),\sum_{j=1}^{2}A(f_{j},\Delta )\geq A(f,\Delta )$.

(iii) $V_{NE}(f_{1})+V_{NE}(f_{2})\leq V_{NE}(f)+2,$ $%
V_{E}(f_{1})+V_{E}(f_{2})\geq V_{E}(f).$

(iv) $V(f_{1})+V(f_{2})\leq V(f)+2.$
\end{theorem}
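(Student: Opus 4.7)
The overall strategy is a \emph{push-to-boundary, cut, and glue} argument. Start with a ramification point $p_0 \in \overline{\Delta}$ with $f(p_0) \notin E$. If $p_0 \in \Delta$, choose a short polygonal Jordan path $\beta$ in $S \setminus E$ starting at $f(p_0)$ such that its $v_f(p_0)$ lifts from $p_0$ include exactly one, $\alpha_1$, ending at a regular boundary point $p_1$, while the other lifts stay in $\Delta$ and no ramification other than $p_0$ lies on any of them; such a path exists by exploring the Riemann surface of $f$ along any sheet reaching $\partial \Delta$, shrinking $\beta$ to avoid $E$ and other ramifications. Then Lemma \ref{move} produces a normal mapping $g : \overline{\Delta} \to S$ having the same area, length, and boundary curve (up to reparameterization) as $f$, but with the ramification moved to $p_1 \in \partial \Delta$ and $g(p_1) = \beta(1) \notin E$. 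If $p_0$ already lies on $\partial \Delta$ with $f(p_0) \notin E$, simply take $g = f$ and $p_1 = p_0$.

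Apply Corollary \ref{branched-lift} to $g$ at $p_1$: there is a section $\alpha_1^* \subset \partial \Delta$ starting at $p_1$ whose image $\beta^* = g(\alpha_1^*)$ is a simple path from $q^* := g(p_1)$, together with $b_g(p_1) \geq 1$ interior lifts $\alpha_2^*, \dots, \alpha_{b+1}^*$ starting at $p_1$. Shrink $\alpha_1^*$ so that $\beta^*$ sits strictly inside a single natural edge of $\Gamma_g$, $\beta^* \cap E = \emptyset$, no $g$-ramification lies on any $\alpha_j^* \setminus \{p_1\}$, and $L(\beta^*) < \pi - L(N)$ for every natural edge $N$ of $g$ adjacent to $p_1$ or to $\alpha_1^*(1)$. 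Pick any interior lift $\alpha_2^*$; its endpoint $p_2 \in \partial \Delta$ differs from $p_1$, does not lie in the interior of $\alpha_1^*$ (by injectivity of $g|_{\alpha_1^*}$), and differs from $\alpha_1^*(1)$ by the argument of Lemma \ref{cut-3}, which via Lemma \ref{glue} and $g(\Delta) \cap E = \emptyset$ goes through unchanged when one of the two lifts lies on $\partial \Delta$. Hence $\alpha_2^*$ is a Jordan arc with interior in $\Delta$ and distinct boundary endpoints, splitting $\Delta$ into two Jordan domains $D_1$ (with $\alpha_1^* \subset \partial D_1$) and $D_2$. On $\overline{D_1}$ the paths $\alpha_1^*, \alpha_2^*$ share initial point $p_1$, have common image $\beta^*$ under $g$, and distinct terminal points, so Lemma \ref{glue} converts $g|_{\overline{D_1}}$ into a normal mapping $f_1 : \overline{\Delta} \to S$; setting $f_2 := g|_{\overline{D_2}}$ (reparameterized onto $\overline{\Delta}$) gives the second mapping. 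Area additivity yields $A(f_1) + A(f_2) = A(g) = A(f)$, while the length formula in Lemma \ref{glue} cancels the two copies of $L(\beta^*)$ contributed by $\alpha_1^*$ and $\alpha_2^*$, giving $L(f_1) + L(f_2) = L(g) = L(f)$.

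The main obstacle is verifying the natural-edge length condition (i) and the vertex-count inequalities (iii)--(iv). The construction creates three new boundary junctions: $p_1$ and $p_2$ in $\partial D_2$, and the identified pair $\alpha_1^*(1) \sim p_2$ in the quotient $\partial D_1$. A local tangent analysis using the model $\zeta \mapsto \zeta^{2d+1}$ at the boundary ramification shows that at $p_1 \in \partial D_2$ the incoming tangent from the arc of $\partial \Delta$ and the outgoing tangent of $\alpha_2^*$ (equal to that of $\beta^*$ leaving $q^*$) agree with the two tangents of $\Gamma_g$ at $p_1$, which themselves agree because $p_1$ is not a natural vertex of $g$ (its image $q^* \notin E$ and $\Gamma_g$ is locally straight and simple at $p_1$ under this local model). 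Hence $p_1$ is not a natural vertex of $f_2$, and the natural edge of $f_2$ crossing $p_1$ equals the concatenation of the Arc3-side natural edge of $g$ at $p_1$ with $\beta^*$, whose total length is $<\pi$ by our shrinking. At the remaining two junctions the images equal $\beta^*(1) \notin E$ and the tangents generically mismatch, each producing at most one new non-$E$ natural vertex, so $V_{NE}(f_1) + V_{NE}(f_2) \leq V_{NE}(f) + 2$. Every $E$-vertex of $g = f$ on $\partial \Delta$ lies in exactly one of $\partial D_1 \setminus (\alpha_1^* \cup \alpha_2^*)$ and $\partial D_2$, so it survives in exactly one of $f_1, f_2$, yielding $V_E(f_1) + V_E(f_2) \geq V_E(f)$; combining with the $V_{NE}$ bound gives $V(f_1) + V(f_2) \leq V(f) + 2$.
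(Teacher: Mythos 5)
Your overall plan (push the ramification to the boundary with Lemma \ref{move}, then cut along a second lift and glue with Lemma \ref{glue}) is the same as the paper's, but the cutting step as you set it up does not work. After shrinking $\alpha_{1}^{*}$ so that $\beta^{*}=g(\alpha_{1}^{*})$ is a short arc, the interior lifts $\alpha_{2}^{*},\dots ,\alpha_{b+1}^{*}$ given by Corollary \ref{branched-lift} satisfy $\alpha_{j}^{*}\backslash \{p_{1}\}\subset \Delta$; in particular the terminal point of $\alpha_{2}^{*}$ lies in $\Delta$, not on $\partial \Delta$ as you assert. A short arc emanating from the branched point simply has no reason to reach the boundary, so $\alpha_{2}^{*}$ is not a cross-cut, it does not split $\Delta$ into two Jordan domains, and Lemma \ref{glue} cannot be applied. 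The only mechanism that forces a lift onto $\partial \Delta$ is that $g(\Delta )\cap E=\emptyset$, so one must continue the lifts along the boundary image until it reaches a point of $E$ (or until some interior lift hits $\partial \Delta$ earlier). This is exactly what the paper's Lemma \ref{move-to-3pts} does, iterating Lemma \ref{move-bd} past intermediate natural vertices and invoking Lemma \ref{cut-2} when an interior lift lands on the boundary. Once the cut point is determined this way it is no longer local to $p_{1}$, so your "shrink $\beta^{*}$" control of the new natural-edge lengths is unavailable: the two concatenated edges created at the new vertex $q'$ can have length up to nearly $2\pi$, and the paper must call on Theorem \ref{>pi} to repair any edge of length $\geq \pi$ (this is why the paper only gets $V(f_{1})+V(f_{2})\leq V(f)+2$ rather than an exact count).

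A second, smaller gap is in your first paragraph: you assert that $\beta$ can be chosen so that exactly one lift from $p_{0}$ reaches $\partial \Delta$ while all others stay in $\Delta$, "by exploring the Riemann surface along any sheet." Two lifts may reach $\partial \Delta$ at the same first parameter value, and no perturbation argument is offered to exclude this. The paper does not try to exclude it; its Lemma \ref{move-to-bd} treats this as a separate case and cuts immediately via Lemma \ref{cut-1} instead of moving. Your proof needs either that case analysis or a genuine argument that the simultaneous arrival can always be avoided.
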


The proof will be put to the end of this section, after we establish some
results for cutting and gluing the Riemann surface of $f.$

\begin{lemma}
\label{cut-1}Let $f:\overline{\Delta }\rightarrow S$ be a normal mapping,
let $p_{0}\in \Delta $ be a ramification point of $f$ and let $\beta =\beta
(t),t\in \lbrack 0,1],$ be a polygonal Jordan path in $S$ with distinct
endpoints. Assume that the followings hold.

(a) Each natural edge $f$ has spherical length strictly less that $\pi .$

(b) $\beta (0)=f(p_{0}),$ $\beta $ has two lifts $\alpha _{j}=\alpha
_{j}(t),t\in \lbrack 0,1],$ in $\overline{\Delta }$ by $f,$ with%
\begin{equation}
\alpha _{j}(0)=p_{0}\ \text{\textrm{and\ }}f(\alpha _{j}(t))=\beta (t),t\in
\lbrack 0,1],j=1,2.  \label{717-1}
\end{equation}

(c) $\alpha _{j}(t)\in \Delta $ for all $t\in \lbrack 0,1),$ $j=1,2,$ but $%
\{\alpha _{1}(1),\alpha _{2}(1)\}\subset \partial \Delta .$

(d) $f$ has no ramification point in the interior $\alpha _{j}(0,1)=\{\alpha
_{j}(t),t\in (0,1)\},j=1,2.$

Then there exist normal mappings $f_{1},f_{2}:\overline{\Delta }\rightarrow
S,$ such that the following conditions hold.

(i) Each natural edge $f_{j}$ has spherical length strictly less that $\pi ,$
$j=1,2$.

(ii) $L(f,\Delta )\geq L(f_{1},\Delta )+L(f_{2},\Delta )$\ and\textrm{\ }$%
A(f,\Delta )\leq A(f_{1},\Delta )+A(f_{2},\Delta ).$

(iii) $V_{NE}(f_{1})+V_{NE}(f_{2})\leq V_{NE}(f)+2$ and $%
V_{E}(f_{1})+V_{E}(f_{2})\geq V_{E}(f).$

(iv) $V(f_{1})+V(f_{2})\leq V(f)+2.$
\end{lemma}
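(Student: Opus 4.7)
The plan is to perform a cut-and-glue surgery on $\overline{\Delta}$ along the two lifts of $\beta$. I first observe that since the interiors of $\alpha_1,\alpha_2$ lie in $\Delta$ and contain no ramification points (by (c) and (d)), the uniqueness of lifting away from ramification points forces $\alpha_1\cap\alpha_2=\{p_0\}$. Lemma \ref{cut-3} then applies to give $\alpha_1(1)\neq \alpha_2(1)$, so $\alpha_1\cup\alpha_2$ is a Jordan path in $\overline{\Delta}$ from $\alpha_1(1)\in\partial\Delta$ through the interior point $p_0$ to $\alpha_2(1)\in\partial\Delta$. Cutting $\overline{\Delta}$ along this path produces two closed Jordan subdomains $\overline{D_1},\overline{D_2}$ whose interiors partition $\Delta\setminus(\alpha_1\cup\alpha_2)$, and by Lemma \ref{p3} each restriction $f|_{\overline{D_j}}$ is a normal mapping whose boundary contains both $\alpha_1$ and $\alpha_2$ with $f(\alpha_1(t))=f(\alpha_2(t))=\beta(t)$.

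On each $\overline{D_j}$ the two paths $\alpha_1,\alpha_2$ share the common initial point $p_0$, meet nowhere else, and have distinct terminal points, so the hypotheses of Lemma \ref{glue} are met. The pointwise identification $\alpha_1(t)\sim \alpha_2(t)$ realizes $f|_{\overline{D_j}}$ as a normal mapping $f_j:\overline{\Delta}\to S$ satisfying
\[
A(f_j,\Delta)=A(f,D_j),\qquad L(f_j,\partial\Delta)=L\bigl(f,\,(\partial D_j)\setminus(\alpha_1\cup\alpha_2)\bigr).
\]
Summing over $j=1,2$ yields $A(f_1,\Delta)+A(f_2,\Delta)=A(f,\Delta)$ and $L(f_1,\partial\Delta)+L(f_2,\partial\Delta)=L(f,\partial\Delta)$, which gives (ii) with equality. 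The new boundary curve $\Gamma_{f_j}$ is obtained from the arc of $\partial\Delta$ between $\alpha_1(1)$ and $\alpha_2(1)$ corresponding to $D_j$ by identifying its two endpoints to a single point $q_j$ with $f_j(q_j)=\beta(1)$.

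Each natural edge of $\Gamma_{f_j}$ is either an intact natural edge of $\Gamma_f$ contained in the relevant arc, or a proper subpath of one (when $\alpha_1(1)$ or $\alpha_2(1)$ lies interior to a natural edge of $\Gamma_f$); in either case hypothesis (a) gives spherical length strictly less than $\pi$, establishing (i). For the vertex counts, every natural vertex of $\Gamma_f$ in the interior of the arc survives as a natural vertex of the same type ($E$ or $NE$) in $\Gamma_{f_j}$, while the only potentially new natural vertex in $\Gamma_{f_j}$ is the identified point $q_j$, which is an $E$-vertex exactly when $\beta(1)\in E$. A short case analysis on whether each of $\alpha_1(1),\alpha_2(1)$ is itself a natural vertex of $\Gamma_f$ (combined with whether $\beta(1)\in E$) then yields $V(f_1)+V(f_2)\le V(f)+2$, $V_{NE}(f_1)+V_{NE}(f_2)\le V_{NE}(f)+2$, and $V_E(f_1)+V_E(f_2)\ge V_E(f)$, proving (iii) and (iv).

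The main technical obstacle is verifying that $f_j$ is a normal mapping in the sense of Definition \ref{722-1} at the singular points introduced by the surgery. Near $p_0$, $f$ is locally modelled by $\zeta\mapsto\zeta^d$ with $d=v_f(p_0)\ge 2$, and the two cuts $\alpha_1,\alpha_2$ correspond to two of the $d$ preimage rays; inside $D_j$ there are $k_j\ge 1$ sectors with $k_1+k_2=d$, and after the identification inside $D_j$ the point $p_0$ becomes interior with local model $\zeta\mapsto \zeta^{k_j}$, which is the form required by Definition \ref{722-1}(b). The boundary condition at the identified point $q_j$, where a possibly branched boundary piece of $f$ at $\alpha_1(1)$ is glued to one at $\alpha_2(1)$, is checked by a direct local-coordinate argument using Lemma \ref{cov-1}; all other local conditions of Definition \ref{722-1} on $f_j$ are inherited unchanged from $f$.
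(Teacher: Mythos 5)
Your surgery is the same as the paper's (cut $\overline{\Delta}$ along $\alpha_2^{-}+\alpha_1$, then glue each piece into a disk via Lemma \ref{glue}), and the area/length bookkeeping and the local-model analysis at $p_0$ are fine. But there is a genuine gap in your verification of conclusion (i). You assert that every natural edge of $\Gamma_{f_j}$ is either an intact natural edge of $\Gamma_f$ or a proper subpath of one. That is false at the identified point $q_j$ with $f_j(q_j)=q'=\beta(1)$: if the glued boundary curve happens to be simple and straight through $q'$ and $q'\notin E$, then $q'$ is \emph{not} a natural vertex of $\Gamma_{f_j}$, and the natural edge of $\Gamma_{f_j}$ containing $q'$ is the concatenation of the terminal part of one natural edge of $\Gamma_f$ (say a piece of $\Gamma_{j_0}$, of length $<\pi$) with the initial part of a \emph{different} natural edge (a piece of $\Gamma_1$, also of length $<\pi$). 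This merged edge is not a subpath of any single natural edge of $\Gamma_f$, and its length is only bounded by $L(\Gamma_1)+L(\Gamma_{j_0})<2\pi$, so it may well be $\geq\pi$, violating (i).

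This is exactly the case the paper must work to exclude: when $\overline{q_{j_0}q'}+\overline{q'q_2}$ is a natural edge of $\Gamma_{g_1}$ of length $\geq\pi$ (the identity $L(\Gamma_{11}+\Gamma_{12})+L(\Gamma_{21}+\Gamma_{22})=L(\Gamma_1)+L(\Gamma_{j_0})<2\pi$ shows at most one of the two glued mappings is affected), the paper applies Theorem \ref{>pi} to that mapping to restore the condition that all natural edges have length $<\pi$, at the controlled cost in $V$, $V_E$, $V_{NE}$ recorded there. This extra step is substantive — Theorem \ref{>pi} occupies an entire section — and it is also the reason conclusions (ii)–(iv) are inequalities rather than the equalities your construction alone would give. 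Without it, your proof of (i) does not go through.
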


\begin{proof}
By Lemma \ref{cut-3}, we have $\alpha _{1}(1)\neq \alpha _{2}(1),$ and by
(b) and (d), $\alpha _{1}$ and $\alpha _{2}$ are Jordan paths that intersect
only at $p_{0}.$ Then by the assumption, $\alpha =\alpha _{2}^{-}+\alpha
_{1} $ compose a Jordan path in $\overline{\Delta }$ from $\alpha _{2}(1)$
to $\alpha _{1}(1),$ such that the interior of $\alpha $ is contained in $%
\Delta .$ Thus, $\alpha $ divides the disk $\overline{\Delta }$ into two
parts, of which one is on the left hand side of $\alpha $ and is denoted by $%
\Delta _{1},$ and the other, denoted by $\Delta _{2},$ is on the right hand
side of $\alpha $.

Now, we consider the restrictions%
\begin{equation*}
f|_{\overline{\Delta _{j}}}:\overline{\Delta }\rightarrow S,j=1,2.
\end{equation*}%
By Lemma \ref{glue}, these two normal mapping can be regarded as two normal
mappings $g_{1}$ and $g_{2}$ defined on $\overline{\Delta }$ as follows.

Let $\gamma _{j}=\partial \Delta \cap \partial \Delta _{j},$ which is the
section of the boundary of $\Delta _{j}$ that is on the circle $\partial
\Delta ,$ and let $h_{j}:\overline{\Delta _{j}}\rightarrow \overline{\Delta }
$ be a continuous mapping such that $h_{j}|_{\Delta }$ is a homeomorphism
onto $\Delta _{\lbrack 0,1]}=\Delta \backslash \lbrack 0,1],$ in which $%
[0,1] $ is the interval of the real numbers, $h_{j}\ $maps the interior of $%
\gamma _{j}$ homeomorphically onto $\left( \partial \Delta \right)
\backslash \{1\}, $ and%
\begin{equation*}
h_{j}(\alpha _{j}(t))=t,t\in \lbrack 0,1].
\end{equation*}

Then define $g_{j}=f_{j}\circ h_{j}^{-1},$ and this is the glued mappings
from $\overline{\Delta }\ $into $S$. Then the followings hold.

(e) The boundary curves of $g_{1}$ and $g_{2}$ compose the boundary curve of
$f,$ i.e. the curve $\Gamma _{g_{1}}=g(z),z\in \partial \Delta ,$ and the
section of the curve $\Gamma _{f}=f(z),$ in which $z$ runs on $\partial
\Delta $ from $\alpha _{1}(1)$ to $\alpha _{2}(1)$ are the same and the
curve $\Gamma _{g_{2}}=g_{2}(z),z\in \partial \Delta ,$ and the the section
of the curve $\Gamma _{f}=f(z),$ in which $z$ runs on $\partial \Delta $
from $\alpha _{2}(1)$ to $\alpha _{1}(1)\ $are the same; and%
\begin{equation}
A(f,\Delta )=A(g_{1},\Delta )+A(g_{2},\Delta )\ \mathrm{and\ }L(f,\Delta
)=L(g_{1},\Delta )+L(g_{2},\Delta ).  \label{1062}
\end{equation}

Let $p_{j}=e^{i\theta _{j}},j=1,\dots ,n,$ be an enumeration of all natural
vertices of $f$ that are in order anticlockwise, let $c_{j}=e^{i\theta
},\theta \in \lbrack \theta _{j},\theta _{j+1}]$ ($\theta _{n+1}=\theta
_{1}+2\pi $) be the section of $\partial \Delta $ from $p_{j}$ to $p_{j+1}$
and write $q_{j}=f(p_{j}).$ Then%
\begin{eqnarray}
\Gamma _{f} &=&\Gamma _{1}+\Gamma _{2}+\Gamma _{3}+\dots +\Gamma _{n}
\label{12-3} \\
&=&\overline{q_{1}q_{2}}+\overline{q_{2}q_{3}}+\dots +\overline{q_{n-1}q_{n}}%
+\overline{q_{n}q_{1}}  \notag
\end{eqnarray}%
is a natural partition$\ $of the boundary curve $\Gamma _{f}=f(e^{i\theta
}),\theta \in \lbrack 0,2\pi ],$ with (by (a))%
\begin{equation*}
L(\Gamma _{j})<\pi ,j=1,2,\dots ,n,
\end{equation*}%
and
\begin{equation*}
n=V(f).
\end{equation*}

Without loss of generality, assume $\alpha _{1}(1)\in c_{1}$ and $\alpha
_{2}(1)\in c_{j_{0}}$ for some $j_{0}\leq n.$

Let
\begin{equation*}
q^{\prime }=f(\alpha _{1}(1))=f(\alpha _{2}(1)).
\end{equation*}%
Then, it is clear that the boundary curves $\Gamma _{g_{j}}(z),z\in \partial
\Delta ,j=1,2,$ have the permitted partitions%
\begin{eqnarray}
\Gamma _{g_{1}} &=&\overline{q_{j_{0}}q^{\prime }}+\overline{q^{\prime }q_{2}%
}+\overline{q_{2}q_{3}}+\dots +\overline{q_{j_{0}-1}q_{j_{0}}}  \label{12-1}
\\
&=&\Gamma _{11}+\Gamma _{12}+\Gamma _{2}+\dots +\Gamma _{j_{0}-1},  \notag
\end{eqnarray}%
and%
\begin{eqnarray}
\Gamma _{g_{2}} &=&\overline{q_{1}q^{\prime }}+\overline{q^{\prime
}q_{j_{0}+1}}+\overline{q_{j_{0}+1}q_{j_{0}+2}}+\dots +\overline{q_{n-1}q_{n}%
}+\overline{q_{n}q_{1}}  \label{12-2} \\
&=&\Gamma _{21}+\Gamma _{22}+\Gamma _{j_{0}+1}+\dots +\Gamma _{n},  \notag
\end{eqnarray}%
respectively, such that%
\begin{equation*}
L(\Gamma _{ij})<\pi ,i,j=1,2,
\end{equation*}%
where%
\begin{equation}
\Gamma _{11}=\overline{q_{j_{0}}q^{\prime }},\Gamma _{12}=\overline{%
q^{\prime }q_{2}},\Gamma _{21}=\overline{q_{1}q^{\prime }},\Gamma _{22}=%
\overline{q^{\prime }q_{j_{0}+1}}.  \label{41}
\end{equation}

If $\alpha _{1}(1)$ (or $\alpha _{2}(1))$ is one of the endpoint of $c_{1}$
(or $c_{j_{0}}),$ then the discussion is similar and easier than the
followings, since in this case some of edges in (\ref{41}) reduce to points,
and the discussion is left to the reader. So, we assume $\alpha _{1}(1)$ is
in the interior of $c_{1}$ and $\alpha _{2}(1)$ is in the interior of $%
c_{2}. $ Then
\begin{equation*}
q^{\prime }\notin E.
\end{equation*}%
and it is clear that%
\begin{equation}
\left\{
\begin{array}{l}
V_{NE}(g_{1})+V_{NE}(g_{2})\leq V_{NE}(f)+2, \\
V_{E}(g_{1})+V_{E}(g_{2})=V_{E}(f), \\
V(f_{1})+V(f_{2})\leq n+2,%
\end{array}%
\right.  \label{1061}
\end{equation}%
and%
\begin{eqnarray}
L(\Gamma _{1})+L(\Gamma _{j_{0}}) &=&L(\Gamma _{11}+\Gamma _{12})+L(\Gamma
_{21}+\Gamma _{22})  \label{a7} \\
&=&L(\Gamma _{11}+\Gamma _{22})+L(\Gamma _{21}+\Gamma _{12}).  \notag
\end{eqnarray}

Now, there are two cases need to discuss.

\noindent \textbf{Case 1. }$\Gamma _{1}^{\prime }=\Gamma _{11}+\Gamma _{12}=%
\overline{q_{j_{0}}q^{\prime }}+\overline{q^{\prime }q_{2}}$ is not a
natural edge of $\Gamma _{g_{1}}.$

\noindent \textbf{Case 2. }$\Gamma _{1}^{\prime }=\Gamma _{11}+\Gamma _{12}=%
\overline{q_{j_{0}}q^{\prime }}+\overline{q^{\prime }q_{2}}$ is a natural
edge of $\Gamma _{g_{1}}.$

In Case $1,$ $\Gamma _{2}^{\prime }=\Gamma _{21}+\Gamma _{22}=\overline{%
q_{1}q^{\prime }}+\overline{q^{\prime }q_{j_{0}+1}}$ not a natural as well.
Then the partitions (\ref{12-1}) and (\ref{12-2}) are natural partitions,
since (\ref{12-3}) is a natural partition, and then $g_{1},g_{2}$ are the
two desired mappings by (\ref{1062}) and (\ref{1061}).

In Case 2, $\Gamma _{2}^{\prime }=\Gamma _{21}+\Gamma _{22}$ is a natural
edge as well. Then%
\begin{equation*}
\Gamma _{g_{1}}=\Gamma _{1}^{\prime }+\Gamma _{2}+\dots +\Gamma _{j_{0}-1},
\end{equation*}%
and
\begin{equation}
\Gamma _{g_{2}}=\Gamma _{2}^{\prime }+\Gamma _{j_{0}+1}+\dots +\Gamma _{n},
\notag
\end{equation}%
are natural partition of $\Gamma _{g_{1}}$ and $\Gamma _{g_{2}},$
respectively, and then (\ref{1061}) changes into%
\begin{equation}
\left\{
\begin{array}{l}
V_{NE}(g_{1})+V_{NE}(g_{2})\leq V_{NE}(f), \\
V_{E}(g_{1})+V_{E}(g_{2})=V_{E}(f), \\
V(f_{1})+V(f_{2})=n.%
\end{array}%
\right.  \label{1063}
\end{equation}

If $L(\Gamma _{1}^{\prime })<\pi $ and $L(\Gamma _{2}^{\prime })<\pi ,$ then
$g_{1}$ and $g_{2}$ are the desired mappings with (\ref{1063}).

If $L(\Gamma _{1}^{\prime })\geq \pi ,$ then by (\ref{a7}) and the
assumption of the lemma, $L(\Gamma _{2}^{\prime })<\pi $. This is because
that by the assumption of the lemma, $L(\Gamma _{1}^{\prime })+L(\Gamma
_{2}^{\prime })=L(\Gamma _{1})+L(\Gamma _{j_{0}})<2\pi .$ Then applying
Theorem \ref{>pi}, there exists a normal mapping $f_{1}:\overline{\Delta }%
\rightarrow S$ such that
\begin{equation*}
L(f_{1},\Delta )\leq L(g_{1},\partial \Delta ),\mathrm{\ }A(f_{1},\Delta
)=A(g_{1},\Delta ),
\end{equation*}%
and
\begin{equation*}
V_{NE}(f_{1})\leq V_{NE}(g_{1}),\mathrm{\ }V_{E}(f_{1})\geq
V_{E}(g_{1})+1,V(f_{1})\leq V(g_{1})+2,
\end{equation*}%
and each natural edges of $\Gamma _{f_{1}}$ has spherical length strictly
less than $\pi .$ Then we have by (\ref{1062}) and (\ref{1063}) that%
\begin{equation*}
L(f_{1},\partial \Delta )+L(g_{2},\partial \Delta )\leq L(f,\partial \Delta
),A(f_{1}.\Delta )+A(g_{2},\Delta )=A(f,\Delta )
\end{equation*}%
\begin{equation*}
V_{NE}(f_{1})+V_{NE}(g_{2})\leq V_{NE}(g_{1})+V_{NE}(g_{2})\leq V_{NE}(f),
\end{equation*}%
\begin{equation*}
V_{E}(f_{1})+V_{E}(g_{2})\geq V_{E}(g_{1})+1+V_{E}(g_{2})=V_{E}(f)+1,
\end{equation*}%
and%
\begin{equation*}
V(f_{1})+V(g_{2})\leq V(g_{1})+2+V(g_{2})\leq V(f)+2.
\end{equation*}%
Thus, $f_{1}$ and $g_{2}$ is the desired mappings. This completes the proof.
\end{proof}

\begin{corollary}
\label{ccut-1}Assume that $f,\beta ,\alpha _{1}$ and $\alpha _{2}$ satisfy
all the assumptions in Lemma \ref{cut-1} and, in addition, $\beta (1)\in E.$
Then there exist normal mappings $g_{1},g_{2}:\overline{\Delta }\rightarrow
S,$ such that the followings hold.

(i) Each natural edge of $\Gamma g_{j}$ is a natural edge of $\Gamma
_{f},j=1,2,$ and each natural edge of $\Gamma _{f}$ is a natural edge of
either $g_{1}$ or $g_{2}$.

(ii) $L(g_{1},\Delta )+L(g_{2},\Delta )=L(f,\Delta )$\ and\textrm{\ }$%
A(g_{1},\Delta )+A(g_{2},\Delta )=A(f,\Delta ).$

(iii) $%
V_{NE}(g_{1})+V_{NE}(g_{2})=V_{NE}(f),V_{E}(g_{1})+V_{E}(g_{2})=V_{E}(f),$ $%
V(g_{1})+V(g_{2})=V(f).$

(iv)
\begin{equation}
\sum_{p\in \overline{\Delta }\backslash
g_{1}^{-1}(E)}b_{g_{1}}(p)+\sum_{p\in \overline{\Delta }\backslash
g_{2}^{-1}(E)}b_{g_{2}}(p)\leq \sum_{p\in \overline{\Delta }\backslash
f^{-1}(E)}b_{f}(p).  \label{1211-1}
\end{equation}
\end{corollary}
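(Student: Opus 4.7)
The plan is to run exactly the cutting-and-gluing construction from the proof of Lemma \ref{cut-1} and show that the extra hypothesis $\beta(1)\in E$ sharpens all of its estimates into equalities. Concretely, cut $\overline{\Delta}$ along $\alpha=\alpha_2^-+\alpha_1$ into the two Jordan domains $\Delta_1,\Delta_2$ (with $\alpha_1(1)\neq\alpha_2(1)$ provided by Lemma \ref{cut-3}), and, exactly as there, define $g_j=f\circ h_j^{-1}$ by means of the gluings $h_j$ that identify $\alpha_1(t)\sim\alpha_2(t)$; Lemmas \ref{glue} and \ref{patch} make $g_j$ a normal mapping and yield immediately the length and area equalities of (ii).

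The key simplification is that, because $q'=\beta(1)\in E$, the two cut endpoints $\alpha_1(1),\alpha_2(1)$ lie in $f^{-1}(E)\cap\partial\Delta$ and hence are \emph{already} natural vertices of $\Gamma_f$ by Definition \ref{v-e}(1)(a); label them $p_{j_1},p_{j_2}$. Consequently the cut splits $\partial\Delta$ at pre-existing natural vertices, so each $g_j$ inherits a boundary arc that is a concatenation of whole natural edges $\Gamma_k$ of $\Gamma_f$---none is broken---and after the identification $\alpha_1(t)\sim\alpha_2(t)$ the endpoints $p_{j_1},p_{j_2}$ collapse to a single boundary point in each $g_j$ whose image is the point $q'\in E$ and is therefore again a natural vertex (Definition \ref{v-e}(1)(a)). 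This gives (i). For (iii) I then count: every natural vertex of $\Gamma_f$ distinct from $p_{j_1},p_{j_2}$ passes to exactly one $g_j$ keeping its $E$/non-$E$ status, while the pair $p_{j_1},p_{j_2}$ contributes one $E$-vertex to $g_1$ and one to $g_2$, so the three totals $V,V_E,V_{NE}$ are preserved exactly.

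For (iv) I would carry out a local computation at $p_0$. In the local normal form $f(\zeta)=\zeta^d$ with $d=v_f(p_0)\ge 2$, the lifts $\alpha_1,\alpha_2$ are two of the $d$ rays at angles $2\pi/d$ apart, and $\Delta_j$ corresponds locally to a sector of opening $2\pi k_j/d$ with $k_1+k_2=d$, $k_j\ge 1$. The gluing $\alpha_1(t)\sim\alpha_2(t)$ folds each sector into a cone on which $\zeta^d$ descends to a $k_j$-fold branched cover, so $p_0$ is an interior point of $\overline{\Delta}$ in each $g_j$ with $b_{g_j}(p_0)=k_j-1$. Every other ramification point of $f$ lies away from $\alpha$ by hypothesis (d) of Lemma \ref{cut-1} and hence belongs to one $\Delta_j$ on which $h_j$ is a homeomorphism, preserving branch numbers. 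Since $p_0\in\Delta$ with $f(p_0)\notin E$, summing gives
\[
\sum_{p\in\overline{\Delta}\setminus g_1^{-1}(E)}b_{g_1}(p)+\sum_{p\in\overline{\Delta}\setminus g_2^{-1}(E)}b_{g_2}(p)=\sum_{p\in\overline{\Delta}\setminus f^{-1}(E)}b_f(p)-1,
\]
which is (iv) (even strictly). The only real obstacle is this last local branching computation at $p_0$; the remainder is bookkeeping driven by the single observation that $\beta(1)\in E$ forces the cut endpoints to be pre-existing natural vertices, so none of the auxiliary deformations of Theorem \ref{>pi} invoked in the proof of Lemma \ref{cut-1} are needed here and all of its inequalities tighten to equalities.
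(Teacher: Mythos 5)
Your proposal is correct and follows essentially the same route as the paper: cut along $\alpha_2^{-}+\alpha_1$ and glue as in Lemma \ref{cut-1}, observe that $\beta(1)\in E$ forces the cut endpoints to be pre-existing natural vertices (so no edge is broken and no repair via Theorem \ref{>pi} is needed), and obtain (iv) from the local sector computation $b_{g_1}(0)+b_{g_2}(0)=b_f(p_0)-1$ at $p_0$, which is exactly the paper's identity (\ref{23}). One small caveat: hypothesis (d) of Lemma \ref{cut-1} does not rule out ramification of $f$ at the endpoints $\alpha_1(1),\alpha_2(1)$ themselves, so your assertion that every other ramification point "lies away from $\alpha$" is not literally justified; the correct (and easy) fix is that these endpoints and the glued point $1$ have image $\beta(1)\in E$ and are therefore excluded from both sides of (\ref{1211-1}) — the paper instead controls them explicitly via the rotation-number estimate (\ref{22}), which is not actually needed for (iv) as stated.
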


\begin{proof}
By repeating the above proof from the beginning to (\ref{41}) and
considering that, in current situation, $q^{\prime }=f(\alpha (1))=f(\alpha
_{2}(1))$ must be a natural vertex of $f,g_{1}$ and $g_{2},$ we can conclude
that all the conclusion follows, except the inequality (iv).

By the assumption and the definition of $g_{j}$s, it is clear that%
\begin{equation}
b_{g_{1}}(0)+b_{g_{2}}(0)=b_{f}(p_{0})-1.  \label{23}
\end{equation}

Next, we show that
\begin{equation}
b_{g_{1}}(1)+b_{g_{2}}(1)\leq b_{f}(\alpha _{1}(1))+b_{f}(\alpha _{2}(1))+1.
\label{22}
\end{equation}%
Let $l_{j}$ be the circular arc of the circle $C_{j}:|z-\alpha
_{j}(1)|=\varepsilon $ inside $\overline{\Delta }$ and $\gamma _{j}$ be the
section of $\partial \Delta $ inside $C_{j},j=1,2;$ let $l$ be the circular
arc of the circle $C:|z-1|$ inside $\overline{\Delta }$ and $\gamma ^{\prime
}$ be the section of $\partial \Delta $ inside $C,j=1,2;$ where $\varepsilon
$ is a sufficiently small positive number. Let $s_{1},s_{2},s_{1}^{\prime
},s_{2}^{\prime }$ be smooth and orientation preserved diffeomorphisms from
neighborhoods of $f(a_{1}(1))$, $f(\alpha _{2}(1)),g_{1}(1)$ and $g_{2}(1)$
onto the disk $\Delta $ with
\begin{equation*}
s_{j}(f(\alpha _{j}(1))=0,s_{j}^{\prime }(g_{j}(1))=0,
\end{equation*}%
such that they keep the angles at $f(a_{1}(1))$, $f(\alpha _{2}(1)),g_{1}(1)$
and $g_{2}(1)$ and maps $f(\gamma _{1}),f(\gamma _{2}),g_{1}(\gamma
_{1}^{\prime })$ and $g_{2}(\gamma _{2}^{\prime })$ onto angles (broken
lines) with vertices $f(a_{1}(1))$, $f(\alpha _{2}(1)),g_{1}(1)$ and $%
g_{2}(1),$ respectively, in $\mathbb{C}$. Then we can define the rotation
numbers
\begin{equation*}
\tau _{j}=\frac{1}{2\pi }\int_{l_{j}}\frac{d\left( s_{j}\circ f(z)\right) }{%
s_{j}\circ f(z)}\ \mathrm{and\ }\tau _{j}^{\prime }=\frac{1}{2\pi }\int_{l}%
\frac{d\left( s_{j}^{\prime }\circ g_{j}(z)\right) }{s_{j}^{\prime }\circ
g_{j}(z)},
\end{equation*}%
which is invariant for sufficiently small $\varepsilon $, independent of $%
s_{j}$s and $s_{j}^{\prime }$s by the assumption and all are positive
because $s_{j}$s and $s_{j}^{\prime }$s are orientation preserved and $g_{j}$%
s and $f$ are normal. It is clear that%
\begin{equation}
\tau _{1}^{\prime }+\tau _{2}^{\prime }=\tau _{1}+\tau _{2}.  \label{21}
\end{equation}

Then there exists $k_{j}$ and $k_{j}^{\prime },j=1,2,$ such that%
\begin{equation*}
k_{j}^{\prime }<\tau _{j}^{\prime }\leq k_{j}^{\prime }+1,k_{j}<\tau
_{j}\leq k_{j}+1,j=1,2,
\end{equation*}%
and then we have%
\begin{eqnarray*}
b_{g_{1}}(1)+b_{g_{2}}(1) &=&k_{1}^{\prime }+k_{2}^{\prime } \\
&<&\tau _{1}^{\prime }+\tau _{2}^{\prime }=\tau _{1}+\tau _{2}\leq
k_{1}+k_{2}+2 \\
&=&b_{f}(\alpha (1))+b_{f}(\alpha _{2}(1))+2,
\end{eqnarray*}%
but branched numbers are integers, we have (\ref{22}).

It is clear that, by the definition of $g_{j}$s
\begin{equation*}
\sum_{p\in \overline{\Delta }\backslash \{\alpha _{1}(1),\alpha
_{2}(1)\}}b_{f}(p)=\sum_{j=1}^{2}\sum_{p\in \overline{\Delta }\backslash
\{1\}}b_{g_{j}}(p),
\end{equation*}%
which, together (\ref{23}) and (\ref{22}), implies (\ref{1211-1}).
\end{proof}

\begin{lemma}
\label{cut-2}Let $f:\overline{\Delta }\rightarrow S$ be a normal mapping,
let $\alpha _{1}=\alpha _{1}(\theta )=e^{i\theta },\theta \in \lbrack \theta
_{1},\theta _{2}],$ be a section of $\partial \Delta $ and denote $\beta
=\beta (\theta )=f(e^{i\theta }),\theta \in \lbrack \theta _{1},\theta
_{2}]. $ Assume that the followings hold:

(a) Each natural edge $f_{j}$ has spherical length strictly less that $\pi .$

(b) $\beta $ is a Jordan path with distinct endpoints and
\begin{equation*}
\beta (\theta )\notin E\ \mathrm{for\ each\ }t\in \lbrack \theta _{1},\theta
_{2}).
\end{equation*}

(c) $\beta $ has a lift $\alpha _{2}=\alpha _{2}(\theta ),\theta \in \lbrack
\theta _{1},\theta _{2}],$ in $\overline{\Delta },$ with $\alpha _{2}(\theta
_{1})=\alpha _{1}(\theta _{1})=e^{i\theta _{1}},$ and%
\begin{equation*}
f(e^{i\theta })=f(\alpha _{1}(\theta ))=\beta (\theta ),\theta \in \lbrack
\theta _{1},\theta _{2}].
\end{equation*}

(d) $f$ has no ramification point in the interior of $\alpha _{1}$ and $%
\alpha _{2}.$

(e) The interior of $\alpha _{2},$ which means the open curve $\alpha
_{2}(\theta ),\theta \in (\theta _{1},\theta _{2}),$ is contained in $\Delta
,\ $but $\{\alpha _{2}(\theta _{2})\}\subset \partial \Delta .$

Then there exist two normal mappings $f_{1},f_{2}:\overline{\Delta }%
\rightarrow S,$ such that the followings hold.

(i) Each natural edge $f_{j}$ has spherical length strictly less that $\pi ,$
$j=1,2$.

(ii) $A(f,\Delta )\leq A(f_{1},\Delta )+A(f_{2},\Delta )\ \mathrm{and\ }%
L(f,\Delta )\geq L(f_{1},\Delta )+L(f_{2},\Delta ).$

(iii) $V_{NE}(f_{1})+V_{NE}(f_{2})\leq V_{NE}(f)+2,$ and $%
V_{E}(f_{1})+V_{E}(f_{2})\geq V_{E}(f).$

(iv) $V(f_{1})+V(f_{2})\leq V(f)+2.$
\end{lemma}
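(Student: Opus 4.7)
The plan is to mirror the structure of Lemma \ref{cut-1}, adapting for the asymmetry that here $\alpha_1$ already lies entirely on $\partial\Delta$ while only $\alpha_2$ has its interior in $\Delta$. First I would establish the analogue of Lemma \ref{cut-3} in this setting, namely that $\alpha_2(\theta_2)\neq p_2=\alpha_1(\theta_2)$. Indeed, if they coincided, then $\alpha_1-\alpha_2$ would be a Jordan curve enclosing a Jordan subdomain $D\subset\overline{\Delta}$, and the second case of Lemma \ref{glue} would force $f|_{\overline{D}}$ to extend to a continuous open map of $S$ onto itself; but $f(\overline{D})\subset f(\Delta)\cup\beta$ meets $E=\{0,1,\infty\}$ at most at the single point $\beta(\theta_2)$ by hypothesis (b), contradicting surjectivity. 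Moreover, the possibility $\alpha_2(\theta_2)\in\alpha_1\setminus\{p_1,p_2\}$ is excluded because $\beta$ is a Jordan path and $f\circ\alpha_1=\beta$, so $f(\alpha_2(\theta_2))=\beta(\theta_2)=\beta(\theta^*)$ would force $\theta^*=\theta_2$.

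Next I would cut along $\alpha_2$, splitting $\Delta$ into two Jordan subdomains $\Delta_1,\Delta_2$ where $\alpha_1$ is a sub-arc of $\partial\Delta_1$. Applying Lemma \ref{glue} to the restriction $f|_{\overline{\Delta_1}}$, identifying $\alpha_1(\theta)\sim\alpha_2(\theta)$, produces (by the first case, since endpoints differ by Step 1) a normal mapping $g_1:\overline{\Delta}\to S$ whose boundary is precisely the sub-arc $\alpha_1^{***}$ of $\partial\Delta$ from $p_2$ to $\alpha_2(\theta_2)$ traversed counterclockwise. I set $g_2=f|_{\overline{\Delta_2}}$, viewed as a normal mapping on $\overline{\Delta}$ via a homeomorphism; its boundary traces $\beta$ along $\alpha_2$ concatenated with the remaining arc $\alpha_1^{**}$ of $\partial\Delta$. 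The length-area ledger then gives equalities: $A(g_1,\Delta)+A(g_2,\Delta)=A(f,\Delta_1)+A(f,\Delta_2)=A(f,\Delta)$ and $L(g_1,\partial\Delta)+L(g_2,\partial\Delta)=L(f,\alpha_1^{***})+L(\beta)+L(f,\alpha_1^{**})=L(f,\partial\Delta)$.

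The remaining work is the bookkeeping of natural vertices, edges, and edge lengths. Every natural edge of $\Gamma_{g_j}$ lying strictly inside $\alpha_1^{***}$ (resp.\ inside $\alpha_1^{**}$ or inside the $\beta$-image) is inherited verbatim from a natural edge of $\Gamma_f$ with length $<\pi$. At each of the (at most two) junction points created by the cut, two adjacent natural edges of $\Gamma_f$ may merge into a single edge of $\Gamma_{g_j}$ if the junction is not itself a natural vertex; by assumption (a) the resulting merged edge has length $<2\pi$. If such a merged edge has length $\geq\pi$, it must have at least one endpoint outside $E$ (since its two halves were separate natural edges of $\Gamma_f$), placing us in case (a), (b), or (c) of Theorem \ref{>pi}; applying that theorem once or twice to $g_1$ and $g_2$ yields the required $f_1,f_2$ with all natural edges of length $<\pi$, while each application trades at most two units in $V$ and $V_{NE}$ against a gain of at least one in $V_E$. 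Summing these with the obvious bound $V(g_1)+V(g_2)\leq V(f)+2$ coming from the two new boundary points created by the cut yields (i)-(iv).

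The main obstacle will be the last step: keeping precise track of which natural vertices of $\Gamma_f$ survive, which glued corners become natural vertices of the $g_j$, and checking that the vertex accounting still fits within the budget of (iii)-(iv) after each invocation of Theorem \ref{>pi}. This is purely combinatorial but demands the same patience as the corresponding passage in the proof of Lemma \ref{cut-1}.
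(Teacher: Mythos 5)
Your proposal is correct and follows essentially the same route as the paper: show the terminal points of $\alpha_1$ and $\alpha_2$ are distinct (the paper invokes Lemma \ref{cut-3}, whose argument you reproduce directly via Lemma \ref{glue} and the fact that $f(\overline D)$ would miss two points of $E$), cut along $\alpha_2$, glue $\alpha_1\sim\alpha_2$ in the piece containing $\alpha_1$ while restricting $f$ to the other piece, record the exact area and length equalities, and then finish exactly "as in the proof of Lemma \ref{cut-1}" by repairing over-long merged edges with Theorem \ref{>pi}. One small sharpening: the two half-edges merged at the new junction $q'=\beta(\theta_2)$ in $g_1$ and $g_2$ have total length $<2\pi$ (the analogue of (\ref{a7})), so Theorem \ref{>pi} is needed for at most one of the two mappings, which is what keeps the vertex count within the budget of (iii)--(iv).
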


\begin{proof}
By Lemma \ref{cut-3}, we have $\alpha _{1}(\theta _{2})\neq \alpha
_{2}(\theta _{2}).$

$\alpha _{2}$ divides the disk $\overline{\Delta }$ into two parts, one of
which denoted by $\Delta _{1},$ is on the left hand side of $\alpha _{2},$
and the other, denoted by $\Delta _{2},$ is on the right hand side. Then, $%
\alpha _{1}$ is a section of $\partial \Delta _{2}.$ By ignoring a
coordinate transform, we may regard the restriction $f|_{\overline{\Delta
_{1}}}$ as a normal mapping $g_{1}$ defined on $\overline{\Delta }.$

Consider the Jordan domain $\Delta _{2}.$ By Lemma \ref{glue}, we can glue
the $\alpha _{1}$ and $\alpha _{2}$ so that the restriction $f|_{\overline{%
\Delta _{2}}}$ can be regarded as a normal mapping $g_{2}:\overline{\Delta }%
\rightarrow S,$ as we did in the proof of Lemma \ref{cut-1}. Then the
followings hold:

(e) The boundary curves of $g_{1}$ and $g_{2}$ compose the boundary curve of
$f,$ i.e. the curve $\Gamma _{g_{1}}=g(z),z\in \partial \Delta ,$ and the
section of the curve $\Gamma _{f}=f(z),$ in which $z$ runs on $\partial
\Delta $ from $\alpha _{1}(1)$ to $\alpha _{2}(1)$ are the same and the
curve $\Gamma _{g_{2}}=g_{2}(z),z\in \partial \Delta ,$ and the the section
of the curve $\Gamma _{f}=f(z),$ in which $z$ runs on $\partial \Delta $
from $\alpha _{2}(1)$ to $\alpha _{1}(1)\ $are the same; and%
\begin{equation}
A(f,\Delta )=A(g_{1},\Delta )+A(g_{2},\Delta )\ \mathrm{and\ }L(f,\Delta
)=L(g_{1},\Delta )+L(g_{2},\Delta ).  \label{1211-3}
\end{equation}

Then, as in the proof of Lemma \ref{cut-1}, there exist normal mappings $%
f_{j}:\overline{\Delta }\rightarrow S,j=1,2,$ satisfied the conclusions.
\end{proof}

\begin{corollary}
\label{ccut-2}Assume that $f,\beta ,\alpha _{1}$ and $\alpha _{2}$ satisfy
all the assumptions in Lemma \ref{cut-2} and, in addition, $\beta (1)\in E.$
Then all the conclusions of Corollary \ref{ccut-1} hold.
\end{corollary}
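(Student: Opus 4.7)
The plan is to follow the construction of $g_{1}$ and $g_{2}$ in the proof of Lemma \ref{cut-2}, and then exploit the extra hypothesis $\beta(1)\in E$ to upgrade every inequality to an equality and to verify that all the boundary partitions produced are \emph{natural} rather than merely \emph{permitted}. As in the proof of Lemma \ref{cut-2}, the lift $\alpha_{2}$ divides $\overline{\Delta}$ into two Jordan domains $\Delta_{1}$ and $\Delta_{2}$, and after applying Lemma \ref{glue} to identify $\alpha_{1}$ with $\alpha_{2}$ in $\Delta_{2}$, the restrictions $f|_{\overline{\Delta_{1}}}$ and the glued $f|_{\overline{\Delta_{2}}}$ can be regarded as normal mappings $g_{1},g_{2}:\overline{\Delta}\to S$ satisfying
\begin{equation*}
A(f,\Delta)=A(g_{1},\Delta)+A(g_{2},\Delta),\qquad L(f,\partial\Delta)=L(g_{1},\partial\Delta)+L(g_{2},\partial\Delta),
\end{equation*}
which already gives conclusion (ii) with equality.

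Next, I would examine the permitted partitions of $\Gamma_{g_{1}}$ and $\Gamma_{g_{2}}$ analogous to (\ref{12-1}) and (\ref{12-2}) in the proof of Lemma \ref{cut-1}: the cutting produces a new common vertex $q'=f(\alpha_{1}(\theta_{2}))=f(\alpha_{2}(\theta_{2}))=\beta(1)$. Under the extra hypothesis $\beta(1)\in E$, this point $q'$ lies in $E$, hence is automatically a natural vertex of both $\Gamma_{g_{1}}$ and $\Gamma_{g_{2}}$ by Definition \ref{v-e}(1)(a). Consequently the permitted partitions of $\Gamma_{g_{1}}$ and $\Gamma_{g_{2}}$ are in fact natural partitions, no two adjacent edges get amalgamated into a single natural edge, and each natural edge of $\Gamma_{f}$ appears as a natural edge of exactly one of $g_{1}$ or $g_{2}$. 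This immediately yields conclusion (i) and the equalities
\begin{equation*}
V_{NE}(g_{1})+V_{NE}(g_{2})=V_{NE}(f),\quad V_{E}(g_{1})+V_{E}(g_{2})=V_{E}(f),\quad V(g_{1})+V(g_{2})=V(f),
\end{equation*}
which is conclusion (iii). In particular, the length bound (a) is preserved automatically, so there is no need to invoke Theorem \ref{>pi}, which was the step that produced losses in Lemma \ref{cut-2}.

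For conclusion (iv), I would repeat verbatim the rotation-number argument given at the end of the proof of Corollary \ref{ccut-1}: the gluing identifies two boundary points into a single interior point of the new disk, and a computation with rotation numbers of small circular arcs around $\alpha_{1}(\theta_{2})$ and $\alpha_{2}(\theta_{2})$ (using orientation-preserving local linearisations) gives the sum formula
\begin{equation*}
b_{g_{1}}(1)+b_{g_{2}}(1)\le b_{f}(\alpha_{1}(\theta_{2}))+b_{f}(\alpha_{2}(\theta_{2}))+1,
\end{equation*}
while all other ramification points of $f$ split cleanly between $g_{1}$ and $g_{2}$. Since $\alpha_{1}(\theta_{2}),\alpha_{2}(\theta_{2})\in f^{-1}(E)$ and $g_{j}(1)\in g_{j}^{-1}(E)$, these points do \emph{not} contribute to the sums in (\ref{1211-1}), so the inequality follows.

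The only genuinely delicate step is verifying that $q'\in E$ really does force the permitted partitions to be natural, i.e.\ that no natural edge of $\Gamma_{f}$ is split when we insert $q'$ as a new vertex; but here the point $q'=\beta(\theta_{2})$ is a value of $f$ on $\partial\Delta$ lying in $E$, hence its $f$-preimages on $\partial\Delta$ are already natural vertices of $\Gamma_{f}$, so the ``cut'' occurs exactly at pre-existing natural vertices $\alpha_{1}(\theta_{2}),\alpha_{2}(\theta_{2})\in(\partial\Delta)\cap f^{-1}(E)$ rather than in the interior of a natural edge. Once this observation is in place, the rest is straightforward bookkeeping.
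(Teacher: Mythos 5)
Your proposal is correct and takes essentially the same route as the paper, whose own proof consists only of the instruction to repeat the proof of Lemma \ref{cut-2} up to (\ref{1211-3}) and then the proof of Corollary \ref{ccut-1}; you have simply spelled out the details, correctly identifying that $\beta(1)\in E$ forces the cut to occur at pre-existing natural vertices (so no invocation of Theorem \ref{>pi} is needed) and that the vertices created or identified by the cut are excluded from the sums in (\ref{1211-1}). One minor descriptive slip: here the gluing identifies the two terminal points $\alpha_{1}(\theta_{2})$ and $\alpha_{2}(\theta_{2})$ into a single \emph{boundary} point of $g_{2}$'s disk, while it is the initial point $e^{i\theta_{1}}$ that becomes interior; this does not affect the argument, since the terminal points map into $E$ and the multiplicity at the initial point can only drop.
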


\begin{proof}
Repeat the above proof from the beginning to (\ref{1211-3}) and repeat the
proof of Corollary \ref{ccut-1}.
\end{proof}

\begin{lemma}
\label{move-to-bd}Let $f:\overline{\Delta }\rightarrow S$ be a normal
mapping such that each natural edge of $f$ has spherical length strictly
less than $\pi $. If $f$ has a ramification point in $\Delta ,$ then, one of
the following conditions (A) and (B) is satisfied.

(A) There exists a normal mappings $f_{1}:\overline{\Delta }\rightarrow S$
such that$,$ the followings hold.

(i) The boundary curve $\Gamma _{f_{1}}=f_{1}(z),z\in \partial \Delta $, is
the same as that of $f$.

(ii) $L(f_{1},\partial \Delta )=L(f,\partial \Delta ),A(f_{1},\Delta
)=A(f,\Delta ).$

(iii) $f_{1}$ has no ramification point in $\Delta .$

(iv) $f_{1}$ has at least one ramification point in $\left( \partial \Delta
\right) \backslash f_{1}^{-1}(E).$

(B) There exist normal mappings $f_{j}:\overline{\Delta }\rightarrow
S,j=1,2, $ such that the followings hold.

(i) Each natural edge of $f_{j}$ has spherical length strictly less than $%
\pi ,j=1,2$.

(ii) $\sum_{j=1}^{2}L(f_{j},\partial \Delta )\leq L(f,\partial \Delta
),\sum_{j=1}^{2}A(f_{j},\Delta )\geq A(f,\Delta ).$

(iii) $V_{NE}(f_{1})+V_{NE}(f_{2})\leq E(f)+2,$ $V_{E}(f_{1})+V_{E}(f_{2})%
\geq V_{E}(f).$

(iv) $V(f_{1})+V(f_{2})\leq V(f)+2.$
\end{lemma}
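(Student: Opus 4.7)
The plan is to reduce the lemma to the two lifting-type lemmas proved above, namely Lemma~\ref{move} (for relocating an interior ramification point to the boundary) and Lemma~\ref{cut-1} (for splitting the domain along two lifts that both reach $\partial\Delta$). The choice between conclusion (A) and conclusion (B) is dictated by which of these two configurations arises first when a lifted path is extended from an interior branch point.

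Let $p_0 \in \Delta$ be a ramification point of $f$ with $v_f(p_0) = d+1$. Since $f$ is normal, $q_0 := f(p_0) \notin E$, and I choose a short polygonal simple path $\beta\colon [0,1] \to S$ with $\beta(0) = q_0$, $\beta((0,1]) \cap E = \emptyset$, and $\beta(1) \notin E$. Near $q_0$ the path $\beta$ has exactly $d+1$ distinct local lifts $\alpha_1, \dots, \alpha_{d+1}$ starting at $p_0$, which are disjoint Jordan paths with initial segments in $\Delta$. I extend $\beta$ continuously while tracking these lifts, perturbing $\beta$ generically inside $S \setminus E$ so that none of the lifts runs into an interior ramification point of $f$ other than $p_0$, and stopping at the first parameter $t_* \in (0,1]$ at which some lift reaches $\partial\Delta$. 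After re-parametrization I may assume $t_* = 1$.

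Case 1: exactly one lift, $\alpha_1$, reaches $\partial\Delta$ at $t=1$, and $\alpha_2, \dots, \alpha_{d+1}$ all stay in $\Delta$. Set $p_1 := \alpha_1(1)$. Since $\beta(1) \notin E$ and $p_1$ is not a ramification point of $f$ (by the stopping rule and by the generic choice), $f$ is a homeomorphism near $p_1$, so the hypotheses of Lemma~\ref{move} are satisfied. The lemma produces a normal mapping $f'$ with the same boundary curve, the same length $L(f',\partial\Delta) = L(f,\partial\Delta)$, and the same area $A(f',\Delta) = A(f,\Delta)$, in which $p_0$ is no longer a ramification point while $p_1 \in (\partial\Delta)\setminus f'^{-1}(E)$ is a ramification point with $v_{f'}(p_1) = v_f(p_0)$. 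Because each application of Lemma~\ref{move} strictly reduces the total interior branching $\sum_{p \in \Delta} b_{f}(p)$, iterating finitely many times yields a normal mapping $f_1$ with no interior ramification point and at least one boundary ramification point with image outside $E$. This is conclusion (A); note that each natural edge length remains strictly less than $\pi$ because the boundary curve is preserved throughout.

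Case 2: two of the lifts, say $\alpha_1$ and $\alpha_2$, both reach $\partial\Delta$ at $t = 1$. Lemma~\ref{cut-3} guarantees $\alpha_1(1) \neq \alpha_2(1)$, and the remaining hypotheses of Lemma~\ref{cut-1} are immediate from the construction. That lemma directly furnishes two normal mappings $f_1, f_2$ satisfying all of the assertions in conclusion (B). The main obstacle in the whole argument is the generic extension of $\beta$: one must ensure that the lifts can be tracked past nothing worse than the two configurations above, in particular that collisions with the finite set of other interior ramification points of $f$ can be avoided by small polygonal perturbations of $\beta$ inside $S\setminus E$, and that at the terminal time only one of the two configurations (E1)--(E2) occurs. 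Once the stopping event is correctly identified, the preservation of the natural-edge length bound $<\pi$, the area and length inequalities, and the vertex inequalities are all packaged in Lemmas~\ref{move} and~\ref{cut-1}, and no further work is required.
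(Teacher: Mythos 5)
Your overall strategy coincides with the paper's: pick an interior ramification point $p_{0}$, lift a polygonal path $\beta $ issuing from $q_{0}=f(p_{0})$, and split into the case where two lifts first reach $\partial \Delta $ simultaneously (apply Lemma \ref{cut-1} to get (B)) versus the case where only one lift reaches $\partial \Delta $ while the others stay interior (apply Lemma \ref{move} and iterate to get (A)). However, there is one genuine gap, and it sits exactly at the point you flag as ``the main obstacle'' without resolving it: you never establish that the stopping time $t_{\ast }$ exists. You take $\beta (1)\notin E$ and extend $\beta $ ``generically inside $S\setminus E$,'' but a path confined to $S\setminus E$ can perfectly well have all of its lifts remain in $\Delta $ forever (think of $f$ restricted near $p_{0}$ as a branched cover of a small disk: lifts of any path staying in that disk never approach $\partial \Delta $). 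So your Case 1/Case 2 dichotomy is not exhaustive --- there is a third possibility in which no lift ever hits the boundary, and your argument gives no way to rule it out or to terminate the extension process.

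The paper's fix is the one missing idea: choose $\beta $ to terminate at a point $q_{1}\in E$ (with $\beta ^{\circ }\cap E=\emptyset $ and no branched values on $\beta ^{\circ }$ after a small perturbation). Since $f$ is normal, $f(\Delta )\cap E=\emptyset $, so any lift that survives to parameter $1$ must land in $f^{-1}(q_{1})\subset \partial \Delta $; hence every lift reaches $\partial \Delta $ at some first time $t_{1}\leq 1$, and the dichotomy becomes exhaustive. The same choice also yields the strict inequality $t_{1}<1$ in the single-lift case (otherwise one of the interior lifts would end at a preimage of $q_{1}\in E$, forcing it onto $\partial \Delta $), which is what guarantees that the terminal point lies in $\beta ^{\circ }$, where the no-branched-point assumption makes hypotheses (c) and (d) of Lemma \ref{move} available. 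A secondary, minor imprecision: in your Case 1 iteration, after one application of Lemma \ref{move} the new mapping may still have interior ramification points, and reprocessing them may land in the splitting case; so the finite iteration terminates in ``(A) or (B),'' not necessarily in (A) as you assert. That is harmless for the statement being proved, but the termination of the lifting process is not, and it needs the $\beta (1)\in E$ device to go through.
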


\begin{proof}
Let $p_{0}\in \Delta $ be any ramification point of $f$ and let $\beta
=\beta (t),t\in \lbrack 0,1],$ be a polygonal Jordan path in $S$ from $%
q_{0}=f(p_{0})$ to some point $q_{1}\in E=\{0,1,\infty \}$ such that the
interior of $\beta $ does not contain any point in $E$. Then $\beta (0)\neq
\beta (1),$ since $f$ is normal.

We may assume that

(a) There is no branched point of $f$ in the interior of $\beta $
(otherwise, we deform $\beta $ slightly$).$

Let $d=v_{f}(p_{0}).$ Then, by (a) and the fact that $f(\Delta )\cap
E=\emptyset $ (note that $f$ is normal) we conclude that there are only two
cases:

\noindent \textbf{Case 1.} There exists a positive number $t_{1}\leq 1,$
such that the followings hold.

(a1) The section $\beta \lbrack 0,t_{1}]=\{\beta (t);t\in \lbrack 0,t_{1}]\}$
of $\beta $ has two lifts $\alpha _{j}=\alpha _{j}(t),t\in \lbrack 0,t_{1}],$
in $\overline{\Delta }$ by $f,$ such that
\begin{equation*}
\alpha _{j}(0)=p_{0}\text{\textrm{and\ }}f(\alpha _{j}(t))=\beta (t),t\in
\lbrack 0,t_{1}];j=1,2.
\end{equation*}

(b1) For $j=1$ and $2,$ $\alpha _{j}(t)\in \Delta $ for all $t\in \lbrack
0,t_{1}],$ but $\{\alpha _{1}(t_{1}),\alpha _{2}(t_{1})\}\subset \partial
\Delta .$

(c1) $f$ has no ramification point in the interior of $\alpha _{1}$ and $%
\alpha _{2},$ i.e. $f$ has no ramification point on $\alpha _{1}(0,1)\cup
\alpha _{2}(0,1),$ where $\alpha _{j}(0,1)$ is the open curve $\alpha
_{j}(t),t\in (0,1),$ which is the curve $\alpha _{j}$ without end points, $%
j=1,2.$

\noindent \textbf{Case 2.} There exists a positive number $t_{1}\leq 1,$
such that the followings hold.

(a2) The section $\beta (t),t\in \lbrack 0,t_{1}],$ of $\beta $ has a number
of $d=v_{f}(p_{0})$ lifts $\alpha _{j}=\alpha _{j}(t),t\in \lbrack 0,t_{1}],$
in $\overline{\Delta },$ such that
\begin{equation}
\cup _{j=2}^{d}\alpha _{j}\subset \Delta ,  \label{122-2}
\end{equation}%
\begin{equation*}
f(\alpha _{j}(t))=\beta (t),t\in \lbrack 0,t_{1}],\mathrm{\ }j=1,\dots ,d,
\end{equation*}%
and
\begin{equation*}
\alpha _{j}(0)=p_{0},\mathrm{\ }j=1,\dots ,d.
\end{equation*}

(b2) $\alpha _{1}(t)\in \Delta $ for all $t\in \lbrack 0,t_{1})\ $but $%
p_{1}^{\prime }=\alpha _{1}(t_{1})\in \partial \Delta .$

In Case 1, by Lemmas \ref{cut-1}, (B) is satisfied.

Now, assume Case 2 occurs. Then, we must have $t_{1}<1.$ Otherwise, we have $%
f(\alpha _{2}(t_{1}))=\beta (1)=q_{1}\in E,$ and then by the fact $f(\Delta
)\cap E=\emptyset ,$ we have $\alpha _{2}(t_{1})\in \partial \Delta ,$
contradicting (\ref{122-2}). Thus by (a) we have:

(c2) $f$ has no ramification point on $\cup _{j=1}^{d}\alpha _{j}(0,t_{1}],$
where $\alpha _{j}(0,t_{1}]$ is the curve $\alpha _{j}(t),t\in (0,t_{1}],$
which is the curve $\alpha _{j}$ without initial point, $j=1,2,\dots d.$

Then, by (a2), (b2) and (c2), Lemma \ref{move} applies, and then, there
exists a normal mapping $g_{1}:\overline{\Delta }\rightarrow S$ such that
(recall that $p_{1}^{\prime }=\alpha _{1}(t_{1}))$
\begin{equation}
\#\{p\in \Delta ;b_{g_{1}}(p)>1\}=\#\{p\in \Delta ;b_{f}(p)>1\}-1,
\label{1118-11}
\end{equation}%
\begin{equation}
b_{g_{1}}(p_{1}^{\prime })=b_{f}(p_{0})  \label{122-1}
\end{equation}%
\begin{equation}
L(g_{1},\partial \Delta )=L(f,\partial \Delta ),A(g_{1},\Delta )=A(f,\Delta
),  \label{1128-1}
\end{equation}%
and

(d) The boundary curve of $g_{1}$ is the same as that of $f.$

Then, $g_{1}$ satisfies (i), (ii) of condition (A).

By (\ref{122-1}), $p_{1}^{\prime }\in \partial \Delta $ is a ramification
point of $g_{1}.$ On the other hand, by (b2) and (\ref{122-2})
\begin{equation*}
g_{1}(\partial \Delta )\ni g_{1}(p_{1}^{\prime })=f(p_{1}^{\prime
})=f(\alpha _{1}(t_{1}))=f(\alpha _{2}(t_{1}))\in f(\Delta ),
\end{equation*}%
,i.e. $g_{1}(p_{1}^{\prime })\in g_{1}(\partial \Delta )\cap f(\Delta ),$
and then $g_{1}(p_{1}^{\prime })\notin E$ since $f$ is normal. Thus, (iv) of
(A) hold.

If $g_{1}$ does not satisfies (iii) in (A), then $g_{1}$ satisfies all
assumptions of the lemma under proving, but the number of ramification
points of $g_{1}$ located in $\Delta $ is dropped by one (by (\ref{1118-11}%
)), and then apply the above argument, we again reach Case 1 or Case 2.
Since there are finitely many ramification point of $f$ in $\Delta ,$ after
repeating the above arguments finitely many time, we can show that either
(A) or (B) holds.
\end{proof}

\begin{lemma}
\label{move-to-3pts}Let $f:\overline{\Delta }\rightarrow S$ be a normal
mapping such that

(a) Each natural edge of $f$ has length strictly less than $\pi ,$ and

(b) $f$ has no ramification point in $\Delta $.

Assume that there exist $\theta _{1}$ and $\theta _{3}$ with $\theta
_{1}<\theta _{3}<\theta _{1}+2\pi $ such that the followings hold.

(c) $p_{1}=e^{i\theta _{1}}\in \partial \Delta \backslash f^{-1}(E)$ is a
ramification point of $f.\ $Note that $E=\{0,1,\infty \}.$

(d) $f(e^{i\theta _{3}})\in E\ $but $f(e^{i\theta })\notin E$ for each $%
\theta \in (\theta _{1},\theta _{3}).$

(e) Each point $e^{i\theta }\in \partial \Delta $ with $\theta \in (\theta
_{1},\theta _{3})$ is not a ramification point of $f$.

Then, there exist two normal mappings $f_{j}:\overline{\Delta }\rightarrow
S,j=1,2,$ such that the followings holds.

(i) Each natural edge of $f_{j}$ has length strictly less than $\pi $, $%
j=1,2.$

(ii) $\sum_{j=1}^{2}L(f_{j},\partial \Delta )\leq L(f,\partial \Delta
),\sum_{j=1}^{2}A(f_{j},\Delta )\geq A(f,\Delta ).$

(iii) $V_{NE}(f_{1})+V_{NE}(f_{2})\leq V_{NE}(f)+2,$ $%
V_{E}(f_{1})+V_{E}(f_{2})\geq V_{E}(f).$

(iv) $V(f_{1})+V(f_{2})\leq V(f)+2.$
\end{lemma}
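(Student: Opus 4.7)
I follow the lifting-and-cutting strategy used in Lemma~\ref{move-to-bd}, adapted to the boundary ramification at $p_1 = e^{i\theta_1}$. Set
$\beta(\theta) = f(e^{i\theta})$ for $\theta \in [\theta_1,\theta_3]$: by hypotheses (c) and (d), this path runs from $q_1 = f(p_1) \notin E$ to $q_3 = f(p_3) \in E$ with no intermediate value in $E$. Since $p_1$ is a boundary ramification point with $b_f(p_1) \geq 1$, Corollary~\ref{branched-lift} supplies at least one interior lift $\alpha_2$ of an initial segment of $\beta$, satisfying $\alpha_2(\theta_1) = p_1$ and $\alpha_2 \setminus \{p_1\} \subset \Delta$.

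By hypothesis (b), $f$ has no ramification in $\Delta$, and by hypothesis (e) none on the open arc $(\theta_1,\theta_3)$, so the lift $\alpha_2$ can be extended uniquely as long as its interior stays in $\Delta$. Let $\theta_2 \in (\theta_1,\theta_3]$ be maximal such that $\alpha_2$ is defined on $[\theta_1,\theta_2]$ with $\alpha_2((\theta_1,\theta_2)) \subset \Delta$ and $\alpha_2(\theta_2) \in \partial\Delta$. Note that even in the case $\theta_2 = \theta_3$ we automatically have $\alpha_2(\theta_3) \in \partial\Delta$, since $f(\alpha_2(\theta_3)) = \beta(\theta_3) \in E$ while $f(\Delta) \cap E = \emptyset$ by normality.

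Set $\alpha_1 := e^{i\theta}$, $\theta \in [\theta_1,\theta_2]$, so $\alpha_1$ is the boundary lift of $\beta|_{[\theta_1,\theta_2]}$ and $\alpha_2$ the interior lift. The hypotheses of Lemma~\ref{cut-2} are then verified directly: condition (a) of Lemma~\ref{cut-2} is our (a); condition (b) of Lemma~\ref{cut-2} asks that $\beta|_{[\theta_1,\theta_2]}$ be a Jordan path with distinct endpoints and that $\beta(\theta) \notin E$ for $\theta < \theta_2$, the latter being immediate from our (d); the remaining conditions (c)--(e) are built into the choice of $\alpha_1,\alpha_2$ together with our (b) and (e). Applying Lemma~\ref{cut-2} produces the desired $f_1,f_2$ satisfying (i)--(iv). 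In the favorable subcase $\theta_2 = \theta_3$, the terminal value $\beta(\theta_3) \in E$, so Corollary~\ref{ccut-2} applies and gives the strengthened conclusions $V(f_1)+V(f_2) = V(f)$ together with equalities for $L$ and $A$, which still imply (i)--(iv) of the present lemma.

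The principal obstacle is verifying the Jordan-path hypothesis of Lemma~\ref{cut-2} on $\beta|_{[\theta_1,\theta_2]}$: although $f$ has no ramification in $\Delta$ or on the open boundary arc, the global image $\beta$ may still self-intersect. If Jordan-ness fails before $\alpha_2$ exits $\Delta$, let $\theta' < \theta_2$ be the first self-intersection parameter; then $\beta|_{[\theta_1,\theta']}$ is Jordan and avoids $E$, so the hypotheses of Lemma~\ref{move-bd} are met and we can translate the ramification from $p_1$ to $e^{i\theta'}$, preserving length, area, and all $V$-counts, without altering the boundary curve. Repeating the argument with the new ramification point in place of $p_1$ strictly advances along the arc toward $p_3$, so the procedure terminates after finitely many steps and reduces to the clean setting in which Lemma~\ref{cut-2} or Corollary~\ref{ccut-2} applies directly.
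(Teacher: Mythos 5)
Your overall strategy coincides with the paper's: follow the boundary image $\beta$ from the ramified point toward the first $E$-point, lift it; if an interior lift exits $\partial \Delta$ while $\beta$ is still simple, cut via Lemma~\ref{cut-2} (or Corollary~\ref{ccut-2} when the terminal value lies in $E$); if $\beta$ self-intersects first, push the ramification point forward with Lemma~\ref{move-bd} and iterate. The gap is in the fallback step. If $\theta'$ is the first self-intersection parameter, then $\beta|_{[\theta_1,\theta']}$ is \emph{not} a Jordan path: since $f$ is regular on the open arc, $\Gamma_f$ is locally simple there, so the supremum of those $\theta$ for which $\beta|_{[\theta_1,\theta]}$ is injective is attained as an actual coincidence $\beta(\theta')=\beta(\theta'')$ with $\theta''<\theta'$. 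Hence Lemma~\ref{move-bd}, which requires $\beta$ to be Jordan on the closed interval, does not apply with terminal parameter $\theta'$. If you repair this by stopping at some $\theta'''<\theta'$, the lemma applies, but your termination claim ("strictly advances along the arc toward $p_3$") collapses: nothing bounds the advance from below, and the next first-self-intersection parameter of the shifted path is uncontrolled, so the iteration could in principle produce infinitely many steps.

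The paper closes exactly this hole by always advancing to a \emph{natural vertex}: it takes $\theta_4$ to be a natural-vertex parameter with $\Gamma|_{[\theta_1,\theta_4]}$ still Jordan and the self-intersection occurring just beyond it, applies Lemma~\ref{move-bd} to relocate the ramification point to $e^{i\theta_4}$, and notes that the number of natural vertices (the "loops") remaining between the new base point and $e^{i\theta_3}$ strictly decreases; since $\Gamma_f$ is polygonal this is a finite decreasing invariant, so finitely many iterations suffice. A second, smaller omission: Lemma~\ref{move-bd} needs \emph{all} $d=v_f(p_1)$ lifts to remain interior up to the chosen terminal parameter, not only the single lift $\alpha_2$ you track; if some other lift reaches $\partial\Delta$ earlier, one must instead cut with that lift (this is precisely the paper's dichotomy between its Cases 1.1 and 1.2). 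With these two repairs your argument becomes the paper's proof.
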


\begin{proof}
By the assumption, there are only two cases need to discuss.

\noindent \textbf{Case 1.} There exist $\theta _{4},\theta _{5}\in (\theta
_{1},\theta _{2})\ $with $\theta _{4}<\theta _{5}$ such that

(f) $e^{i\theta _{4}}$ is a natural vertex of $f.$

(g) Both of the sections $\Gamma _{14}=f(e^{i\theta }),\theta \in \lbrack
\theta _{1},\theta _{4}],$ and $\Gamma _{45}=f(e^{i\theta }),\theta \in
\lbrack \theta _{4},\theta _{5}]$ are Jordan paths with $\Gamma _{14}(\theta
_{1})\neq \Gamma _{14}(\theta _{4})$ and $\Gamma _{45}(\theta _{4})\neq
\Gamma _{45}(\theta _{5})$, but $\Gamma _{14}+\Gamma _{45}$ is not a Jordan
curve.

\noindent \textbf{Case 2.} The section $\Gamma _{13}=f(e^{i\theta }),\theta
\in \lbrack \theta _{1},\theta _{3}],$ is a Jordan path$.$

Let $d=v_{f}(p_{0}).$ We first assume Case 1 occur.

Then, there exists a positive number $\delta $ and there exist a number of $%
d $ Jordan paths
\begin{equation*}
\alpha _{j,\delta }=\alpha _{j,\delta }(\theta ),\theta \in \lbrack \theta
_{1},\theta _{1+\delta }],j=1,\dots ,d,
\end{equation*}%
such that%
\begin{equation*}
\alpha _{1,\delta }(\theta )=e^{i\theta },\theta \in \lbrack \theta
_{1},\theta _{1+\delta }],
\end{equation*}%
\begin{equation*}
\alpha _{j,\delta }(\theta _{1})=p_{1},j=1,\dots ,d,
\end{equation*}%
\begin{equation*}
\alpha _{j,\delta }(\theta )\in \Delta ,\theta \in (\theta _{1},\theta
_{1+\delta }),j=2,3,\dots d,
\end{equation*}%
and
\begin{equation*}
f(\alpha _{j,\delta }(\theta ))=f(e^{i\theta }),\theta \in \lbrack \theta
_{1},\theta _{1+\delta }],j=1,\dots ,d.
\end{equation*}

Since $f$ has no ramification point in $\Delta ,$ there are only two further
cases for Case 1.

\noindent \textbf{Case 1.1.} Each $\alpha _{j,\delta }$ can be extended to
be a Jordan path $\alpha _{j}=\alpha _{j}(\theta ),\theta \in \lbrack \theta
_{1},\theta _{4}],$ such that%
\begin{equation*}
\alpha _{j}(\theta )\in \Delta ,\theta \in (\theta _{1},\theta
_{4}],j=2,3,\dots ,d,
\end{equation*}%
and
\begin{equation*}
f(\alpha _{j}(\theta ))=f(e^{i\theta }),\theta \in \lbrack \theta
_{1},\theta _{4}],j=2,3,\dots ,d.
\end{equation*}

\noindent \textbf{Case 1.2.} For some $j_{0}\in \{2,3,\dots ,d\},$ there
exists $\theta _{2}\in (\theta _{1},\theta _{4}]$ such that $\alpha
_{j_{0},\delta }$ can be extended to be a Jordan path $\alpha
_{j_{0}}=\alpha _{j_{0}}(\theta ),\theta \in \lbrack \theta _{1},\theta
_{2}],$ such that%
\begin{equation*}
\alpha _{j_{0}}(\theta _{2})\in \partial \Delta ,
\end{equation*}%
\begin{equation*}
\alpha _{j_{0}}(\theta )\in \Delta ,\theta \in (\theta _{1},\theta _{2}),
\end{equation*}%
\begin{equation*}
f(\alpha _{j_{0}}(\theta ))=f(e^{i\theta }),\theta \in \lbrack \theta
_{1},\theta _{2}].
\end{equation*}

In Case 1.1, since $f$ has no ramification point in $\Delta $, $f$ has no
ramification point in the interior of each $\alpha _{j},j=2,\dots ,d,$ and
then $f,\alpha _{1},\dots ,\alpha _{d}$ and $\beta =f(e^{i\theta }),\theta
\in \lbrack \theta _{1},\theta _{4}]$ satisfy all assumptions of Lemma \ref%
{move-bd} by (e)$.$ Then Lemma \ref{move-bd} apply, and then there exists a
normal mapping $g:\overline{\Delta }\rightarrow S$ such that%
\begin{equation*}
g(e^{i\theta })=f(e^{i\theta }),\theta \in \lbrack 0,2\pi ],
\end{equation*}%
\begin{equation*}
b_{g}(p_{1})=0,b_{g}(p_{2})=b_{f}(p_{1}),
\end{equation*}%
\begin{equation*}
b_{g}(p)=b_{f}(p)\ \mathrm{for\ a}\text{\textrm{l}}\mathrm{l\ }p\in
\overline{\Delta }\backslash \{p_{1},p_{2}\},
\end{equation*}%
and%
\begin{equation*}
L(g,\partial \Delta )=L(f,\partial \Delta ),A(g,\Delta )=A(f,\Delta ).
\end{equation*}%
Then $g$ satisfies all the assumptions in the lemma under proving by
replacing $\theta _{1}$ with $\theta _{4}$. But now, the number of loops of
the section $g(e^{i\theta }),\theta \in \lbrack \theta _{4},\theta _{3}],$
is dropped by one. Then, by repeating the same argument several times, we
can find a number $\theta _{1}^{\prime }\in \lbrack \theta _{1},\theta
_{3}), $ and a normal mapping $f_{1}:\overline{\Delta }\rightarrow S$ such
that
\begin{equation*}
f_{1}(e^{i\theta })=f(e^{i\theta }),\theta \in \lbrack 0,2\pi ],
\end{equation*}%
$f_{1}$, $\theta _{1}=\theta _{1}^{\prime }$ and $\theta _{3}$ satisfy all
assumptions of the lemma and fit case Case 2.

In Case 1.2, $f$ also has no ramification point in the interior of each $%
\alpha _{j},j=1,j_{0}.$ Without loss of generality, we assume $j_{0}=2.$
Then, $f,\alpha _{1},\alpha _{2}=\alpha _{j_{0}}$ and $\beta =\beta
=f(e^{i\theta }),\theta \in \lbrack \theta _{1},\theta _{2}]$, satisfy all
assumptions of Lemma \ref{cut-2}. Then by Lemmas \ref{cut-2}, There exist
two normal mappings $f_{1},f_{2}:\overline{\Delta }\rightarrow S,$
satisfying (i)--(ii).

Now assume Case 2 occurs. Then since $f(\Delta )\cap E=\emptyset $ (note
that $f$ is normal), by (a)--(e), there exists $\theta _{2}\in (\theta
_{1},\theta _{3}]$ such that the sections $\alpha _{1}=e^{i\theta }$ and $%
\beta =f(e^{i\theta })$ with $\theta \in \lbrack \theta _{1},\theta _{2}]$
satisfy all assumptions of Lemma \ref{cut-2}, and the arguments in Case 1.2
apply. This completes the proof.
\end{proof}

\begin{proof}[Proof of Theorem\protect\ref{1-br-2-map}]
Assume $f$ has a branched point in $f(\overline{\Delta })\backslash E.$
There are two cases:

\noindent \textbf{Case 1. }$f$ has a ramification point in $\Delta .$

\noindent \textbf{Case 2.} $f$ has no ramification point in $\Delta ,$ but
has a ramification point in $\partial \Delta \backslash f^{-1}(E).$

In Case 1, Lemma \ref{move-to-bd} applies, and we have the following
conclusions (A) or (B).

(A) There exists a normal mappings $g_{1}:\overline{\Delta }\rightarrow S$
such that$,$ the followings hold.

(1) The boundary curve $\Gamma _{g_{1}}=g_{1}(z),z\in \partial \Delta $, is
the same as that of $f$.

(2) $L(g_{1},\partial \Delta )=L(f,\partial \Delta ),A(g_{1},\Delta
)=A(f,\Delta ).$

(3) $g_{1}$ has no ramification point in $\Delta .$

(4) $g_{1}$ has at least one branched point in $f(\partial \Delta
)\backslash E.$

(B) The conclusions of Theorem \ref{1-br-2-map} hold true$.$

If (A) occurs, then $g_{1}$ has a ramification point $p_{1}\in \left(
\partial \Delta \right) \backslash f^{-1}(E),$ and then we can found $\theta
_{1}$ and $\theta _{3}$ such that $g_{1}$, $\theta _{1}$ and $\theta _{3}$
satisfy all assumptions of Lemma \ref{move-to-3pts}, and then (B) holds.

In Case 2, Lemma \ref{move-to-3pts}applies, and so, the conclusions of
Theorem \ref{1-br-2-map} hold true again$.$
\end{proof}

\section{Deformation of normal mappings that have nonconvex vertices\label%
{ss-10to-convex}}

In this section we will prove the following theorem, which is used to prove
Theorem \ref{key}. Theorem \ref{key} is the first key step to prove the main
theorem.

\begin{theorem}
\label{1-nconvex}Let $f:\overline{\Delta }\rightarrow S$ be a normal mapping
and assume that each natural edge of $\Gamma _{f}$ has length strictly less
than $\pi $. If $\Gamma _{f}$ is not convex at some natural vertex $q$ and $%
q\notin E.$ Then there exists a normal mapping $g:\overline{\Delta }%
\rightarrow S,$ such that%
\begin{equation*}
L(g,\partial \Delta )\leq L(f,\partial \Delta ),A(g,\Delta )\geq A(f,\Delta
),
\end{equation*}%
each natural edge of $g$ has spherical length strictly less than $\pi ,$ and%
\begin{equation*}
V_{NE}(g)\leq V_{NE}(f)-1,\mathrm{\ }V_{E}(g)\geq V_{E}(f)\ \mathrm{and}\
V(g)\leq V(f)+1.
\end{equation*}
\end{theorem}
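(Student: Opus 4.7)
}
The plan is to replace the non-convex corner at $q$ by a straightened (geodesic) path on the right-hand side of $\Gamma_f$, gluing in the triangular wedge that lies to the right of $\Gamma_f$ near $q$ and thereby extending the Riemann surface of $f$. Concretely, let $\Gamma_f = \Gamma_1+\Gamma_2+\dots+\Gamma_n$ be the natural partition, enumerated so that $p$ is the common endpoint of $\Gamma_1$ and $\Gamma_2$; write $\Gamma_1=\overline{q_1q_2}$ and $\Gamma_2=\overline{q_2q_3}$, where $q_2=q\notin E$ and $L(\Gamma_1),L(\Gamma_2)<\pi$ by hypothesis. Since $q_2$ is a natural vertex with $q_2\notin E$, the path $\Gamma_1+\Gamma_2$ cannot be locally straight at $q_2$, and since $\Gamma_f$ is not convex at $p$, it must strictly turn to the right, so the triangle with vertices $q_1,q_2,q_3$ encloses a nonempty open domain $T$ lying on the right-hand side of $\Gamma_f$ in a neighborhood of $q_2$.

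I would then build the straightening along the following case analysis, choosing a polygonal path $\gamma$ from $q_1$ to $q_3$ that, together with $\Gamma_1+\Gamma_2$, bounds a polygonal Jordan domain $T_\gamma$ on the right of $\Gamma_f$ with $T_\gamma\cap E=\emptyset$. In the generic case where the geodesic $\overline{q_1q_3}$ exists, has length less than $\pi$, does not pass through $E$, and the triangle domain $T$ it bounds satisfies $T\cap E=\emptyset$, take $\gamma=\overline{q_1q_3}$. If $T$ contains a point $e\in E$ in its interior, or the geodesic $\overline{q_1q_3}$ itself passes through $e\in E$, take instead $\gamma=\overline{q_1 e}+\overline{eq_3}$: because $E$ consists of only three points with mutual spherical distances $\pi/2,\pi/2,\pi$, a direct geometric check (using $L(\Gamma_1),L(\Gamma_2)<\pi$ together with Lemma \ref{pre-large} to guarantee that the required geodesics exist) shows that at most one point of $E$ needs to be bypassed, and each of the two new straight segments has length $\leq L(\overline{q_1q_3})\leq L(\Gamma_1)+L(\Gamma_2)<2\pi$. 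Having chosen $\gamma$, extend the Riemann surface of $f$ by gluing $\overline{T_\gamma}$ along the arc of $\partial\Delta$ carrying $\Gamma_1+\Gamma_2$, in the manner of Lemma \ref{patch}; this produces a normal mapping $g_1:\overline{\Delta}\to S$ whose boundary curve is obtained from $\Gamma_f$ by replacing $\Gamma_1+\Gamma_2$ with $\gamma$, which satisfies
\begin{equation*}
A(g_1,\Delta)=A(f,\Delta)+A(T_\gamma)\geq A(f,\Delta),\qquad L(g_1,\partial\Delta)=L(f,\partial\Delta)-L(\Gamma_1+\Gamma_2)+L(\gamma)\leq L(f,\partial\Delta)
\end{equation*}
by the spherical triangle inequality.

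The accounting for $V$, $V_E$, $V_{NE}$ then runs as follows. The vertex $q_2\notin E$ is destroyed; the vertices $q_1,q_3$ survive (possibly being absorbed into a common natural edge with their other neighbors if the new segments are collinear with $\Gamma_n$ or $\Gamma_3$, but this only decreases $V$). In the case $\gamma=\overline{q_1q_3}$, no new natural vertex is created, so $V_{NE}(g_1)\leq V_{NE}(f)-1$, $V_E(g_1)=V_E(f)$, and $V(g_1)\leq V(f)-1$. In the case $\gamma=\overline{q_1 e}+\overline{eq_3}$, a single new natural vertex is created at $e\in E$, so $V_{NE}(g_1)\leq V_{NE}(f)-1$, $V_E(g_1)\leq V_E(f)+1$, and $V(g_1)\leq V(f)$. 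If after this step every natural edge of $g_1$ has length $<\pi$, take $g=g_1$ and we are done. Otherwise one (or at most two) of the new natural edges has length in $[\pi,2\pi)$, and this can occur only when $L(\overline{q_1q_3})\geq\pi$, in which case this long edge has at least one endpoint in $E$; then Theorem \ref{>pi} (specifically the case (a) version, i.e.\ Lemma \ref{ini-1-pi}, or case (c) version, Lemma \ref{bd-bd>pi}, when two such long edges appear) applies to produce a further normal mapping $g$ with all edges of length $<\pi$ and with $V_{NE}(g)\leq V_{NE}(g_1)$, $V_E(g)\geq V_E(g_1)+1$, $V(g)\leq V(g_1)+1$, keeping area and length on the correct side. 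Combining the two bookkeeping steps gives the required bounds $V_{NE}(g)\leq V_{NE}(f)-1$, $V_E(g)\geq V_E(f)$, $V(g)\leq V(f)+1$.

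The main obstacle I anticipate is the second step: verifying that the path $\gamma$ can always be chosen so that $T_\gamma$ is a legitimate polygonal Jordan domain disjoint from $E$ in its interior, in all positional configurations of $q_1,q_2,q_3$ and of $E$. The delicate subcases are when the triangle $T$ would contain more than one point of $E$ (which needs to be ruled out using $L(\Gamma_j)<\pi$ and the geometry of $E$), when $q_1$ and $q_3$ are antipodal (so $\overline{q_1q_3}$ is not uniquely defined and one must pick the side through a convenient $e\in E$), and when combining $\gamma$ with $\Gamma_n$ or $\Gamma_3$ produces a natural edge of length $\geq\pi$ that must be broken by Theorem \ref{>pi}. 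Each of these requires a short but careful geometric argument parallel to those in the proofs of Lemmas \ref{parti-ini} and \ref{no-end-pt}.
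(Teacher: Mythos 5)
Your overall strategy --- patch a wedge on the right-hand side of the non-convex vertex, bounded by $\Gamma_1+\Gamma_2$ and a new polygonal path $\gamma$ from $q_1$ to $q_3$ that avoids $E$ in the wedge's interior, then repair any resulting natural edge of length $\geq\pi$ with Theorem \ref{>pi} --- is exactly the paper's (Lemmas \ref{CB} and \ref{CC}). But two cases you dismiss are genuinely needed, and one of them is where the bound $V(g)\leq V(f)+1$ in the statement actually comes from. First, you assert that since $\Gamma_f$ strictly turns right at $q_2$, ``the triangle with vertices $q_1,q_2,q_3$ encloses a nonempty open domain $T$.'' This fails when $\Gamma_1+\Gamma_2$ is a perigon angle at $q_2$, i.e. $\Gamma_2$ doubles back along $\Gamma_1$ (equivalently $q_3\in\Gamma_1$ or $q_1\in\Gamma_2$): then there is no wedge to patch. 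The paper's Lemma \ref{CA} treats this separately by identifying the overlapping arcs (a gluing, not a patching, argument), and your proposal has no substitute for it.

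Second, your ``direct geometric check'' that at most one point of $E$ needs to be bypassed is false. The closed triangle $\overline{T}$, even with $L(\Gamma_1),L(\Gamma_2)<\pi$, can have area close to $2\pi$ and can contain two of the points $0,1,\infty$ (which lie at mutual distance $\pi/2$ or $\pi$). This is the paper's Case D (Lemma \ref{CD}), where $\gamma$ must pass through two points of $E$ and hence creates two new $E$-vertices; it is precisely this case that produces $V(g)\leq V(f)+1$ rather than $V(g)\leq V(f)$, and your bookkeeping, which never creates more than one new vertex, does not cover it. A smaller issue of the same kind: a long natural edge produced by the merger of $\gamma$ with $\Gamma_n$ or $\Gamma_3$ need not have an endpoint in $E$, so you need case (b) of Theorem \ref{>pi}, not only (a) and (c); and if both ends merge in the no-$E$-point case you can get a single natural edge of length up to $3\pi$, requiring case (d). The compensating drops in $V(g_1)$ caused by the mergers do make the totals close, but this must be tracked subcase by subcase as in Lemmas \ref{CB}--\ref{CD}, not asserted globally.
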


\begin{proof}
We divide the proof into four parts, which is the coming Lemmas \ref{CA}--%
\ref{CD}.
\end{proof}

Before we introduce these lemmas, we first make some conventions.

We fix the normal mapping $f:\overline{\Delta }\rightarrow S$ and assume
\begin{equation}
\Gamma _{f}=\Gamma _{1}+\Gamma _{2}+\Gamma _{3}+\dots +\Gamma _{n}
\label{a-1}
\end{equation}%
is a natural partition of $\Gamma _{f},$ with $n=V(f)$,
\begin{equation*}
\partial \Delta =\gamma _{1}+\dots +\gamma _{n}
\end{equation*}%
is the corresponding natural partition of $\partial \Delta $ for $f,$ and
denote by $p_{j}=e^{i\theta _{j}},j=1,\dots ,n,$ the initial point of $%
\gamma _{j},j=1,\dots ,n$, with $\theta _{j+1}=\theta _{1}$ and%
\begin{equation*}
\theta _{1}<\theta _{2}<\dots <\theta _{n}<\theta _{1}+2\pi ;
\end{equation*}%
and assume that

(I) All natural edges of $f$ has spherical length strictly less than $\pi .$

Then, $q_{j}=f(p_{j})$ is the initial point of $\Gamma _{j}$ for each $%
j=1,2,\dots ,n,$ and by (I), the notation $\overline{q_{j}q_{j+1}}$ makes
sense, which is the unique shortest path from $q_{j}$ to $q_{j+1},$ and $%
\Gamma _{j}=\overline{q_{j}q_{j+1}},j=1,2,\dots ,n.$ Therefore, the natural
partition (\ref{a-1}) can be written%
\begin{equation*}
\Gamma _{f}=\overline{q_{1}q_{2}}+\overline{q_{2}q_{3}}+\dots +\overline{%
q_{n-1}q_{n}}.
\end{equation*}%
We will also assume that

(II) $\Gamma _{1}+\Gamma _{2}$ is not convex at $q_{2}\notin E.$

The assumption (II) means that either $\Gamma _{1}+\Gamma _{2}$ can be
regarded as a perigon angle, or the oriented triangle $\overline{%
q_{1}q_{3}q_{2}q_{1}}$ is a convex triangle. When $\Gamma _{1}+\Gamma _{2}$
is a perigon angle, there is only one case need to discuss.

\noindent \textbf{Case A. }$q_{3}\in \Gamma _{1}=\overline{q_{1}q_{2}}$ or $%
q_{1}\in \Gamma _{2}=\overline{q_{2}q_{3}}$.

When $\overline{q_{1}q_{3}q_{2}q_{1}}$ is a convex triangle, it encloses a
triangle domain $T$ that is on the right hand side of $\Gamma _{1}+\Gamma
_{2},$ and there are only three cases need to discuss:

\noindent \textbf{Case B. }$\left( \overline{\mathbf{T}}\backslash
\{q_{1},q_{3}\})\cap E\right) =\emptyset .$

\noindent \textbf{Case C. }There is only one point $q_{1}^{\prime }$ in $%
E=\{0,1,\infty \}$ that is located in $\overline{T}\backslash \{\Gamma
_{1}+\Gamma _{2}\}.$

\noindent \textbf{Case D. }There exist two points $q_{1}^{\prime }$ and $%
q_{1}^{\prime \prime }$ in $E=\{0,1,\infty \}$ that is located in the
triangle domain $T.$

Under these settings, we can execute deformations of $f\ $which will be
stated in the following Lemmas \ref{CA}--\ref{CD}.

\begin{lemma}
\label{CA}In Case A, there exists a normal mapping $g:\overline{\Delta }%
\rightarrow S$ such that
\begin{equation}
L(g,\partial \Delta )<L(f,\partial \Delta )\text{\textrm{\ and }}A(g,\Delta
)\geq A(f,\Delta ),  \label{1122-2}
\end{equation}%
each natural edge of $\Gamma _{g}$ has spherical length strictly less than $%
\pi ,$ and
\begin{equation}
V_{NE}(g)\leq V_{NE}(f)-1,V_{E}(g)\geq V_{E}(f),V(g)=V(f).  \label{1122-1}
\end{equation}
\end{lemma}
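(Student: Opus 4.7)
The plan is to fold the Riemann surface of $f$ along the overlap between $\Gamma_1$ and $-\Gamma_2$, using Lemma \ref{glue}. By the symmetry of the two subcases of Case A, assume without loss of generality that $q_3 \in \Gamma_1 = \overline{q_1 q_2}$, so that $\Gamma_2 = \overline{q_2 q_3}$ retraces backward along the terminal portion of $\Gamma_1$. Since $L(\Gamma_1) < \pi$ by the standing assumption (I), $f$ restricts to a homeomorphism from $\gamma_1$ onto $\Gamma_1$, and similarly from $\gamma_2$ onto $\Gamma_2$. Let $p_1' \in \gamma_1^\circ$ be the unique preimage of $q_3$, let $\alpha_1$ be the subarc of $\gamma_1$ from $p_1'$ to $p_2$, and let $\alpha_2$ be $\gamma_2$ with reversed orientation, running from $p_3$ to $p_2$. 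Then $\alpha_1$ and $\alpha_2$ share the endpoint $p_2$, have distinct initial points $p_1' \ne p_3$, and satisfy $f(\alpha_1(t)) = f(\alpha_2(t))$ under matching unit parametrizations.

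Next I would invoke Lemma \ref{glue} to identify $\alpha_1$ with $\alpha_2$ point by point. Since the two arcs lie in $\partial \Delta$, share only the endpoint $p_2$, and carry equal $f$-images, the lemma yields a normal mapping $g : \overline{\Delta} \to S$ with
\[
A(g, \Delta) = A(f, \Delta), \qquad L(g, \partial \Delta) = L(f, \partial \Delta) - 2 L(\Gamma_2) < L(f, \partial \Delta),
\]
and whose boundary curve, after reparametrization, equals
\[
\Gamma_g = \overline{q_1 q_3} + \Gamma_3 + \Gamma_4 + \dots + \Gamma_n.
\]
The new edge $\overline{q_1 q_3}$ is a subarc of $\Gamma_1$ on the same great circle, hence has spherical length at most $L(\Gamma_1) < \pi$; combined with assumption (I) applied to $\Gamma_3, \dots, \Gamma_n$, this shows every natural edge of $\Gamma_g$ has length strictly less than $\pi$, and gives the first half of \eqref{1122-2}.

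The final step is the vertex bookkeeping, which is the main obstacle. The loss of $q_2 \notin E$ as a boundary feature is immediate and yields $V_{NE}(g) \leq V_{NE}(f) - 1$. The remaining assertions $V_E(g) \geq V_E(f)$ and the equality $V(g) = V(f)$ in \eqref{1122-1} need a careful case split according to (a) whether $q_3 \in E$, (b) whether $\Gamma_3$ extends $\overline{q_1 q_3}$ along the same great circle at $q_3$ (in which case $q_3$ ceases to be a natural vertex of $g$), and (c) whether a point of $E = \{0, 1, \infty\}$ lies on the merged edge produced by joining $\overline{q_1 q_3}$ with $\Gamma_3$, creating a compensating $E$-vertex. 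In each subcase one checks directly that the loss of the non-$E$ vertex $q_2$ is exactly balanced, either by the preservation of $q_3$ as a natural vertex of $\Gamma_g$ or by the appearance of a new $E$-vertex along the straightened segment, so that $V(g) = V(f)$ and $V_E(g) \geq V_E(f)$ both hold. This combinatorial verification, rather than the gluing construction itself, is where the delicate part of the argument lies.
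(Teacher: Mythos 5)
Your gluing construction is the same as the paper's: fold the overlap of $\Gamma_1$ and $-\Gamma_2$ at $q_2$ via Lemma \ref{glue}, obtaining $g$ with $A(g,\Delta)=A(f,\Delta)$, $L(g,\partial\Delta)=L(f,\partial\Delta)-2L(\Gamma_2)$, and the permitted partition $\Gamma_g=\overline{q_1q_3}+\Gamma_3+\dots+\Gamma_n$. The gap is in your claim that this immediately gives the edge-length condition. The partition above is only a \emph{permitted} partition, not necessarily a natural one: if $q_3\notin E$ and $\Gamma_g$ is straight and simple near the preimage of $q_3$, then $q_3$ ceases to be a natural vertex and $\overline{q_1q_3}+\Gamma_3$ merges into a single natural edge, whose length is only bounded by $L(\Gamma_1)+L(\Gamma_3)<2\pi$ and may well be $\geq\pi$. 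In that case the conclusion ``each natural edge of $\Gamma_g$ has spherical length strictly less than $\pi$'' fails for the directly glued mapping. The paper repairs exactly this case by applying Theorem \ref{>pi} (cases (a)/(b)) to produce a further mapping $f_1$ with all edges of length $<\pi$, and then checks that the vertex counts still satisfy the required inequalities because the merge had already dropped $V_{NE}$ and $V$ by two. Your proof is missing this entire branch, and it cannot be dismissed as bookkeeping: without Theorem \ref{>pi} the stated edge-length conclusion is simply false in that subcase.

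Your vertex accounting is also off. The permitted partition has $n-1$ terms, so the gluing yields $V(g)\leq V(f)-1$ at best; no ``compensating $E$-vertex'' can appear on the straightened segment, because $\overline{q_1q_3}$ and $\Gamma_3$ are (subarcs of) natural edges of $f$ and hence contain no point of $E$ in their interiors. Thus the exact balance $V(g)=V(f)$ you claim to verify is unattainable by this construction; the equality in the lemma statement is evidently a typo for $V(g)\leq V(f)$ (which is what the paper's own proof establishes and what is used downstream in Theorem \ref{1-nconvex}), and the correct statement follows either directly from the $n-1$ term count or, in the merged case, from the bound $V(f_1)\leq V(g)+2\leq V(f)$ supplied by Theorem \ref{>pi}.
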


\begin{proof}
Assume Case A occurs. Then, without loss of generality, we may assume $%
q_{3}\in \Gamma _{1}.$ Let $p^{\prime }=e^{i\theta _{1}^{\prime }}$, $\theta
_{1}^{\prime }\in (\theta _{1},\theta _{2}),$ such that $f(p^{\prime
})=q_{3}.$ Then we can glue the section of $\partial \Delta $ from $%
p^{\prime }$ to $p_{2}$ and the section of $\partial \Delta $ from $p_{2}$
to $p_{3}$ and regard $f$ as a mapping $g$ of the glued closed set, which
can be regard as a closed disk, such that the boundary curve of $g$ has a
permitted partition%
\begin{equation}
\Gamma _{g}=\Gamma _{1}^{\prime }+\Gamma _{3}+\dots +\Gamma _{n},
\label{1122-3}
\end{equation}%
where $\Gamma ^{\prime }=\overline{q_{1}q^{\prime }}=\overline{q_{1}q_{3}}$
is the section of $\Gamma _{1}$ from $q_{1}=f(p_{1})$ to $q^{\prime
}=f(p^{\prime })=q_{3},$ and\textrm{\ }%
\begin{equation}
L(g,\partial \Delta )<L(f,\partial \Delta ),\ A(g,\Delta )=A(f,\Delta ).
\label{1122-4}
\end{equation}%
By (\ref{1122-3}), we have (\ref{1122-1}).

If (\ref{1122-3}) is a natural partition, then $g$ also satisfies (I) and
then $g$ is the desired mapping.

Assume that (\ref{1122-3}) is not a natural partition, which is only in the
case that
\begin{equation*}
\Gamma _{1}^{\prime \prime }=\Gamma _{1}^{\prime }+\Gamma _{3}=\overline{%
q_{1}q_{3}}+\overline{q_{3}q_{4}}
\end{equation*}%
is a natural edge of $\Gamma _{g}$ .

But in this case,
\begin{equation*}
\Gamma _{g}=\Gamma _{1}^{\prime \prime }+\Gamma _{4}+\dots +\Gamma _{n}
\end{equation*}%
is a natural partition$,$ and then we have%
\begin{equation}
V_{NE}(g)=V_{NE}(f)-2,V_{E}(g)=V_{E}(f),\ V(g)=V(f)-2.  \label{24}
\end{equation}%
By (I) we have%
\begin{equation*}
L(\Gamma _{1}^{\prime \prime })<2\pi .
\end{equation*}

%\begin{figure}[tbp]
%\centering
%\includegraphics[scale=0.85]{4-pt.eps} \label{4-pt}
%\end{figure}

If $L(\Gamma _{1}^{\prime \prime })<\pi ,$ then $g$ already satisfies all
the conclusions of Lemma \ref{CA}.

If $L(\Gamma _{1}^{\prime \prime })\geq \pi ,$ then $g$ satisfies (a) or (b)
of Theorem \ref{>pi}, and then there exists a normal mapping $f_{1}:%
\overline{\Delta }\rightarrow S$ such that
\begin{equation*}
L(f_{1},\partial \Delta )\leq L(g,\partial \Delta ),A(f_{1},\Delta )\geq
A(g,\Delta ),
\end{equation*}%
each natural edge of $f_{1}$ has spherical length strictly less than $\pi ,$
and
\begin{equation*}
V_{NE}(f_{1})\leq V_{NE}(g),V_{E}(f_{1})\geq V_{E}(g)+1,V(f_{1})\leq V(g)+2.
\end{equation*}

Then by (\ref{1122-4}) and (\ref{24}) we have
\begin{equation*}
L(f_{1},\partial \Delta )<L(f,\partial \Delta ),A(f_{1},\Delta )\geq
A(f,\Delta ),
\end{equation*}%
and
\begin{equation*}
V_{NE}(f_{1})\leq V_{NE}(f)-2,V_{E}(f_{1})\geq
V_{E}(f)+1>V_{E}(f),V(f_{1})\leq V(f).
\end{equation*}%
Thus, $f_{1}$ satisfies all the conclusion of Lemma \ref{CA}.
\end{proof}

\begin{lemma}
\label{CB}In Case B, there exists a normal mapping $g:\overline{\Delta }%
\rightarrow S$ such that
\begin{equation*}
L(g,\partial \Delta )<L(f,\partial \Delta )\text{\textrm{\ and }}A(g,\Delta
)>A(f,\Delta ),
\end{equation*}%
each natural edge of $\Gamma _{g}$ has spherical length strictly less than $%
\pi ,$ and%
\begin{equation*}
V_{NE}(g)\leq V_{NE}(f)-1,\text{\textrm{\ }}V_{E}(g)\geq V_{E}(f),V(g)\leq
V(f).
\end{equation*}
\end{lemma}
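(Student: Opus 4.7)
The strategy is to patch the triangle domain $T$ (which lies on the right-hand side of $\Gamma_1+\Gamma_2$ and, by the Case B hypothesis, avoids $E$ except possibly at the vertices $q_1,q_3$) onto the Riemann surface of $f$ along the boundary arc mapping to $\Gamma_1+\Gamma_2$. Concretely, let $\gamma_1+\gamma_2\subset\partial\Delta$ be the natural boundary arcs with $f(\gamma_j)=\Gamma_j$ for $j=1,2$; choose a Jordan domain $V\subset\mathbb{C}\setminus\overline{\Delta}$ with $(\partial V)\cap\partial\Delta=\gamma_1+\gamma_2$; and construct a homeomorphism $\tau:\overline{V}\to\overline{T}$ with $\tau|_{\gamma_1+\gamma_2}=f|_{\gamma_1+\gamma_2}$ and $\tau((\partial V)\setminus(\gamma_1+\gamma_2))=\overline{q_1q_3}$. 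Since the Case B hypothesis forces the oriented triangle $\overline{q_1q_3q_2q_1}$ to be a generic convex triangle, the notation $\overline{q_1q_3}$ makes sense and $L(\overline{q_1q_3})<\pi$. Lemma \ref{patch} then glues $f$ and $\tau$ into a normal mapping on $\overline{D}$ where $D=\Delta\cup V\cup(\gamma_1+\gamma_2)^{\circ}$; after a coordinate change $\overline{D}\to\overline{\Delta}$ we obtain a normal mapping $g_1:\overline{\Delta}\to S$.

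By construction $A(g_1,\Delta)=A(f,\Delta)+A(T)>A(f,\Delta)$, and the new boundary curve $\Gamma_{g_1}$ admits the permitted partition
\begin{equation*}
\Gamma_{g_1}=\overline{q_1q_3}+\Gamma_3+\cdots+\Gamma_n,
\end{equation*}
whose total spherical length equals $L(f,\partial\Delta)-L(\overline{q_1q_2})-L(\overline{q_2q_3})+L(\overline{q_1q_3})$, which is strictly less than $L(f,\partial\Delta)$ by the triangle inequality applied to the convex triangle $T$. The natural vertex $q_2$ of $\Gamma_f$ (which lies outside $E$) has been absorbed into the interior of the new edge $\overline{q_1q_3}$ while all vertices $q_1,q_3,q_4,\ldots,q_n$ persist on $\Gamma_{g_1}$.

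The remaining work is the vertex-count bookkeeping, which splits into two subcases according to whether the new edge $\overline{q_1q_3}$ is itself a natural edge of $\Gamma_{g_1}$. In the \emph{good subcase}, no merging occurs at $q_1$ or $q_3$ (this is automatic whenever $q_1\in E$ or $q_3\in E$, and otherwise holds unless the local concatenation is straight), so the partition above is already a natural partition, each of its edges has length $<\pi$, and one reads off $V_{NE}(g_1)=V_{NE}(f)-1$, $V_E(g_1)=V_E(f)$, $V(g_1)=V(f)-1$; take $g=g_1$. In the \emph{merging subcase}, $\overline{q_1q_3}$ fuses with $\Gamma_n$, $\Gamma_3$, or both, producing a single natural edge $\widetilde{\Gamma}$ of length at most $L(\Gamma_n)+L(\overline{q_1q_3})+L(\Gamma_3)<3\pi$; this shrinks $V_{NE}$ by $2$ or $3$ while leaving $V_E$ unchanged. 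If $L(\widetilde{\Gamma})<\pi$ we still take $g=g_1$; if $\pi\le L(\widetilde{\Gamma})<3\pi$, the configuration fits exactly one of cases (a)--(d) of Theorem \ref{>pi} (determined by which of the endpoints of $\widetilde{\Gamma}$ lie in $E$), so we apply that theorem once to $g_1$ to produce the desired $g$ with all natural edges of length $<\pi$. A direct arithmetic check — comparing the vertex increments $V_{NE}(g)-V_{NE}(g_1)\le 2$, $V_E(g)-V_E(g_1)\ge 1$, $V(g)-V(g_1)\le 3$ supplied by Theorem \ref{>pi} against the $-2$ or $-3$ drops from the merging — confirms $V_{NE}(g)\le V_{NE}(f)-1$, $V_E(g)\ge V_E(f)$, $V(g)\le V(f)$, together with the length and area inequalities (the latter inheriting the strict improvement from $g_1$).

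The principal obstacle is the merging subcase with $\widetilde{\Gamma}$ of length in $[\pi,3\pi)$: one must verify that the precise case of Theorem \ref{>pi} that applies is compatible with the fact that at least one endpoint of $\widetilde{\Gamma}$ (namely the absorbed vertex on the $q_1$- or $q_3$-side, now hidden in the interior) originally lay in $E$ or did not, and that the resulting vertex balance does not violate $V_{NE}(g)\le V_{NE}(f)-1$. This is a careful but mechanical case split mirroring the end of the proof of Lemma \ref{CA}.
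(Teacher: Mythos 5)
Your proposal is correct and follows essentially the same route as the paper's proof: patch the convex triangle domain $T$ along $\Gamma_1+\Gamma_2$ via Lemma \ref{patch} to replace $\overline{q_1q_2}+\overline{q_2q_3}$ by $\overline{q_1q_3}$, then handle possible edge-merging at $q_1$ and $q_3$ by one application of Theorem \ref{>pi}. The paper merely spells out your "merging subcase" as three separate cases (fusion with $\Gamma_n$ only, with $\Gamma_3$ only, or with both), and the vertex arithmetic you sketch matches its computations.
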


%\begin{figure}[tbp]
%\centering
%\includegraphics[scale=0.85]{caseb.eps} \label{caseb}
%\end{figure}

\begin{proof}
Putting
\begin{equation*}
\Gamma _{1}^{\prime }=\overline{q_{1}q_{3}},
\end{equation*}%
by (I) and (II), we have
\begin{equation}
L(\Gamma _{1}^{\prime })=L(\overline{q_{1}q_{3}})<\pi .  \label{33}
\end{equation}%
as in the previous proof, by Lemma \ref{patch}, there exists a normal
mapping $g$, which will be regarded as an extension of $f,$ such that $%
\Gamma _{g}$ has the permitted partition%
\begin{equation}
\Gamma _{g}=\Gamma _{1}^{\prime }+\Gamma _{3}+\dots +\Gamma _{n},
\label{128-2}
\end{equation}%
and%
\begin{equation*}
L(g,\partial \Delta )<L(f,\partial \Delta ),A(g,\Delta )>A(f,\Delta ).
\end{equation*}

Then
\begin{equation*}
V_{NE}(g)\leq V_{NE}(f)-1,V_{E}(g)=V_{E}(f),V(g)\leq V(f)-1,
\end{equation*}%
and there are four cases:

\noindent \textbf{Case 1. }Neither\textbf{\ }$\overline{q_{n}q_{1}q_{3}}$
nor $\overline{q_{1}q_{3}q_{4}}\ $is a natural edge of $\Gamma _{g}.$

\noindent \textbf{Case 2. }$\overline{q_{n}q_{1}q_{3}}\ $is a natural edge
of $\Gamma _{g},$ while $\overline{q_{1}q_{3}q_{4}}\ $is not$.$

\noindent \textbf{Case 3. }$\overline{q_{n}q_{1}q_{3}}$ is not a natural
edge of $\Gamma _{g}$, while $\overline{q_{1}q_{3}q_{4}}$ is.

\noindent \textbf{Case 4. }Both $\overline{q_{n}q_{1}q_{3}q_{4}}\ $is a
natural edge of $\Gamma _{g}$.

In Case 1, (\ref{128-2}) is a natural partition, and $g$ is the desired
mapping.

In Case 2, $g$ has a natural partition%
\begin{equation*}
\Gamma _{g}=\Gamma _{1}^{\prime \prime }+\Gamma _{3}+\dots +\Gamma _{n-1},
\end{equation*}%
where $\Gamma _{1}^{\prime \prime }=\Gamma _{n}+\Gamma _{1}^{\prime }=%
\overline{q_{n}q_{1}q_{3}},$ and it is clear that
\begin{equation*}
V_{NE}(g)=V_{NE}(f)-2,V_{E}(g)=V_{E}(f),V(g)=V(f)-2,
\end{equation*}%
and by (I) and (\ref{33}),
\begin{equation}
L(\Gamma _{1}^{\prime \prime })<2\pi .  \label{31}
\end{equation}

If $L(\Gamma _{1}^{\prime \prime })<\pi ,$ the $g$ satisfies all the
conclusions.

If $L(\Gamma _{1}^{\prime \prime })\geq \pi ,$ then by (I), (\ref{31}) and
Theorem \ref{>pi} for the cases (a) and (b), there exists a normal mapping $%
f_{1}:\overline{\Delta }\rightarrow S$ such that
\begin{equation*}
L(f_{1},\partial \Delta )\leq L(g,\partial \Delta ),A(f_{1},\Delta )\geq
A(g,\Delta ),
\end{equation*}%
each natural edge of $f_{1}$ has spherical length strictly less than $\pi ,$
and
\begin{equation*}
V_{NE}(f_{1})\leq V_{NE}(g_{1}),\mathrm{\ }V_{E}(f_{1})\geq
V_{E}(g)+1,V(f_{1})\leq V(g)+2.
\end{equation*}%
Then $f_{1}$ satisfies all the desired conditions in the lemma with%
\begin{equation*}
V_{NE}(f_{1})\leq V_{NE}(f)-2,V_{E}(f_{1})\geq V_{E}(f)+1\ \mathrm{and\ }%
V(f_{1})\leq V(f).
\end{equation*}

Case $3$ can be treated as Case $2.$

In case $4$ we have%
\begin{equation}
V_{NE}(g)=V_{NE}(f)-3,V_{E}(g)=V_{E}(f),V(g)=V(f)-3,  \label{35}
\end{equation}%
and $g$ has a natural partition%
\begin{equation}
\Gamma _{g}=\Gamma _{1}^{\prime \prime \prime }+\Gamma _{4}+\dots +\Gamma
_{n-1},  \label{32}
\end{equation}%
where
\begin{equation*}
\Gamma _{1}^{\prime \prime \prime }=\overline{q_{n}q_{1}}+\overline{%
q_{1}q_{3}}+\overline{q_{3}q_{4}}=\Gamma _{n}+\Gamma _{1}^{\prime }+\Gamma
_{3}.
\end{equation*}%
Then by (I) and (\ref{33}) we have%
\begin{equation}
\Gamma _{1}^{\prime \prime \prime }<3\pi .  \label{34}
\end{equation}

If $L(\Gamma _{1}^{\prime \prime \prime })<\pi ,$ then by (I), (\ref{35})
and (\ref{32}), $g$ is the desire mapping.

If $L(\Gamma _{1}^{\prime \prime \prime })\geq \pi ,$ then by (I), (\ref{32}%
) and Theorem \ref{>pi} (a), (b) and (d), there exists a normal mapping $%
g_{1}:\overline{\Delta }\rightarrow S$ such that
\begin{equation*}
L(g_{1},\partial \Delta )\leq L(g,\partial \Delta ),A(g_{1},\Delta )\geq
A(g,\Delta ),
\end{equation*}%
each natural edge of $g_{1}$ has spherical length strictly less than $\pi ,$
and
\begin{equation*}
V_{NE}(g_{1})\leq V_{NE}(g_{1})+2,\mathrm{\ }V_{E}(g_{1})\geq
V_{E}(g)+1,V(g_{1})\leq V(g)+3.
\end{equation*}%
Then $g_{1}$ satisfies all the desired conditions in the lemma with (by (\ref%
{35}))%
\begin{equation*}
V_{NE}(g_{1})\leq V_{NE}(f)-1,V_{E}(g_{1})\geq V_{E}(f)+1\ \mathrm{and\ }%
V(g_{1})\leq V(f).
\end{equation*}%
This completes the proof.
\end{proof}

\begin{lemma}
\label{CC}In Cases C, there exists a normal mapping $g:\overline{\Delta }%
\rightarrow S$ such that
\begin{equation*}
L(g,\partial \Delta )<L(f,\partial \Delta )\text{\textrm{\ and }}A(g,\Delta
)>A(f,\Delta ),
\end{equation*}%
each natural edge of $\Gamma _{g}$ has spherical length strictly less than $%
\pi ,$ and%
\begin{equation*}
V_{NE}(g)\leq V_{NE}(f)-1,V_{E}(g)\geq V_{E}(f)+1\ \mathrm{and}\ V(g)\leq
V(f).
\end{equation*}
\end{lemma}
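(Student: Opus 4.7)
The plan is to mimic the construction of Lemma \ref{CB} but to route the new boundary through the unique point $q_{1}^{\prime}\in E$ lying inside the convex triangle domain $T$, so that $q_{1}^{\prime}$ becomes a boundary natural vertex (rather than landing in the interior, which would violate the normal-mapping condition). Specifically, using that the triangle $\overline{q_{1}q_{3}q_{2}q_{1}}$ is convex and hence lies in some open hemisphere, the notations $\Gamma_{1}^{\prime}=\overline{q_{1}q_{1}^{\prime}}$ and $\Gamma_{1}^{\prime\prime}=\overline{q_{1}^{\prime}q_{3}}$ make sense, each has length strictly less than $\pi$, and by the triangle inequality inside the convex triangle domain $T$,
\begin{equation*}
L(\Gamma_{1}^{\prime})+L(\Gamma_{1}^{\prime\prime})<L(\Gamma_{1})+L(\Gamma_{2}),
\end{equation*}
the inequality being strict because $q_{1}^{\prime}\notin\Gamma_{1}\cup\Gamma_{2}$. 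Moreover, the path $\Gamma_{1}^{\prime}+\Gamma_{1}^{\prime\prime}-(\Gamma_{1}+\Gamma_{2})$ bounds a Jordan domain in $T$, and $T\cap E=\{q_{1}^{\prime}\}$ entirely lies on the new boundary.

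Next I would apply Lemma \ref{patch} to glue a homeomorphic copy of $\overline{T}$ along $\Gamma_{1}+\Gamma_{2}$ to the Riemann surface of $f$, producing a normal mapping $g_{0}:\overline{\Delta}\rightarrow S$ (after a coordinate change) whose boundary curve admits the permitted partition
\begin{equation*}
\Gamma_{g_{0}}=\Gamma_{1}^{\prime}+\Gamma_{1}^{\prime\prime}+\Gamma_{3}+\dots+\Gamma_{n},
\end{equation*}
with $L(g_{0},\partial\Delta)<L(f,\partial\Delta)$ and $A(g_{0},\Delta)=A(f,\Delta)+A(T)>A(f,\Delta)$. Because $q_{1}^{\prime}\in E$, it is automatically a natural vertex of $\Gamma_{g_{0}}$; the ``bad'' vertex $q_{2}\notin E$ has disappeared; and the natural vertex count is controlled by
\begin{equation*}
V_{NE}(g_{0})\leq V_{NE}(f)-1,\quad V_{E}(g_{0})\geq V_{E}(f)+1,\quad V(g_{0})\leq V(f).
\end{equation*}

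The remaining work is a case analysis, parallel to Cases 1--4 in the proof of Lemma \ref{CB}, for whether $\overline{q_{n}q_{1}}+\Gamma_{1}^{\prime}$ and/or $\Gamma_{1}^{\prime\prime}+\Gamma_{3}$ merge into natural edges of $\Gamma_{g_{0}}$ (i.e.\ when the adjacent edges happen to be collinear and contain no $E$-point at the joining vertex). In the benign cases the partition above is already natural and each piece has length $<\pi$, so $g=g_{0}$ works. In the merging cases, each merged edge has length $<2\pi$ by (I) together with $L(\Gamma_{1}^{\prime}),L(\Gamma_{1}^{\prime\prime})<\pi$; if one of them happens to have length $\geq\pi$, then by the structure of natural edges its endpoint on the $\Gamma_{1}^{\prime}+\Gamma_{1}^{\prime\prime}$ side is $q_{1}^{\prime}\in E$, which places us exactly in case (a) (and possibly (c)) of Theorem \ref{>pi}. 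I then apply Theorem \ref{>pi} to reduce to natural edges of length $<\pi$; the bookkeeping in Theorem \ref{>pi} (iii) preserves the inequalities $V_{NE}(g)\leq V_{NE}(f)-1$, $V_{E}(g)\geq V_{E}(f)+1$, and $V(g)\leq V(f)$ while keeping the strict length decrease and area increase inherited from $g_{0}$.

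The main obstacle is the bookkeeping in the merging subcases: one must verify that, even when \emph{both} $\overline{q_{n}q_{1}}+\Gamma_{1}^{\prime}$ and $\Gamma_{1}^{\prime\prime}+\Gamma_{3}$ become single natural edges of length $\geq\pi$, a single invocation of Theorem \ref{>pi} handles each without degrading the $V_{E}\geq V_{E}(f)+1$ bound. This is where the hypothesis $q_{1}^{\prime}\in E$ is essential, since it guarantees the merged long edge has an $E$-endpoint and thus falls into the favorable cases (a)/(c) (not (b) or (d)) of Theorem \ref{>pi}, which produce the required net gain in $V_{E}$ without forcing $V(g)>V(f)$.
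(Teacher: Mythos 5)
Your proposal is correct and follows essentially the same route as the paper: replace $\Gamma_1+\Gamma_2$ by $\overline{q_1q_1'}+\overline{q_1'q_3}$ so that the patched quadrilateral domain meets $E$ only at the boundary point $q_1'$, obtain a permitted partition with $V_{NE}$ dropped and $V_E$ raised, and then run the four-way merging analysis, invoking Theorem \ref{>pi} in cases (a)/(c) precisely because any merged long edge has the endpoint $q_1'\in E$. The only discrepancy is notational (the paper's $\Gamma_2'=\overline{q_1'q_2}$ is evidently a typo for $\overline{q_1'q_3}$, which is what you use), so no further comment is needed.
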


\begin{proof}
Assume Case C occurs and let $\Gamma _{1}^{\prime }=\overline{%
q_{1}q_{1}^{\prime }}$ and $\Gamma _{2}^{\prime }=\overline{q_{1}^{\prime
}q_{2}}.$ Then, considering that $q_{1},q_{1}^{\prime },q_{2}$ are contained
in the closure of the triangle domain $T$ which in on the left hand side of
the convex triangle $\overline{q_{1}q_{3}q_{2}q_{1}},$ we have%
\begin{equation}
L(\Gamma _{1}^{\prime })<\pi ,L(\Gamma _{2}^{\prime })<\pi ,\ L(\Gamma
_{1}^{\prime }+\Gamma _{2}^{\prime })<L(\Gamma _{1}+\Gamma _{2}),  \label{36}
\end{equation}%
and it is clear that%
\begin{equation*}
\Gamma _{1}^{\prime }+\Gamma _{2}^{\prime }-\Gamma _{2}-\Gamma _{1}
\end{equation*}%
is a quadrilateral and encloses a domain $T^{\prime }$ in $T$ that is on the
right hand side of $\Gamma _{1}+\Gamma _{2}.$ Then, by (\ref{36}), replacing
the the domain $T$ in the proof of Lemma \ref{CB} by $T^{\prime }$ and
repeating the extension arguments, we can obtain a normal mapping $g:%
\overline{\Delta }\rightarrow S$ such that%
\begin{equation}
L(g,\partial \Delta )<L(f,\partial \Delta )\text{\textrm{\ and }}A(g,\Delta
)>A(f,\Delta ),  \label{914}
\end{equation}%
and the boundary curve $\Gamma _{g}$ of $g$ has the following permitted
partition%
\begin{equation*}
\Gamma _{g}=\Gamma _{1}^{\prime }+\Gamma _{2}^{\prime }+\Gamma _{3}+\dots
+\Gamma _{n},
\end{equation*}%
which implies another permitted partition%
\begin{eqnarray}
\Gamma _{g} &=&\Gamma _{n}+\Gamma _{1}^{\prime }+\Gamma _{2}^{\prime
}+\Gamma _{3}+\dots +\Gamma _{n-1}.  \label{128-3} \\
&=&\overline{q_{n}q_{1}}+\overline{q_{1}q_{1}^{\prime }}+\overline{%
q_{1}^{\prime }q_{2}}+\overline{q_{2}q_{3}}+\dots +\overline{q_{n-1}q_{n}}.
\notag
\end{eqnarray}%
But here the terminal point $q_{1}^{\prime }$ of $\Gamma _{1}^{\prime }$,
which is the initial point of $\Gamma _{2}^{\prime },$ is in $E,$ and so we
have%
\begin{equation*}
V_{NE}(g)\leq V_{NE}(f)-1,V_{E}(g)=V_{E}(f)+1\ \mathrm{and\ }V(g)\leq V(f).
\end{equation*}

Now, there are four cases need to discuss.

\noindent \textbf{Case 1. }Neither $\Gamma _{n}+\Gamma _{1}^{\prime }=%
\overline{q_{n}q_{1}q_{1}^{\prime }}\ $nor $\Gamma _{2}^{\prime }+\Gamma
_{3}=\overline{q_{1}^{\prime }q_{2}q_{3}}\ $is a natural edge of $\Gamma
_{g}.$

\noindent \textbf{Case 2. }$\overline{q_{n}q_{1}q_{1}^{\prime }}$ is a
natural edge of $\Gamma _{g},$ while $\overline{q_{1}^{\prime }q_{2}q_{3}}$
is not.

\noindent \textbf{Case 3. }$\overline{q_{n}q_{1}q_{1}^{\prime }}$ is a
natural edge of $\Gamma _{g}$, while $\overline{q_{1}^{\prime }q_{2}q_{3}}\ $%
is not$.$

\noindent \textbf{Case 4. }Both $\overline{q_{n}q_{1}q_{1}^{\prime }}\ $and $%
\overline{q_{1}^{\prime }q_{2}q_{3}}\ $are natural edges of $\Gamma _{g}.$

In Case 1, (\ref{128-3}) is a natural partition, and $g$ is the desired
mapping.

In Case 2, $g$ has the natural partition%
\begin{equation*}
\Gamma _{g}=\Gamma _{1}^{\prime \prime }+\Gamma _{2}^{\prime }+\Gamma
_{3}+\dots +\Gamma _{n-1},
\end{equation*}%
where $\Gamma _{1}^{\prime \prime }=\Gamma _{n}+\Gamma _{1}^{\prime }=%
\overline{q_{n}q_{1}q_{1}^{\prime }},$ and it is clear that
\begin{equation}
V_{NE}(g)=V_{NE}(f)-2,V_{E}(g)=V_{E}(f)+1\ \mathrm{and\ }V(g)=V(f)-1.
\label{916}
\end{equation}%
and by (I) and (\ref{36})
\begin{equation*}
L(\Gamma _{1}^{\prime \prime })<2\pi .
\end{equation*}

If $L(\Gamma _{1}^{\prime \prime })<\pi ,$ then $g$ is the desired mapping
with (\ref{916}).

If $\pi \leq L(\Gamma _{1}^{\prime \prime })<2\pi ,$ then by (I) and Theorem %
\ref{>pi} (a) (note that $q_{1}^{\prime }\in E$ is the terminal point of $%
\Gamma _{1}^{\prime \prime })$, there exists a normal mapping $g_{1}:%
\overline{\Delta }\rightarrow S$ such that
\begin{equation*}
L(g_{1},\partial \Delta )\leq L(g,\partial \Delta ),A(g_{1},\Delta )\geq
A(g,\Delta ),
\end{equation*}%
each natural edge of $g_{1}$ has spherical length strictly less than $\pi ,$
and

\begin{equation*}
V_{NE}(g_{1})\leq V_{NE}(g),\mathrm{\ }V_{E}(g_{1})\geq V_{E}(g)+1\ \mathrm{%
and\ }V(f_{1})\leq V(g)+1.
\end{equation*}%
Then, by (\ref{914}) and (\ref{916}), $g_{1}$ satisfies all the desired
conclusions in Lemma \ref{CC} with%
\begin{equation*}
V_{NE}(f_{1})\leq V_{NE}(f)-2,\mathrm{\ }V_{E}(f_{1})\geq V_{E}(f)+2\
\mathrm{and\ }V(f_{1})\leq V(f).
\end{equation*}

Case $3$ can be treated as Case $2.$

In case $4$ we have%
\begin{equation}
V_{NE}(g)=V_{NE}(f)-3,V_{E}(g)=V_{E}(f)+1\ \mathrm{and\ }V(g)\leq V(f)-3
\label{38}
\end{equation}%
and $g$ has a natural partition%
\begin{equation}
\Gamma _{g}=\Gamma _{1}^{\prime \prime }+\Gamma _{2}^{\prime \prime }+\Gamma
_{4}+\dots +\Gamma _{n-1},  \label{37}
\end{equation}%
where $\Gamma _{1}^{\prime \prime }=\Gamma _{n}+\Gamma _{1}^{\prime }=%
\overline{q_{n}q_{1}q_{1}^{\prime }}\ $and $\Gamma _{2}^{\prime \prime
}=\Gamma _{2}^{\prime }+\Gamma _{3}=\overline{q_{1}^{\prime }q_{2}q_{3}}.$
By (\ref{36}) and (I), we have%
\begin{equation}
L(\Gamma _{1}^{\prime \prime })<2\pi ,L(\Gamma _{2}^{\prime \prime })<2\pi .
\label{919}
\end{equation}

If
\begin{equation}
L(\Gamma _{1}^{\prime \prime })<\pi ,L(\Gamma _{2}^{\prime \prime })<\pi ,
\label{920}
\end{equation}
then by (I), (\ref{914}), (\ref{38}) and (\ref{37}), $g$ is the desired
mapping.

If (\ref{920}) does not hold, then by (I), (\ref{37}), (\ref{919}) and the
fact that both $\Gamma _{1}^{\prime \prime }$ and $\Gamma _{2}^{\prime
\prime }$ have endpoints in $E,$ Theorem \ref{>pi} (a) or (c) applies to $g$%
, and then, there exists a normal mapping $f_{1}:\overline{\Delta }%
\rightarrow S$ such that
\begin{equation*}
L(f_{1},\partial \Delta )\leq L(g,\partial \Delta ),A(f_{1},\Delta )\geq
A(g,\Delta ),
\end{equation*}%
each natural edge of $f_{1}$ has spherical length strictly less than $\pi ,$
and%
\begin{equation*}
V_{NE}(f_{1})\leq V_{NE}(g_{1}),\mathrm{\ }V_{E}(f_{1})\geq V_{E}(g)+1,\
\mathrm{and\ }V(f_{1})\leq V(g)+2.
\end{equation*}%
Then $f_{1}$ satisfies all the desired conclusions of Lemma \ref{CC} with
(by (\ref{38}))
\begin{equation*}
V_{NE}(f_{1})\leq V_{NE}(f)-3,\mathrm{\ }V_{E}(f_{1})\geq V_{E}(f)\ \mathrm{%
and}\ V(f_{1})\leq V(f)-1.
\end{equation*}%
This completes the proof.
\end{proof}

\begin{lemma}
\label{CD}In Case D, there exists a normal mapping $g:\overline{\Delta }%
\rightarrow S$ such that
\begin{equation*}
L(g,\partial \Delta )<L(f,\partial \Delta )\text{\textrm{\ and }}A(g,\Delta
)>A(f,\Delta ),
\end{equation*}%
each natural edge of $\Gamma _{g}$ has spherical length strictly less than $%
\pi ,$ and%
\begin{equation*}
V_{NE}(g)\leq V_{NE}(f)-1,V_{E}(g)\geq V_{E}(f)+1\ \mathrm{and\ }V(g)\leq
V(f)+1.
\end{equation*}
\end{lemma}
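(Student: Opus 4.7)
The plan is to mimic the construction of Lemma \ref{CC}, but instead of replacing the nonconvex pair $\Gamma_1 + \Gamma_2$ by the broken geodesic $\overline{q_1 q_1' q_2}$ through the single point $q_1' \in E \cap T$, we insert a broken geodesic $\overline{q_1 q_1' q_1'' q_2}$ that passes through both points of $E$ lying in the convex triangle domain $T$. Order $q_1'$ and $q_1''$ so that, walking from $q_1$ to $q_2$ through $\overline{T}$ staying close to $\Gamma_1 + \Gamma_2$, the point $q_1'$ comes first. Set
\[
\Gamma_1' = \overline{q_1 q_1'}, \qquad \gamma = \overline{q_1' q_1''}, \qquad \Gamma_2' = \overline{q_1'' q_2}.
\]
Since all four endpoints lie in the closure of the convex triangle domain $T$ (which by Remark \ref{loc-con} sits in an open hemisphere of $S$), Lemma \ref{convextri} gives $L(\Gamma_1') < \pi$, $L(\gamma) < \pi$ and $L(\Gamma_2') < \pi$. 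Moreover the closed curve $\Gamma_1' + \gamma + \Gamma_2' - (\Gamma_1 + \Gamma_2)$ is a (possibly degenerate) convex pentagon on the right hand side of $\Gamma_1 + \Gamma_2$ and encloses a polygonal Jordan domain $T' \subset T$; because $\Gamma_1 + \Gamma_2$ is not convex at $q_2$, the triangle inequality on $S$ gives
\[
L(\Gamma_1') + L(\gamma) + L(\Gamma_2') < L(\Gamma_1) + L(\Gamma_2).
\]

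Next I would apply Lemma \ref{patch} to patch $\overline{T'}$ to the Riemann surface of $f$ along the section $\gamma_1 + \gamma_2$ of $\partial \Delta$, obtaining a normal mapping $g:\overline{\Delta}\to S$ with $A(g,\Delta) = A(f,\Delta)+A(T') > A(f,\Delta)$, $L(g,\partial \Delta) < L(f,\partial \Delta)$, and with boundary curve admitting the permitted partition
\[
\Gamma_g \;=\; \Gamma_n + \Gamma_1' + \gamma + \Gamma_2' + \Gamma_3 + \dots + \Gamma_{n-1}.
\]
Because the two endpoints of $\gamma$ lie in $E$, $\gamma$ is automatically a natural edge of $\Gamma_g$. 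The only vertex of $\Gamma_f$ removed by the construction is $q_2 \notin E$, while two new natural vertices $q_1',q_1'' \in E$ are created, so before any merging of adjacent edges we already have $V_{NE}(g) \le V_{NE}(f) - 1$, $V_E(g) = V_E(f) + 2$, $V(g) \le V(f) + 1$.

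It remains to convert this permitted partition into a natural one and to enforce the $<\pi$ length condition. As in the proof of Lemma \ref{CC}, I would split into four cases according to whether $\Gamma_n + \Gamma_1'$ (resp.\ $\Gamma_2' + \Gamma_3$) is a natural edge of $\Gamma_g$ or not; note that $\gamma$ is never merged with its neighbors because its endpoints are in $E$. In the case when neither merging occurs, the partition above is already natural and $g$ works. In the one-sided merging cases, exactly one of $\Gamma_n + \Gamma_1'$ or $\Gamma_2' + \Gamma_3$ becomes a natural edge, with length strictly less than $2\pi$ by the individual bounds, and with one endpoint in $E$ (namely $q_1'$ or $q_1''$); if its length is $<\pi$ we are done, otherwise Theorem \ref{>pi}(a) applies and produces the desired $g$ from it. In the two-sided merging case, both merged edges have one endpoint in $E$ and length $<2\pi$; either both are $<\pi$ and $g$ itself works, or Theorem \ref{>pi}(a) or (c) applies. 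A careful bookkeeping of $V_{NE}$, $V_E$, $V$ across these sub-cases, exactly parallel to that in the proof of Lemma \ref{CC}, shows that in every case the final mapping satisfies $V_{NE}\le V_{NE}(f)-1$, $V_E\ge V_E(f)+1$, $V\le V(f)+1$.

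The main obstacle I anticipate is purely combinatorial: Theorem \ref{>pi} can increase $V_{NE}$ and $V(\cdot)$ by as much as $2$, so one must verify in each of the four sub-cases that the $V(g)\le V(f)+1$ bound is not violated. The worst configuration is the two-sided merging case combined with an application of Theorem \ref{>pi}(c), and here the bookkeeping uses that $V(g)$ already dropped by at least $3$ before the merging corrections are applied. Once this ledger is verified, the conclusion of Lemma \ref{CD} follows, completing Case D and hence the proof of Theorem \ref{1-nconvex}.
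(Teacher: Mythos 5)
Your construction silently assumes that the three-segment replacement path through both points of $E$ is convex, and that is precisely what cannot be taken for granted in Case D. Write $q_1',q_1''$ for the two points of $E$ in $T$ and let $L$ be the geodesic through them with endpoints on $\partial T$. When $L$ runs from a point of $\Gamma_1$ to a point of $\Gamma_2$, the path $\overline{q_1q_1'q_1''q_3}$ is convex, the pentagon it bounds together with $-\Gamma_2-\Gamma_1$ is convex, and the perimeter monotonicity of nested convex regions in a hemisphere gives $L(\Gamma_1')+L(\gamma)+L(\Gamma_2')<L(\Gamma_1)+L(\Gamma_2)$; there your argument works, and this is one of the two sub-cases of the paper's proof. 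But when $L$ meets $\overline{q_1q_3}$ --- say $q_1'$ lies close to $\Gamma_1$ near $q_2$ while $q_1''$ lies close to $\overline{q_1q_3}$ --- the path $q_1\to q_1'\to q_1''\to q_3$ zigzags: it is not convex at $q_1'$, the enclosed region is not a convex pentagon, and the step you attribute to ``the triangle inequality'' is false. A Euclidean model makes this concrete: with $q_1=(0,0)$, $q_2=(1,1)$, $q_3=(2,0)$, $q_1'=(0.9,0.8)$, $q_1''=(1,0.05)$, the replacement path has length about $2.96$ while $L(\Gamma_1)+L(\Gamma_2)=2\sqrt2\approx 2.83$. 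In such a configuration your $g$ satisfies $L(g,\partial\Delta)>L(f,\partial\Delta)$ and the lemma fails. The paper's proof turns exactly on this dichotomy: when $L$ meets $\overline{q_1q_3}$ it threads the new boundary through only the point of $E$ farther from $\overline{q_1q_3}$, so that the other point falls outside the patched domain, and the whole argument of Lemma \ref{CC} then applies verbatim; only when $L$ joins a point of $\Gamma_1$ to a point of $\Gamma_2$ does it pass the path through both points. This case distinction is the missing idea.

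A smaller but genuine defect: for $q_2$ to stop being a natural vertex --- which is what produces $V_{NE}(g)\le V_{NE}(f)-1$ --- the replacement path must terminate at $q_3$ and replace $\Gamma_1+\Gamma_2$ in its entirety, as your displayed partition $\Gamma_n+\Gamma_1'+\gamma+\Gamma_2'+\Gamma_3+\dots+\Gamma_{n-1}$ in fact requires ($\Gamma_3$ begins at $q_3$). As you define it, $\Gamma_2'=\overline{q_1''q_2}$ ends at $q_2$, so the partition does not concatenate and the nonconvex vertex $q_2$ survives, breaking both the vertex count and the purpose of the deformation. By contrast, the combinatorial ledger in your last paragraph (merging of adjacent edges and corrections via Theorem \ref{>pi}) does parallel Lemma \ref{CC} and is not where the difficulty lies.
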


%\begin{figure}[tbp]
%\centering
%\includegraphics[scale=0.85]{2E.eps} \label{2E}
%\end{figure}

\begin{proof}
In Case D, $q_{1}^{\prime }\in T$ and $q_{2}^{\prime }\in T$ are
the only points in $\overline{T}\cap E.$ Let $L$ be the line
segment in $\overline{T}$ that passes through $q_{1}^{\prime }$ and $%
q_{2}^{\prime }$ and has endpoints in $\partial T.$ Then there are two cases:

\noindent \textbf{Case 1. }$L$ intersects $\overline{q_{1}q_{3}}$.

\noindent \textbf{Case 2. }$L$ does not intersect $\overline{q_{1}q_{3}}$.

Assume Case 1 occurs and, without loss of generality, assume $q_{2}^{\prime
} $ is closer to $\overline{q_{1}q_{3}}$ than $q_{1}^{\prime }.$ Let $\Gamma
_{1}^{\prime }=\overline{q_{1}q_{1}^{\prime }}$ and $\Gamma _{2}^{\prime }=%
\overline{q_{1}^{\prime }q_{2}}$ (a))$.$ Then $\Gamma _{1}^{\prime
}$ and $\Gamma _{2}^{\prime }$ satisfy all the conditions in the
proof of Lemma \ref{CC}, and in this case, we can prove Lemma
\ref{CD} by exactly repeating the proof of Lemma \ref{CC}.

Assume Case 2 occurs. Then one endpoint $q_{1}^{\prime \prime }$ of $L$ is
in the interior of $\Gamma _{1}$ and the other endpoint $q_{2}^{\prime
\prime }$ of $L$ is in the interior of $\Gamma _{2}.$ Without loss of
generality, assume $q_{1}^{\prime \prime },q_{1}^{\prime },q_{2}^{\prime }$
and $q_{2}^{\prime \prime }$ are arranged in order on $L.$ Let $\Gamma
_{1}^{\prime }=\overline{q_{1}q_{1}^{\prime }},\Gamma ^{\prime \prime }=%
\overline{q_{1}^{\prime }q_{2}^{\prime }}$ and $\Gamma _{2}^{\prime }=%
\overline{q_{2}^{\prime }q_{3}}$. Then,
considering that $T$ is on the left hand side of the convex triangle $%
\overline{q_{1}q_{3}q_{2}q_{1}},$ we have that%
\begin{equation*}
L(\Gamma _{1}^{\prime })<\pi ,L(\Gamma ^{\prime \prime })=\frac{\pi }{2}%
,L(\Gamma _{2}^{\prime })<\pi ,
\end{equation*}%
\begin{equation*}
L(\Gamma _{1}^{\prime }+\Gamma ^{\prime \prime }+\Gamma _{2}^{\prime
})<L(\Gamma _{1}+\Gamma _{2});
\end{equation*}%
and the domain $T$ enclosed by $\Gamma _{1}^{\prime }+\Gamma ^{\prime \prime
}+\Gamma _{2}^{\prime }-\Gamma _{2}-\Gamma _{1}$ is a polygonal Jordan
domain on the right hand side of $\Gamma _{1}+\Gamma _{2}$ with
\begin{equation*}
\overline{T}\cap E=\{q_{1},q_{2}\}.
\end{equation*}%
Then by Lemma \ref{patch} and the extension arguments, there exists a normal
mapping $g:\overline{\Delta }\rightarrow S$ such that%
\begin{equation*}
L(g,\partial \Delta )<L(f,\partial \Delta )\text{\textrm{\ and }}A(g,\Delta
)>A(f,\Delta ).
\end{equation*}%
and $\Gamma _{g}$ has a permitted partition
\begin{equation*}
\Gamma _{g}=\Gamma _{1}^{\prime }+\Gamma ^{\prime \prime }+\Gamma
_{2}^{\prime }+\Gamma _{3}+\dots +\Gamma _{n},
\end{equation*}%
which implies the following permitted partition%
\begin{eqnarray*}
\Gamma _{g} &=&\Gamma _{n}+\Gamma _{1}^{\prime }+\Gamma ^{\prime \prime
}+\Gamma _{2}^{\prime }+\Gamma _{3}+\dots +\Gamma _{n-1} \\
&=&\overline{q_{n}q_{1}}+\overline{q_{1}q_{1}^{\prime }}+\overline{%
q_{1}^{\prime }q_{2}^{\prime }}+\overline{q_{2}^{\prime }q_{3}}+\overline{%
q_{2}q_{3}}+\dots +\overline{q_{n-1}q_{n}}.
\end{eqnarray*}%
Since $q_{1}^{\prime },q_{2}^{\prime }\in E,$ it is clear that%
\begin{equation*}
V_{NE}(g)\leq V_{NE}(f)-1,V_{E}(g)\geq V_{E}(f)+2\ \mathrm{and\ }V(g)\leq
V(f)+1.
\end{equation*}%
Now, there are four cases:

\noindent \textbf{Case 2.1. }None of $\Gamma _{n}+\Gamma _{1}^{\prime }=%
\overline{q_{n}q_{1}q_{1}^{\prime }}$ and $\Gamma ^{\prime \prime }+\Gamma
_{2}^{\prime }=\overline{q_{1}^{\prime }q_{2}^{\prime }q_{3}}$ is a natural
edge of $\Gamma _{g}.$

\noindent \textbf{Case 2.2. }$\overline{q_{n}q_{1}q_{1}^{\prime }}\ $is a
natural edge of $\Gamma _{g},$ while $\overline{q_{1}^{\prime }q_{2}^{\prime
}q_{3}}\ $is not.

\noindent \textbf{Case 2.3. }$\overline{q_{n}q_{1}q_{1}^{\prime }}$ is not a
natural edge of $\Gamma _{g},$ while $\overline{q_{1}^{\prime }q_{2}^{\prime
}q_{3}}\ $is.

\noindent \textbf{Case 2.4. }Both $\overline{q_{n}q_{1}q_{1}^{\prime }}$ and
$\overline{q_{1}^{\prime }q_{2}^{\prime }q_{3}}$ are natural edges of $%
\Gamma _{g}.$

The discussion for these cases is almost the same as that for the four Cases
1--4 in the proof of Lemma \ref{CC}, just with a little difference which
leads to that the desired mapping may has a number of $V(f)+1$ natural edges.
\end{proof}

\section{Decomposition and deformation of Riemann surfaces of normal
mappings \label{ss-9finite}}

In this section, we prove the following theorem, which is the first key step
to prove the main theorem in Section \ref{ss-p2}.

\begin{theorem}
\label{key}Let $f:\overline{\Delta }\rightarrow S$ be a normal mapping and
assume that each natural edge of $f$ has spherical length strictly less than
$\pi $. Then, there exist a finite number of normal mappings $f_{j}:%
\overline{\Delta }\rightarrow S,j=1,\dots m,$ with $m\geq 1,$ such that%
\begin{equation*}
\sum_{j=1}^{m}L(f_{j},\partial \Delta )\leq L(f,\partial \Delta
),\sum_{j=1}^{m}A(f_{j},\Delta )\geq A(f,\Delta ),
\end{equation*}%
and for each $j\leq m$ the followings hold.

(i) Each natural edge of $f_{j}$ has spherical length strictly less than $%
\pi $.

(ii) The boundary curve $\Gamma _{f_{j}}=f_{j}(z),z\in \partial \Delta $, is
locally convex in $S\backslash E,$ where $E=\{0,1,\infty \}.$

(iii) $f_{j}$ has no branched point in $S\backslash E.$
\end{theorem}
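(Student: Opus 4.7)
The plan is to construct the desired family by alternating two deformation operations, starting from the singleton $\{f\}$ and maintaining throughout a finite family $\mathcal{F} = \{h^{(1)}, \ldots, h^{(k)}\}$ of normal mappings on $\overline{\Delta}$ whose members all satisfy (i) and such that $\sum_j L(h^{(j)}, \partial \Delta) \le L(f, \partial \Delta)$ and $\sum_j A(h^{(j)}, \Delta) \ge A(f, \Delta)$. \emph{Operation I} (restoring local convexity): while some $h^{(j)} \in \mathcal{F}$ has a natural vertex outside $E$ at which $\Gamma_{h^{(j)}}$ is not convex, replace $h^{(j)}$ by the single normal mapping produced by Theorem \ref{1-nconvex}. \emph{Operation II} (removing branched points outside $E$): while some $h^{(j)} \in \mathcal{F}$ has a branched point in $S \setminus E$, replace $h^{(j)}$ by the pair of normal mappings produced by Theorem \ref{1-br-2-map}. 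Both operations preserve (i) and respect the accumulating length and area inequalities by the conclusions of Theorems \ref{1-nconvex} and \ref{1-br-2-map}. Operation~I, applied in a single inner loop to a fixed family, terminates because each invocation strictly decreases $V_{NE}$ of the acted-upon member while leaving the others untouched.

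The heart of the argument, and the main obstacle, is to show that the outer alternation of Operations I and II terminates, since Operation II can destroy local convexity on the two new pieces and Operation I might (a priori) leave residual ramification points. The plan is to introduce a single non-negative integer potential
\begin{equation*}
\Phi(\mathcal{F}) \;=\; M \cdot B_{NE}(\mathcal{F}) + V_{NE}(\mathcal{F}),
\end{equation*}
where $M \ge 3$ is a fixed integer, $V_{NE}(\mathcal{F}) = \sum_j V_{NE}(h^{(j)})$, and $B_{NE}(\mathcal{F}) = \sum_j \sum_{p \in \overline{\Delta} \setminus (h^{(j)})^{-1}(E)} b_{h^{(j)}}(p)$ counts total branched numbers of members of $\mathcal{F}$ at points whose images lie outside $E$. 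I then plan to show: Operation I preserves $B_{NE}$ while strictly decreasing $V_{NE}$; Operation II strictly decreases $B_{NE}$ while increasing $V_{NE}$ by at most $2$ (by part (iii) of Theorem \ref{1-br-2-map}). Together with $M \ge 3$, this forces $\Phi$ to strictly decrease at every step of either loop, so the alternation halts. Upon termination, every member of $\mathcal{F}$ satisfies (ii) and (iii) by definition, establishing the theorem.

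The delicate verification is precisely the behavior of $B_{NE}$ under the two operations. For Operation I, inspection of the constructions in Lemmas \ref{CA}--\ref{CD} shows that each deformation is either a boundary identification (Case A) or the patching of an external triangle or quadrilateral along the existing boundary (Cases B, C, D); in every case no ramification point is created in the interior of the new disk, and any new boundary singularities occur at the images of points already in $E$, so $B_{NE}$ is preserved. For Operation II, tracing through the proof of Theorem \ref{1-br-2-map} reduces matters to Lemmas \ref{move}, \ref{move-bd}, \ref{cut-1}, and \ref{cut-2}: Lemmas \ref{move} and \ref{move-bd} transport branched numbers from the interior to the boundary without creating new ones, and Lemmas \ref{cut-1}, \ref{cut-2} then split off a branched point via a cut, so that a book-keeping of branched numbers in the spirit of equations (22) and (23) from the proof of Corollary \ref{ccut-1} yields a strict drop in $B_{NE}$ across one composite Operation II. This branch-count book-keeping is the main technical hurdle; once it is in place, the potential argument and hence the proof of Theorem \ref{key} follow.
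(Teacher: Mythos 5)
Your overall architecture (alternate Theorem \ref{1-nconvex} and Theorem \ref{1-br-2-map}, then prove the alternation halts) matches the paper's strategy, but your termination argument has a genuine gap: neither of the two monotonicity claims for $B_{NE}$ is established, and both appear to be false. For Operation I, the constructions in Lemmas \ref{CA}--\ref{CD} (and the further invocations of Theorem \ref{>pi} inside them) patch a triangle or quadrilateral onto the \emph{outside} of the boundary along $\Gamma_1+\Gamma_2$; the new boundary singularities this creates occur at the endpoints $q_1$ and $q_3$ of the replaced edges, which are natural vertices of $\Gamma_f$ but in general do \emph{not} lie in $E$. Adding the patched sector's angle at such an endpoint can raise the local degree there past the threshold $\left[\frac{d+1}{2}\right]-1$, so $B_{NE}$ can strictly increase under Operation I; since your potential weights $B_{NE}$ by $M\ge 3$, a single such increase destroys the monotonicity of $\Phi$. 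For Operation II, Theorem \ref{1-br-2-map} asserts nothing about branched numbers, and the paper's own bookkeeping shows why a strict decrease is not available: in the proof of Corollary \ref{ccut-1}, the branching at the cut origin drops by one (equation (\ref{23})) but the branching at the two cut endpoints can \emph{rise} by one (equation (\ref{22})), giving only the non-strict inequality (\ref{1211-1}) --- and even that is proved only in the special case $\beta(1)\in E$, whereas the cuts actually used in Lemmas \ref{cut-1} and \ref{cut-2} terminate at arbitrary boundary points and are followed by further applications of Theorem \ref{>pi}. So $\Phi$ need not decrease at either step, and termination is not proved.

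The paper closes this gap by an entirely combinatorial induction on the level $k=V_{NE}(f)+V(f)$, never tracking branched numbers. The key points are: Lemma \ref{E>3} forces $V(g_j)\ge 3$ for each piece produced by Theorem \ref{1-br-2-map}, so $V(g_1)+V(g_2)\le V(f)+2$ gives $V(g_j)\le V(f)-1$; when both pieces have $V_{NE}\ge 2$ their levels strictly drop and induction applies; the residual case where one piece has $V_{NE}\le 1$ is disposed of directly by Lemma \ref{Vne=0/1}, and the possibility that the \emph{other} piece stays at level $k+1$ indefinitely is excluded by Lemma \ref{NE=01}, which shows each such occurrence consumes at least $\pi$ of the fixed boundary length budget $L(f,\partial\Delta)$. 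This length-consumption argument is the essential ingredient your potential-function scheme is missing; without it (or a corrected monotone quantity), the alternation you describe is not known to halt.
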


We first prove several lemmas before we prove this theorem.

\begin{lemma}
\label{E>3}Let $f:\overline{\Delta }\rightarrow S$ be a normal mapping and
assume that each natural edge of $\Gamma _{f}=f(z),z\in \partial \Delta ,$
has spherical length strictly less than $\pi .$ Then $V(f)\geq 3,$ and if in
addition $V_{NE}(f)=0,$ then $V(f)\geq 4.$
\end{lemma}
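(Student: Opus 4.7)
The plan is to rule out the cases $V(f)\in\{0,1,2\}$ for the first assertion, and also $V(f)=3$ under the additional hypothesis $V_{NE}(f)=0$ for the second. The underlying rigidity I will use repeatedly is: a natural edge, being locally simple and locally straight, is contained in some great circle of $S$; since a normal mapping is non-constant on every arc of $\partial\Delta$ (by condition (c) of Definition \ref{722-1}), a natural edge whose two endpoints coincide must traverse its great circle at least once and hence has length $\geq 2\pi$.

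First I would dispose of $V(f)=0$ and $V(f)=1$ directly: in both cases the whole $\partial\Delta$ is (regarded as) a single natural edge with coincident endpoints, hence has length $\geq 2\pi$ by the rigidity above, contradicting the length hypothesis. For $V(f)=2$, the two natural edges $\Gamma_1,\Gamma_2$ are geodesic segments of length $<\pi$ joining $q_1$ and $q_2$. The case $q_1=q_2$ is excluded by rigidity; for $q_1\neq q_2$, uniqueness of the shortest geodesic between non-antipodal points of $S$ forces $\Gamma_1$ and $\Gamma_2$ to coincide as sets, i.e., $\Gamma_2=-\Gamma_1$. Pulling this back to two arcs of $\partial\Delta$ from the common initial vertex $p_1$ and applying Lemma \ref{glue} (in the case $\alpha_1(1)=\alpha_2(1)$), $f$ can be realized as an open continuous surjection $S\to S$, so $f$ takes every value of $S$. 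But $f(\partial\Delta)=\Gamma_1$ is a geodesic segment of length $<\pi$, which cannot contain all three points of $E=\{0,1,\infty\}$; hence some point of $E$ lies in $f(\Delta)$, contradicting the definition of a normal mapping. This gives $V(f)\geq 3$.

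For the refinement, suppose $V(f)=3$ and $V_{NE}(f)=0$, so each $q_j\in E$. The three cyclic pairs $(q_j,q_{j+1})$ must each satisfy $q_j\neq q_{j+1}$ (otherwise the edge is degenerate by rigidity) and $d(q_j,q_{j+1})<\pi$ (the length hypothesis). Since the pairwise spherical distances on $E$ are $d(0,1)=d(1,\infty)=\pi/2$ and $d(0,\infty)=\pi$, each admissible pair lies in $\{\{0,1\},\{1,\infty\}\}$, i.e.\ involves $1$. A short enumeration of cyclic triples $(q_1,q_2,q_3)\in E^3$ whose consecutive pairs all involve $1$ and are all distinct shows there is none: either two consecutive entries coincide, or the forbidden pair $\{0,\infty\}$ appears cyclically. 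This contradiction yields $V(f)\geq 4$.

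The $V(f)=2$ case, which hinges on invoking Lemma \ref{glue} to turn a doubled geodesic boundary into a surjection of $S$, is the only step with real content; the other cases are forced by the great-circle rigidity of natural edges combined with the geometry of $E$ on the sphere.
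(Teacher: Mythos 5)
Your proof is correct and follows essentially the same route as the paper: rule out $V(f)\le 2$ by the great-circle rigidity of natural edges together with the identification $\Gamma_2=-\Gamma_1$ of the two geodesic edges, then eliminate the case $V(f)=3$, $V_{NE}(f)=0$ by the enumeration argument showing no closed $3$-cycle exists among the admissible edges $\overline{0,1}$ and $\overline{1,\infty}$. The only cosmetic difference is that in the $V(f)=2$ case you reach the contradiction via Lemma \ref{glue}, whereas the paper argues directly that $f(\Delta)\supset S\setminus\Gamma_f$ by openness of $f$; both force a point of $E$ into $f(\Delta)$, contradicting normality.
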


\begin{proof}
If $V(f)=1$, then $\Gamma _{f}$ itself is a natural edge that is a straight
and closed curve in $S$, and $L(f,\partial \Delta )<\pi .$ This is
impossible.

Assume $V(f)=2$ and $\Gamma _{f}=\Gamma _{1}+\Gamma _{2}$ is a natural
partition. Since $\Gamma _{f}$ is a closed curve, $L(\Gamma _{j})<\pi $ and $%
\Gamma _{j}$ is straight, $j=1,2$, we have $\Gamma _{1}=-\Gamma _{2}$
(ignoring a transformation of parameter) with $L(\Gamma _{1})=L(\Gamma
_{2})<\pi .$ Then, $S\backslash \Gamma _{f}$ contains at least one point in $%
E=\{0,1,\infty \}.$ Considering that $f$ is normal, we conclude that $%
f(\Delta )\supset S\backslash \Gamma _{f}$ contains at least one point of $%
E, $ which contradicts the assumption that $f$ is normal. Thus, $V(f)\geq 3.$

If in addition $V_{NE}(f)=0,$ then by the assumption, each natural edge of $%
\Gamma _{f}$ must be $\overline{0,1},\overline{1,0},\overline{1,\infty }$ or
$\overline{\infty ,1},$ and then since $\Gamma _{f}$ is a closed curve and $%
V(f)\geq 3$, we have $V(f)\geq 4.$
\end{proof}

\begin{lemma}
\label{V=3}Let $f:\overline{\Delta }\rightarrow S$ be a normal mapping and
assume that the followings hold.

(a) each natural edge of $\Gamma _{f}=f(z),z\in \partial \Delta ,$ has
spherical length strictly less than $\pi .$

(b) $V(f)=3.$

Then $f:\overline{\Delta }\rightarrow f(\overline{\Delta })$ is a
homeomorphism and $\Gamma _{f}$ is a generic convex triangle.
\end{lemma}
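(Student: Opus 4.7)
The plan is to determine $\Gamma_f$ explicitly and then use a winding-number argument. Since $V(f) = 3$ and each natural edge is locally simple, locally straight and of length $<\pi$, each edge is a simple arc of some great circle with distinct endpoints; setting $q_j = f(p_j)$ for $j = 1,2,3$, the three images are pairwise distinct (each coincidence $q_i = q_j$ would force some natural edge to be either zero-length or a locally simple straight closed loop of length $<\pi$, both impossible). Hence
\[
\Gamma_f = \overline{q_1 q_2} + \overline{q_2 q_3} + \overline{q_3 q_1},
\]
each edge being the unique shortest arc.

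Next I would show $\Gamma_f$ is a Jordan triangle whose left-hand region $T_L$ is the small convex component of $S \setminus \Gamma_f$. For the Jordan property: if $q_1, q_2, q_3$ lay on a common great circle $C$, then $\Gamma_f \subset C$ is locally straight everywhere, and any natural vertex $q_j \notin E$ would then arise only from a direction reversal on $C$; tracking the resulting back-and-forth degeneration, the open set $f(\Delta)$ would have to fill one open hemisphere of $C$ and hence meet $E$, unless $C$ is the unique great circle through $E = \{0,1,\infty\}$. In that exceptional case, the antipodality of $0$ and $\infty$ (so that they cannot be joined by an arc of length $<\pi$) together with the impossibility of an $E$-point sitting interior to a natural edge (by Definition \ref{v-e}, since any preimage would be a natural vertex interior to a natural edge) lead to a contradiction in every subcase. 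Hence the $q_j$ lie on no common great circle; since two distinct great circles meet only in antipodal points and arcs of length $<\pi$ share only their common vertex, $\Gamma_f$ is a simple Jordan triangle. Because $f$ is normal and orientation-preserving, $f(\Delta) = T_L$, whence $T_L \cap E = \emptyset$. The small convex component $T_{\mathrm{cvx}}$ of $S \setminus \Gamma_f$ has diameter $<\pi$ and so contains at most one of the antipodal pair $\{0,\infty\}$; were $T_L$ the complementary non-convex component, at least one point of $\{0,\infty\}$ would lie in $\overline{T_L}$, and the interior, edge-interior, and two-vertex subcases each give a contradiction (to $T_L \cap E = \emptyset$, to the natural-edge definition, or to the bound $L(\overline{q_i q_j}) < \pi$ respectively). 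Therefore $T_L = T_{\mathrm{cvx}}$ and $\Gamma_f$ is a generic convex triangle.

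Finally, to conclude that $f$ is a homeomorphism, I would apply a standard winding-number/degree argument. Since $\Gamma_f$ is a Jordan curve oriented so that $T_L$ lies on its left, the winding number of $\Gamma_f$ about any $q \in T_L$ is $+1$ and about any $q \notin \overline{T_L}$ is $0$. For a normal mapping the sum $\sum_{p \in f^{-1}(q)} v_f(p)$ equals this winding number, so equals $1$ for every $q \in T_L$; this rules out all interior and boundary ramification of $f$ and forces $f : \overline{\Delta} \to \overline{T_L}$ to be bijective, hence a homeomorphism onto the closed generic convex triangle region. I expect the main technical obstacle to lie in the second paragraph: ruling out the degenerate case where the three vertices are collinear on a great circle requires a careful case analysis exploiting the specific antipodal structure of $E = \{0,1,\infty\}$ and the interplay between natural vertices and direction reversals.
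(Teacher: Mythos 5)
Your proposal is correct and follows essentially the same route as the paper: rule out the three vertices lying on a common great circle, use $f(\Delta)\cap E=\emptyset$ together with the antipodality of $0$ and $\infty$ to identify the image with the generic convex triangle domain, and deduce the homeomorphism from normality (your winding-number count just makes explicit what the paper asserts with ``since $f$ is normal''). The only real difference is the collinear case, which the paper dispatches in two lines by observing that one of the three points would then lie in the interior of an opposite edge or the curve would be straight at it, so that it could not be a natural vertex, rather than by your longer openness-plus-antipodality case analysis.
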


\begin{proof}
Let
\begin{equation*}
\alpha =\alpha _{1}+\alpha _{2}+\alpha _{2}
\end{equation*}%
be a natural partition of $\partial \Delta $ for $f$ and let
\begin{equation*}
\Gamma _{f}=\overline{q_{1}q_{2}}+\overline{q_{2}q_{3}}+\overline{q_{3}q_{1}}
\end{equation*}%
be the corresponding natural partition of $\Gamma _{f}=f(z),z\in \partial
\Delta .$ Then by (a), $f$ restricted to each $\alpha _{j}$ is a
homeomorphism onto $\Gamma _{j}=\overline{q_{j}q_{j+1}},$ where $%
q_{4}=q_{1}. $

We first show that $\overline{q_{1}q_{2}q_{3}}$ can not be contained in any
great circle of $S.$ Otherwise, by (a) and the definition of natural edges,
either $q_{3}\in \overline{q_{1}q_{2}}^{\circ }$ or $q_{1}\in \overline{%
q_{2}q_{3}}^{\circ },\ $where $\overline{q_{1}q_{2}}^{\circ }$ denotes the
interior of $\overline{q_{1}q_{2}}.$ But in the first case, $q_{3}$ is not a
natural vertex of $\Gamma _{f}$ and in the second case, $q_{1}$ is not a
natural vertex of $\Gamma _{f}.$ Thus $\overline{q_{1}q_{2}q_{3}}$ is not
contained in any great circle of $S.$

Then $\Gamma _{f}$ must be a triangle that is contained in some open
hemisphere $S^{\prime }$ of $S$ and $f$ maps $\partial \Delta $
homeomorphically onto $\Gamma _{f}\ $and then, since $f$ is normal, $f:%
\overline{\Delta }\rightarrow \overline{T}$ is a homeomorphism, where $T$ is
the domain inside $\Gamma _{f}.$ Since $f$ is normal, we also have $f(\Delta
)\cap E=\emptyset .$ Thus, $\overline{T}\subset S^{\prime }$ and then $%
\Gamma _{f}$ is a generic convex triangle.
\end{proof}

\begin{lemma}
\label{NE=01}Let $f:\overline{\Delta }\rightarrow S$ be a normal mapping
such that each natural edge of $f$ has spherical length strictly less than $%
\pi $. If $V_{NE}(f)=0,\ $then $L(f,\partial \Delta )\geq 2\pi ,$ and if $%
V_{NE}(f)=1,$ then $L(f,\partial \Delta )\geq \pi .$
\end{lemma}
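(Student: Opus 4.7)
The plan is to classify the natural edges of $\Gamma_f$ whose two endpoints both lie in $E=\{0,1,\infty\}$. Such an edge is locally straight, hence contained in a great circle of $S$; since its length is strictly less than $\pi$, it is a simple geodesic arc and its length equals the spherical distance between its endpoints. The pairwise distances among points of $E$ are $d(0,1)=d(1,\infty)=\frac{\pi}{2}$ and $d(0,\infty)=\pi$, so a natural edge with both endpoints in $E$ must connect either $\{0,1\}$ or $\{1,\infty\}$ and has length exactly $\frac{\pi}{2}$. (A loop-type edge whose two endpoints coincide at a single point of $E$ would have to be an entire great circle of length $2\pi$, which is ruled out.)

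For the first statement, $V_{NE}(f)=0$ means every natural vertex of $\Gamma_f$ lies in $E$, and the second half of Lemma \ref{E>3} gives $V(f)\geq 4$. Every one of the at least four natural edges therefore has both endpoints in $E$, hence length $\frac{\pi}{2}$ by the observation above. Summing gives $L(f,\partial\Delta)\geq 4\cdot\frac{\pi}{2}=2\pi$.

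For the second statement, let $q^\ast$ denote the unique natural vertex of $\Gamma_f$ not in $E$. Then exactly two natural edges are incident to $q^\ast$, and every other natural edge has both endpoints in $E$ and thus length $\frac{\pi}{2}$. By Lemma \ref{E>3}, $V(f)\geq 3$. If $V(f)\geq 4$, there are at least $V(f)-2\geq 2$ edges of length $\frac{\pi}{2}$, hence $L(f,\partial\Delta)\geq 2\cdot\frac{\pi}{2}=\pi$. If $V(f)=3$, Lemma \ref{V=3} asserts that $\Gamma_f$ is a generic convex triangle and therefore lies in an open hemisphere of $S$; two of its three vertices lie in $E$ and the edge between them has length $\frac{\pi}{2}$. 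The spherical triangle inequality applied at $q^\ast$ gives that the sum of the two edges incident to $q^\ast$ is at least $\frac{\pi}{2}$, so the perimeter is at least $\pi$.

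The argument is essentially a bookkeeping exercise once the classification of $E$-to-$E$ natural edges is in hand. The only nontrivial external input is the spherical triangle inequality in the $V(f)=3$ subcase, which is legitimate precisely because Lemma \ref{V=3} places the triangle inside an open hemisphere; elsewhere, the proof is driven by combining the rigidity of shortest arcs between points of $E$ with the lower bounds on $V(f)$ provided by Lemma \ref{E>3}.
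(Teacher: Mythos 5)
Your proof is correct. The first half coincides with the paper's argument: $V_{NE}(f)=0$ forces $V(f)\geq 4$ by Lemma \ref{E>3}, and every natural edge joins two distinct points of $E$ by a geodesic of length $<\pi$, hence is one of $\overline{0,1}$, $\overline{1,\infty}$ (or their reverses) of length exactly $\frac{\pi}{2}$, giving $L(f,\partial\Delta)\geq 2\pi$. For the second half you take a slightly different route. The paper argues in one stroke: since $V_{E}(f)\geq 2$, the closed curve $\Gamma_f$ passes through at least two distinct points of $E$, and any two distinct points of $E$ are at spherical distance at least $\frac{\pi}{2}$, so a closed curve through both has length at least $2\cdot\frac{\pi}{2}=\pi$. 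You instead split on $V(f)\geq 4$ versus $V(f)=3$, counting $E$-to-$E$ edges in the first case and invoking Lemma \ref{V=3} together with the spherical triangle inequality in the second. Both are valid; the paper's version is shorter, needs no case distinction, and does not rely on Lemma \ref{V=3}, while yours makes explicit the small point (which the paper leaves implicit) that adjacent natural vertices in $E$ cannot map to the same point of $E$ because a locally straight loop of length less than $\pi$ cannot close up.
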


\begin{proof}
If $V_{NE}(f)=0,$ then by Lemma \ref{E>3}, $V(f)\geq 4,$ and in this case
each natural edge of $\Gamma _{f}=f(z),z\in \partial \Delta ,$ has spherical
length $\frac{\pi }{2},$ and then $L(f,\partial \Delta )\geq 2\pi $. If $%
V_{NE}(f)=1,$ then by Lemma \ref{E>3}, $V(f)\geq 3,$ and then $f(\partial
\Delta )$ contains at least two point of $E;$ and since $\Gamma _{f}$ is
closed, we have $L(f,\partial \Delta )\geq \pi .$
\end{proof}

\begin{lemma}
\label{splift}Let $f:\overline{\Delta }\rightarrow S$ be a normal mapping
and let $p_{0}$ be a ramification point of $f$. Assume $\beta =\overline{%
q_{0}q_{1}}\subset S$ satisfies the followings.

(a) $q_{0}=f(p_{0})$ and $q_{1}\in E=\{0,1,\infty \}$.

(b) The interior $\beta ^{\circ }$ of $\beta $ has a neighborhood $N$ in $S$
such that $N$ is a polygonal Jordan domain and $f$ has no branched point in $%
N$.

(c) The boundary curve $\Gamma _{f}=\Gamma _{f}(z),z\in \partial \Delta ,$
has no natural vertex in $N.$

(d) Either
\begin{equation}
f(\partial \Delta )\cap N=\emptyset ,  \label{11-1}
\end{equation}%
or
\begin{equation}
p_{0}\in \partial \Delta \ \mathrm{and}\ f(\partial \Delta )\cap N=\beta
^{\circ }.  \label{11-2}
\end{equation}

Then there exist normal mappings $g_{1},g_{2}:\overline{\Delta }\rightarrow
S,$ such that the followings hold.

(i) Each natural edge of $\Gamma g_{j}$ is a natural edge of $\Gamma
_{f},j=1,2,$ and each natural edge of $\Gamma _{f}$ is a natural edge of
either $g_{1}$ or $g_{2}$.

(ii) $L(g_{1},\Delta )+L(g_{2},\Delta )=L(f,\Delta )$\ and\textrm{\ }$%
A(g_{1},\Delta )+A(g_{2},\Delta )=A(f,\Delta ).$

(iii) $%
V_{NE}(g_{1})+V_{NE}(g_{2})=V_{NE}(f),V_{E}(g_{1})+V_{E}(g_{2})=V_{E}(f),$ $%
V(g_{1})+V(g_{2})=V(f).$
\end{lemma}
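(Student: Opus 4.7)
The plan is to extract two lifts of $\beta$ by $f$ at $p_{0}$ and then reduce Lemma \ref{splift} directly to Corollary \ref{ccut-1} or Corollary \ref{ccut-2}, according to which of the alternatives (\ref{11-1}) or (\ref{11-2}) holds. The exact equalities in conclusions (ii) and (iii) match those corollaries verbatim, which is why these are the right tools; the length hypothesis needed to run them will be arranged from the fact that $\beta$ is one line segment $\overline{q_0q_1}$ and so makes sense intrinsically without appeal to any ``length $<\pi$'' condition on the natural edges of $f$.

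First I would build the lifts. Since $p_{0}$ is a ramification point, $v_{f}(p_{0})\geq 2$; by condition (b), a polygonal neighborhood $N$ of $\beta^{\circ}$ is free of branched points of $f$, so standard covering-space lifting gives lifts of $\beta$ starting at each of the two (or more) local sheets meeting at $p_{0}$. When $p_{0}\in\Delta$ we pick two such lifts $\alpha_{1},\alpha_{2}$ with interiors in $\Delta$; when $p_{0}\in\partial\Delta$, Corollary \ref{branched-lift} (together with (\ref{11-2})) lets me pick $\alpha_{1}$ as the section of $\partial\Delta$ at $p_{0}$ mapping onto $\beta^{\circ}$ and $\alpha_{2}$ as another lift whose interior enters $\Delta$. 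In both situations the lifts terminate at preimages of $q_{1}\in E$; since $f$ is normal, $f(\Delta)\cap E=\emptyset$, so those terminal points lie on $\partial\Delta$. By (c), no interior point of $\alpha_{1}$ or $\alpha_{2}$ can be a preimage of a natural vertex of $\Gamma_{f}$, and by (b) no interior point is a ramification point of $f$; so the lifts are Jordan paths with the right boundary/interior behavior required by Lemma \ref{cut-1} or Lemma \ref{cut-2}. Lemma \ref{cut-3} then guarantees that $\alpha_{1}(1)\neq \alpha_{2}(1)$, since otherwise $f$ would have to assume every value in $S$, contradicting normality (as $q_{1}\in E$).

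With $\alpha_{1},\alpha_{2}$ in hand and with $\beta(1)=q_{1}\in E$ by hypothesis (a), I would invoke Corollary \ref{ccut-1} in the case of (\ref{11-1}) and Corollary \ref{ccut-2} in the case of (\ref{11-2}); these are the $\beta(1)\in E$ versions of the cut/glue lemmas, and their conclusions are precisely (i), (ii), (iii) of Lemma \ref{splift}. To see that the exact natural-edge bijection in (i) holds, I note that the only new natural vertex created by the cut is the image of $\alpha_{1}(1)$ (and of $\alpha_{2}(1)$), which is $q_{1}\in E$; because this vertex sits in $E$, the two pieces of any natural edge of $\Gamma_{f}$ produced by the cut automatically remain natural edges of $g_{1}$ and $g_{2}$ (no merger with an adjacent edge of $\Gamma_{f}$ into a longer natural edge can happen through a point of $E$).

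The main obstacle I anticipate is the verification that no \emph{other} preimage of a natural vertex of $\Gamma_{f}$ sneaks in along the interior of $\alpha_{1}$ or $\alpha_{2}$, and, in case (\ref{11-2}), that $\alpha_{2}$ (the non-boundary lift) reaches $\partial\Delta$ transversally at a non-vertex point of $\Gamma_{f}$. Both are handled by condition (c) together with (b): the interior of $\beta$ has a neighborhood $N$ on which $f$ is unbranched and whose image misses natural vertices of $\Gamma_{f}$, so the lifts stay inside the unbranched, vertex-free preimage $f^{-1}(N)$ until the very last moment when they arrive at $q_{1}\in E$. This last-moment analysis, modelled on the closing argument of the proof of Lemma \ref{ok}, is what closes the gap between the geometric setup of Lemma \ref{splift} and the algebraic accounting demanded by Corollaries \ref{ccut-1} and \ref{ccut-2}.
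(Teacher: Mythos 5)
Your overall strategy coincides with the paper's: produce two lifts of $\beta$ from $p_{0}$, check they are Jordan paths with interiors in $\Delta$ terminating at distinct boundary points (Lemma \ref{cut-3}), and then hand everything to Corollary \ref{ccut-1} or Corollary \ref{ccut-2} according to which alternative of (d) holds. The tools you name are exactly the ones the paper uses.

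However, the step you yourself flag as the main obstacle is left unproven, and the justification you sketch for it is not the right one. The substantive verification is that the ``interior'' lift $\alpha_{2}$, continued maximally with $\alpha_{2}^{\circ}\subset\Delta$, cannot terminate at a point $p^{\ast}\in\partial\Delta$ whose image is an \emph{interior} point $q^{\ast}\in\beta^{\circ}$ — i.e.\ that the lift reaches a preimage of $q_{1}$ before it can touch $\partial\Delta$. You attribute this to conditions (b) and (c) (``the lifts stay inside the unbranched, vertex-free preimage $f^{-1}(N)$''), but staying in $f^{-1}(N)$ is automatic and says nothing about staying in the open disk; what actually forbids premature landing on $\partial\Delta$ is condition (d). In case (\ref{11-1}) one has $f(\partial\Delta)\cap N=\emptyset$, so no point of $\partial\Delta$ maps into $\beta^{\circ}$ at all and $p^{\ast}$ cannot exist. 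In case (\ref{11-2}) the paper's argument is finer: by (b), (c) and Lemma \ref{cov-1}, $f$ maps a half-disc neighborhood $U_{p^{\ast}}$ of $p^{\ast}$ homeomorphically onto a half-disc whose diameter lies in $\beta^{\circ}$, and then (\ref{11-2}) forces $f^{-1}(\beta^{\circ})\cap U_{p^{\ast}}=U_{p^{\ast}}\cap\partial\Delta$, so the lift $\alpha_{2}$ would have to approach $p^{\ast}$ along $\partial\Delta$, contradicting $\alpha_{2}^{\circ}\subset\Delta$. Your pointer to ``the closing argument of Lemma \ref{ok}'' does not supply this; the relevant model is rather the maximal-continuation argument of Lemma \ref{c1}. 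With this continuation argument supplied (and with (d) cited in place of (b)--(c)), the rest of your reduction to Corollaries \ref{ccut-1} and \ref{ccut-2} goes through as in the paper.
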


\begin{proof}
By Lemma \ref{cov-1} or Corollary \ref{branched-lift}, there exist a point $%
q_{2}$ in $\beta ^{\circ }$ such that the section $\overline{q_{0}q_{2}}$ of
$\beta =\overline{q_{0}q_{1}}$ has a lift $\gamma \subset \overline{\Delta }$
from $p_{0}$ to some point $p_{2}\in \Delta $ with $\gamma \backslash
\{p_{0}\}\subset \Delta .$ Let $q^{\ast }\in \beta $ be the closest point to
$q_{1}$ in $\beta $ such that the section $\overline{q_{0}q^{\ast }}$ has a
lift $\alpha _{2}$ that is an extension of the lift $\gamma $ and that $%
\alpha _{2}^{\circ }\subset \Delta .$ We show that $q^{\ast }=q_{1}.$

Let $p^{\ast }$ be the terminal point of $\alpha _{2}$. Assume $q^{\ast
}\neq q_{1},$ i.e. $q^{\ast }\in \beta ^{\circ }.$ If $p^{\ast }\in \Delta ,$
then by (b) and Lemma \ref{cov-1}, $\alpha _{2}$ can be extended past $%
p^{\ast }$ to be a longer lift so that the extended part is still in $\Delta
$, which contradicts the definition of $p^{\ast }$ and $q^{\ast }.$ Thus, we
have $p^{\ast }\in \partial \Delta .$

Then by (c) and the definition of natural vertices there is a neighborhood $%
A_{p^{\ast }}$ of $p^{\ast }$ in $\partial \Delta ,$ such that $f$
restricted to $A_{p^{\ast }}\ $is a homeomorphism onto a section of $\beta
^{\circ }.$ On the other hand, by (b), (c) and Lemma \ref{cov-1}, there is a
neighborhood $U_{p^{\ast }}$ of $p^{\ast }$ in $\overline{\Delta }$ such
that $f$ restricted to $U_{p^{\ast }}$ is a homeomorphism onto $f(U_{p^{\ast
}})$ with $f(U_{p^{\ast }})\subset N$ and $f(U_{p^{\ast }})$ is a half-disc
whose boundary diameter is contained in $\beta ^{\circ }\cap f(U_{p^{\ast
}}) $. Thus, by (d), $U_{p^{\ast }}\cap \partial \Delta =U_{p^{\ast }}\cap
f^{-1}([0,+\infty ]),$ and then $\alpha _{2}\cap U_{p^{\ast }}\subset
U_{p^{\ast }}\cap \partial \Delta .$ This is a contradiction, since $\alpha
_{2}^{\circ }\subset \Delta .$ Thus we have proved that $\alpha _{2}$ is a
lift of the whole path $\beta $ with $\alpha _{2}^{\circ }\subset \Delta .$

Since $p_{0}$ is a ramification point, in case $p_{0}\in \Delta $, by Lemma %
\ref{cov-1}, $\beta $ has another lift $\alpha _{1}$ starting from $p_{0}$
such that $\alpha _{1}^{\circ }\subset \Delta .$ Since $f(\Delta )\cap
E=\emptyset \ $and the terminal point $q_{1}$ of $\beta $ is in $E,$ the
terminal points of $\alpha _{1}$ and $\alpha _{2}$ must land on $\partial
\Delta ,$ and by Lemma \ref{cut-3}, these terminal points are distinct each
other. Thus, $f,\alpha _{1},\alpha _{2}$ and $\beta $ satisfy all
assumptions of Corollary \ref{ccut-1}, and then the desired $g_{1}$ and $%
g_{2}$ follow.

In case (\ref{11-2}), by (c) there is a section $\alpha _{1}$ of $\partial
\Delta $ starting from $p_{0}$ so that $\alpha _{1}$ is a lift of $\beta ,$
and by Lemma \ref{cut-3}, the terminal points of $\alpha _{1}$ and $\alpha
_{2}$ are also distinct. Then, $f,\alpha _{1},\alpha _{2}$ and $\beta $
satisfy all assumptions of Corollary \ref{ccut-2}, and then the desired $%
g_{1}$ and $g_{2}$ follow as well. This completes the proof of the lemma.
\end{proof}

Now, we can prove Theorem \ref{key} in some special cases.

\begin{lemma}
\label{Vne=0/1}Theorem \ref{key} holds true if $V_{NE}(f)=0$ or $%
V_{NE}(f)=1. $
\end{lemma}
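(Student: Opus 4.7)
The plan is to first dispose of the convexity requirement (ii) of Theorem~\ref{key} by at most a single application of Theorem~\ref{1-nconvex}, and then iteratively remove branched points in $S\setminus E$ by Lemma~\ref{splift}, which (unlike Theorem~\ref{1-br-2-map}) preserves the $V_{NE}$ count while strictly decreasing the total branched number in $S\setminus E$ at each split.

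First I would handle condition~(ii). When $V_{NE}(f)=0$, every natural vertex of $\Gamma_f$ lies in $E$, and by Definition~\ref{v-e}(1)(b) each point of $\Gamma_f$ outside $E$ has a neighborhood on which $\Gamma_f$ is simple and straight; hence $\Gamma_f$ is automatically locally convex in $S\setminus E$. When $V_{NE}(f)=1$, the same observation handles every point of $\Gamma_f\setminus E$ except the unique non-$E$ natural vertex $q$; if $\Gamma_f$ is not convex at $q$, I apply Theorem~\ref{1-nconvex} to obtain a normal mapping $g$ with $L(g,\partial\Delta)\leq L(f,\partial\Delta)$, $A(g,\Delta)\geq A(f,\Delta)$, all natural edges of length strictly less than $\pi$, and $V_{NE}(g)\leq 0$. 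After this optional reduction I may assume that $\Gamma_f$ is locally convex in $S\setminus E$ and $V_{NE}(f)\leq 1$.

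Now if $f$ has no branched point in $S\setminus E$, the family $\{f\}$ already satisfies (i)--(iii). Otherwise, I pick a ramification point $p_0\in\overline{\Delta}$ with $q_0:=f(p_0)\notin E$, pick $q_1\in E$, and take $\beta=\overline{q_0q_1}$, perturbed generically so that $\beta^\circ$ avoids the (finite) image sets of the other branched points and of the natural vertices of $\Gamma_f$. Hypotheses (a)--(c) of Lemma~\ref{splift} are then arranged by shrinking the polygonal neighborhood $N$ around $\beta^\circ$. For hypothesis (d) I split into subcases dictated by the location of $p_0$: if $p_0\in\Delta$, the interior sheet of $f$ through $p_0$ lets me steer $\beta^\circ$ into $f(\Delta)$ so that $f(\partial\Delta)\cap N=\emptyset$; if $p_0\in\partial\Delta$, the local convexity of $\Gamma_f$ at $q_0$ together with Corollary~\ref{branched-lift} forces a short initial portion of $\beta$ to coincide with a natural edge of $\Gamma_f$, giving $f(\partial\Delta)\cap N=\beta^\circ$ for $N$ thin enough. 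Lemma~\ref{splift} then yields normal mappings $g_1,g_2$ whose natural edges are inherited from $f$ (so (i) persists with length $<\pi$), with $L(g_1,\partial\Delta)+L(g_2,\partial\Delta)=L(f,\partial\Delta)$, $A(g_1,\Delta)+A(g_2,\Delta)=A(f,\Delta)$, $V_{NE}(g_1)+V_{NE}(g_2)=V_{NE}(f)\leq 1$, and strictly smaller total branched number in $S\setminus E$ by equation~(23) in the proof of Corollary~\ref{ccut-1}.

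I would then iterate the splitting on each $g_j$. Local convexity of $\Gamma_{g_j}$ in $S\setminus E$ is preserved because its non-$E$ natural vertices are inherited from $f$ together with a neighborhood on which $\Gamma_{g_j}$ and $\Gamma_f$ coincide, and the natural-edge length bound persists by Lemma~\ref{splift}(i). Since the total branched number outside $E$ is a nonnegative integer strictly decreasing at every split, the procedure terminates in finitely many steps in the required family $\{f_1,\dots,f_m\}$; telescoping the length and area identities and folding in the single application of Theorem~\ref{1-nconvex} delivers the global inequalities $\sum_jL(f_j,\partial\Delta)\leq L(f,\partial\Delta)$ and $\sum_jA(f_j,\Delta)\geq A(f,\Delta)$. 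The main obstacle I anticipate is a uniform verification of hypothesis~(d) of Lemma~\ref{splift} at every iteration: both subcases depend on choosing the endpoint $q_1\in E$ and the tubular neighborhood $N$ compatibly with the current boundary curve, and for boundary ramification points this forces $\beta$ to begin along an existing natural edge of $\Gamma_{g_j}$, a compatibility that must be preserved through each subsequent split.
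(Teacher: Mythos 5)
Your overall route coincides with the paper's: dispose of convexity with one application of Theorem~\ref{1-nconvex}, then repeatedly split off branched points with Lemma~\ref{splift}. Your termination argument (the total branched number over $\overline{\Delta}\setminus f^{-1}(E)$ drops by one at each split, via equation (23) in Corollary~\ref{ccut-1}, since the terminal point of $\beta$ lies in $E$ and so is excluded from the count) is legitimate and in fact a little cleaner than the paper's, which instead runs an induction on $V(f)$ using $V(f_1)+V(f_2)=V(f)$ together with the lower bounds $V(f_j)\geq 3$ or $4$ from Lemma~\ref{E>3} to get $V(f_j)\leq V(f)-3$.

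There is, however, a genuine gap at the one place where the hypothesis $V_{NE}(f)\leq 1$ actually has to do work: the construction of $\beta$ satisfying hypothesis (d) of Lemma~\ref{splift}. That hypothesis demands that a full neighborhood $N$ of $\beta^{\circ}$ satisfy $f(\partial\Delta)\cap N=\emptyset$ or $f(\partial\Delta)\cap N=\beta^{\circ}$; it is a condition on the position of the entire boundary curve $f(\partial\Delta)$ in $S$ relative to $\beta$, not on which sheet of $f$ the path lifts to. Your proposal only arranges that $\beta^{\circ}$ avoids the finitely many images of branched points and natural vertices; if $\beta^{\circ}$ crosses $f(\partial\Delta)$ transversally at an ordinary edge point, neither alternative of (d) holds, and your ``steering into $f(\Delta)$'' for interior $p_0$ does not prevent this. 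Moreover $\beta$ must be the geodesic segment $\overline{q_0q_1}$ with $q_1\in E=\{0,1,\infty\}$, so there is no generic perturbation available beyond the choice of one of three endpoints. The paper's proof instead exploits that $V_{NE}(f)\leq 1$ forces $f(\partial\Delta)$ to be contained in $[0,+\infty]$ together with at most the two segments adjacent to the single non-$E$ vertex $q^{\ast}$; only because the boundary curve is this rigid can one always pick $q_1\in E$ with $\beta\cap f(\partial\Delta)=\{q_1\}$ or $\beta\subset f(\partial\Delta)$, and this property must then be re-verified for each piece $g_j$ in the iteration. As written, your argument never invokes $V_{NE}(f)\leq 1$ for this purpose and would apply verbatim to an arbitrary normal mapping, which cannot be right: for general $V_{NE}$ every geodesic from $q_0$ to $E$ may cross $\Gamma_f$, and that is precisely why the general case of Theorem~\ref{key} requires the heavier machinery of Theorem~\ref{1-br-2-map}.
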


\begin{proof}
Let $f:\overline{\Delta }\rightarrow S$ be a normal mapping that satisfies
the assumption of Theorem \ref{key} and $V_{NE}(f)=0$ or $V_{NE}(f)=1.$

If $\Gamma _{f}$ is locally convex in $S\backslash E$ and $f$ has no
branched point in $S\backslash E,$ then $f$ itself satisfies the conclusion
of Theorem \ref{key}, and there is nothing to proof.

If $\Gamma _{f}$ is not locally convex in $S\backslash E,$ then $V_{NE}(f)=1$
and by Theorem \ref{1-nconvex}, there exists a normal mapping $g:\overline{%
\Delta }\rightarrow S,$ such that each natural edge of $g$ has spherical
length strictly less than $\pi ,$%
\begin{equation*}
L(g,\partial \Delta )\leq L(f,\partial \Delta ),A(g,\Delta )\geq A(f,\Delta
),
\end{equation*}%
and%
\begin{equation*}
V_{NE}(g)\leq V_{NE}(f)-1=0.
\end{equation*}%
and then $V_{NE}(g)=0,$ and in this case $\Gamma _{g}$ is locally convex in $%
S\backslash E.$ If Theorem \ref{key} holds for $g,$ then it is clear that
Theorem \ref{key} holds for $f.$ Thus we may assume that

(a) $\Gamma _{f}$ is locally convex in $S\backslash E.$

If $f$ has no branched point in $S\backslash E,$ then there is nothing to
prove again. Thus, we may complete the proof under the assumption that

(b) $\Gamma _{f}$ is locally convex in $S\backslash E$ and $f$ has a
branched point in $S\backslash E.$

Let $p_{0}\in \overline{\Delta }\backslash f^{-1}(E)$ be a ramification
point of $f$ and let $q_{0}=f(p_{0}).$ Since $V_{NE}(f)=0$ or $1,$ $\Gamma
_{f}$ has at most one natural vertex $q^{\ast }$ outside $E.$ Then by (a),
there is a shortest path $\beta =\overline{q_{0}q_{1}}$ from $q_{0}$ to some
point $q_{1}\in E$ such that either $\beta \cap f(\partial \Delta
)=\{q_{1}\} $ or $\beta \cap f(\partial \Delta )=\overline{q_{0}q_{1}}.$ The
later case occurs if and only if $q_{0}\in \Gamma _{f}\backslash E.$

We may assume $f$ has no branched point in $\beta \backslash
\{q_{0},q_{1}\}, $ otherwise we take the branched point in the interior of $%
\beta $ that is closest to $q_{1}.$ Then $\beta ^{\circ }$ has a
neighborhood $N$ satisfying the hypothesis of Lemma \ref{splift}, and then,
by Lemma \ref{splift}, there exist two normal mappings $f_{1},f_{2}:%
\overline{\Delta }\rightarrow S,$ satisfying the following two conditions.

(c) Each natural edge of $f_{j}$ is a natural edge of $f,$ $j=1,2$, and each
natural edge of $\Gamma _{f}$ is a natural edge of $f_{1}$ or $f_{2}.$

(d) $L(f,\partial \Delta )=L(f_{1},\partial \Delta )+L(f_{2},\partial \Delta
)$\ and\textrm{\ }$A(f,\Delta )=A(f_{1},\Delta )+A(f_{2},\Delta ).$

By (c), we have

\begin{equation}
n=V(f)=V(f_{1})+V(f_{2}).  \label{52}
\end{equation}

It is clear by (c) that if $V_{NE}(f)=0,$ then $%
V_{NE}(f_{1})=V_{NE}(f_{2})=0,$ and if $V_{NE}(f)=1,$ then the the unique
natural vertex $q^{\ast }$ of $\Gamma _{f}$ outside $E$ can not be contained
in both $\Gamma _{f_{1}}$ and $\Gamma _{f_{2}},$ but $q^{\ast }$ must be a
convex natural vertex of $\Gamma _{f_{1}}$ or $\Gamma _{f_{2}}$ and both $%
f_{1}$ and $f_{2}$ satisfy the assumption of Theorem \ref{key}. Summarizing,
we may assume

(e) $V_{NE}(f_{1})=0,$ $V_{NE}(f_{2})=1,$ and $f_{1}$ and $f_{2}$ satisfy
(a).

On the other hand, by Lemma \ref{E>3} and (e) we have%
\begin{equation}
V(f_{1})\geq 4\ \mathrm{and\ }V(f_{2})\geq 3.  \label{11--3}
\end{equation}%
Thus, we have
\begin{equation*}
n=V(f)\geq 7,
\end{equation*}
and by (\ref{52}) we have%
\begin{equation}
V(f_{1})\leq V(f)-3\ \mathrm{and\ }V(f_{2})\leq V(f)-3.  \label{11--4}
\end{equation}

We have in fact proved that under the assumption (b), $n\geq 7.$ Thus,
Theorem \ref{key} holds true in case (a) with $n\leq 6.$ From this and the
above arguments for the existence of $f_{1}$ and $f_{2}$ satisfying (c),
(d), (e) and (\ref{11--4}) we can prove the theorem, under the assumption
(a), by induction on $n=V(f)$. This completes the proof.
\end{proof}

\begin{proof}[Proof of Theorem \protect\ref{key}]
We prove Theorem \ref{key} by induction on the sum $V_{NE}(f)+V(f).$

By Lemma \ref{E>3} we have $V(f)\geq 3,$ and then $V_{NE}+V(f)=3$ holds only
in the case $V_{NE}=0,$ but by Lemma \ref{E>3}, $V_{NE}=0$ implies $V(f)\geq
4.$ Thus
\begin{equation*}
V_{NE}(f)+V(f)\geq 4,
\end{equation*}
and equality holds if and only if $V_{NE}(f)=0$ and $V(f)=4$, or $%
V_{NE}(f)=1 $ and $V(f)=3.$ Thus, by Lemma \ref{Vne=0/1}, Theorem \ref{key}
holds true in the case $V_{NE}(f)+V(f)=4.$

Now, let $k>4$ be a positive integer and assume that we have proved Theorem %
\ref{key} for the case $4\leq V_{NE}(f)+V(f)\leq k.$ Let $f$ be any normal
mapping that satisfies the assumption of Theorem \ref{key} with
\begin{equation}
V_{NE}(f)+V(f)=k+1.  \label{50}
\end{equation}%
We call this that $f$ is at the level $k+1,$ and will show that Theorem \ref%
{key} holds true for $f.$

Then, there are only three cases need to be discussed.

\noindent \textbf{Case 1. }The boundary curve $\Gamma _{f}=f(z),z\in
\partial \Delta $, is locally convex in $S\backslash E$ and $f$ has no
branched point in $S\backslash E.$

\noindent \textbf{Case 2. }$\Gamma _{f}$ is not convex at some natural
vertex $p_{1}\in \left( \partial \Delta \right) \backslash f^{-1}(E)$ of $f.$

\noindent \textbf{Case 3.} $\Gamma _{f}$ is locally convex in $S\backslash
E, $ and $f$ has a branched point in $S\backslash E.$

If Case 1 occurs, then $f$ itself satisfies the conclusion of Theorem \ref%
{key}, and then there is nothing to proof.

\textbf{Discussion of Case 2.} In this case, by Theorem \ref{1-nconvex} it
is clear that there exists a normal mapping $f_{1}:\overline{\Delta }%
\rightarrow S,$ such that each natural edge of $f_{1}$ has spherical length
strictly less than $\pi ,$
\begin{equation}
L(f_{1},\partial \Delta )\leq L(f,\partial \Delta ),A(f_{1},\Delta )\geq
A(f,\Delta ),  \label{129-12}
\end{equation}%
and%
\begin{equation}
V_{NE}(f_{1})\leq V_{NE}(f)-1\text{\textrm{\ and\ }}V(f_{1})\leq V(f)+1.
\label{51}
\end{equation}

If one of the equalities of (\ref{51}) fails, then by (\ref{50}), $f_{1}$ is
at the level of $k,$ and by the induction hypothesis, Theorem \ref{key}
holds for $f_{1}$ and by (\ref{129-12}), Theorem \ref{key} holds for $f.$ If
both of the equalities in (\ref{51}) hold true, then $f_{1}$ is still at the
level of $k+1,$ but%
\begin{equation}
V_{NE}(f_{1})=V_{NE}(f)-1.  \label{129-13}
\end{equation}

Then, $f_{1}$ satisfies the assumption of Theorem \ref{key}, and then we
return to Cases 1, 2, or 3. If Case 1 occurs for $f_{1},$ then Theorem \ref%
{key} holds for $f_{1},$ and then Theorem \ref{key} holds for $f$ by (\ref%
{129-12}). If Case 2 occurs, then we can replace $f$ by $f_{1}$ and repeat
the discussion of Case 2. By (\ref{129-13}), we can not always return to
Case 2 from Case 2. Thus, Repeating discussion for Case 2 finitely many
times, we return to either Case 1 or Case 3. If we return to Case 1, the
proof is completed, and if we return to Case 3, we continue the following
discussion.

\textbf{Discussion of Case 3. }By Theorem \ref{1-br-2-map}, there exist two
normal mappings $g_{j}:\overline{\Delta }\rightarrow S,j=1,2,$ such that the
followings hold.

(a1) Each natural edge of $g_{j}$ has spherical length strictly less than $%
\pi ,j=1,2.$

(a2) $\sum_{j=1}^{2}L(g_{j},\partial \Delta )\leq L(f,\partial \Delta
),\sum_{j=1}^{2}A(g_{j},\Delta )\geq A(f,\Delta )$.

(a3) $V_{NE}(g_{1})+V_{NE}(g_{2})\leq V_{NE}(f)+2.$

(a4) $V(g_{1})+V(g_{2})\leq V(f)+2.$\medskip

By Lemma \ref{E>3} and (a1), $V(g_{j})\geq 3,j=1,2,$ and so by (a4) we have

(a5) $V(g_{j})\leq V(f)-1,j=1,2.$

Then there are only two cases:

\noindent \textbf{Case 2.1 }$V_{NE}(g_{j})\geq 2,j=1,2.$

\noindent \textbf{Case 2.2 }$V_{NE}(g_{1})\leq V_{NE}(g_{2})$ and $%
V_{NE}(g_{1})=0$ or $1.$

\textbf{Discussion of Case 2.1}. In this case, by (a3) we have $%
V_{NE}(g_{j})\leq V_{NE}(f),$ and then by (a5) and (\ref{50}) we have
\begin{equation*}
V_{NE}(g_{j})+V(g_{j})\leq V_{NE}(f)+V(f)-1=k,j=1,2,
\end{equation*}%
i.e. both $g_{1}$ and $g_{2}$ are at level $\leq k.$ Then by (a1) and the
induction hypothesis, Theorem \ref{key} holds for $g_{1}$ and $g_{2},$ and
then by (a2) and the induction hypothesis, Theorem \ref{key} holds for $f.$

\textbf{Discussion of Case 2.2}. Assume
\begin{equation}
V_{NE}(g_{1})=0\ \mathrm{or\ }1.  \label{129-10}
\end{equation}%
Then by Lemma \ref{NE=01}
\begin{equation}
L(g_{1},\partial \Delta )\geq \pi .  \label{129-11}
\end{equation}

We first show that%
\begin{equation}
V_{NE}(g_{2})+V(g_{2})\leq V_{NE}(f)+V(f)\leq k+1.  \label{129-14}
\end{equation}

If $V_{NE}(g_{1})=0,$ then by Lemma \ref{E>3} we have $V(g_{1})\geq 4,$ and
then by (a3) and (a4) we have (\ref{129-14}). If $V_{NE}(g_{1})=1,$ then by
(a3) and (a5) we still have (\ref{129-14})

If $V_{NE}(g_{2})+V(g_{2})<k+1,$ then Theorem \ref{key} holds for $g_{2}$ by
the induction hypothesis$;$ and on the other hand, by (\ref{129-10}) and
(a1), Theorem \ref{key} holds for $g_{1}$; and therefore, by (a2) Theorem %
\ref{key} holds for $f$.

Now, we assume
\begin{equation*}
V_{NE}(g_{2})+V(g_{2})=k+1,
\end{equation*}%
i.e., $g_{2}$ is in the level $k+1.$ Then we can return to Cases 1--3 and
repeat the same discussion for $g_{2}.$ We can prove Theorem \ref{key} by
repeating the above arguments finitely many times, since by (\ref{129-11}),
Case 2.2 can not occur infinitely times. This completes the proof.
\end{proof}

\section{Decomposition of fat mappings\label{ss14}}

In this section we prove Theorem \ref{fat}. This is the third key step to
prove the main theorem in Section \ref{ss-p2}.

A normal mapping $g:\overline{\Delta }\rightarrow S$ is called \emph{fat} if
and only if $\Delta \backslash f^{-1}([0,+\infty ])$ has a component $D$
such that $f:D\rightarrow S\backslash \lbrack 0,+\infty ]$ is a
homeomorphism.

By Corollary \ref{by the way}, if $g$ satisfies all assumptions of Theorem %
\ref{decom}, then $g$ is fat if and only if%
\begin{equation*}
f(\partial D)\subset \lbrack 0,+\infty ].
\end{equation*}

\begin{theorem}
\label{fat}Let $f:\overline{\Delta }\rightarrow S$ be a normal mapping that
satisfies (a)--(d) of Theorem \ref{decom}, that is, the following conditions
(a)--(d) hold.

(a) Each natural edge of the boundary curve $\Gamma _{f}=f(z),z\in \partial
\Delta ,$ has length strictly less than $\pi $.

(b) $\Gamma _{f}=f(z),z\in \partial \Delta ,$ is locally convex in $%
S\backslash E,E=\{0,1,\infty \}.$

(c) $f$ has no branched point in $S\backslash E.$

(d) $\Gamma _{f}\cap \lbrack 0,+\infty ]$ contains at most finitely many
points.

If $f$ is fat, then there exist two normal mappings $f_{j}:\overline{\Delta }%
\rightarrow S,j=1,2,$ such that $f_{1}$ and $f_{2}$ satisfy (a)--(d) and
\begin{equation*}
A(f_{1},\Delta )+A(f_{2},\Delta )=A(f,\Delta )-4\pi ,
\end{equation*}%
\begin{equation*}
L(f_{1},\partial \Delta )+L(f_{2},\partial \Delta )=L(f,\partial \Delta ),
\end{equation*}%
$f_{1}$ maps $[-1,1]\subset \overline{\Delta }$ homeomorphically onto $%
\overline{0,1}\subset S$ and $f_{2}$ maps $[-1,1]$ homeomorphically onto $%
\overline{1,\infty }\subset S.$
\end{theorem}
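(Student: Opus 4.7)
Let $D$ be a component of $\Delta \setminus f^{-1}([0,+\infty])$ witnessing fatness. By Corollary \ref{by the way}, $f|_D \colon D \to S \setminus [0,+\infty]$ is a homeomorphism, so $A(f,D) = 4\pi$ and $f(\partial D) = [0,+\infty]$. Condition (d), together with the observation that $f^{-1}(E) \subset \partial \Delta$ (since $f(\Delta) \cap E = \emptyset$), forces each arc of $\partial D$ to lie in $\Delta \cap f^{-1}([0,+\infty])$ as an interior cut, with endpoints in $\partial \Delta \cap f^{-1}(E)$. Traversing the Jordan curve $\partial D$, the map $f$ sweeps $[0,+\infty]$ twice, and one decomposes $\partial D = a_1 + a_2 + a_3 + a_4$ on which $f$ restricts to homeomorphisms onto $\overline{0,1}$, $\overline{1,\infty}$, $\overline{\infty,1}$, $\overline{1,0}$ respectively, with the four interstitial vertices $p_0, p_1, p_2, p_3 \in \partial \Delta$ lying in $f^{-1}(E)$.

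The plan is to remove $D$ from $\overline{\Delta}$ and, via the patching and gluing operations of Lemmas \ref{patch} and \ref{glue}, identify the paired exposed arcs $a_1 \sim a_4^{-1}$ (over $\overline{0,1}$) and $a_2 \sim a_3^{-1}$ (over $\overline{1,\infty}$). The identifications turn the pairs into interior Jordan chords $\lambda_{01}, \lambda_{1\infty}$. The components of $\overline{\Delta} \setminus D$ adjacent to $a_1$ and $a_4$ merge along $\lambda_{01}$ into a single Jordan domain $\overline{\Omega_1}$ containing $\lambda_{01}$ as an interior chord, and similarly the components adjacent to $a_2$ and $a_3$ merge into $\overline{\Omega_2}$ containing $\lambda_{1\infty}$. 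Uniformizing each $\overline{\Omega_j} \to \overline{\Delta}$ so that the chord corresponds to $[-1,1]$ defines $f_1$ and $f_2$ with the required diameter behavior.

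The area identity $A(f_1, \Delta) + A(f_2, \Delta) = A(f, \Delta) - 4\pi$ follows directly from the excision of $D$, and the length identity $L(f_1, \partial \Delta) + L(f_2, \partial \Delta) = L(f, \partial \Delta)$ follows because the affected arcs $a_i$ are interior cuts rather than parts of $\partial \Delta$ (so removing them leaves $L(f,\partial\Delta)$ intact), while the chords $\lambda_{01}, \lambda_{1\infty}$ become interior diameters in the $f_j$ and do not appear in $L(f_j, \partial \Delta)$.

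The main obstacle will be verifying the topological and analytic compatibility of the construction: (i) that the components of $\overline{\Delta} \setminus D$ regroup consistently around each corner $p_i$ so that the assignment of each piece to $\Omega_1$ or $\Omega_2$ is well-defined, and (ii) that each $f_j$ is a bona fide normal mapping satisfying conditions (a)--(d) of Theorem \ref{decom}. The essential tool is Lemma \ref{position}: at each corner $p_i$ the local convexity of $\Gamma_f$ in $S \setminus E$ (condition (b)) and the regularity of $f$ there (condition (c)) constrain the abutting boundary curves to fit together so that the identification produces a clean chord on each side, avoiding spurious natural vertices, non-convex turns, or edges of length $\geq \pi$ in the new $\Gamma_{f_j}$. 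Once local compatibility is established, the remaining verifications for $f_j$ (polygonality, local convexity in $S \setminus E$, absence of branching in $S \setminus E$, finiteness of $\Gamma_{f_j} \cap [0,+\infty]$) are inherited from the corresponding properties of $f$.
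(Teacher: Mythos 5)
Your proposal is essentially the paper's own proof: excise the fat component $\Delta_1$, split $\partial\Delta_1$ into four arcs mapped homeomorphically onto $\overline{0,1}$, $\overline{1,\infty}$, $\overline{\infty,1}$, $\overline{1,0}$, and glue the two pairs of arcs lying over $\overline{0,1}$ and over $\overline{1,\infty}$ to produce $f_1$ and $f_2$; the area and length identities then follow exactly as you say.

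One correction on where the real work sits. The claim that each arc $a_j$ is an interior cut (its interior lies in $\Delta$) does not follow from condition (d) alone: a single interior point $p_0$ of $a_j$ lying on $\partial\Delta$ contributes only one point to $\Gamma_f\cap[0,+\infty]$, so no contradiction with finiteness arises yet. The paper closes this by applying Lemma \ref{position} at such a $p_0$ — here $f(p_0)\notin E$, so (b) gives convexity and (c) gives regularity of both abutting curves at $p_0$ — to upgrade the single common point to a whole sub-arc of $\partial\Delta$ mapped into $[0,+\infty]$, which then contradicts (d). By contrast, your plan to invoke Lemma \ref{position} at the corners $p_i$ cannot work as stated, since $f(p_i)\in E$ there and neither the convexity hypothesis (b) nor the regularity hypothesis (c) is available at points of $f^{-1}(E)$. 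Once the arcs are known to be interior chords, the topology is simpler than you fear: $\Delta\setminus\overline{\Delta_1}$ consists of exactly four Jordan domains $D_j$ bounded by $\alpha_j-\beta_j$ meeting only at the corners, so there is no regrouping ambiguity, and the gluing of $\overline{D_1}$ with $\overline{D_4}$ and of $\overline{D_2}$ with $\overline{D_3}$ via Lemmas \ref{patch} and \ref{glue} gives the $f_j$ directly.
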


The geometrical meaning of this theorem is that we can cut off the whole
Riemann sphere $S,$ with $[0,+\infty ]$ being removed, from the interior of
the Riemann surface of $f$ and then sew up the cut edges along $[0,+\infty
]. $ Then we obtain two Riemann surfaces that are only joint at $1\in S,$
and the boundary curve of these two surfaces compose the boundary curve $%
\Gamma _{f}.$

\begin{proof}
Let $\Delta _{1}$ be a component of $\Delta \backslash f^{-1}([0,+\infty ])$
such that $f(\partial \Delta _{1})\subset \lbrack 0,+\infty ].$

%\begin{figure}[tbp]
%\centering
%\includegraphics[scale=0.85]{decom-1.eps} \label{decom-1}
%\end{figure}

Then by Corollary \ref{by the way}, $f|_{\overline{\Delta _{1}}}:\overline{%
\Delta _{1}}\rightarrow S$ is normal and surjective, $f(\partial \Delta
_{1})=[0,+\infty ]$ and $f$ restricted to $\Delta _{1}$ is a homeomorphism
onto $S\backslash \lbrack 0,+\infty ]$. Then the restriction
\begin{equation*}
f:\partial \Delta _{1}\rightarrow \lbrack 0,+\infty ]
\end{equation*}%
is a folded two to one mapping and we can express $\partial \Delta _{1}$ to
be%
\begin{equation*}
\partial \Delta _{1}=\beta _{1}+\beta _{2}+\beta _{3}+\beta _{4}
\end{equation*}%
such that $f$ maps $\beta _{1},\beta _{2},\beta _{3},\beta _{4}$
homeomorphically onto $\overline{0,1},\overline{1,\infty },\overline{\infty
,1},$ $\overline{1,0},$ respectively and $\beta _{1},\beta _{2},\beta
_{3},\beta _{4}$ are arranged anticlockwise in $\partial \Delta _{1}.$
Denote by $p_{j}\ $the initial points of $\beta _{j},j=1,2,3,4.$ Then
\begin{equation*}
f(p_{1})=0,f(p_{2})=f(p_{4})=1,f(p_{3})=\infty ,
\end{equation*}
which implies
\begin{equation*}
p_{j}\in \partial \Delta ,j=1,2,3,4,
\end{equation*}%
by the definition of normal mappings. We denote by $\alpha _{j}$ the section
of $\partial \Delta $ from $p_{j}$ to $p_{j+1},j=1,2,3,4$ ($p_{5}=p_{1}).$

We first show that the interior of $\beta _{j}$ is contained in $\Delta $
for $j=1,2,3,4.$ Assume $\beta _{1}$ has an interior point $p_{0}$ with $%
p_{0}\in \partial \Delta .$ Then it is clear that $p_{0}$ is in the interior
of $\alpha _{1}$ and $p_{0}\neq 0,1,\infty ,$ and then, by (b) the section $%
\Gamma _{\alpha _{1}}=f(z),z\in \alpha _{1},$ of $\Gamma _{f}$ is convex at $%
p_{0},$ and by (c), $f$ is regular at $p_{0}.$ On the other hand, $\Gamma
_{\beta _{1}}=f(z),z\in -\beta _{1},$ which is the simple path $\overline{1,0%
},$ is obviously convex by the definition$.$ Therefore, Lemma \ref{position}
applies to $p_{0},\alpha _{1},-\beta _{1}$ and $f$, and then $\alpha _{1}$
has a neighborhood contained in $\beta _{1}.$ This contradicts (d), for $%
\alpha _{1}\subset \partial \Delta $ and $\beta _{1}=\overline{0,1}\subset
\lbrack 0,+\infty ].$ Thus the interior of $\beta _{1}$ is contained in $%
\Delta .$ For the same reason, the interiors of $\beta _{2},\beta _{3}$ and $%
\beta _{4}$ are all contained in $\Delta $ $.$

We have proved that $\Delta \backslash \Delta _{1}$ contains four disjoint
Jordan domains $D_{j}$ enclosed by $\alpha _{j}-\beta _{j}$ with $\overline{%
D_{j}}\cap \overline{D_{j+1}}=\{p_{j+1}\},j=1,2,3,4$ ($D_{5}=D_{1}$ and $%
p_{5}=p_{1}).$

Now, we glue $\overline{D_{1}}$ and $\overline{D_{4}}$ along $\beta _{1}$
and $-\beta _{4}$ so that $x\in \beta _{1}$ and $y\in -\beta _{4}$ are
identified if and only if $f(x)=f(y).$ The glued closed domain can be
understood to be the unit disk $\overline{\Delta }$ so that $\beta _{1}$ and
$-\beta _{4}$ both become the diameter $[-1,1]$ of $\overline{\Delta }.$ In
this way we have in fact glued the restrictions $f|_{\overline{D_{1}}}$ and $%
f|_{\overline{D_{4}}}$ to be a normal mapping $f_{1}:\overline{\Delta }%
\rightarrow S$ such that $f_{1}$ maps $[-1,1]\subset \overline{\Delta }$
homeomorphically onto $\overline{0,1}\subset S.$

Similarly, we can glue the restrictions $f|_{\overline{D_{2}}}$ and $f|_{%
\overline{D_{3}}}$ to be a normal mapping $f_{2}:\overline{\Delta }%
\rightarrow S$ such that $f_{2}$ maps $[-1,1]$ homeomorphically onto $%
\overline{1,\infty }.$

It is clear that $f_{1}$ and $f_{2}$ satisfies all the conclusions of the
Theorem. As a matter of fact, the above process just cut off $f(\Delta
_{1}), $ the sphere $S$ with $[0,+\infty ]$ being removed, from the interior
of the Riemann surface of $f$ and then sew up the cut edges along $%
[0,+\infty ].$ Then we obtain two Riemann surfaces that are only joint at $%
1\in S,$ and the boundary curve of these two surfaces compose the boundary
curve $\Gamma _{f}. $ This completes the proof.
\end{proof}

\section{Proof of the main theorem\label{ss-p2}}

We first prove the main theorem under certain conditions.

\begin{lemma}
\label{nofat}Let $f:\overline{\Delta }\rightarrow S$ be a normal mapping
that is not fat and satisfies (a)--(d) of Theorem \ref{decom}. Then%
\begin{equation*}
A(f,\Delta )<2L(f,\partial \Delta ),
\end{equation*}%
and
\begin{equation*}
A(f,\Delta )<h_{0}L(f,\partial \Delta )-A(f,\Delta ),
\end{equation*}%
where $h_{0}$ is given by (\ref{best}).
\end{lemma}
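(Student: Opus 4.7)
The plan is to first observe that the second inequality reduces to the first. Since $h_0>4$ has already been established and $L(f,\partial\Delta)>0$ (because $f$ is nonconstant and normal), the bound $A(f,\Delta)<2L(f,\partial\Delta)$ gives $2A(f,\Delta)<4L(f,\partial\Delta)<h_0 L(f,\partial\Delta)$, which rearranges to $A(f,\Delta)<h_0L(f,\partial\Delta)-A(f,\Delta)$. So I would concentrate entirely on proving $A(f,\Delta)<2L(f,\partial\Delta)$.

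To prove this, I would decompose $\Delta$ along $f^{-1}([0,+\infty])$ using Theorem \ref{decom}, and analyze each piece separately. For each component $D$ of $\Delta\setminus f^{-1}([0,+\infty])$ (or $D=\Delta$ itself if $\Delta\cap f^{-1}([0,+\infty])=\emptyset$), set $\alpha_D=(\partial D)\cap(\partial\Delta)$ and $\beta_D=(\partial D)\setminus(\partial\Delta)$, and let $g_D=f|_{\overline{D}}$, viewed as a normal mapping on a Jordan domain. Because $f$ is \emph{not} fat, no component satisfies $f(\partial D)\subset[0,+\infty]$, and so by Corollary \ref{by the way} the hypothesis of Lemma \ref{ok} with $\alpha_0=\alpha_D\neq\emptyset$ applies to $g_D$. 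This yields three crucial properties: $g_D$ restricted to $D$ is a homeomorphism onto $g_D(D)$; $g_D(\overline{D})$ is contained in some open hemisphere of $S$; and, by Theorem \ref{decom}(v), $L(f,\beta_D)<L(f,\alpha_D)$.

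The piecewise bound is now quick. Set $L_D=L(g_D,\partial D)=L(f,\alpha_D)+L(f,\beta_D)$. If $L_D<2\pi$, then Theorem \ref{good} applied to $g_D$ gives $A(f,D)<L_D$, and combining with the strict inequality $L(f,\beta_D)<L(f,\alpha_D)$ produces $A(f,D)<2L(f,\alpha_D)$. If instead $L_D\geq 2\pi$, then since $g_D$ is injective on $D$ the area equals the area of the image without multiplicity, so $A(f,D)=\mathrm{area}(g_D(D))<2\pi$ (strict, because $g_D(\overline{D})$ is compact in the open hemisphere), whence $A(f,D)<2\pi\leq L_D<2L(f,\alpha_D)$. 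In either case $A(f,D)<2L(f,\alpha_D)$.

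Summing over all components, using that the $\alpha_D$ partition $\partial\Delta$ up to a finite set (so $\sum_D L(f,\alpha_D)=L(f,\partial\Delta)$) and that $\sum_D A(f,D)=A(f,\Delta)$, I obtain $A(f,\Delta)<2L(f,\partial\Delta)$, which closes the argument. The main conceptual point where something could go wrong is the large-length case $L_D\geq 2\pi$, in which Theorem \ref{good} is unavailable; the saving grace is that non-fatness, through Corollary \ref{by the way} and Lemma \ref{ok}(iv), forces $g_D(\overline{D})$ into an open hemisphere and hence caps the area at $2\pi$. Once that is in hand, the remainder is routine arithmetic.
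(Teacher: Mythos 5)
Your proposal is correct and follows essentially the same route as the paper's own proof: reduce the second inequality to the first via $h_{0}>4$, decompose $\Delta$ along $f^{-1}([0,+\infty ])$ via Theorem \ref{decom}, use non-fatness to guarantee $\alpha _{D}\neq \emptyset$ and hence $L(f,\beta _{D})<L(f,\alpha _{D})$ together with containment of $f(\overline{D})$ in a hemisphere, split on $L(f,\partial D)$ versus $2\pi $ (using Theorem \ref{good} in the small case and the hemisphere area cap in the large case), and sum over components. No gaps.
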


\begin{proof}
The second inequality follows from the first one directly, for $h_{0}>4.$ So
we only prove the first inequality.

Since $f$ is not fat, for each component $D$ of $\Delta \backslash
f^{-1}([0,+\infty ]),$ $f(\partial D)\backslash \lbrack 0,+\infty ]\neq
\emptyset ,$ and so by (d) the interior $\alpha _{0}$ of $\left( \partial
D\right) \cap (\partial \Delta )$ is not empty, and then by Theorem \ref%
{decom},
\begin{equation*}
L(f,\left( \partial D\right) \cap (\partial \Delta ))>L(f,(\partial
D)\backslash \left( \partial \Delta \right) ),
\end{equation*}%
$f(\overline{D})$ is contained in some hemisphere of $S,$ which implies%
\begin{equation*}
A(f(D))<2\pi ,
\end{equation*}%
and $f$ restricted to $D$ is a homeomorphism.

Then we have%
\begin{equation}
2L(f,\left( \partial D\right) \cap (\partial \Delta ))>L(f,\left( \partial
D\right) \cap (\partial \Delta ))+L(f,(\partial D)\backslash \left( \partial
\Delta \right) )=L(f,\partial D).  \label{13-1}
\end{equation}

If $L(f,\partial D)\geq 2\pi ,$ then we have%
\begin{equation*}
A(f,D)=A(f(D))<2\pi \leq L(f,\partial D),
\end{equation*}%
and if $L(f,\partial D)<2\pi ,$ then by Theorem \ref{good} we have
\begin{equation*}
A(f,D)<L(f,\partial D),
\end{equation*}%
and then by (\ref{13-1}), in both cases we have%
\begin{equation}
A(f,D)<2L(f,\left( \partial D\right) \cap (\partial \Delta )).  \label{15-1}
\end{equation}

By Theorem \ref{decom} it is clear that for any pair $D_{1}$ and $D_{2}$ of
distinct components of $\Delta \backslash f^{-1}([0,+\infty ]),$ $\left(
\partial D_{1}\right) \cap (\partial \Delta )$ and $\left( \partial
D_{2}\right) \cap (\partial \Delta )$ contains at most two common points,
and on the other hand, by the same theorem, $\Delta \backslash
f^{-1}([0,+\infty ])$ contains only finitely many components. Thus, we have%
\begin{equation*}
L(f,\partial \Delta )=\sum_{D}L(f,\left( \partial D\right) \cap (\partial
\Delta )),
\end{equation*}%
where the sum runs over all components $D$ of $\Delta \backslash
f^{-1}([0,+\infty ])$. Then, summing up (\ref{15-1}), we have%
\begin{equation*}
A(f,\Delta )=\sum_{D}A(f,D)<2\sum_{D}L(f,(\partial D)\cap (\partial \Delta
))=2L(f,\partial \Delta ).
\end{equation*}%
This completes the proof.
\end{proof}

\begin{lemma}
\label{key1}Let $f:\overline{\Delta }\rightarrow S$ be a normal mapping that
is fat and satisfies (a)--(d) of Theorem \ref{decom}. Then%
\begin{equation*}
A(f,\Delta )\leq h_{0}L(f,\partial \Delta )-4\pi .
\end{equation*}%
where $h_{0}$ is given by (\ref{best}).
\end{lemma}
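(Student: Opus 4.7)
The plan is to reduce the lemma to a per-piece estimate via Theorem \ref{fat}. Since $f$ is fat and satisfies (a)--(d), Theorem \ref{fat} yields normal mappings $f_1, f_2:\overline{\Delta}\to S$ satisfying (a)--(d), with $A(f_1,\Delta)+A(f_2,\Delta)=A(f,\Delta)-4\pi$, $L(f_1,\partial\Delta)+L(f_2,\partial\Delta)=L(f,\partial\Delta)$, and $f_1$ (resp.\ $f_2$) mapping $[-1,1]$ homeomorphically onto $\overline{0,1}$ (resp.\ $\overline{1,\infty}$). If the per-piece estimate
\begin{equation*}
A(f_i,\Delta)+4\pi\leq h_{0}\,L(f_i,\partial\Delta),\qquad i=1,2,
\end{equation*}
can be established, then adding the two inequalities and subtracting $4\pi$ yields exactly $A(f,\Delta)\leq h_{0}\,L(f,\partial\Delta)-4\pi$.

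To prove the per-piece estimate for a normal $g$ satisfying (a)--(d) and mapping $[-1,1]$ homeomorphically onto $\overline{0,1}$ or $\overline{1,\infty}$, I would proceed by strong induction on $A(g,\Delta)$, splitting into cases according to $L_g:=L(g,\partial\Delta)$. When $L_g<\sqrt{2}\pi$, Theorem \ref{good2} applies directly if the target is $\overline{0,1}$; for target $\overline{1,\infty}$, first postcompose $g$ with the M\"{o}bius transformation $w\mapsto 1/w$, which is a spherical isometry fixing $1$ and swapping $0$ and $\infty$, so the hypotheses of Theorem \ref{good2} are inherited by the transformed mapping. When $\sqrt{2}\pi\leq L_g<2\pi$, Theorem \ref{good} gives $4\pi+A(g,\Delta)<4L_g\leq h_{0}L_g$ since $h_{0}>4$. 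When $L_g\geq 2\pi$ and $g$ is not fat, Lemma \ref{nofat} gives $A(g,\Delta)<2L_g$, so $A(g,\Delta)+4\pi<2L_g+4\pi\leq h_{0}L_g$; the last inequality reduces to $(h_{0}-2)L_g\geq 4\pi$, which holds because $L_g\geq 2\pi$ and $h_{0}>4$. Finally, when $L_g\geq 2\pi$ and $g$ itself is fat, apply Theorem \ref{fat} to $g$ and invoke the inductive hypothesis on the two resulting pieces, whose areas are strictly less than $A(g,\Delta)$.

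The induction is well-founded because each application of Theorem \ref{fat} strictly decreases the area by $4\pi$ and the initial area is finite, so the recursion tree terminates in finitely many steps. I expect the principal obstacle to be the case $L_g\geq 2\pi$ with $g$ non-fat: its success hinges on the numerical inequality $h_{0}>4$, which is precisely the reason $h_{0}$ is the correct sharp constant and exactly what Theorems \ref{good} and \ref{good2} together are calibrated to yield. The M\"{o}bius adjustment for $f_2$ and the bookkeeping of the recursion should be routine once one verifies that the class of normal mappings satisfying (a)--(d) together with the diameter condition is preserved both by the isometry $w\mapsto 1/w$ and by the decomposition furnished by Theorem \ref{fat}.
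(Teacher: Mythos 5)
Your proposal is correct and follows essentially the same route as the paper: the paper iterates Theorem \ref{fat} until every piece is non-fat and then applies exactly your three estimates (Theorem \ref{good2} for $L<\sqrt{2}\pi$, Theorem \ref{good} for $\sqrt{2}\pi\leq L<2\pi$, and Lemma \ref{nofat} together with $h_{0}>4$ for $L\geq 2\pi$), whereas you package the same iteration as a recursion justified by the $4\pi$ area drop. Your explicit treatment of the $\overline{1,\infty}$ case via the isometry $w\mapsto 1/w$ fills in a step the paper leaves implicit in its appeal to Theorem \ref{good2}.
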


\begin{proof}
By Theorem \ref{fat}, there exist normal mappings $g_{j}:\overline{\Delta }%
\rightarrow S,j=1,2,\dots ,n+1,$ such that for each $j\leq n+1$ the
followings hold.

(e) Each $g_{j}$ satisfies all assumptions (a)--(d) of theorem \ref{decom}, $%
j=1,2,\dots ,n+1$.

(f) Each $g_{j}$ is not fat, $j=1,2,\dots ,n+1$.

(g) $g_{j}$ maps the diameter $[-1,1]$ of $\overline{\Delta }$
homeomorphically onto the real interval $\overline{0,1}$ in $S$ or onto $%
\overline{1,\infty }$ in $S.$

(h) $A(f,\Delta )=4n\pi +\sum_{j=1}^{n+1}A(g_{j},\Delta ),L(f,\partial
\Delta )=\sum_{j=1}^{n+1}L(g_{j},\partial \Delta ).$

Let $j$ be any positive integer with $j\leq n+1$ and consider the mapping $%
g_{j}.$ By (e) and (f), Lemma \ref{nofat} applies. Then we have%
\begin{equation}
A(g_{j},\Delta )<2L(g_{j},\partial \Delta ),  \label{13.5}
\end{equation}%
and then
\begin{eqnarray*}
4\pi +A(g_{j},\Delta ) &<&2L(g_{j},\partial \Delta )+4\pi \\
&=&(2+\frac{4\pi }{L(g_{j},\partial \Delta )})L(g_{j},\partial \Delta ),
\end{eqnarray*}%
and, considering that $2+\frac{4\pi }{L(g_{j},\partial \Delta )}\leq 4$ in
the case $L(g_{j},\partial \Delta )\geq 2\pi ,$ we have

(k) If $L(g_{j},\partial \Delta )\geq 2\pi ,$ then

\begin{equation*}
4\pi +A(g_{j},\Delta )<4L(g_{j},\partial \Delta ).
\end{equation*}

By Theorem \ref{good}, we have

(l) If $\sqrt{2}\pi \leq L(g_{j},\partial \Delta )<2\pi ,$ then%
\begin{equation*}
4\pi +A(g_{j},\Delta )<4L(g_{j},\partial \Delta ).
\end{equation*}

By (g) and Theorem \ref{good2}, we have

(m) If $L(g_{j},\partial \Delta )<\sqrt{2}\pi ,$ then%
\begin{equation*}
4\pi +A(g_{j},\Delta )\leq h_{0}L(g_{j},\partial \Delta ),
\end{equation*}%
where $h_{0}$ is given by (\ref{best}).

Summarizing (k)--(m) and the fact that $h_{0}>4$, we have in any case%
\begin{equation*}
4\pi +A(g_{j},\Delta )\leq h_{0}L(g_{j},\partial \Delta ),j=1,\dots ,n+1,
\end{equation*}%
and then by (h), we have

\begin{eqnarray*}
A(f,\Delta ) &=&4n\pi +\sum_{j=1}^{n+1}A(g_{j},\Delta ) \\
&=&\sum_{j=1}^{n+1}\left( 4\pi +A(g_{j},\Delta )\right) -4\pi \\
&\leq &h_{0}\sum_{j=1}^{n+1}L(g_{j},\partial \Delta )-4\pi \\
&=&h_{0}L(f,\partial \Delta )-4\pi .
\end{eqnarray*}

This completes the proof.
\end{proof}

\begin{proof}[Proof of the Main Theorem]
Let $f:\overline{\Delta }\rightarrow S$ be any nonconstant holomorphic
mapping such that $f(\Delta )\cap E=\emptyset .$ Then for any positive
number
\begin{equation}
\varepsilon <\frac{1}{4h_{0}}\min \{4\pi ,A(f,\Delta )\},  \label{15-2}
\end{equation}%
there exists a Jordan domain $D\subset \Delta $ such that $f$ restricted to $%
D$ is a normal mapping,%
\begin{equation}
A(f,D)\geq A(f,\Delta )-\varepsilon \mathrm{\ and\ }L(f,\partial
D)<L(f,\Delta )+\varepsilon ,  \label{13-4}
\end{equation}%
and the following condition holds.

(1) Each natural edge of the restricted mapping $f|_{\overline{D}}$ has
spherical length strictly less than $\pi $.

Let $h$ be a homeomorphism from $\overline{\Delta }$ onto $\overline{D}$ and
let $F=f\circ h.$ Then by (1) $F:\overline{\Delta }\rightarrow S$ is a
normal mapping satisfying the assumption of Theorem \ref{key} with $%
A(F,\Delta )=A(f,D)\ $and $L(F,\partial \Delta )=L(f,\partial D).$ Then by (%
\ref{13-4}) we have that
\begin{equation}
A(F,\Delta )\geq A(f,\Delta )-\varepsilon \ \mathrm{and}\ L(F,\partial
\Delta )<L(f,\Delta )+\varepsilon ;  \label{13-5}
\end{equation}%
and by Theorem \ref{key} there exist a number of $m$ normal mappings $f_{j}:%
\overline{\Delta }\rightarrow S,j=1,\dots m,$ such that%
\begin{equation}
\sum_{j=1}^{m}A(f_{j},\Delta )\geq A(F,\Delta )\ \mathrm{and\ }%
\sum_{j=1}^{m}L(f_{j},\partial \Delta )\leq L(F,\partial \Delta ),
\label{13-6}
\end{equation}%
and for each $j$ the followings hold.

(A) Each natural edge of the boundary curve $\Gamma _{f_{j}}=f_{j}(z),z\in
\partial \Delta $, has spherical length strictly less than $\pi $.

(B) $\Gamma _{f_{j}}=f_{j}(z),z\in \partial \Delta $, is locally convex in $%
S\backslash E.$

(C) $f_{j}$ has no branched point in $S\backslash E.$

Then, by Lemma \ref{pertur}, for the above $\varepsilon $ and each $j\leq m,$
there exists a normal mapping $g_{j}:\overline{\Delta }\rightarrow S$ such
that%
\begin{equation}
A(g_{j},\Delta )\geq A(f_{j},\Delta ),L(g_{j},\partial \Delta
)<L(f_{j},\partial \Delta )+\frac{\varepsilon }{m},  \label{13-7}
\end{equation}%
and $g_{j}$ satisfies (d) in Theorem \ref{decom} and (A)--(C), say, $g_{j}$
satisfies all hypotheses of Theorem \ref{decom}. Then by Lemmas \ref{nofat}
and \ref{key1} we have%
\begin{equation}
A(g_{j},\Delta )\leq h_{0}L(g_{j},\partial \Delta )-\min \{A(g_{j},\Delta
),4\pi \},j=1,\dots ,m.  \label{13-8}
\end{equation}%
On the other hand, if for some $j_{0}\leq m,$ $A(g_{j_{0}},\Delta )\geq 4\pi
,$ then we have
\begin{equation*}
\sum_{j=1}^{m}\min \{A(g_{j},\Delta ),4\pi \}\geq 4\pi ,
\end{equation*}%
and if $A(g_{j},\Delta )<4\pi $ for all $j\leq m,$ then we have, by (\ref%
{13-7}), (\ref{13-6}), (\ref{13-5}) and (\ref{15-2}), that%
\begin{eqnarray*}
&&\sum_{j=1}^{m}\min \{A(g_{j},\Delta ),4\pi \} \\
&=&\sum_{j=1}^{m}A(g_{j},\Delta )\geq \sum_{j=1}^{m}A(f_{j},\Delta )\geq
A(F,\Delta ) \\
&>&A(f,\Delta )-\varepsilon >\frac{4h_{0}-1}{4h_{0}}A(f,\Delta );
\end{eqnarray*}%
and thus, in both cases, we have%
\begin{equation}
\sum_{j=1}^{m}\min \{A(g_{j},\Delta ),4\pi \}\geq \min \{\frac{4h_{0}-1}{%
4h_{0}}A(f,\Delta ),4\pi \}.  \label{13-9}
\end{equation}

Summing up the inequalities of (\ref{13-8}), by (\ref{13-9}) we have%
\begin{eqnarray}
\sum_{j=1}^{m}A(g_{j},\Delta ) &\leq &\sum_{j=1}^{m}h_{0}L(g_{j},\partial
\Delta )-\sum_{j=1}^{m}\min \{A(g_{j},\Delta ),4\pi \}  \label{13-10} \\
&\leq &\sum_{j=1}^{m}h_{0}L(g_{j},\partial \Delta )-\min \{\frac{4h_{0}-1}{%
4h_{0}}A(f,\Delta ),4\pi \}.  \notag
\end{eqnarray}

By (\ref{13-7}), (\ref{13-6}) and (\ref{13-5}) we have
\begin{eqnarray*}
\sum_{j=1}^{m}h_{0}L(g_{j},\partial \Delta )
&<&\sum_{j=1}^{m}h_{0}L(f_{j},\partial \Delta )+\varepsilon h_{0}\leq
h_{0}L(F,\partial \Delta )+\varepsilon h_{0} \\
&<&h_{0}L(f,\Delta )+2\varepsilon h_{0},
\end{eqnarray*}%
i.e.%
\begin{equation}
\sum_{j=1}^{m}h_{0}L(g_{j},\partial \Delta )<h_{0}L(f,\Delta )+2\varepsilon
h_{0}.  \label{13-11}
\end{equation}

By (\ref{13-5})--(\ref{13-7}) we have

\begin{equation}
A(f,\Delta )\leq \sum_{j=1}^{m}A(g_{j},\Delta )+\varepsilon .  \label{13-12}
\end{equation}%
Therefore, we have by (\ref{13-10})--(\ref{13-12})%
\begin{eqnarray*}
A(f,\Delta ) &\leq &\sum_{j=1}^{m}A(g_{j},\Delta )+\varepsilon  \\
&\leq &\sum_{j=1}^{m}h_{0}L(g_{j},\partial \Delta )-\min \{\frac{4h_{0}-1}{%
4h_{0}}A(f,\Delta ),4\pi \} \\
&<&h_{0}L(f,\Delta )+2\varepsilon h_{0}+\varepsilon -\min \{\frac{4h_{0}-1}{%
4h_{0}}A(f,\Delta ),4\pi \} \\
&\leq &h_{0}L(f,\Delta )+\frac{2h_{0}+1}{4h_{0}}\min \{A(f,\Delta ),4\pi
\}-\min \{\frac{4h_{0}-1}{4h_{0}}A(f,\Delta ),4\pi \},
\end{eqnarray*}%
and considering that $h_{0}>4,$ we have%
\begin{equation*}
A(f,\Delta )<h_{0}L(f,\Delta ).
\end{equation*}%
It remains to show that the lower is sharp.

We give an example to show that $h_{0}$ given by (\ref{best}) is a sharp
lower bound of the constant $h$ in (\ref{a6}).

As in Section \ref{1SS-Intro}, we denote by $D$ the spherical disk in $S$
with diameter $\overline{1,\infty },$ the shortest path in $S$ from $1$ to $%
\infty ,$ and for $l\in \lbrack \pi ,\sqrt{2}\pi ]$ denote by $D_{l}$ the
domain contained in the disk $D$ such that the boundary $\partial D_{l}$ is
composed of two congruent circular arcs, each of which has endpoints $%
\{1,\infty \}$ and spherical length $\frac{l}{2}$. Then, $l=L(\partial D_{l})
$ and by (\ref{1.1}) and (\ref{1.4}), the number $h_{0}$ given by (\ref{best}%
) is the maximum value of the function $\frac{4\pi +A(D_{l})}{l}$ and%
\begin{equation*}
h_{0}=\frac{4\pi +A(D_{l_{0}})}{l_{0}}
\end{equation*}%
for some $l_{0}\in (\pi ,\sqrt{2}\pi )$.

It is clear that $D_{l_{0}},$ regarded as a domain in $\mathbb{C},$ is an
angular domain whose vertex is $1$ and bisector is the ray $[1,+\infty )$ in
$\mathbb{C}$. We denote by $2\theta _{0}$ the value of the angle of this
angular domain$.$ Then it is clear that $\theta _{0}<\frac{\pi }{2}.$

Let $M_{1}$ be the angular domain in $\mathbb{C}$ defined by%
\begin{equation*}
M_{1}=\{re^{i\theta };\ r>0,0<\theta <\frac{\theta _{0}}{m}\}
\end{equation*}%
and let $\Sigma _{1}$ be the angular domain in $\mathbb{C}$ defined by
\begin{equation*}
\Sigma _{1}=\{1+re^{i\theta };\ 0<r<+\infty ,-\theta _{0}<\theta <\pi \}.
\end{equation*}%
Then it is easy to construct a homeomorphism $f_{0}$ from the closure $%
\overline{M_{1}}$ of $M_{1}$ in $\overline{\mathbb{C}}$ onto the closure $%
\overline{\Sigma _{1}}$ of $\Sigma _{1}$ in $\overline{\mathbb{C}},$ such
that $f_{0}$ maps the ray $\arg z=\frac{\theta _{0}}{m}$ onto the ray $\arg
z=\pi ,$ maps the interval $[0,1]$ onto itself increasingly, maps the ray $%
[1,+\infty ]$ onto the ray
\begin{equation*}
z=1+re^{-i\theta _{0}},r\in \lbrack 0,+\infty ]
\end{equation*}%
and $f_{0}$ is holomorphic on $M_{1}.$

Let%
\begin{equation*}
M_{2}=e^{\frac{\theta _{0}}{m}}M_{1}=\{e^{\frac{\theta _{0}}{m}}z;\ z\in
M_{1}\}.
\end{equation*}%
Then, by the Schwarz symmetry principle, we can extend $f_{0}$ to be an open
and continuous mapping $f_{1}$ from the closed angular domain%
\begin{equation*}
A_{1}=\overline{M_{1}\cup M_{2}}=\{z\in \overline{\mathbb{C}};0\leq \arg
z\leq \frac{2\theta _{0}}{m}\}
\end{equation*}%
onto $\overline{\mathbb{C}}$ such that $f_{1}\ $maps the segments
\begin{equation*}
l_{k}=\left\{ re^{i\frac{k\theta _{0}}{m}},r\in \lbrack 0,1]\right\} ,k=0,1,
\end{equation*}%
homeomorphically onto the interval $[0,1],$ respectively; $f_{1}$ maps the
segments
\begin{equation*}
L_{k}=\left\{ re^{i\frac{k\theta _{0}}{m}},r\in \lbrack 1,+\infty ]\right\}
,k=0,1,
\end{equation*}%
homeomorphically onto the segments
\begin{equation*}
l^{-}=\left\{ 1+re^{-i\theta _{0}},r\in \lbrack 0,+\infty ]\right\}
\end{equation*}%
and%
\begin{equation*}
l^{+}=\left\{ 1+re^{i\theta _{0}},r\in \lbrack 0,+\infty ]\right\} ,
\end{equation*}%
respectively; and $f_{1}$ restricted to the domain $A_{1}$ is a holomorphic
mapping that covers the domain $D_{l_{0}}$ two times and covers the domain $%
\mathbb{C}\backslash \overline{D_{l_{0}}}$ one times.

Let
\begin{equation*}
A_{1}^{\ast }=A_{1}^{\circ }\cup l_{0}^{\circ }\cup l_{1}^{\circ },
\end{equation*}%
where $A_{1}^{\circ }$ is the interior of $A_{1},$ and let
\begin{equation*}
A_{k}^{\ast }=e^{i\frac{2\left( k-1\right) \theta _{0}}{m}}A_{1}^{\ast
}=\{e^{i\frac{2\left( k-1\right) \theta _{0}}{m}}z;z\in A_{1}^{\ast
}\},k=2,\dots ,m.
\end{equation*}%
Then for each $k=1,\dots ,m$ we can defined a continuous function $f_{k}$ on
$A_{k}^{\ast }$ inductively: $f_{k+1}$ is obtained from $f_{k}$ by Schwarz
symmetry principle cross the symmetry axis
\begin{equation*}
l_{k}=\{re^{i\frac{2k\theta _{0}}{m}},r\in \lbrack 0,1]\}.
\end{equation*}

Let $H^{+}$ be the upper half plane $\mathrm{Im}z>0$ in $\mathbb{C}$ and let
\begin{equation*}
K=H^{+}\backslash \cup _{k=1}^{m-1}\{re^{i\frac{2k\theta _{0}}{m}},r\in
\lbrack 1,+\infty )\}.
\end{equation*}%
Then $K=\left( \cup _{k=1}^{m}A_{k}^{\ast }\right) \backslash (l_{0}\cup
l_{m})$ and $f_{1},\dots ,f_{m}$ can be patched to be a holomorphic function
$f$ defined on $K\cup (-\infty ,+\infty )$, by the Schwarz symmetric
principle$.$ It is clear that $K$ is a simply connected domain and there is
a conformal mapping $h$ from $\Delta $ onto $K$ such that $h$ can be
extended to be a continuous function $\widetilde{h}$ such that when $z$ goes
around $\partial \Delta $ once, $\widetilde{h}(z)$ describes the boundary
section $(-\infty ,+\infty )$ of $K$ once and the boundary sections $\{re^{i%
\frac{k\theta _{0}}{m}},r\in \lbrack 1,+\infty )\}$ twice for $k=1,\dots
,m-1.$

Let $g=f\circ \widetilde{h}.$ Then $g:\overline{\Delta }\rightarrow
\overline{\mathbb{C}}$ is a continuous mapping that is holomorphic in $%
\Delta $ and when we regard $g$ as a mapping from $\overline{\Delta }$ to $S,
$ $g$ restricted to $\Delta $ covers $S\backslash D_{l_{0}}$ by $m$ times
and covers $D_{l_{0}}$ by $2m$ times and the boundary curve $\Gamma
_{g}=g(z),z\in \partial \Delta ,$ covers $\partial D_{l_{0}}$ by $m$ times
and covers the shortest path $\overline{0,1}$ in $S$ from $0$ to $1$ by $2$
times.

Then we have
\begin{equation*}
A(g,\Delta )=4m\pi +mA(D_{l_{0}})
\end{equation*}%
and
\begin{equation*}
L(g,\partial \Delta )=2L(\overline{0,1})+mL(\partial D_{l_{0}})=\pi +ml_{0},
\end{equation*}%
and then%
\begin{equation*}
\frac{A(g,\Delta )}{L(g,\partial \Delta )}=\frac{4m\pi +mA(D_{l_{0}})}{\pi
+mL(\partial D_{l_{0}})}<\frac{4\pi +A(D_{l_{0}})}{l_{0}}=h_{0}.
\end{equation*}%
It is clear that as $m\rightarrow +\infty ,$ $\frac{A(g,\Delta )}{%
L(g,\partial \Delta )}=\frac{4m\pi +mA(D_{l_{0}})}{\pi +mL(\partial
D_{l_{0}})}$ converges to $h_{0}.$ This completes the proof.
\end{proof}

\end{document}